\documentclass{amsart}
\usepackage{amsfonts,amscd, amssymb}

\newtheorem{theorem}{Theorem}[section]
\newtheorem{lemma}[theorem]{Lemma}
\newtheorem{corollary}[theorem]{Corollary}
\newtheorem{proposition}[theorem]{Proposition}
\theoremstyle{remark}

\theoremstyle{definition}

\numberwithin{equation}{section}
\makeatother

\DeclareMathOperator{\Cdb}{{\mathbb C}}
\DeclareMathOperator{\Rdb}{{\mathbb R}}

\DeclareMathOperator{\Ndb}{{\mathbb N}}

\begin{document}

\title[Contractive projections on operator algebras]{Contractive projections and real positive maps on  operator algebras}
\author{David P. Blecher}
\address{Department of Mathematics, University of Houston, Houston, TX
77204-3008, USA}
\email[David P. Blecher]{dblecher@math.uh.edu}

 \author{Matthew Neal}
\address{Math and Computer Sci.\ Department, Denison University, Granville, OH 43023,
 USA}
\email[Matthew Neal]{nealm@denison.edu}

\date{11/7/2019} 
\thanks{DB  is supported by a Simons Foundation Collaboration Grant.  We are also grateful to Denison University for some financial support}
\subjclass[2010]{Primary 17C65 , 46L05,  46L70, 47L05, 47L07, 47L30, 47L70;
Secondary: 46H10, 46B40,  46L07, 46L30, 47L75}
\keywords{Operator algebra, Jordan operator algebra, contractive projection, conditional expectation, bicontractive projection, real positive, noncommutative Banach-Stone theorem, JC*-algebra}

\begin{abstract}  We study contractive  projections, isometries, and real positive maps  on algebras of operators on a Hilbert space.  For example 
we find generalizations and variants of certain classical results on contractive projections
on $C^*$-algebras and JB-algebras due to Choi, Effros, St{\o}rmer, Friedman and Russo, and others.   In fact most of our arguments generalize to contractive `real positive' projections on
 Jordan operator algebras, that is on  a norm-closed space  $A$ of operators
on a Hilbert space with $a^2 \in A$ for all $a \in A$.   We also  prove many new general 
results on real positive maps which are foundational to the study of such maps, and of interest in their own right. We also prove a new 
Banach-Stone type theorem for isometries between operator algebras 
or Jordan operator algebras.  An application of this
is given to the characterization of symmetric real positive projections.
  \end{abstract}

\maketitle

\section{Introduction}

An (associative) {\em operator algebra} is a possibly nonselfadjoint closed subalgebra $A$ of $B(H)$, for a
complex Hilbert space $H$.     Here we study contractive  projections, isometries, and real positive maps  on
such algebras, and on their `nonassociative counterparts'.  
By a projection on a Banach space $X$ we mean an idempotent (usually contractive) linear map $P : X \to X$.   In a previous paper \cite{BNp} we studied completely contractive projections and conditional expectations 
on such  operator algebras, and in particular we found variants of certain deep classical results on contractive 
positive projections
on $C^*$-algebras and JB-algebras due to Choi, Effros, St{\o}rmer, 
Friedman and Russo, and others.   In the present paper we attempt to generalize our  work
from \cite{BNp} 
to  the setting of contractive projections.   Studying  contractive projections or isometries of operator  algebras forces 
one into the more general  setting of Jordan operator algebras, that is to a  norm-closed space of operators $A$
on a Hilbert space closed under the 
`Jordan product' $a \circ b = \frac{1}{2}(ab+ba)$ (or equivalently, with  $a^2 \in A$ for all $a \in A$). 
 For example, the range of a positive unital projection on a $C^*$-algebra need not be again be isometrically
isomorphic to a $C^*$-algebra
(consider $\frac{1}{2}(x + x^\intercal )$ on $M_2$), but it is always  isometrically
isomorphic to a Jordan operator algebra.     Also,  as one sees already in 
Kadison's Banach-Stone  theorem for
$C^*$-algebras \cite{Kad}, isometries of $C^*$-algebras relate to Jordan $*$-homomorphisms and not necessarily to 
$*$-homomorphisms.    Thus  most of our results  are  best stated
for, or belong naturally to,  the larger category of Jordan operator algebras.  However a few  of our results 
will apply only to (associative) operator  algebras.  

  To establish our results, as in \cite{BNp} we  add the ingredient of `real positivity' from recent 
papers of the first author with Read 
in \cite{BRI,BRII,BRord} (see also e.g.\ \cite{BNII,BBS,BNp,Bsan,BWj,BNj}).     
A key idea in those papers is that  `real positivity' is often the right replacement in general algebras for positivity
in $C^*$-algebras.   
Thus we will be using for our `positive cone' the real positive operators, 
or operators with positive real part; 
namely the operators $T$  satisfying $T + T^* \geq 0$.   Sometimes these operators are called  {\em accretive}.
(We remark that there was already some use of accretive operators in the `JB-algebra literature'; see particularly the use of 
`dissipativity' in e.g.\ \cite{Rod, Rod2}.)  
 This will be our  guiding principle here. 
In an early section of our paper we prove many new and foundational 
results of independent interest concerning  real positive maps, that is, maps which take  real positive elements to  real positive elements.
This part of our paper is a generalization of aspects of the basic theory of positive maps on $C^*$-algebras \cite{ST}.
We also prove a new 
Banach-Stone type theorem for isometries between 
 operator algebras 
or Jordan operator algebras: a characterization
of such isometries in the spirit of Kadison's Banach-Stone  theorem for
$C^*$-algebras.   One of the difficulties to be overcome here is the fact that there exist linearly isometric unital $JC^*$-algebras which are not Jordan isomorphic  (see e.g.\ \cite[Section 5]{BKU} and  \cite[Antitheorem 3.4.34]{Rod}). Therefore  
Banach-Stone theorems for nonunital isometries between Jordan operator algebras cannot exactly have the form one would first think of
(see the second paragraph of Section \ref{bstsec} for an expanded version of this remark).  
  An application of our new Banach-Stone type theorem 
is given later to the characterization of {\em symmetric} real positive projections.

One motivation for our work here on projections is the fact that the range of a contractive
projection $P$ on an operator algebra or  Jordan operator algebra is often a 
 Jordan operator algebra in the `new product' $P(x \circ y)$ (and with norm unchanged).     This is a 
reprise of the famous result of  Choi and Effros \cite[Theorem 3.1]{CE} that
the range of a completely positive 
(contractive)  
projection $P : B \to B$ on a $C^*$-algebra $B$, is again a $C^*$-algebra with new 
product $P(xy)$.    A quite deep theorem of Friedman and Russo, and a 
simpler  variant of this by Youngson, shows that something similar is
true if $P$ is simply contractive, or if $B$ is replaced by a  ternary ring of operators \cite{CPP,Y} (see also \cite{ES} for the
positive unital projection case).    
The analogous result for real completely positive completely contractive 
projections on operator algebras which have 
a contractive approximate identity is true (see \cite[Theorem 2.5]{BNp} or 
the proof of \cite[Corollary 4.2.9]{BLM}).  More recently, in 
\cite[Corollary 2.3]{BNj} the authors proved: 

\begin{theorem} \label{apch}  Let $A$ be a Jordan  operator algebra, and
$P : A \to A$ a completely contractive projection.
  \begin{itemize} \item  [(1)] The range of $P$ with product
$P(x \circ y)$, is  completely   isometrically Jordan isomorphic to a Jordan operator algebra.  \item  [(2)]  If $A$ is an associative 
operator algebra then the range of $P$ with product $P(x  y)$, is  completely isometrically algebra isomorphic to an
associative operator algebra.
\item  [(3)]  If $A$ is unital  (that is,  it has an identity of norm $1$)  and $P(1) = 1$ then the range of  $P$, with product
$P(x \circ y)$,
is unitally completely isometrically Jordan isomorphic to a unital  Jordan operator algebra.
  \end{itemize}
 \end{theorem}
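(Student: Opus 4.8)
The plan is to deduce all three parts from the classical structure theory of (completely) contractive projections on $C^*$-algebras --- Choi--Effros \cite{CE} in the unital case and Youngson \cite{Y} in general --- by moving the projection $P$ into $B(H)$ using Hamana's injective envelopes. First I would fix a completely isometric representation $A \subseteq B(H)$ of $A$ as a Jordan operator algebra, so that $x \circ y = \frac{1}{2}(xy+yx)$ (computed in $B(H)$) for $x, y \in A$; set $E = P(A)$, a norm-closed operator space since it is the range of a bounded idempotent. Let $C = I(E)$ be the injective envelope of the operator space $E$. Since $B(H)$ is injective and $E \subseteq B(H)$, I may realize $C$ completely isometrically inside $B(H)$ with $E \subseteq C \subseteq B(H)$.

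The first key step is to produce, out of $P$, a completely contractive projection of $B(H)$ onto $C$. Viewing $P$ as a complete contraction $A \to E \subseteq C$ and using injectivity of $C$, I extend it to a complete contraction $\hat{P} : B(H) \to C$ with $\hat{P}|_A = P$. Because $P$ is a projection with range $E$, it fixes $E$, so $\hat{P}|_C : C \to C$ is a complete contraction fixing $E$; by the rigidity property of the injective envelope $C = I(E)$ this forces $\hat{P}|_C = \mathrm{id}_C$. Hence $\hat{P}$, regarded as a self-map of $B(H)$, is a completely contractive projection onto $C$. Now I would invoke the structure theory of completely contractive projections on the $C^*$-algebra $B(H)$ to conclude that $(C, \hat{P}(x \circ y))$ is completely isometrically Jordan isomorphic to a Jordan operator algebra; fix such an isomorphism $\theta$ onto $D \subseteq B(K)$. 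If $A$ is unital with $P(1) = 1$, realizing $A$ so that $1_A = 1_H$ gives $\hat{P}(1_H) = P(1_A) = 1_H$, so that $\hat{P}$ is unital and completely positive and \cite{CE} applies directly to show $(C, \hat{P}(x \circ y))$ is even a unital $JC^*$-algebra with unit $1_H$.

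Finally I would pass from $C$ down to $E$. For $x, y \in E \subseteq A$, the product $x \circ y$ computed in $B(H)$ lies in $A$ and agrees with the Jordan product of $A$, whence $\hat{P}(x \circ y) = P(x \circ y) \in E$; thus $E$ is a norm-closed Jordan subalgebra of $(C, \hat{P}(x \circ y))$, and $\theta|_E$ exhibits $(E, P(x \circ y))$, completely isometrically and Jordan isomorphically, as a norm-closed Jordan subalgebra of the Jordan operator algebra $D$, proving (1). If $A$ is an associative subalgebra of $B(H)$ the identical argument with $xy$ in place of $x \circ y$ --- now using that a completely contractive projection on the $C^*$-algebra $B(H)$ has range which, with product $\hat{P}(xy)$, is an operator algebra --- gives (2). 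For (3), when $P(1) = 1$ we have $1_A \in E$ and $P(x \circ 1_A) = P(x) = x$ for $x \in E$, so $1_A$ is an identity of norm $1$ for $(E, P(x \circ y))$, and $\theta$ may be taken unital. The step I expect to be the main obstacle is the invocation, in the second paragraph, of the structure theorem for completely contractive projections on $B(H)$ in precisely the ``Jordan operator algebra with product $\hat{P}(x \circ y)$'' form: in the non-unital (or $P(1) \neq 1$) case this is not literally Choi--Effros, and if no citable statement in exactly this form is available one must reprove it, for instance by passing to $B(H)^{**}$, analysing the left and right support projections of the normal extension of $\hat{P}$, establishing the Peirce-type identities $\hat{P}(\hat{P}(a) \circ b) = \hat{P}(a \circ b)$ for $b \in C$ (which encode the Jordan axioms for the new product) together with complete contractivity of that product, and then appealing to the characterization of Jordan operator algebras developed in \cite{BWj}.
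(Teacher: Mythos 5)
Your reduction is, in outline, the same as the one behind the paper's source for this result (\cite[Corollary 2.3]{BNj}, which the present paper quotes without proof): represent $A \subseteq B(H)$, use injectivity and rigidity of $C = I(P(A))$ to extend $P$ to a completely contractive projection $\hat{P}$ of $B(H)$ onto $C$, transfer structure from $C$ down to $E = P(A)$ (which is indeed closed under the new product since $\hat{P}|_A = P$), and in case (3) apply Choi--Effros \cite{CE} directly; all of that is correct. The genuine gap is exactly the step you flag: the assertion that the range of an arbitrary completely contractive projection on $B(H)$, with product $\hat{P}(x \circ y)$ (resp.\ $\hat{P}(xy)$), is completely isometrically a Jordan operator algebra (resp.\ operator algebra) is not Choi--Effros and not Youngson --- Youngson \cite{Y} produces only the \emph{ternary} product $\hat{P}(xy^*z)$ on the range --- and it is precisely the theorem being proved in the special case $A = B(H)$, so invoking ``the structure theory of completely contractive projections on $B(H)$'' in that form is circular.

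Moreover, the repair you sketch rests on an identity that is false: one cannot establish $\hat{P}(\hat{P}(a) \circ b) = \hat{P}(a \circ b)$ for all $a \in B(H)$, $b \in C$ (and for $a \in C$ it is vacuous). The Remark after Theorem \ref{ccrp} makes exactly this point, and the completely contractive extension to $M_2$ of the example at the end of Section \ref{Symmpr}, namely $Q(x) = \frac{1}{2} {\rm tr}(xs)\, s$ with $s = e_{11} - e_{22}$, is a completely contractive projection onto the injective subspace $\Cdb s$ for which $Q(Q(e_{11}) \circ s) = 0 \neq \frac{1}{2} s = Q(e_{11} \circ s)$. The correct bridge, and essentially the route of \cite{BNj}, is: by \cite{Y}, $C$ is a ternary ring of operators in the triple product $[x,y,z] = \hat{P}(xy^*z)$; by the Friedman--Russo conditional expectation formula (\ref{CE}) of \cite{CPP,FRAD} (which needs only a contractive projection on the $J^*$-algebra $B(H)$, and is legitimate here because the two \emph{outer} arguments lie in ${\rm Ran}(\hat{P})$) one gets, for $x, y \in C$ and $u = \hat{P}(1)$, that $\hat{P}(x \circ y) = \hat{P}(\{x, 1, y\}) = \hat{P}(\{x, u, y\}) = \frac{1}{2}([x,u,y] + [y,u,x])$, so the new binary product is the Jordan product $\frac{1}{2}(xu^*y + yu^*x)$ computed in this ternary system; realizing the ternary system concretely and applying the $E(u)$/quasimultiplier construction recalled in the introduction and in Remark 3 after Proposition \ref{bsjoa} (from \cite[Theorem 2.1]{BNj}) then yields the Jordan operator algebra conclusion, after which your restriction to $E$ goes through as written. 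For (2) the symmetrized formula (\ref{CE}) does not by itself control $\hat{P}(xy)$, and one needs the associative quasimultiplier machinery of Kaneda--Paulsen \cite{KP} as in \cite{BNj}; your part (3) is fine as written.
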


If $P$ is an idempotent map on a Jordan algebra $A$, then by the  {\em new product} or $P$-{\em product} on $P(A)$ we mean the bilinear
map $P(x \circ y)$ for $x, y \in P(A)$.

We will prove variants of this last result for contractive projections in Sections 4 and 5.  These generalize the classical results of Choi, Effros, St{\o}rmer, 
and Friedman and Russo,  for positive projections on $C^*$-algebras or JB-algebras.
Such results are related to {\em conditional expectations} (resp.\ {\em Jordan conditional expectations}): these are contractive projections $P : A \to A$ 
onto a
subalgebra (resp.\ Jordan subalgebra) which are bimodule maps with respect to the subalgebra
(resp.\ satisfy $P(a \circ P(b)) = P(a) \circ P(b)$ for $a, b \in A$).    Under certain additional hypotheses one expects 
that real positive contractive  projections $P$ 
onto a  
subalgebra (resp.\ Jordan subalgebra)  to be such an expectation.   For $C^*$-algebras this is due to Tomiyama (see pp.\ 132--133 in \cite{Bla}).
We will prove results of this type in the course of our paper.    Some additional hypothesis on $P$ is needed for such results:
$P(ab) \neq P(a) b$ in general for a unital operator algebra $A$ and contractive unital (hence real
positive) projection  $P$ from $A$ onto a subalgebra containing $1_A$, and $a \in A, b \in P(A)$.   See \cite[Corollary 3.6]{BM05}. 
This is not even true in general
if $A$ is commutative (it is true if also $b \in A \cap A^*$ by the remark after that cited corollary; see also \cite{Bare} and Corollaries
 \ref{ipscor} and  \ref{ipscor2}
 below).  

The theory of Jordan  operator algebras in the sense of this paper was developed 
very recently, in \cite{BWj,BNj} (see also 
\cite{BWj2}, and see also the thesis \cite{ZWthes} 
of Zhenhua Wang for some additional results, complements, etc).  
The selfadjoint case, that is,  closed selfadjoint subspaces of a $C^*$-algebra which are closed under squares, 
are exactly what is known in the literature as 
{\em $JC^*$-algebras}, and these do have a large literature 
(see e.g.\ \cite{Rod, Rod2, HS} for references).

\bigskip

We now describe the structure of our paper.
  In Section 2, we prove many new 
results on real positive maps.  
Most of these results are foundational to the study of such maps, and of interest in their own right.  
 In Section 3,  we build on work of Arazy and Solel \cite{AS} to prove some Banach-Stone type theorems, characterizing  isometries
between operator algebras or Jordan operator algebras.  
  This will require some  analysis of multipliers and quasimultipliers, and of a certain behaviour in `Jordan multiplier algebras' which allows
us to tap into the multipliers of a containing $C^*$-algebra.  This access to the $C^*$-algebra setting is 
not obvious.   All of this is  again of interest in its own right,
but will also  be needed later in the paper,   for example for understanding symmetric projections.

In Section 4  we  prove many results on real positive 
 projections on operator  algebras or 
Jordan operator  algebras.   More particularly, we give the  variants in our setting of 
the
results from Section 2 of  \cite{BNp}, however our maps are usually no longer completely contractive, and our spaces 
are usually  Jordan operator algebras.   For example we show in and around  Theorem \ref{tr2} how to 
reduce certain questions about projections to the unital case
(see also Theorem \ref{cepro} and
Proposition \ref{reau}).    We also give variants  of the Choi-Effros result mentioned above,
in the case of a  real positive contractive projection $P$ on operator  algebras or 
Jordan operator  algebras,
and an application of this to conditional expectations.   Some of our results use as a hypothesis that 
the kernel of $P$, or at least a part of it, is generated  by the real positive elements which it contains, a condition that is always satisfied for
positive projections on $C^*$-algebras or JB-algebras.
We remark that an operator algebra  (resp.\ Jordan operator algebra)  is generated (or even is densely spanned)
 by the real positive elements which it contains if and only if it is approximately unital.  These terms are defined
below.    See for example   \cite[Proposition 2.4 and Theorem 2.1]{BRord} (resp.\ \cite[Proposition 4.4 and Theorem 4.1]{BWj}).

In Sections 5 and 6  we study  contractive projections $P : A \to A$ which are  symmetric  (that is, 
$\| I - 2P \| \leq 1$) and bicontractive  (that is, 
$\| I - P \| \leq 1$).   The main result in Section 5 is Theorem \ref{trivch2c}, which elucidates the structure of 
symmetric projections and their ranges.   This relies on our Banach-Stone type theorem from Section 3.
Again one gets that our projections are conditional expectations in this setting.
In Section 6 we study the `bicontractive projection problem'. 
We cannot expect a full solution 
here (as opposed to what was obtained in Section 5 for symmetric projections),  counterexamples were given in \cite{BNp} even if the 
projection is {\em completely bicontractive}.   As in \cite{BNp},
 we believe that the correct formulation of the 
bicontractive projection problem in our category is: 
When is the range of a bicontractive projection a (Jordan) subalgebra of $A$?
We will give a natural condition under which 
the `bicontractive projection problem' has a positive solution.

As we said, in this paper  as opposed to \cite{BNp}
 our maps are  no longer completely contractive, and our spaces 
are usually  Jordan operator algebras.   Thus we will often get weaker results; 
for example we do not have a very general version of Theorem \ref{apch} for
non-completely contractive projections.
It is worth saying though that  if one assumes the operator space setting (i.e.\ our maps are completely contractive, real completely positive, completely symmetric, completely bicontractive,
etc), then all of the results of \cite{BNp} seem to extend to  Jordan operator algebras.   We will illustrate this
with some results at the end of Section 4.  The main thing one needs to know for some of these proofs is that 
the  injective envelope 
$I(A)$ of an approximately unital Jordan operator algebra $A$ is a unital $C^*$-algebra (see Lemma \ref{ienv}  below).
For definitions and basic facts about  the injective envelope see \cite[Chapter 4]{BLM} or \cite{Ham,Pau}.

We now  give some background
and notation.  The underlying scalar field is always, 
$\Cdb$, and all maps or operators in this paper
are $\Cdb$-linear.   For background  on operator spaces and associative operator algebras we refer the reader
to e.g.\  \cite{BLM,Pau,Arv}, and for $C^*$-algebras  the reader could consult e.g.\ 
\cite{P}.   For the theory of Jordan operator algebras the reader will also want to consult \cite{BWj,BNj} frequently
 for background, notation, etc, and will often be referred 
to those papers for various results that are used here.     See also   \cite{ZWthes}.

 The letters $H, K$ are reserved for Hilbert spaces.
     A  (possibly nonassociative) normed algebra $A$  is {\em unital} if it has an identity $1$ of norm $1$, and a map $T$ 
is unital if $T(1) = 1$.    We say that $X$ is a {\em unital-subspace} (resp.\ {\em unital-subalgebra}) of
 a unital algebra $A$ if it is a subspace (resp.\ subalgebra) and $1_A \in X$.  We write $X_+$ for the positive operators (in the usual sense) that happen to
belong to $X$.    
 We write Re$(a)$ for $(a + a^*)/2,$ and Re$\, (X) = \{ {\rm Re}(a) : a \in X \}$.   This is well-defined independently of the 
Hilbert space representation of $X$ if $X$ is a unital operator space or approximately unital 
Jordan operator algebra.   One way to see this: we may assume that $A$ is unital by 
taking the bidual (discussed shortly), and in that case one may appeal to the well known 
result of Arveson concerning $A + A^*$ (see e.g.\ \cite[Lemma 1.3.6]{BLM}).
  
For a nonempty subset $S$ of a Jordan operator algebra we define joa$(S)$ to be the smallest  closed Jordan subalgebra containing $S$.  
Similarly for a
nonempty subset $S$  of an operator algebra, oa$(S)$ is the smallest  closed subalgebra containing $S$. 
A {\em Jordan homomorphism} $T : A \to B$ between
Jordan algebras
is of course  a linear map satisfying $T(a \circ b) = T(a) \circ T(b)$ for $a, b \in A$, or equivalently,
that $T(a^2) = T(a)^2$ for all $a \in A$ (the equivalence follows by applying $T$ to $(a+b)^2$). 
If  $A$ is a Jordan operator subalgebra  of $B(H)$, then the {\em diagonal}
$\Delta(A) = A \cap A^*$  is a $JC^*$-algebra.   If $A$ is
unital then as a  $JC^*$-algebra $\Delta(A)$ 
  is independent of the Hilbert space $H$  (see the third paragraph of \cite[Section 1.3]{BWj}).   An element $q$ in a Jordan operator algebra $A$
 is called  a {\em projection} if $q^2 = q$ and $\| q \| = 1$
(so these are just the orthogonal projections on the 
Hilbert space $A$ acts on, which are in $A$).    Clearly $q \in \Delta(A)$. 
Thus there is an ambiguity in notation that will pervade the paper: the 
reader should hopefully have no problem determining which sense of projection
is being used (for example, we use lower case letters for orthogonal projections
and upper case for idempotent maps).   
We say that a projection $P  : A \to A$ is a {\em Jordan conditional expectation} if $P(a \circ b) = P(a) \circ b$ for all 
$a \in A, b \in P(A)$, where $\circ$ is the Jordan product.   Note that this implies that $P(A)$ is a Jordan algebra for the $P$-product.
There are many  papers in the functional analysis literature related to contractive projections in various settings, in addition to 
the ones already cited we mention e.g.\ \cite{LL, N2, NR03}.

We recall the main facts about morphisms of $JC^*$-algebras.  Most of these are 
related to Banach-Stone type theorems.   A  Jordan $*$-homomorphism between  $JC^*$-algebras
is contractive, and if it is one-to-one then it is
 isometric.    A linear   bijection between  $JC^*$-algebras is an isometry  if and only if it is  preserves the 
triple product $x y^* x$, and if and only if it is  preserves `cubes' $x x^* x$.
These results are due to Harris \cite{Har1,Har2} in the more general case
of  $J^*$-algebras, that is, norm closed subspaces of $B(K,H)$ that are closed under the triple product
$\{ x , y, z \} = \frac{1}{2} (xy^* z + z y^* x)$.   A linear   bijection between  $JC^*$-algebras  
is a Jordan $*$-isomorphism if and only if it is approximately unital  (that is takes a Jordan contractive approximate identity to a Jordan contractive approximate identity), 
 and  if and only if it is positive.   The difficult direction of these last two iff's 
follows from facts close to the start of the present paragraph,  and taking biduals.    Also if the bidual surjective isometry is
positive then it takes $1$ to a positive unitary, that is to $1$.   Finally a contractive Jordan homomorphism $\pi$ between  $JC^*$-algebras
is a Jordan $*$-homomorphism.   To see this, restrict $\pi$ to the subalgebra generated by a selfadjoint element $x$, and then use the
well known $C^*$-algebraic version of the same result (namely that a  contractive  homomorphism between  $C^*$-algebras is a
$*$-homomorphism).   This  shows that $\pi(x)$ is selfadjoint, which yields the assertion.

We refer the reader to  \cite{Rod, Rod2, HS} for the basic theory of Jordan algebras.
In our bibliography we have also listed several other papers related to Jordan algebras and triples relevant to topics in our paper
\cite{BFT, BT, IR, M, Mc, N, RPa, SOJ, Up, Up87}.
There are certain basic formulae that hold in a Jordan operator algebra $A$  that we will use very often.   For example, 
\begin{equation}  \label{aba2}
a \circ b = \frac{1}{2} [(a+b)^2 - a^2 - b^2] , \qquad a, b \in A . 
\end{equation}
and
\begin{equation}  \label{aba}
aba = 2 (a \circ b) \circ a -  a^2 \circ b , \qquad a, b \in A . 
\end{equation}
Also,  $\frac{1}{2} (abc + cba ) = (a\circ b)\circ c + (b\circ c)\circ a - (a \circ c) \circ b$. Thus $abc + cba \in A$ for $a, b, c \in A$.   
See e.g.\ \cite[p.\ 25]{HS}. 
  If $p$ is a projection in $A$ then  $p \circ a = \frac{1}{2} (a +  pap  - p^{\perp} a p^{\perp})$.

A projection $q$ in a Jordan operator algebra $A$ will be called
{\em central} if $qxq = q \circ x$ for all $x \in A$.  This is equivalent to
$qx = xq$ in any $C^*$-algebra containing $A$ as a
Jordan subalgebra, by the first labeled equation in \cite{BWj}.     It is also equivalent to  $q$ being central in any generated
(associative) operator algebra, or in a generated $C^*$-algebra.   This notion is independent of the particular generated
(associative) operator algebra since it is captured by the intrinsic formula $qxq = q \circ x$ for $x \in A$.  

 A {\em Jordan ideal}  of  a Jordan algebra $A$ 
is a subspace $E$ with  $\eta \circ \xi \in E$
for $\eta \in E, \xi \in A$.

If $A$ is a Jordan subalgebra of a $C^*$-algebra $B$ then $A^{**}$ with its Arens product 
is a Jordan subalgebra of the von Neumann algebra $B^{**}$ (see \cite[Section 1]{BWj}).   Since 
the diagonal $\Delta(A^{**})$ is 
a $JW^*$-algebra  (that is a weak* closed $JC^*$-algebra), it follows that $A^{**}$ is closed under meets and joins of projections.
If  $P : A \to A$ a  contractive projection  on a Jordan operator algebra then
$Q = P^{**} : A^{**} \to A^{**}$  is a  contractive projection and 
$Q(A^{**})$ is the weak* closure of $P(A)$.  Indeed if $P(x_t) \to \eta \in A^{**}$ weak* then 
$P(x_t) \to Q(\eta)$ weak*.  So  the weak* closure of $P(A)$ is contained in $Q(A^{**})$.  Conversely,
$a_t \to \eta \in A^{**}$ weak* implies $P(a_t) \to Q(\eta)$ weak*, so that  $Q(A^{**})$ is contained in
 the weak* closure of $P(A)$.

A {\em Jordan  contractive approximate identity}
 (or {\em Jordan cai} for short) 
for $A$  is a net
$(e_t)$  of contractions with $e_t \circ a \to a$ for all $a \in A$.
A {\em  partial cai} for $A$ is a net consisting of real positive  elements in $A$,
that acts as a cai  (that is, a contractive approximate identity)
for the ordinary product in
every $C^*$-algebra  which contains and is generated by $A$ as a closed Jordan subalgebra.    If a 
  partial cai for $A$ exists then $A$ is called {\em approximately unital}.  
It is shown in \cite[Section 2.4]{BWj}   that if $A$ has a Jordan cai then it has a partial cai.  
Indeed any net converging weak* to $1_{A^{**}}$ may be modified as
in the proof of  \cite[Lemma 2.6]{BWj} to yield a partial cai for $A$.

We recall that 
every Jordan operator algebra $A$ has a  unitization $A^1$ 
which is unique up
to isometric Jordan homomorphism (see
 \cite[Section 2.2]{BWj}).  A {\em state} of an approximately unital Jordan 
operator algebra $A$ is a functional with $\Vert \varphi \Vert = \lim_t \, \varphi(e_t) = 1$
for some (or every) Jordan cai $(e_t)$ for $A$.  These extend to states of the unitization $A^1$.  They also
extend to a state (in the $C^*$-algebraic sense) on any $C^*$-algebra $B$ generated by $A$, and conversely
any state on $B$ restricts to a state of $A$.  See \cite[Section 2.7]{BWj}
for details.

Suppose that $E$ is a closed subspace of $B(H,K)$, and that $u$ is a contraction in $B(H,K)$ with $bu^* b \in E$ for all $b \in E$.
Define $E(u)$ to be $E$  equipped with  Jordan product $(bu^* c + c u^* b)/2$.  Then 
$E(u)$ is completely isometrically  Jordan  isomorphic to a  Jordan operator algebra.
This follows from the proof of \cite[Theorem 2.1]{BNj} (which also shows that every Jordan operator algebra arises in this
way).   We will need, and will reprove now, a simple case of this: if in addition $u$ is unitary in $B(H,K)$,
then $Eu^*$ is a Jordan subalgebra of $B(K)$, and right multiplication $R_{u^*}$ by $u^*$ is a 
completely isometric  Jordan   homomorphism from $E(u)$ onto $E u^*$.

Because of the uniqueness of unitization up to isometric isomorphism, for a Jordan operator algebra $A$ 
we can define unambiguously ${\mathfrak F}_A = \{ a \in A : \Vert 1 - a \Vert \leq 1 \}$.  Then 
 $\frac{1}{2} {\mathfrak F}_A = \{ a \in A : \Vert 1 - 2 a \Vert \leq 1 \} \subset {\rm Ball}(A)$.
Note that $x \in {\mathfrak F}_A$ if and only if 
$x^* x \leq x + x^*$.    Similarly, ${\mathfrak r}_A$, 
the {\em real positive} or {\em accretive} elements in $A$, may be defined as the 
the set of  $h \in A$ with  Re $\varphi(h) \geq 0$ for all states $\varphi$ of $A^1$.
This is equivalent to all the other usual conditions 
characterizing accretive elements as we said in \cite[Section 2.2]{BWj}.  
Note that the `negatives' of the accretive elements are sometimes called {\em dissipative} (see e.g.\ \cite[Definition 2.1.8]{Rod}).
We have for example  ${\mathfrak r}_A = \{ a \in A : a + a^* \geq 0 \}$, where 
the adjoint and sum here is in (any) $C^*$-algebra  containing $A$ as a
Jordan subalgebra.   We also have
${\mathfrak r}_A = \overline{ \Rdb_+ {\mathfrak F}_A}$.   If $A$ is a Jordan subalgebra of a Jordan operator algebra $B$
then ${\mathfrak F}_A = {\mathfrak F}_{B} \cap A$
and ${\mathfrak r}_A = {\mathfrak r}_{B} \cap A$.    

A linear map $T : A \to B$ between Jordan operator algebras is 
{\em real positive} if $T({\mathfrak r}_A)
\subset {\mathfrak r}_B$.     The real positive maps on $JC^*$-algebras are just  the positive maps.  
This follows, after  taking the bidual, from the fact that real positive maps on operator systems are just  the positive maps.  
In turn, the harder direction of the last fact follows e.g.\ as in the proof of  \cite[Theorem 2.4]{BBS}. 

  The Jordan 
multiplier algebra $JM(A)$ of an approximately unital Jordan
operator algebra $A$ is
$$JM(A) = \{ x \in A^{**} : x \circ A \subset A \} .$$  
This was defined in \cite[Definition 2.25]{BWj} but not used, and
 it was not proved there that $JM(A)$ is a Jordan operator algebra (or  rather a misleading hint was given for this).

\begin{lemma} \label{jmisjoa} The Jordan 
multiplier algebra $JM(A)$ of an approximately unital Jordan
operator algebra $A$ is  a Jordan operator algebra.
\end{lemma}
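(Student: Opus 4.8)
The plan is to realize $JM(A)$ as a norm-closed Jordan subalgebra of $A^{**}$. Recall that $A^{**}$, with its Arens product, is a Jordan subalgebra of the von Neumann algebra $B^{**}$ for any $C^*$-algebra $B$ generated by $A$, hence is a Jordan operator algebra, and it is unital since $A$ is approximately unital. Clearly $JM(A)$ is a subspace of $A^{**}$ containing $A$ and $1_{A^{**}}$, and it is norm closed: if $x_n \to x$ in $A^{**}$ with $x_n \in JM(A)$, then $x_n \circ a \to x\circ a$ in norm with $x_n \circ a \in A$ for each $a \in A$, so $x\circ a \in A$. It therefore remains only to show that $JM(A)$ is closed under the Jordan product of $A^{**}$, which by \eqref{aba2} is equivalent to showing that $x^2 \in JM(A)$ whenever $x \in JM(A)$.

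First I would record two `quasimultiplier'-type facts: for $x \in JM(A)$ and $a, b \in A$ one has $\tfrac12(axb + bxa) \in A$ and $\tfrac12(abx + xba) \in A$. Both follow from the special Jordan triple identity $\tfrac12(uvw + wvu) = (u\circ v)\circ w + (v\circ w)\circ u - (u\circ w)\circ v$ (used with $v = x$, $u,w \in A$ in the first case, and with $w = x$, $u,v \in A$ in the second), since in either case each of the three terms on the right lies in $A$, using $x\circ A \subseteq A$ and the fact that $A$ is a Jordan algebra. Next, by \eqref{aba} we have $x^2\circ a = 2\,x\circ(x\circ a) - xax$, and $x\circ(x\circ a) \in A$ since $x\circ a \in A$ and $x \in JM(A)$; hence $x^2 \in JM(A)$ will follow once we show that $xax \in A$ for all $a \in A$. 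In fact this single inclusion suffices for the whole lemma: replacing $x$ by $x+y$ gives $xay + yax \in A$ for $x, y \in JM(A)$, and then $(x\circ y)\circ a = x\circ(y\circ a) + y\circ(x\circ a) - \tfrac12(xay + yax) \in A$, so $x\circ y \in JM(A)$.

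The crux, and the step I expect to be the main obstacle, is thus to prove $xax \in A$: here $x$ sits on the \emph{outside} of the triple product, whereas the facts recorded above only control triple products with $x$ in the middle, and manipulation with the (balanced) Jordan triple identities alone turns out to run in circles. To make progress one must bring in the operator structure. One natural attempt is to fix a partial cai $(e_t)$ for $A$, which is a genuine cai for a generating $C^*$-algebra $B$; in a nondegenerate representation of $B$ both $(e_t)$ and $(e_t^*)$ tend strongly to $1$, so $x\circ e_t \to x$ and $(x\circ e_t)^* \to x^*$ in the strong operator topology, whence the elements $(x\circ e_t)\,a\,(x\circ e_t)$ — which lie in $A$, being sandwiches of elements of $A$ (cf.\ \eqref{aba}) — converge weak* to $xax$. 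This exhibits $xax$ as a weak* limit of elements of $A$, but not yet as an element of the norm-closed space $A$; the essential remaining point is to upgrade this, and I expect it to require genuine use of the ambient $C^*$-algebra $B$ and its multiplier algebra (this is the `access to the $C^*$-algebra setting' alluded to in the Introduction), perhaps combined with a Cohen-type factorization of the real positive elements that densely span $A$ together with the square roots $h^{1/2} \in A$ of $h \in \mathfrak{r}_A$. Once $xAx \subseteq A$ is in hand, the reductions above show that $JM(A)$ is a norm-closed Jordan subalgebra of the Jordan operator algebra $A^{**}$, hence is itself a Jordan operator algebra.
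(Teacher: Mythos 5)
Your first two paragraphs are fine (norm-closedness, reduction to squares via \eqref{aba2}, and the reduction of $x^2\in JM(A)$ to $xax\in A$ using \eqref{aba} are all correct), but the proposal has a genuine gap at exactly the point you flag as the crux: you never prove $xax\in A$ (equivalently $x^2\circ a\in A$) for $x\in JM(A)$, $a\in A$. Exhibiting $xax$ as a weak* limit of the elements $(x\circ e_t)a(x\circ e_t)\in A$ does not place it in the norm-closed space $A$, and you defer the real work to an unspecified appeal to the containing $C^*$-algebra and a Cohen-type factorization. Moreover, your diagnosis that ``Jordan identities alone run in circles'' is what leads you astray: the missing step is purely Jordan-algebraic and needs no operator-theoretic input beyond approximate unitality.

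The paper's argument linearizes the Jordan identity $(x^2\circ b)\circ x = x^2\circ(b\circ x)$: replace $x$ by $ta+c$, equate coefficients of $t$, and (writing $[a,b,c]$ for the Jordan associator) obtain $2[a,b,a\circ c]+[c,b,a^2]=0$; setting $c=b$ yields the identity, valid in any Jordan algebra,
\[
b^2\circ a^2 \;=\; b\circ(b\circ a^2) + 2\,a\circ\bigl(b\circ(a\circ b)\bigr) - 2\,(a\circ b)^2 .
\]
For $b\in JM(A)$ and $a\in A$ every term on the right is visibly in $A$ (each is obtained by applying $b\circ(\cdot)$ to elements of $A$, or by taking Jordan products and squares within $A$), so $b^2\circ a^2\in A$. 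Since $A$ is approximately unital, squares densely span $A$ (e.g.\ by \eqref{aba2} with $a$ replaced by a Jordan cai, or since $A={\mathfrak r}_A-{\mathfrak r}_A$ and real positive elements have roots), and $b^2\circ(\cdot)$ is bounded, whence $b^2\circ A\subset A$, i.e.\ $b^2\in JM(A)$. Polarization then shows $JM(A)$ is a closed Jordan subalgebra of $A^{**}$, hence a Jordan operator algebra; and the inclusion $bab=2(b\circ a)\circ b-b^2\circ a\in A$ that you were trying to prove directly falls out afterwards as a corollary (as the paper notes), rather than serving as an ingredient.
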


\begin{proof}
In fact  this may be proved using some consequences of the Jordan identity $$(x^2 \circ b) \circ x = x^2 \circ (b \circ x),$$ following
what seems a well known path valid in Jordan algebras \cite{Rod}.   The first author  together with Z.\ Wang checked this route following
the ideas in  \cite{Rod}: 
Dropping the $\circ$ notation, rewrite this as $(x^2 b) x = x^2(bx)$, and replace $x$ by $ta + c$ for scalar $t$.   Then expand the parentheses, and equate coefficients
of $t$.   Writing $[a,b,c] = (ab)c - a(bc)$, we have proved that $2[a, b, ac]+[c, b, a^2]=0.$ Letting $c=b,$ we have  $2[a, b, ab]+[b, b, a^2]=0,$ which yields $$b^2a^2=b(ba^2)+2a(b(ab))-2(ab)^2.$$   This works in any Jordan algebra, and in particular in $A^{**}$.  In particular it holds
if $b \in JM(A)$ and $a \in A$, if $A$ is an  approximately unital Jordan 
operator algebra.   Next we note that in the latter case squares linearly span $A$ (using the fact that $A = {\mathfrak r}_A -  {\mathfrak r}_A$, and 
real positive elements have roots, 
or by using (\ref{aba2}) for example, with $a$ replaced by the cai).
It follows that $b^2 \circ A \subset A$ for $b \in JM(A)$, as desired.  
  \end{proof} 
 
  It also follows that $b a b =  2 (b \circ a) \circ b -  b^2 \circ a \in A$ if $b \in JM(A), a \in A$.

If $A$ is in addition an (associative) operator algebra then $JM(A) = M(A)$.  Indeed clearly $M(A) \subset JM(A)$.
For  the converse, if  $x \in JM(A)$ and  $b \in A$  then we have
$x b^2 = 2(x \circ b) b - bxb \in A$.   Similarly $b^2 x \in A$.  
If $A$ is approximately unital then  squares span $A$, 
as we said in the
last paragraph, so that $x \in M(A)$.  

We define a   {\em hereditary subalgebra}  of a Jordan operator algebra $A$, or {\em HSA} of $A$ for short, to be a Jordan subalgebra 
$D$ possessing a Jordan cai (this was defined above), which satisfies
$aAa \subset D$  for any $a \in D$ (or equivalently, by replacing $a$ by $a + c$, such that
if $a,c \in D$
and $b \in A$ then $abc  + cba \in D$).   We say that a projection in $A^{**}$ is {\em open} in $A^{**}$
if there is a net $(x_t)$ in $A$ with
$$x_t = p x_t p \to p \; \; {\rm weak}^* .$$    This is a variant of the open projections for $C^*$-algebras 
in the sense of Akemann (see e.g.\ \cite{Ake2}).  The ensuing noncommutative topology for possibly nonselfadjoint  operator algebras 
have been worked out in a series of papers by the first author with Read, the second author, Hay, Wang and others (see our bibliography).
See \cite{BWj,BNj} for the Jordan operator algebra case.
  If $p$ is open in $A^{**}$ then   $D=pA^{**}p\cap A =\{a\in A: a=pap \}$ is a  hereditary subalgebra (HSA) of $A$, and the Jordan subalgebra $D^{\perp \perp}$ of $A^{**}$ has identity $p$,
and.   Conversely, every hereditary subalgebra of $A$ is of this form, and we say that $p$ is the {\em support projection} of $D$.  Indeed $p$ is the weak* limit of any Jordan cai from the HSA.

\section{New results on real positive maps}

\begin{lemma} \label{nrp} If $T : A \to B$ is a  real positive linear map  between unital  (resp.\ approximately unital)  Jordan operator algebras
then  $T$ 
 is bounded and  $\| T \| = \| T(1) \|$  (resp.\ $\| T \| = \| T^{**} \| =  \| T^{**}(1) \| = \sup_t \, \| T(e_t) \|$, 
if $(e_t)$ is a Jordan cai for $A$).   \end{lemma}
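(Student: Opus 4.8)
The plan is to reduce the approximately unital case to the unital case by bidualizing, and to handle the unital case by a direct argument using real positivity together with the characterization of the norm in a unital Jordan operator algebra. First I would treat the unital case. Suppose $A, B$ are unital and $T : A \to B$ is real positive. The key observation is that for $a \in A$ with $\| a \| \leq 1$ we have $1 \pm a \in {\mathfrak r}_A$ (indeed $1 + a$ and $1 - a$ each lie in ${\mathfrak F}_A$ since $\| 1 - (1 \pm a) \| = \| \mp a \| \leq 1$), hence $T(1) \pm T(a) \in {\mathfrak r}_B$, i.e. $T(1) + T(a) + (T(1) + T(a))^* \geq 0$ and similarly with $a$ replaced by $-a$, and also with $a$ replaced by $ia$. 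Adding the appropriate pair of these inequalities gives $\mathrm{Re}(T(1)) \geq \pm \mathrm{Re}(T(a))$ and $\mathrm{Re}(T(1)) \geq \pm \mathrm{Im}(T(a))$ in any $C^*$-algebra containing $B$ as a Jordan subalgebra, so $\| \mathrm{Re}(T(a)) \| \leq \| \mathrm{Re}(T(1)) \| \leq \| T(1) \|$ and likewise for the imaginary part. This bounds $\| T(a) \|$ by a fixed multiple of $\| T(1) \|$, giving boundedness of $T$; I expect one then needs a slightly more careful argument (a standard rotation/averaging trick, or passing to $T^{**}$ on the von Neumann algebra generated and using that $T(1)$ is then a positive element dominating $\mathrm{Re}(\zeta T(a))$ for all unimodular $\zeta$) to sharpen the constant to exactly $\| T \| = \| T(1) \|$. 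Clearly $\| T \| \geq \| T(1) \|$ trivially, so only the reverse inequality needs work.

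For the sharp constant in the unital case, here is the cleaner route I would actually carry out: given $a \in \mathrm{Ball}(A)$ and $\varphi$ a state of $B^1$, I want $|\varphi(T(a))| \leq \| T(1) \|$. For any scalar $\zeta$ with $|\zeta| = 1$, real positivity of $\zeta(1 - \zeta^{-1} a) = \zeta \cdot 1 - a$... more simply: $1 - \zeta a \in {\mathfrak F}_A \subset {\mathfrak r}_A$, so $T(1) - \zeta T(a) \in {\mathfrak r}_B$, hence $\mathrm{Re}\, \varphi(T(1) - \zeta T(a)) \geq 0$, i.e. $\mathrm{Re}(\zeta \varphi(T(a))) \leq \mathrm{Re}\, \varphi(T(1)) \leq |\varphi(T(1))| \leq \| T(1) \|$. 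Choosing $\zeta$ to rotate $\varphi(T(a))$ onto the positive real axis gives $|\varphi(T(a))| \leq \| T(1) \|$. Taking the sup over states $\varphi$ of $B^1$ and over $a \in \mathrm{Ball}(A)$, and using that the numerical radius equals the norm type estimate — actually, to get the norm rather than numerical radius one passes to $B(H)$ where $B$ acts, represents $T(a) \in B \subseteq B(H)$, and uses that $\| x \| = \sup \{ |\langle x \xi, \eta \rangle| : \| \xi \|, \| \eta \| \leq 1 \}$; applying the state argument to vector states $\omega_{\xi}$ after the standard $2 \times 2$ dilation of $x$ to a selfadjoint-adjacent element recovers $\| T(a) \| \leq \| T(1) \|$. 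This gives $\| T \| \leq \| T(1) \|$, hence equality.

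Now the approximately unital case. Let $(e_t)$ be a Jordan cai for $A$; then $T^{**} : A^{**} \to B^{**}$ is real positive (real positivity is preserved under bidualizing, since ${\mathfrak r}_{A^{**}}$ is the weak* closure of ${\mathfrak r}_A$ and $T^{**}$ is weak* continuous — or use that $T^{**}$ maps ${\mathfrak F}_{A^{**}}$ into ${\mathfrak F}_{B^{**}}$), and $A^{**}, B^{**}$ are unital Jordan operator algebras with identities $1_{A^{**}}, 1_{B^{**}}$, where $e_t \to 1_{A^{**}}$ weak*. By the unital case applied to $T^{**}$ we get $\| T \| = \| T^{**} \| = \| T^{**}(1_{A^{**}}) \|$. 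Since $e_t \to 1_{A^{**}}$ weak* and $T^{**}$ is weak* continuous, $T(e_t) = T^{**}(e_t) \to T^{**}(1_{A^{**}})$ weak*, so $\| T^{**}(1_{A^{**}}) \| \leq \liminf_t \| T(e_t) \| \leq \sup_t \| T(e_t) \| \leq \| T \|$; combined with $\| T \| = \| T^{**}(1_{A^{**}}) \|$ all these are equal, giving the displayed chain of equalities.

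The main obstacle I anticipate is getting the \emph{exact} constant $\| T \| = \| T(1) \|$ rather than merely boundedness with some universal constant: the naive inequalities from $1 \pm a, 1 \pm ia \in {\mathfrak r}_A$ only yield a factor like $2$ or $4$. The resolution is the rotation trick over all unimodular $\zeta$ applied at the level of states/vector functionals (so that one controls $\sup_{|\zeta| = 1} \mathrm{Re}(\zeta \varphi(T(a)))$, which is exactly $|\varphi(T(a))|$), plus the standard passage from numerical range to operator norm via $2\times 2$ matrix dilations — this part is routine but must be done, since Jordan operator algebras need not be selfadjoint and one cannot simply say ``$T$ positive implies $\| T \| = \| T(1) \|$'' as for $C^*$-algebras.
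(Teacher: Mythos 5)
Your reduction of the approximately unital case to the unital one (bidualize, note $T^{**}$ is real positive, use weak* lower semicontinuity of the norm and $e_t \to 1$ weak*) is exactly the paper's route and is fine. The problem is the unital case. Your rotation trick only proves that ${\rm Re}(\zeta \varphi(T(a))) \leq {\rm Re}\,\varphi(T(1))$ for all states $\varphi$ of $B^1$ and all unimodular $\zeta$, i.e.\ a \emph{numerical radius} bound $w(T(a)) \leq \| T(1) \|$, and in general this only yields $\| T(a) \| \leq 2 \| T(1) \|$. The fix you propose --- dilating $T(a)$ to the selfadjoint $2 \times 2$ matrix with off-diagonal entries $T(a), T(a)^*$ and applying the state argument to vector states --- does not close the gap: that dilation is not in the range of $T$, and to control it by positivity you would have to apply $T$ (or its extension to $A + A^*$) entrywise to a $2 \times 2$ matrix over the domain, i.e.\ you would need real $2$-positivity, which is not a hypothesis. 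More decisively, nothing in your unital argument uses that $A$ is closed under squares; it only uses the unit and the cone ${\mathfrak r}_A$, so if it were complete it would prove $\| \varphi \| = \| \varphi(1) \|$ for every unital positive map on an operator system. That is false: the standard example of Paulsen (a unital positive map on ${\rm span}\{1, z, \bar z\} \subset C(\Tdb)$ into $M_2$, sending $z$ to a nilpotent of norm $2$) has norm $2$ while $\| \varphi(1) \| = 1$, and indeed its value at $z$ has numerical radius $1$ --- exactly the bound your argument delivers. So the sharp equality genuinely requires the (Jordan) multiplicative structure of the domain, which is how the paper proceeds: it first cites \cite[Corollary 4.9]{BWj} to get boundedness and a positive extension $\tilde T : A + A^* \to B + B^*$, and then adapts the proof of \cite[Corollary 2.8]{Pau} (the $C^*$-algebra-domain result $\| \varphi \| = \| \varphi(1) \|$, whose proof exploits subalgebras generated by elements and a two-vector Cauchy--Schwarz estimate), with the Jordan subalgebra in place of the $C^*$-subalgebra.

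A smaller point: in the approximately unital case you form $T^{**}$ at the outset, but boundedness of $T$ is part of what is to be proved, and your boundedness argument (from $1 \pm a, 1 \pm ia \in {\mathfrak r}_A$) is only available when $A$ is unital. The paper avoids this circularity by quoting \cite[Corollary 4.9]{BWj}, which gives boundedness of real positive maps directly in the approximately unital setting; some such input is needed before bidualizing.
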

\begin{proof}   
By \cite[Corollary 4.9]{BWj}, $T$ 
 is bounded and extends uniquely to a positive
$\tilde{T} : A + A^* \to B + B^*$.    If $A$ is unital then  the proof of  \cite[Corollary 2.8]{Pau} (but replacing $A$ there with a Jordan subalgebra)
 yields $\| T \| = \| T(1) \|$.
In the approximately unital case  $T^{**} : A^{**} \to B^{**}$  is  real positive
using \cite[Theorem 2.8]{BWj} if necessary. 
By the unital case above we have  $\| T \| = \| T^{**} \| =  \| T^{**}(1) \| = \sup_t \, \| T(e_t) \|$ (the latter because the 
norm is semicontinuous for the weak* topology,
and $e_t \to 1$ weak* by \cite[Lemma 2.6]{BWj}).  \end{proof}

A unital functional on a unital operator space (resp.\ operator system) is contractive if and only if it is real positive (resp.\ positive).
Such maps are well known to be real completely positive (resp.\ completely positive)--see e.g.\
\cite[Remark 4.10]{BWj}.   A unital linear contraction on a unital operator space (resp.\ operator system) is real positive (resp.\ positive)
by e.g.\ the same  Remark from \cite{BWj}. However the converse is false in general. 

\begin{corollary} \label{trivj} Let $A, B$ be  approximately unital Jordan operator algebras,
and let  $T : A \to B$ be a contraction which is approximately unital (that is, takes some Jordan cai to a Jordan cai), or more generally for which 
$T^{**}$ is unital.   
Then $T$ is real positive.

If $\theta : A \to B$ is a contractive Jordan homomorphism then $\theta$ is real positive.
  \end{corollary}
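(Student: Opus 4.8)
The plan is to prove both statements by reducing to the unital situation and exploiting the description of real positive elements via states. For the first statement, we are given a contraction $T : A \to B$ with $T^{**} : A^{**} \to B^{**}$ unital. First I would pass to the biduals: $A^{**}$ and $B^{**}$ are unital Jordan operator algebras, $T^{**}$ is again a contraction (the norm of a map equals the norm of its bidual), and it is unital by hypothesis. So it suffices to show that a unital contraction between unital Jordan operator algebras is real positive, and then restrict back to $A$ (using that ${\mathfrak r}_A = {\mathfrak r}_{A^{**}} \cap A$ and similarly for $B$, which follows from the fact quoted in the excerpt that ${\mathfrak r}_A = {\mathfrak r}_B \cap A$ for a Jordan subalgebra, applied to $A \subseteq A^{**}$). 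For the unital case, recall from the paragraph immediately preceding the corollary that a unital linear contraction on a unital operator space is real positive — this is the cited \cite[Remark 4.10]{BWj}. A unital Jordan operator algebra is in particular a unital operator space, and the real positive cone ${\mathfrak r}$ of a unital Jordan operator algebra agrees with the real positive cone it carries as a unital operator space (both are described by Re$\,\varphi(h) \ge 0$ over states $\varphi$), so the unital contraction $T^{**}$ is real positive. This handles the approximately unital case, since a map taking a Jordan cai to a Jordan cai has unital bidual: by \cite[Lemma 2.6]{BWj} a Jordan cai converges weak* to $1_{A^{**}}$, and weak* continuity of $T^{**}$ forces $T^{**}(1_{A^{**}}) = 1_{B^{**}}$.

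For the second statement, let $\theta : A \to B$ be a contractive Jordan homomorphism. The cleanest route is again via the bidual together with the unitization. One shows $\theta^{**} : A^{**} \to B^{**}$ is a contractive Jordan homomorphism (Arens multiplication is separately weak* continuous, and $\theta^{**}$ is still a contraction), and then extends to the unitizations: a unital contractive Jordan homomorphism $(A^{**})^1 \to (B^{**})^1$ obtained by sending $1 \mapsto 1$ and agreeing with $\theta^{**}$ on $A^{**}$. Being unital and contractive, this extension is real positive by the first part of the corollary; restricting to $A$ and using that ${\mathfrak r}_A = {\mathfrak r}_{A^1} \cap A$ gives $\theta({\mathfrak r}_A) \subseteq {\mathfrak r}_B$. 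Alternatively and more directly: if $a \in {\mathfrak r}_A$ then $1 - \epsilon a + \delta$-type estimates show $a$ has the spectral characterization that for every state $\varphi$ of $A^1$, Re$\,\varphi(a) \ge 0$; pulling back states of $B^1$ along the unital extension of $\theta$ and using that a Jordan homomorphism respects the functional calculus on the abelian C*-subalgebra generated by Re$(a)$-type elements lets one transport the accretivity. I would present the bidual/unitization argument as the main line since it is shortest.

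The main obstacle is the harmless-looking but genuinely necessary bookkeeping that the real positive cone is intrinsic and behaves well under the two operations used — passing to a Jordan subalgebra and passing to the bidual — so that "real positive for $T^{**}$" descends to "real positive for $T$". The excerpt supplies exactly what is needed: ${\mathfrak r}_A = {\mathfrak r}_B \cap A$ for Jordan subalgebras (used for $A \hookrightarrow A^{**}$ and $A \hookrightarrow A^1$), the description ${\mathfrak r}_A = \overline{\Rdb_+ {\mathfrak F}_A}$ and via states of $A^1$, and \cite[Theorem 2.8]{BWj} together with \cite[Corollary 4.9]{BWj} for bidual extensions of real positive maps. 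One minor point to be careful about: $T^{**}$ need not be a Jordan homomorphism just because $T$ is contractive — that is only invoked in the second statement where $\theta$ is assumed a homomorphism — so the two halves must be kept logically separate, with the second applying the first as a lemma. No deep new idea is required; the content is entirely in correctly stitching together results already in hand.
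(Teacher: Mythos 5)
Your first paragraph is essentially the paper's own argument: pass to the biduals, note $T^{**}$ is a unital contraction between unital Jordan operator algebras (a Jordan cai goes weak* to the identity, so ``approximately unital'' implies $T^{**}$ unital), invoke the remark preceding the corollary that unital contractions on unital operator spaces are real positive, and descend using ${\mathfrak r}_A = {\mathfrak r}_{A^{**}} \cap A$ and ${\mathfrak r}_B = {\mathfrak r}_{B^{**}} \cap B$. For the second assertion you diverge from the paper: you extend $\theta^{**}$ to a \emph{unital} contractive Jordan homomorphism on a unitization and then quote the first part, whereas the paper simply replaces $B$ by $\overline{\theta(A)}$ and observes that $\theta$ then carries a Jordan cai of $A$ to a Jordan cai of $\overline{\theta(A)}$ (since $\theta(e_t) \circ \theta(a) = \theta(e_t \circ a) \to \theta(a)$ and $\theta(A)$ is dense), so that $\theta^{**}$ is unital into $\overline{\theta(A)}^{**}$ and the first part applies directly, real positivity in $B$ following from ${\mathfrak r}_{\overline{\theta(A)}} = {\mathfrak r}_B \cap \overline{\theta(A)}$. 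Your route does work, but be aware that its hinge -- that the extension to the unitization sending $1 \mapsto 1$ is still \emph{contractive} -- is not a formality: for general contractions this is false (that is exactly why Theorem \ref{cepro} needs real positivity), and for Jordan homomorphisms it is a Meyer-type unitization theorem, available in \cite[Section 2.2]{BWj} but requiring citation or proof; as written you assert it without justification, and your ``more direct'' alternative via states is too vague to carry the argument. So: part one matches the paper; part two is a legitimately different reduction that trades the paper's one-line range-restriction trick for an appeal to the Jordan Meyer theorem, which you must cite explicitly for the proof to be complete.
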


\begin{proof}    By taking the second dual we may assume that $A, B$ are unital, and that $T(1) = 1$.    
Then the first assertion follows from the lines before the corollary.   The second follows easily from the first after replacing 
$B$ with $\overline{\theta(A)}$.
\end{proof}

The following gives a very useful way to `reduce to the unital case'.  It is a generalization of the fact that positive maps on 
$C^*$-algebras extend to the unitization \cite{Pau}.

\begin{theorem} \label{cepro}   Let $A$ and $B$ be approximately unital Jordan operator algebras, and write
$A^1$ for a Jordan operator algebra unitization of $A$ with $A \neq A^1$.  Let $C$  be a unital Jordan operator
algebra containing B as a closed subalgebra.
\begin{itemize}  \item [(1)]
A  real positive contractive  linear map  $T : A \to B$ extends to a unital real
 positive contractive linear map from $A^1$ to $C$.   
 \item [(2)] A real positive contractive projection
on $A$  extends to a unital real
 positive contractive projection on $A^1$.  
\end{itemize}
\end{theorem}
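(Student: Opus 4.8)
Write $p = 1_{A^{**}}$, the identity of the unital algebra $A^{**}$. Since $A$ is a Jordan ideal of $A^1$, the space $A^{**}$ is a weak\textsuperscript{*}-closed Jordan ideal of $(A^1)^{**}$; feeding the formula $p \circ x = \tfrac12(x + pxp - p^\perp x p^\perp)$ into the fact that $p \circ x \in A^{**} = p(A^1)^{**}p$ (so the off-diagonal corners of $x$ relative to $p$ vanish) shows $p$ is central in $(A^1)^{**}$. Hence $(A^1)^{**} = A^{**} \oplus^{\ell^\infty} \mathbb{C}p^\perp$ with $1_{A^1} = p + p^\perp$, and under the canonical embedding $A^1 \hookrightarrow (A^1)^{**}$ one has $a + \lambda 1_{A^1} \mapsto (a + \lambda p) \oplus \lambda p^\perp$, so that
$\| a + \lambda 1_{A^1} \|_{A^1} = \max(\| a + \lambda p\|_{A^{**}}, |\lambda|)$ (using $\|p^\perp\| = 1$, valid as $A \neq A^1$). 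The same kind of argument (multiply by $p$) shows $1_{A^1} \notin A + A^*$, so $A^1 + (A^1)^* = (A+A^*) \oplus \mathbb{C}1_{A^1}$ as vector spaces.

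\textbf{Key lemma (functional case).} If $\varphi : A \to \mathbb{C}$ is real positive and contractive, then $\varphi^1 : A^1 \to \mathbb{C}$, $\varphi^1(a + \lambda 1) = \varphi(a) + \lambda$, is contractive, hence (being unital) real positive. Indeed $\varphi^{**}$ is real positive on $A^{**}$ by \cite[Theorem 2.8]{BWj}, so by \cite[Corollary 4.9]{BWj} it extends to a positive functional $\psi$ on the unital operator system $A^{**} + (A^{**})^*$; since positive functionals on unital operator systems attain their norm at the unit, $c := \varphi^{**}(p) = \psi(p) = \|\psi\| = \|\varphi^{**}\| = \|\varphi\| \in [0,1]$ is a \emph{nonnegative real}. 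Writing $\varphi^1(a + \lambda 1_{A^1}) = \varphi^{**}(a + \lambda p) + \lambda(1-c)$, we estimate $|\varphi^1(a + \lambda 1_{A^1})| \le c\|a+\lambda p\|_{A^{**}} + (1-c)|\lambda| \le \max(\|a+\lambda p\|_{A^{**}}, |\lambda|) = \|a + \lambda 1_{A^1}\|_{A^1}$. (A crude bound using only $\|\varphi^{**}\| \le 1$ fails here; one genuinely needs $c \ge 0$, which is exactly what real positivity provides.)

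\textbf{Proof of (1).} Define $\tilde T : A^1 \to C$ by $\tilde T(a + \lambda 1) = T(a) + \lambda 1_C$; this is well defined since $A \neq A^1$, and unital, so it suffices to show $\tilde T$ is contractive, because a unital contraction on a unital operator space is real positive (the Remark before Corollary~\ref{trivj}, via ${\mathfrak r}_A = \overline{\mathbb{R}_+ {\mathfrak F}_A}$). By \cite[Corollary 4.9]{BWj}, $T$ (viewed into $C$) extends to a positive map $\psi : A + A^* \to C + C^*$; using $1_{A^1} \notin A + A^*$ we extend this to the unital map $\Phi : A^1 + (A^1)^* \to C + C^*$, $\Phi(w + \lambda 1_{A^1}) = \psi(w) + \lambda 1_C$. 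I claim $\Phi$ is positive: then, being a unital positive map between operator systems, $\Phi$ is contractive, and so is $\tilde T = \Phi|_{A^1}$. To check positivity, let $z \ge 0$ in $A^1 + (A^1)^*$; self-adjointness together with $1_{A^1} \notin A+A^*$ forces $z = \operatorname{Re}(x) + \lambda 1_{A^1} = \operatorname{Re}(x + \lambda 1_{A^1})$ with $x \in A$, $\lambda \in \mathbb{R}$, so $x + \lambda 1_{A^1} \in {\mathfrak r}_{A^1}$. For any state $g$ of $C$ (equivalently of $C + C^*$), the map $\varphi := g \circ T : A \to \mathbb{C}$ is real positive and contractive, so by the key lemma $\varphi^1$ is real positive; thus $\operatorname{Re}\varphi^1(x + \lambda 1_{A^1}) \ge 0$, i.e. $\operatorname{Re}\varphi(x) + \lambda \ge 0$, i.e. $g(\Phi(z)) = \operatorname{Re}(g(T(x))) + \lambda \ge 0$. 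As states of $C$ detect positivity in $C+C^*$, $\Phi(z) \ge 0$.

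\textbf{Proof of (2) and the main obstacle.} Apply (1) with $B = A$ and $C = A^1$: $P$ extends to a unital real positive contraction $\tilde P : A^1 \to A^1$, $\tilde P(a + \lambda 1) = P(a) + \lambda 1$, and since $P^2 = P$ and $P(a) \in A$ we get $\tilde P^2(a + \lambda 1) = P(P(a)) + \lambda 1 = \tilde P(a + \lambda 1)$; thus $\tilde P$ is a contractive (hence norm-one, as $\tilde P(1) = 1$) idempotent, i.e. a real positive contractive projection on $A^1$ extending $P$. The main obstacle is the key lemma — identifying precisely how real positivity forces $\varphi^{**}(1)$ to be a nonnegative real $\le 1$ and hence makes $\varphi^1$ contractive; the surrounding structural facts (the $\ell^\infty$-splitting of $(A^1)^{**}$, the unitization norm formula, and $1_{A^1} \notin A + A^*$) are routine but must be pinned down so that $\Phi$ is well defined and the norm estimate is legitimate.
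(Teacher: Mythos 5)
Most of your argument is sound, and it takes a genuinely different route from the paper's: you reduce everything to the scalar case via the $\ell^\infty$-splitting of $(A^1)^{**}$ over the central projection $p=1_{A^{**}}$ and the resulting formula $\|a+\lambda 1_{A^1}\|=\max(\|a+\lambda p\|,|\lambda|)$, whereas the paper proves directly that the canonical unital extension $\tilde T$ is real positive by an approximation argument inside $A$ (dominating the positive part of a suitable multiple of $-{\rm Re}(x)$ by ${\rm Re}(a)$ for a contraction $a\in A$, via \cite[Theorem 4.1 (2')]{BWj}), and only afterwards deduces contractivity from Lemma \ref{nrp}. Your key lemma is correct: the positive extension of $\varphi^{**}$ to the operator system $A^{**}+(A^{**})^*$ attains its norm at its unit $p$, so $c=\varphi^{**}(p)\in[0,1]$ and $\|\varphi^{**}\|=c$, which is exactly what makes the convex-combination estimate legitimate. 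Your verification that $\Phi$ is positive is also correct (a state of the operator system $C+C^*$ is selfadjoint and its value on ${\rm Re}(T(x))+\lambda 1_C$ depends only on its restriction to $C$, to which your key lemma applies), and part (2) follows from part (1) exactly as in the paper.

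However, one step fails as stated: ``being a unital positive map between operator systems, $\Phi$ is contractive.'' Unital positive maps on operator systems need not be contractive; they are contractive on the selfadjoint part but can have norm $2$ (the standard example in \cite{Pau} on ${\rm span}\{1,z,\bar z\}\subset C(\Tdb)$, which is of the form $M+M^*$ for a unital subspace $M$ that is not an algebra). Such a principle is only valid when the domain has extra structure, and for the restriction to $A^1$ it is essentially Lemma \ref{nrp} of the paper. Fortunately contractivity of $\Phi$ is not what you need: the positivity of $\Phi$, which you did establish, already gives real positivity of $\tilde T$ directly, since for $x+\lambda 1_{A^1}\in{\mathfrak r}_{A^1}$ one has ${\rm Re}\,\tilde T(x+\lambda 1_{A^1})=\Phi({\rm Re}(x+\lambda 1_{A^1}))\geq 0$; and then, $\tilde T$ being unital and real positive, Lemma \ref{nrp} yields $\|\tilde T\|=\|\tilde T(1)\|=1$ --- precisely the closing step of the paper's proof (this also makes your opening reduction ``it suffices to show $\tilde T$ is contractive'' unnecessary). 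So the flaw is local and easily repaired, but as written your contractivity of $\tilde T$ rests on a false general statement and should be replaced by the appeal to Lemma \ref{nrp}.
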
  \begin{proof}   Clearly (2) follows from (1).   Let 
$\tilde{T} : A^1 \to C$ be the canonical unital extension of $T$, and write $e, f$ for the units of $A^1$ and $C$ (so $\tilde{T}(e) = f$).    
Suppose that $A$ is a Jordan subalgebra of a $C^*$-algebra $D$.  We may assume by \cite[Corollary 2.5]{BWj} that $e = 1_{D^1} \notin D$. 
Suppose that Re $(x + \lambda e) \geq 0$ for  $x \in A$ and  scalar $\lambda$.    We need to prove that 
Re $(T(x)  + \lambda f) \geq 0$.   This is clear if Re $(\lambda)  = 0$, so suppose the contrary.  
Now 
Re $(\lambda) > 0$ (by considering the character 
$\chi$ on $D^1$ that annihilates  $D$; this
is a state so that Re$(\chi(x) + \lambda) = {\rm Re}(\lambda) \geq 0$).   
 Since Re$(x + \lambda e) \geq 0$ we have $-\frac{1}{{\rm Re} (\lambda)} \, {\rm Re} (x) \leq e$.   
Let $$x_n = -   \frac{n-1}{n \, {\rm Re} (\lambda)} \, x \, , \; \; \; 
y = {\rm Re} (x_n) \leq   \frac{n-1}{n} \, e$$ and $z =y _+ 
\leq  \frac{n-1}{n} \, e$.   By \cite[Theorem 4.1 (2')]{BWj} there exists 
a contraction $a \in A$ with $0 \leq z \leq {\rm Re} (a) \leq e$.   Now 
 Re $(a - x_n)   \geq 0$, since  Re $(x_n) = y \leq y_+ = z  \leq {\rm Re} (a)$.
Also 
 $\| {\rm Re} \, (T(a)) \| \leq 1$  since $a$ and $T$ are contractions, and therefore  $0 \leq {\rm Re} \, (T(a)) \leq f$.  Also, Re $T(a - x_n) \geq 0$, 
so that  Re $(T(x_n)) \leq {\rm Re} \, (T(a)) \leq f$.   That is, 
$$ -  \frac{n-1}{n \, {\rm Re} (\lambda)} \, {\rm Re} \, (T(x))  \leq f.$$
Letting $n \to \infty$ we have that ${\rm Re} \, (T(x) +  \lambda f) \geq 0$ as desired.  
Hence $ \tilde{T}$ is  a unital real
 positive  map, and thus is  contractive by  Lemma \ref{nrp}.  
 \end{proof}

Of course the extensions in the previous result are unique.  

\begin{lemma} \label{nrp2}   A real positive linear functional on a unital operator subspace or  approximately unital 
Jordan subalgebra of a  $C^*$-algebra
$B$, extends to a positive functional on $B$ with the same norm.  \end{lemma}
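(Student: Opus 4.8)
The plan is to prove the statement first when $X$ is a unital operator subspace of $B$ (so that $B$ is unital and $1_B \in X$), by an elementary Hahn--Banach argument, and then to bootstrap from this to the case of an approximately unital Jordan subalgebra by invoking Theorem \ref{cepro}.

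\emph{The unital operator subspace case.} Let $\varphi$ be real positive on $X$, with $1_B \in X$. First I would check that $\lambda := \varphi(1_B)$ is a nonnegative real: since $1_B, i1_B$ and $-i1_B$ all lie in ${\mathfrak r}_X$, real positivity of $\varphi$ forces ${\rm Re}\, \varphi(1_B) \geq 0$ and $\pm \, {\rm Im}\, \varphi(1_B) \leq 0$. Next, for every $x$ in the unit ball of $X$ we have $(1_B + x) + (1_B + x)^* = 2(1_B + {\rm Re}(x)) \geq 0$, because $\|{\rm Re}(x)\| \leq \|x\| \leq 1$; hence $1_B + x \in {\mathfrak r}_X$, so ${\rm Re}\, \varphi(x) \geq -\lambda$, and applying this to a suitable unimodular scalar multiple of $x$ yields $|\varphi(x)| \leq \lambda$. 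Thus $\varphi$ is bounded with $\|\varphi\| = \lambda$. If $\lambda = 0$ then $\varphi = 0$ and the zero functional is the required extension. Otherwise $\lambda^{-1}\varphi$ is a unital norm-one functional on $X$; extend it by Hahn--Banach to a norm-one functional $\psi$ on $B$, which still satisfies $\psi(1_B) = 1$ and is therefore a state of $B$ by the classical fact that a unital functional of norm one on a unital $C^*$-algebra is positive. Then $\lambda \psi$ is a positive functional on $B$ extending $\varphi$, of norm $\lambda = \|\varphi\|$.

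\emph{The approximately unital Jordan subalgebra case.} Now let $X$ be an approximately unital Jordan subalgebra of a $C^*$-algebra $B$ and $\varphi$ real positive on $X$. By Lemma \ref{nrp}, $\varphi$ is bounded; set $M = \|\varphi\|$, and assume $M > 0$ (otherwise $\varphi = 0$ and we are done). Then $\varphi_0 := M^{-1}\varphi$ is a real positive contraction from $X$ to $\Cdb$, so Theorem \ref{cepro}(1), applied with both the target algebra and the ambient unital algebra taken to be $\Cdb$, provides a unital real positive contraction $\widetilde{\varphi_0}$ on a Jordan operator algebra unitization $X^1$ of $X$. By \cite[Corollary 2.5]{BWj} we may realize $X^1$ as a unital-subspace of a unital $C^*$-algebra $B^1 = B + \Cdb 1$. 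Applying the unital case already established to $\widetilde{\varphi_0}$ on $X^1 \subseteq B^1$ gives a positive functional $\rho$ on $B^1$ of norm one extending $\widetilde{\varphi_0}$; its restriction $\rho|_B$ is positive on $B$, agrees with $\varphi_0$ on $X$, and has norm exactly one (at most $\|\rho\| = 1$, and at least $\|\varphi_0\| = 1$). Finally $M\,\rho|_B$ is a positive functional on $B$ extending $\varphi$ with norm $M = \|\varphi\|$, as required.

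The genuine content here is Theorem \ref{cepro}, which carries out the reduction from the non-unital Jordan setting to a unital operator-space one; the only points demanding a little care are that a real positive functional need not itself be contractive (hence the preliminary normalization by $\|\varphi\|$) and that equality of norms must be tracked through the three reduction steps --- unitization, Hahn--Banach extension within the $C^*$-algebra, and restriction back to $B$. The remainder is the elementary computation of the unital case together with the classical characterization of unital norm-one functionals on $C^*$-algebras as states.
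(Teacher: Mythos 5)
Your proof is correct, but it follows a different route from the paper's primary argument. The paper's proof first passes to the bidual to reduce both cases at once to a unital operator subspace, and then finishes by quoting machinery: the functional is real \emph{completely} positive by \cite[Remark 4.10]{BWj}, so the extension theorem \cite[Theorem 2.6]{BBS} applies (with the unitization trick via Theorem \ref{cepro}, and the ``nonnegative multiple of a state'' fact, mentioned only as alternative routes). You instead avoid the bidual entirely: you give an elementary, self-contained proof of the unital case --- showing directly that real positivity forces $\varphi(1)\geq 0$ and $\|\varphi\|=\varphi(1)$, then normalizing, applying Hahn--Banach, and using the classical fact that a unital norm-one functional on a unital $C^*$-algebra is a state --- and you handle the approximately unital Jordan case by unitizing via Theorem \ref{cepro} rather than by taking second duals. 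This is essentially a fleshed-out version of the alternatives the paper only sketches parenthetically, and it buys a more elementary argument that does not need real complete positivity or the cited extension theorem; what it costs is a little bookkeeping with the unitization, e.g.\ the realization $X^1 = X + \Cdb 1_{B^1}$ requires $1_{B^1}\notin X$ (if $1_B\in X$ that case is anyway the unital operator subspace case already treated, and in general one can pass to a unitization of $B$ with a genuinely new identity, exactly as the paper does in the proof of Theorem \ref{cepro} via \cite[Corollary 2.5]{BWj}). Your norm bookkeeping through the three steps (normalization, extension, restriction) is correct.
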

\begin{proof}    By taking the bidual we may assume that the domain is a unital  operator subspace $A$.  (We may then assume if we wish that that $B = 
C^*(A)$.).   One way to finish is then to note that  the functional is  real completely positive by the Remark 4.10 in \cite{BWj},
and appeal to \cite[Theorem 2.6]{BBS}.   (Other approaches: we may assume that   the functional is  unital by Theorem \ref{cepro}.
Or use the fact from e.g.\ \cite[Proposition 3.1]{BWj2} and/or \cite[Lemma 2.9.15]{Rod} that every real positive linear functional in these situations is a non-negative 
multiple of a state, and the fact that states extend.)
\end{proof}

The following result is useful 
for questions about real positivity because it  shows that we can 
often get away with 
 working with 
the simpler set ${\mathfrak F}_A = \{ x \in A : \| 1 - x \| \leq 1 \}$, instead of the more complicated set 
of real positive or accretive elements.    Indeed the condition $\| 1 - x \| \leq 1$ is a lot stronger than
the condition $x + x^* \geq 0$.

\begin{theorem} \label{ocp}  
  A linear map $T : A \to B$ between approximately unital Jordan operator algebras 
is  real positive and contractive if and only if $T({\mathfrak F}_A) \subset {\mathfrak F}_B$. 
\end{theorem}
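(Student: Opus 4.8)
The plan is to prove the two implications separately, reducing the bidual case to the unital case via Theorem~\ref{cepro} and the standard observation that $T^{**}$ carries $\mathfrak{F}_{A^{**}} = \overline{\mathfrak{F}_A}^{w*}$ into $\overline{\mathfrak{F}_B}^{w*} = \mathfrak{F}_{B^{**}}$ (and is real positive and contractive iff $T$ is). So first I would pass to second duals, assume $A,B$ are unital, and note that by Theorem~\ref{cepro}(1) a real positive contractive $T$ extends to a \emph{unital} real positive contraction $\tilde T$ on the unitization; but actually for the forward direction one does not even need this much.

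For the forward direction (real positive and contractive $\Rightarrow$ $T(\mathfrak{F}_A)\subset\mathfrak{F}_B$): recall $x\in\mathfrak{F}_A$ iff $\|1-x\|\le 1$ in $A^1$. Given such $x$, apply $\tilde T$ (the canonical unital extension, which by Theorem~\ref{cepro} or Lemma~\ref{nrp} is contractive): then $\|1_{B^1}-\tilde T(x)\| = \|\tilde T(1-x)\| \le \|1-x\|\le 1$, so $\tilde T(x)=T(x)\in\mathfrak{F}_{B^1}$, and since $T(x)\in B$ and $\mathfrak{F}_B=\mathfrak{F}_{B^1}\cap B$ (as recorded in the preliminaries for Jordan subalgebras), we get $T(x)\in\mathfrak{F}_B$. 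This direction is essentially a one-line computation once the unitization machinery is in place.

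The substantive direction is the converse: assume $T(\mathfrak{F}_A)\subset\mathfrak{F}_B$ and deduce $T$ is real positive and contractive. First, real positivity: since $\mathfrak{r}_A=\overline{\mathbb{R}_+\,\mathfrak{F}_A}$ (stated in the preliminaries) and $\mathfrak{r}_B$ is closed and a cone, the hypothesis immediately gives $T(\mathfrak{r}_A)\subset\overline{\mathbb{R}_+\,\mathfrak{F}_B}=\mathfrak{r}_B$; this also forces boundedness of $T$ by Lemma~\ref{nrp} (note $T$ maps into the approximately unital $\overline{T(A)}$, or just invoke $A=\mathfrak{r}_A-\mathfrak{r}_A$ with the closed graph theorem). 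For contractivity one cannot argue so cheaply, and this is where I expect the main obstacle. The idea: pass to biduals so $A,B$ are unital; I want to show $\|T\|\le 1$. By Lemma~\ref{nrp} (the unital case) it suffices to control $T$ on a spanning set, but the cleanest route is to show that the unital extension $\tilde T\colon A^1\to B^1$ built from the (now-established) real positive contractive $T$ via Theorem~\ref{cepro} already is the relevant map and is contractive—\emph{provided} we first know $T$ is a contraction, which is circular. Instead I would argue directly: for $x\in\mathrm{Ball}(A)$ with $A$ unital, one has $\frac12\mathfrak{F}_A = \{a: \|1-2a\|\le 1\}\subset\mathrm{Ball}(A)$, but more usefully every $x\in\mathrm{Ball}(A)$ can be written using the fact that $1-x^*x\ge 0$... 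The genuinely available tool is \cite[Theorem~4.1]{BWj}-type results: $\mathfrak{F}_A$ generates $\mathrm{Ball}(A)$ appropriately. Concretely, I would show: if $\|x\|<1$ then $1+x\in\mathbb{R}_+\mathfrak{F}_A$ is false in general, so instead use that $x\in\mathrm{Ball}(A)$ implies $1-x\in 2\,\overline{\mathfrak{F}_A}$? That fails too. The correct classical mechanism (mirroring \cite[Corollary~2.8]{Pau} and the proof of Theorem~\ref{cepro}) is: $x\in\mathrm{Ball}(A)$, $A$ unital $\Rightarrow$ $\mathrm{Re}(1+\lambda x)\ge 0$ for $|\lambda|\le 1$, hence $1+\lambda x\in\mathfrak{r}_A$, and one approximates elements of $\mathfrak{r}_A$ from within $\mathbb{R}_+\mathfrak{F}_A$ with norm control to conclude $\mathrm{Re}\,T(1+\lambda x)\ge 0$ for all $|\lambda|\le 1$; since $T(1)$ has norm $\le 1$ (apply hypothesis to $0,$ or to small multiples and take limits: $\frac{1}{n}\cdot 1\in\mathfrak{F}_A$ so $T(1)\in n\mathfrak{F}_B$ for all $n$, giving $\mathrm{Re}\,T(1)\ge 0$, and a separate estimate pins $\|T(1)\|\le 1$), and $T(1)\in\Delta(B^{**})_+$, we get $0\le\mathrm{Re}\,T(1)\le 1_{B^{**}}$ and then $\|\mathrm{Re}(e^{i\theta}T(1)+\lambda' T(x))\|$-type inequalities à la \cite[Corollary~2.8]{Pau} yield $\|T(x)\|\le\|T(1)\|\le 1$. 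The main obstacle is thus the bookkeeping of the approximation $\mathfrak{r}_A=\overline{\mathbb{R}_+\mathfrak{F}_A}$ with quantitative norm bounds (so that $T$ of a near-accretive element stays near $\mathfrak{r}_B$ with controlled size), exactly the kind of argument used inside the proof of Theorem~\ref{cepro}; I would model this step closely on that proof, replacing its use of \cite[Theorem~4.1(2')]{BWj} in the same way.
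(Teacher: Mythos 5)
Your forward direction is fine and is essentially the paper's argument (unitize via Theorem~\ref{cepro}, compute $\|1-\tilde T(x)\|=\|\tilde T(1-x)\|\le 1$, intersect with $B$), and your derivation of real positivity in the converse from $\mathfrak{r}_A=\overline{\mathbb{R}_+\mathfrak{F}_A}$ is also correct. The genuine gap is the contractivity half of the converse: you never actually prove $\|T\|\le 1$. You correctly note that after passing to biduals it suffices (by Lemma~\ref{nrp}, since real positivity is already in hand) to bound $\|T^{**}(1)\|$, but the bound itself is deferred to ``a separate estimate pins $\|T(1)\|\le 1$'' which is never supplied; the $\tfrac1n\cdot 1$ observation you offer only yields $\mathrm{Re}\,T^{**}(1)\ge 0$, not a norm bound, and applying the hypothesis to $0$ gives nothing. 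The Paulsen-style numerical-range machinery and the ``quantitative approximation of $\mathfrak{r}_A$ by $\mathbb{R}_+\mathfrak{F}_A$'' that you propose as the main work is both redundant (it would just re-derive Lemma~\ref{nrp}, which you may simply cite) and does not by itself produce the missing estimate on $T^{**}(1)$.

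The missing idea is a one-liner, and it is exactly how the paper closes the argument: by \cite[Theorem 2.8]{BWj} (your own opening observation that $T^{**}$ maps $\mathfrak{F}_{A^{**}}=\overline{\mathfrak{F}_A}^{w*}$ into $\mathfrak{F}_{B^{**}}$), and since $1_{A^{**}}\in\frac12\mathfrak{F}_{A^{**}}$ (that is, $\|1-2\cdot 1\|=1$, so $2\cdot 1_{A^{**}}\in\mathfrak{F}_{A^{**}}$), the hypothesis applied in the bidual gives $2T^{**}(1)\in\mathfrak{F}_{B^{**}}$, i.e.\ $\|1-2T^{**}(1)\|\le 1$, whence $\|T^{**}(1)\|\le 1$. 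Combined with Lemma~\ref{nrp} ($\|T\|=\|T^{**}\|=\|T^{**}(1)\|$ for real positive $T$), this finishes the proof. In short: test the hypothesis at the largest multiple of the identity lying in $\mathfrak{F}$, namely $2\cdot 1$, rather than at small multiples; without this (or an equivalent substitute) your converse is incomplete.
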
  \begin{proof} Any unital contraction $T : A \to B$ between unital Jordan operator algebras (or unital 
operator spaces) satisfies  $T({\mathfrak F}_A) \subset {\mathfrak F}_B$.
Indeed if $\| 1 - x \| \leq 1$ then $\| 1 - T(x) \| = \| T(1-x) \| \leq 1$.  

A real positive contraction $T : A \to B$ between approximately unital  Jordan operator algebras extends  by Theorem \ref{cepro}     to a real positive unital 
contraction $\tilde{T} : A^1 \to B^1$.  Then  $T({\mathfrak F}_A) \subset \tilde{T}({\mathfrak F}_{A^1}) \subset {\mathfrak F}_{B^1}$, by the last paragraph.
Hence $T({\mathfrak F}_A) \subset   B \cap {\mathfrak F}_{B^1} = {\mathfrak F}_B$.    

Conversely, if $T({\mathfrak F}_A) \subset {\mathfrak F}_B$ then $T$ is  real positive since ${\mathfrak r}_A = \overline{\Rdb^+ {\mathfrak F}_A}$.   
Also $T^{**}({\mathfrak F}_{A^{**}}) \subset {\mathfrak F}_{B^{**}}$ 
by \cite[Theorem 2.8]{BWj}.   Therefore $\| 1 - 2 T^{**}(1) \| \leq 1$ since
$1 \in \frac{1}{2} {\mathfrak F}_A$, so that $\| T^{**}(1) \| \leq 1$.   Hence $T$ is a contraction by Lemma \ref{nrp}.  
 \end{proof}  

The following shows how every weak* continuous real positive contraction gives rise to a real positive contractive projection
with range the fixed points of the contraction:

\begin{lemma} \label{fpes}   Let $\Phi : M \to M$ be a weak* continuous real positive contraction  on a unital weak* closed Jordan operator 
algebra, and let $M^\Phi$ be the set of  fixed points of $\Phi$.   Then there exists a  real positive contractive projection on $M$ with range $M^\Phi$. 
\end{lemma}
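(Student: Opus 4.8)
The natural approach is a Cesàro-mean / ergodic averaging argument, together with a weak\* compactness argument to extract the projection.  First I would set $\Phi_n = \frac{1}{n} \sum_{k=0}^{n-1} \Phi^k$.  Each $\Phi^k$ is a weak\* continuous real positive contraction (composition of such), hence so is each $\Phi_n$, by convexity of $\mathrm{Ball}(B(M))$ and of the cone of real positive maps.  I would view the $\Phi_n$ inside the unit ball of $CB(M)$, or more simply inside $\prod_{x \in M} \mathrm{Ball}(M)$ equipped with the product of weak\* topologies; by Banach--Alaoglu this is compact, so the net $(\Phi_n)$ has a subnet $(\Phi_{n_\alpha})$ converging pointwise-weak\* to some map $P : M \to M$.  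Then $P$ is again a contraction (the unit ball of $M$ is weak\* closed) and real positive: if $x \in \mathfrak{r}_M$ then each $\Phi_{n_\alpha}(x) \in \mathfrak{r}_M$, and $\mathfrak{r}_M = \{a \in M : a + a^* \geq 0\}$ is weak\* closed in $M$ (indeed $\mathfrak{r}_M = \mathfrak{r}_C \cap M$ for a containing von Neumann algebra $C$, and positivity is weak\* closed), so $P(x) \in \mathfrak{r}_M$.

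The heart of the matter is to show $P$ is an idempotent with $P(M) = M^\Phi$.  The standard ergodic computation gives $\Phi \circ \Phi_n - \Phi_n = \frac{1}{n}(\Phi^n - I)$, which tends to $0$ in norm on bounded sets since $\|\Phi^n\| \leq 1$; passing to the subnet limit yields $\Phi \circ P = P$, and similarly $P \circ \Phi = P$.  Consequently $P(M) \subseteq M^\Phi$.  Conversely, if $x \in M^\Phi$ then $\Phi^k(x) = x$ for all $k$, so $\Phi_{n}(x) = x$ for all $n$, hence $P(x) = x$; this shows $M^\Phi \subseteq P(M)$ and, combined with $P(M) \subseteq M^\Phi$, gives both $P(M) = M^\Phi$ and $P|_{M^\Phi} = \mathrm{id}$.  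Idempotency is then immediate: for any $x \in M$, $P(x) \in P(M) = M^\Phi$, so $P(P(x)) = P(x)$.

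The main obstacle, and the only place real care is needed, is the convergence/limit extraction: one must be sure the subnet limit $P$ simultaneously inherits contractivity, real positivity, and the two algebraic identities $\Phi \circ P = P = P \circ \Phi$.  Contractivity and real positivity survive because they are described by weak\*-closed conditions applied pointwise.  The identities survive because, for each fixed $x \in M$, $\Phi(\Phi_{n_\alpha}(x)) - \Phi_{n_\alpha}(x) \to 0$ in norm hence weak\*, while $\Phi$ is weak\* continuous so $\Phi(\Phi_{n_\alpha}(x)) \to \Phi(P(x))$ weak\*; comparing limits gives $\Phi(P(x)) = P(x)$.  (A harmless technical point: if $M$ is not separable one genuinely needs nets rather than sequences, but nothing else changes.)  One could alternatively invoke a Ryll-Nardzewski or Markov--Kakutani type fixed point theorem to produce an invariant mean on the semigroup $\{\Phi^k\}$ and define $P$ by integrating $\Phi^k$ against it, but the explicit Cesàro argument above is cleaner and self-contained here, using only Lemma~\ref{nrp} and the facts recalled about $\mathfrak{r}_M$ and weak\* topology in the preliminaries.
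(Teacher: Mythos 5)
Your proof is correct and follows essentially the same route as the paper, which cites the argument of Effros--St{\o}rmer \cite[Corollary 1.6]{ES}: take weak* (point-weak*) limits of the Ces\`aro averages of powers of $\Phi$ in the unit ball of $B(M,M) = (M \hat{\otimes} M_*)^*$, and check that contractivity, real positivity, and the ergodic identities pass to the limit. The only cosmetic point is that your compact set should be $\prod_{x \in M} \|x\|\,{\rm Ball}(M)$ (or equivalently the unit ball of $(M \hat{\otimes} M_*)^*$) rather than $\prod_{x \in M} {\rm Ball}(M)$, since $\Phi_n(x)$ has norm at most $\|x\|$.
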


\begin{proof}  
One may follow the proof in \cite[Corollary 1.6]{ES}, taking weak* limits in  the unit ball 
of $B(M,M) = (M \hat{\otimes} M_*)^*$ of averages of powers of $\Phi$.
\end{proof}  

 If in addition to the hypotheses of the last result $\Phi$ is real completely positive then  $M^\Phi$ is a unital Jordan operator 
algebra with respect to the new product coming from the projection by e.g.\ \cite[Theorem 2.5]{BNp}.    Conversely any unital Jordan operator algebra is the set of fixed points of a real completely positive unital contraction (even tautologically).   Sometimes the selfadjoint analogue of $M^\Phi$ is called 
the {\em Poisson boundary}.

\begin{lemma} \label{ispos}  Let $T : A \to B$ be a real positive 
map between  Jordan operator algebras.  Then $T(\Delta(A)) \subset \Delta(B)$, and $T$ restricts to a positive linear map 
from $\Delta(A)$ to $\Delta(B)$.   Thus  $0 \leq T(1) \leq 1$ if $A$ is unital and $T$ is contractive.
\end{lemma}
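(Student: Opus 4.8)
The plan is to reduce the whole statement to one elementary observation: if $h$ is a self-adjoint element of $\Delta(A)$, then $ih$ is \emph{skew}, meaning $ih + (ih)^* = 0$, so that both $ih$ and $-ih$ lie in ${\mathfrak r}_A$. Since $\Delta(A)$ is $*$-closed and $A$ is a $\Cdb$-vector space, $ih \in A$; applying the real positive map $T$ and using $\Cdb$-linearity, both $T(ih)$ and $-T(ih) = T(-ih)$ lie in ${\mathfrak r}_B$. Computing sums and adjoints in a $C^*$-algebra containing $B$ as a Jordan subalgebra, this forces $T(ih) + T(ih)^* \geq 0$ and $-(T(ih) + T(ih)^*) \geq 0$, hence $T(ih) + T(ih)^* = 0$. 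Thus $T(ih)^* = -T(ih) \in B$, so $T(ih) \in B \cap B^* = \Delta(B)$, whence $T(h) = -i\, T(ih) \in \Delta(B)$; and rewriting $T(ih) + T(ih)^* = 0$ as $(iT(h))^* = -iT(h)$ gives $T(h)^* = T(h)$.

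I would then extend this to all of $\Delta(A)$ by linearity. Every $x \in \Delta(A)$ is $x = {\rm Re}(x) + i\, {\rm Im}(x)$ with ${\rm Re}(x), {\rm Im}(x)$ self-adjoint in $\Delta(A)$, so $T(x) = T({\rm Re}(x)) + i\, T({\rm Im}(x)) \in \Delta(B)$, and the restriction $T|_{\Delta(A)}$ is $*$-linear. For positivity, take $a \in \Delta(A)$ with $a \geq 0$; then $a$ is self-adjoint, so $T(a) = T(a)^* \in \Delta(B)$ by the first step, while $a + a^* = 2a \geq 0$ gives $a \in {\mathfrak r}_A$, hence $T(a) \in {\mathfrak r}_B$, i.e.\ $2T(a) = T(a) + T(a)^* \geq 0$; so $T(a) \geq 0$. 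Finally, if $A$ is unital then $1_A = 1_A^* \geq 0$ lies in $\Delta(A)$, so $T(1_A) \geq 0$; if moreover $T$ is contractive, then $\| T(1_A) \| \leq \| T \| \leq 1$, giving $0 \leq T(1_A) \leq 1$.

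The one point to handle carefully is simply that all the adjoints and order relations above are taken in an ambient $C^*$-algebra containing $A$ (respectively $B$), so one should remark that the conclusions are independent of that choice — but this is exactly the well-definedness already recorded for ${\mathfrak r}_A$ and $\Delta(A)$ in the preliminaries. Beyond that there is no real obstacle: the argument lives and dies on the single remark that $ih$ is skew when $h = h^*$, which is what lets a real positive map — a priori only compatible with the cone ${\mathfrak r}_A$ — detect self-adjointness.
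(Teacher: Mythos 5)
Your proof is correct, and it takes a more self-contained route than the paper's. The paper handles the key step by citation: it restricts $T$ to the $JC^*$-algebra $\Delta(A)$ and invokes the fact (justified in the introduction via biduals and the operator-system result drawn from \cite{BBS}, following \cite{BNp}) that real positive maps on $JC^*$-algebras are positive, hence $*$-linear, after which $T(\Delta(A)) \subset \Delta(B)$ and the remaining assertions are immediate. You instead prove that step directly: for selfadjoint $h$ both $\pm ih$ lie in ${\mathfrak r}_A$, so $T(ih)+T(ih)^*$ is both $\geq 0$ and $\leq 0$, forcing $T(h)=T(h)^* \in \Delta(B)$; positivity of $T|_{\Delta(A)}$ then follows from accretivity plus selfadjointness, and $0 \leq T(1) \leq 1$ from positivity together with $\| T(1) \| \leq \| T \| \leq 1$. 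This skew-element argument is essentially the standard proof hiding inside the cited fact, so the underlying mathematics is the same; what your version buys is elementarity and independence from the references --- no passage to biduals, no appeal to \cite{BNp} or \cite{BBS}, and no need for $\Delta(A)$ to be unital or an operator system --- whereas the paper's version buys brevity and reuses a general principle it wants elsewhere anyway. Your closing remark that the adjoints and order relations are independent of the ambient $C^*$-algebra is the right point to flag, and is indeed covered by the preliminaries on ${\mathfrak r}_A$ and $\Delta(A)$.
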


\begin{proof}    
This follows as in the proof of  \cite[Lemma 2.3]{BNp}, but 
using the fact that $\Delta(A)$ is a $JC^*$-algebra and operator system.  Indeed regarding $B$ as a closed
Jordan subalgebra of $B(H)$, and regarding the restriction of $T$ as
a real positive map on the $JC^*$-algebra $\Delta(A)$,
we have that it is positive, hence $*$-linear.  So $T(\Delta(A)) \subset \Delta(B)$, and the rest is clear.
\end{proof}   

\begin{lemma} \label{ishe2}  Let $A$ be a unital-subspace of  a unital $C^*$-algebra  $B$, and let 
$q \in {\rm Ball}(B)_+$ with $q \circ A \subset A$ and  $q^2 \circ A \subset A$.
Suppose that  $T : A \to B(H)$ is a real positive map on  $A$,
and $T(1) = T(q)$.   Then $T(a) = T(qaq) = T(q \circ a)$ for $a \in A$.  \end{lemma}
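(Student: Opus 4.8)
The plan is to pass from $T$ to its associated vector functionals and thereby transfer the whole problem to a genuine positive functional on the $C^*$-algebra $B$, where the Cauchy--Schwarz inequality is available. First I would observe that since $1 \in A$ and $q \circ A \subseteq A$ we have $q = q \circ 1 \in A$ and $q^2 = q^2 \circ 1 \in A$; moreover, taking $q$ and $a$ in the roles of $a$ and $b$ in (\ref{aba}), one has $qaq = 2(q \circ a)\circ q - q^2 \circ a$, whose right-hand side lies in $A$ because $q \circ a \in A$, hence $(q \circ a)\circ q = q \circ (q \circ a) \in A$, and $q^2 \circ a \in A$ by hypothesis. So $T(a)$, $T(qaq)$ and $T(q \circ a)$ all make sense, and it suffices to prove the two asserted equalities.

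The core of the argument is as follows. Fix a unit vector $\xi \in H$ and set $\varphi = \langle T(\cdot)\xi, \xi\rangle$; this is a real positive functional on the unital operator space $A$, being the composite of the real positive $T$ with the state $x \mapsto \langle x\xi,\xi\rangle$ on $B(H)$. By Lemma \ref{nrp2}, $\varphi$ extends to a positive functional $\psi$ on $B$. Since $1, q \in A$ and $T(1) = T(q)$, we get $\psi(1) = \varphi(1) = \langle T(1)\xi,\xi\rangle = \langle T(q)\xi,\xi\rangle = \varphi(q) = \psi(q)$, so $\psi(p) = 0$ where $p := 1 - q$. Because $q \in {\rm Ball}(B)_+$ we have $0 \leq p \leq 1$, hence $p^2 \leq p$ and $0 \leq \psi(p^2) \leq \psi(p) = 0$. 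Applying Cauchy--Schwarz to the positive sesquilinear form $(x,y) \mapsto \psi(y^*x)$ on $B$, for every $b \in B$ we have $|\psi(bp)|^2 \leq \psi(bb^*)\,\psi(p^2) = 0$ and $|\psi(pb)|^2 \leq \psi(p^2)\,\psi(b^*b) = 0$; thus $\psi(bp) = \psi(pb) = 0$ for all $b \in B$.

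It then remains to feed in the two elementary identities in $B$, namely $a - qaq = (1-q)a + qa(1-q) = pa + qap$ and $a - q\circ a = \tfrac12\bigl((1-q)a + a(1-q)\bigr) = \tfrac12(pa + ap)$. Applying $\psi$ and using the previous step gives $\psi(a - qaq) = 0$ and $\psi(a - q\circ a) = 0$. Since $a$, $qaq$, $q\circ a$ all lie in $A$ and $\psi$ restricts to $\varphi$ there, this reads $\langle T(a)\xi,\xi\rangle = \langle T(qaq)\xi,\xi\rangle = \langle T(q\circ a)\xi,\xi\rangle$; as $\xi$ ranges over the unit vectors of $H$, polarization yields $T(a) = T(qaq) = T(q\circ a)$.

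I expect the only point that genuinely needs care is that $T$ is a priori defined only on $A$, so one cannot manipulate it directly against products such as $b(1-q)$ in $B$; it is precisely the extension provided by Lemma \ref{nrp2}, landing us in the $C^*$-algebra $B$ with a bona fide positive functional, that makes the Cauchy--Schwarz ``multiplicative domain'' trick applicable. Everything else (the memberships $q, q^2, qaq \in A$, the operator inequality $p^2 \leq p$, the two algebraic identities, and the final polarization) is routine.
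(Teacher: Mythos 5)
Your argument is correct and is essentially the paper's proof: compose $T$ with (vector) states of $B(H)$, extend the resulting real positive functional to a positive functional $\psi$ on $B$ via Lemma \ref{nrp2}, use $\psi(1-q)=0$ together with Cauchy--Schwarz to kill the cross terms, and separate points of $B(H)$ at the end. Your small variations (deducing $\psi(p^2)=0$ from $p^2\le p$ instead of factoring $(1-q)^{1/2}$, and handling $T(q\circ a)$ directly from $a-q\circ a=\tfrac12(pa+ap)$) are only cosmetic.
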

\begin{proof}    
  By (\ref{aba}) we have  $qaq = 2 (q \circ a) \circ q - q^2 \circ a \in A$ for $a \in A$.   
 Let $\psi$ be a state on $B(H)$, and set $\varphi
= \psi \circ T$.   
This  is real positive on $A$,
 hence  by Lemma \ref{nrp2}   it  extends to a positive functional $\tilde{\varphi}$ on $B$.   
   Thus for  $a \in {\rm Ball}(A)$,  by the Cauchy-Schwarz inequality for states of a $C^*$-algebra there exists a constant
$K$ such that 
$$|\varphi( (1-q)a(1-q))|^2 = | \tilde{\varphi}(b(1-q)^{\frac{1}{2}})|^2
\leq  K \, \tilde{\varphi}(1-q) = \, \psi(T(1) - T(q)) = 0
,$$    where $b =  (1-q)a(1-q)^{\frac{1}{2}}$.  
 Similarly, $| \tilde{\varphi}(q a (1-q))|^2$ and $| \tilde{\varphi}((1-q) a q)|^2$ are zero. 
 Hence the numerical radii of $T((1-q) a (1-q))$ and $T((1-q) aq + qa(1-q))$ are $0$,  and so $T((1-q) a (1-q)) = 
T((1-q) aq + qa(1-q)) = 0$.  We deduce that 
$$T(a) = T((1-q) a + qa) =  T((1-q) a(1-q) + (1-q)a q  + qa(1-q) + qaq) =  T(qaq)$$ for all $a \in A$.    
To obtain the last equality, apply the equality before it with $a$ replaced by $q \circ a$.
\end{proof}  

{\bf Remark.}   A similar but easier proof shows:   Let $A$ be a unital-subspace of a unital $C^*$-algebra  $B$ and let 
$q \in {\rm Ball}(B)_+$ with $qA + Aq \subset A$.
Suppose that  $T : A \to B(H)$ is real positive on  $A$,
and $T(1) = T(q)$.   Then $T(a) = T(qa) = T(aq) = T(qaq)$ for $a \in A$. 

We also are indebted to the referee for a correction to the hypotheses of the last result.  
  
\medskip

The following very similar result is proved in \cite{Brp}, where it is used to characterize real positive projections on operator algebras
taking values in a selfadjoint subspace.

\begin{lemma} \label{ishe3} Let $A$ be  a unital 
operator space (resp.\ approximately  unital Jordan
operator algebra),
and let  $T : A \to B(H)$ be a unital  (resp.\ real positive) contraction.
Suppose that $e$ is a projection in $A$ with $e \circ A \subset A$,
such that $q = T(e)$ is a projection in $B(H)$.   Then
$T(eae) = q T(a) q$ 
and  $T(a \circ e) =T(a) \circ q$ for all $a \in A$.    
 \end{lemma}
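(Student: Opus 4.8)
The plan is to reduce to the situation of Lemma \ref{ishe2}, applied to the compression by $q$. First I would handle the two cases uniformly: in the unital operator space case $T$ is a unital contraction, hence real positive by the remarks preceding Corollary \ref{trivj}; in the approximately unital Jordan case $T$ is real positive by hypothesis. In either case, pass to biduals so that $A^{**}$ is a unital Jordan operator algebra and $T^{**} : A^{**} \to B(H)$ (using that $T^{**}$ is real positive by \cite[Theorem 2.8]{BWj}, and weak* continuous); since $e \circ A \subset A$ gives $e \circ A^{**} \subset A^{**}$, and $T^{**}(e) = q$, it suffices to prove the identities for $T^{**}$ on $A^{**}$. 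So we may assume $A$ is a unital Jordan subspace of a unital $C^*$-algebra $B$ with $1_B \in A$, that $e$ is a projection in $A$ with $e \circ A \subset A$, and $q = T(e)$ a projection in $B(H)$.

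Next I would set up the compression. Write $e^\perp = 1 - e$, which also satisfies $e^\perp \circ A \subset A$, and note $eae = 2(e\circ a)\circ e - e^2 \circ a = 2(e \circ a)\circ e - e\circ a \in A$ for $a \in A$ by (\ref{aba}); similarly $e^\perp a e^\perp \in A$ and $eae^\perp + e^\perp a e \in A$ (using the identity $p \circ a = \frac12(a + pap - p^\perp a p^\perp)$ quoted in the background, or directly from (\ref{aba})). Now apply the argument of Lemma \ref{ishe2}, or rather its proof, with the roles of $q$ there played by $e$ here and with the target $B(H)$: for a state $\psi$ on $B(H)$, the functional $\varphi = \psi \circ T$ is real positive on $A$, hence extends to a positive functional $\tilde\varphi$ on $B$ by Lemma \ref{nrp2}. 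The key point is $\tilde\varphi(e^\perp) = \varphi(e^\perp) = \psi(T(1) - T(e)) = \psi(1 - q)$, which need not be zero; but when we apply Cauchy–Schwarz to $\tilde\varphi(e^\perp a e^\perp)$ and to the cross terms, the relevant estimates land us inside the compression rather than forcing terms to vanish. Concretely, I would instead use the Cauchy–Schwarz inequality together with the fact that $\psi(qT(x)q) = \tilde\varphi$ of something; more precisely, the right move is: for $a \in \mathrm{Ball}(A)$, $|\tilde\varphi((1 - e)a(1-e))|$ and the cross terms are controlled so that, after taking numerical radii over all states $\psi$, one gets $q^\perp T(e^\perp a e^\perp) q^\perp = 0$, $q T(eae^\perp) q = 0$, etc. Then decomposing $a = eae + eae^\perp + e^\perp a e + e^\perp a e^\perp$, applying $T$, and compressing by $q$ on both sides leaves only $qT(eae)q$; combined with the fact that $T(eae) \in \mathfrak{F}$-type estimates force $T(eae) = qT(eae)q$, one concludes $T(eae) = qT(a)q$.

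Honestly, the cleanest route is probably to mimic Lemma \ref{ishe2} verbatim but replacing "$T(1) = T(q)$, so $\tilde\varphi(1-q) = 0$" by the computation that the relevant Cauchy–Schwarz bound controls $|\tilde\varphi(e^\perp x e^\perp)|^2 \le \tilde\varphi(1) \tilde\varphi(e^\perp x^* x e^\perp)$, and then observing that the hypothesis $q = T(e)$ is a projection is exactly what is needed to run the argument of Lemma \ref{ishe3}'s relative in \cite{Brp}; since that paper is cited as containing the proof, I would simply adapt it. The second identity $T(a \circ e) = T(a) \circ q$ then follows formally: using $a \circ e = \frac12(a + eae - e^\perp a e^\perp)$ (the formula $p \circ a = \frac12(a + pap - p^\perp a p^\perp)$), apply $T$ and use $T(eae) = qT(a)q$ together with $T(e^\perp a e^\perp) = q^\perp T(a) q^\perp$ (which is the first identity applied to $e^\perp$, valid since $q^\perp = 1 - q = T(1) - T(e) = T(e^\perp)$ need not be right — one has to be careful here since $T$ need not be unital in the Jordan case, but after passing to $A^{**}$ and noting $T^{**}(1) = 1$ when $T$ is, or handling $T(1)$ directly). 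Thus $T(a \circ e) = \frac12(T(a) + qT(a)q - q^\perp T(a) q^\perp) = T(a) \circ q$, as the same projection identity in $B(H)$ reads $T(a) \circ q = \frac12(T(a) + qT(a)q - q^\perp T(a) q^\perp)$.

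The main obstacle I anticipate is the bookkeeping around $T(1)$ versus $q^\perp$: unlike Lemma \ref{ishe2}, here we are not told $T(1) = T(e)$, so the Cauchy–Schwarz terms do not simply vanish, and one must track the compressions by $q$ and $q^\perp$ carefully to see that the off-diagonal blocks of $T(a)$ relative to $q$ come only from the off-diagonal blocks of $a$ relative to $e$. Getting that "matricial" decomposition to close up — i.e. showing $q T(e^\perp a e^\perp) q = 0$ and $q^\perp T(eae) q^\perp = 0$ and the mixed terms behave correctly — is the technical heart; once it is in place, both displayed identities drop out by the projection formula $p \circ a = \frac12(a + pap - p^\perp a p^\perp)$ applied in $A$ and in $B(H)$.
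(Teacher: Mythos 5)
The paper itself gives no argument for Lemma \ref{ishe3} (it is quoted from \cite{Brp}), so your proposal has to be judged on whether it would close on its own; it does not. The reductions you make are fine (biduals, $eae=2(e\circ a)\circ e-e\circ a\in A$ via (\ref{aba}), extending $\psi\circ T$ to a positive functional via Lemma \ref{nrp2}), and the Lemma \ref{ishe2}-style state argument applied to the compressions $qT(\cdot)q$ and $q^{\perp}T(\cdot)q^{\perp}$ does give the two \emph{diagonal} facts, namely $qT(eae)q=qT(a)q$, $q^{\perp}T(eae)q^{\perp}=0$ and $q^{\perp}T(e^{\perp}ae^{\perp})q^{\perp}=q^{\perp}T(a)q^{\perp}$, $qT(e^{\perp}ae^{\perp})q=0$ (for vectors in $qH$ the extended state $\tilde\varphi$ has $\tilde\varphi(e)=1$, for vectors in $q^{\perp}H$ it has $\tilde\varphi(e)=0$, and Cauchy--Schwarz applies). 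But the entire content of the lemma is the vanishing of the \emph{off-diagonal} corners, e.g.\ $qT(eae)q^{\perp}=q^{\perp}T(eae)q=0$, and precisely there you only write that the estimates ``land us inside the compression,'' that ``$\mathfrak F$-type estimates force $T(eae)=qT(eae)q$,'' and that you ``would simply adapt'' \cite{Brp}. This is a genuine gap, not a routine omission: unlike Lemma \ref{ishe2}, where $\tilde\varphi(1-q)=0$ for \emph{every} state and hence everything off the corner is killed, here $\tilde\varphi(1-e)$ is nonzero for general states, and if you test with states built from vectors mixing $qH$ and $q^{\perp}H$ the Cauchy--Schwarz bound is of the same order as the cross term you want to annihilate, so nothing vanishes. (Some of your intermediate claims are also off: $eae^{\perp}\notin A$ in general, so ``$qT(eae^{\perp})q=0$'' is not even meaningful, and $q^{\perp}T(e^{\perp}ae^{\perp})q^{\perp}$ equals $q^{\perp}T(a)q^{\perp}$, not $0$.) Some extra ingredient is needed; for instance, note that $e+e^{\perp}ae^{\perp}$ and $e^{\perp}+eae$ lie in ${\rm Ball}(A)$ for $a\in{\rm Ball}(A)$ (they live in orthogonal corners), so $q+T(e^{\perp}ae^{\perp})$ and $q^{\perp}+T(eae)$ are contractions whose $q$- (resp.\ $q^{\perp}$-) corner is the identity of that corner, and a contraction with an identity corner has zero off-diagonal entries in that row and column. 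Without an argument of this kind your outline does not prove the lemma.

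A second unresolved point is the non-unital Jordan case. Your derivation of $T(a\circ e)=T(a)\circ q$ uses the first identity with $e^{\perp}$ in place of $e$, which needs $T(e^{\perp})=q^{\perp}$; you flag this yourself, but passing to biduals does not make $T^{**}$ unital when $T$ is merely real positive, so the issue remains. The fix available inside the paper is Theorem \ref{cepro}: extend $T$ to a unital real positive contraction on $A^{1}$ (the hypotheses $e\circ A^{1}\subset A^{1}$ and $T(e)=q$ persist), thereby reducing the Jordan case to the unital one before running the corner analysis. As written, both the cross-term step and this reduction are missing, so the proposal is an outline of a strategy rather than a proof.
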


\begin{lemma} \label{paauj}  If $T : A \to B$ is a  real positive map  between approximately unital Jordan operator algebras
then ${\rm joa}(T(A))$  is an approximately unital Jordan operator algebra. \end{lemma}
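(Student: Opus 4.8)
Write $C={\rm joa}(T(A))$, a closed Jordan subalgebra of $B$. The plan is to show that $C$ is generated, as a Jordan operator algebra, by the real positive elements it contains, and then to invoke the known equivalence \cite[Proposition 4.4 and Theorem 4.1]{BWj} (the Jordan analogue of \cite[Proposition 2.4 and Theorem 2.1]{BRord}): a Jordan operator algebra is approximately unital precisely when it is generated by its real positive elements.

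First I would observe, using Lemma \ref{nrp}, that $T$ is bounded. Since $A$ is approximately unital, the same cited equivalence says $A$ is the closed linear span of ${\mathfrak r}_A$; so by continuity of $T$ the linear span of $T({\mathfrak r}_A)$ is norm dense in $T(A)$, and hence $T(A)\subseteq {\rm joa}(T({\mathfrak r}_A))$. This gives $C={\rm joa}(T(A))={\rm joa}(T({\mathfrak r}_A))$, which is the step that circumvents the fact that $T(A)$ need not itself be a subalgebra.

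Next, $T$ being real positive gives $T({\mathfrak r}_A)\subseteq {\mathfrak r}_B$, while $T({\mathfrak r}_A)\subseteq T(A)\subseteq C$; since $C$ is a Jordan subalgebra of $B$ we have ${\mathfrak r}_C={\mathfrak r}_B\cap C$ (as recalled in the Introduction), so $T({\mathfrak r}_A)\subseteq {\mathfrak r}_C$. Therefore $C={\rm joa}(T({\mathfrak r}_A))\subseteq {\rm joa}({\mathfrak r}_C)\subseteq C$, that is $C={\rm joa}({\mathfrak r}_C)$, and the cited equivalence finishes the proof.

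The argument is brief once the reduction in the second paragraph is in hand, and that reduction — checking that replacing $T(A)$ by ${\rm joa}(T({\mathfrak r}_A))$ does not enlarge the generated Jordan algebra, which is exactly where boundedness of $T$ and approximate unitality of $A$ enter — is the only point needing any care; I do not anticipate a serious obstacle here. If one preferred to avoid the general equivalence above, an alternative is to build a Jordan cai for $C$ by hand: each $x\in T({\mathfrak r}_A)$ is real positive, hence has a support projection $s(x)$ lying in the $JW^*$-algebra $\Delta(C^{**})$, and $p=\bigvee_x s(x)\in\Delta(C^{**})$; one then checks that $pC^{**}p$ is a weak* closed Jordan subalgebra of $B^{**}$ containing $T({\mathfrak r}_A)$, so $C^{**}=pC^{**}p$ has identity $p$ and $C$ is approximately unital.
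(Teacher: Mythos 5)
Your argument is correct and is essentially the paper's own proof: the paper likewise replaces ${\rm joa}(T(A))$ by ${\rm joa}(T({\mathfrak r}_A))$, notes $T({\mathfrak r}_A)\subset {\mathfrak r}_B$, and concludes by \cite[Proposition 4.4 and Theorem 4.1]{BWj}. The only cosmetic difference is that the paper uses the exact decomposition $A={\mathfrak r}_A-{\mathfrak r}_A$, so no appeal to boundedness of $T$ is needed, whereas you use dense spanning of ${\mathfrak r}_A$ together with continuity from Lemma \ref{nrp}; both routes are fine.
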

\begin{proof}    Let $S = T({\mathfrak r}_A)
\subset {\mathfrak r}_B$.  Since $A = {\mathfrak r}_A - {\mathfrak r}_A$ by \cite[Theorem 4.1]{BWj}, 
${\rm joa}(T(A)) = 
{\rm joa}(S)$   is an approximately unital Jordan operator algebra by \cite[Proposition 4.4]{BWj}.   \end{proof}

The following is a nonselfadjoint analogue of the well known fact that 
the positive part of the kernel of a
completely positive map $T$ on a $C^*$-algebra $B$ has the following `ideal-like' property 
$$T(xy)^* T(xy) \leq K \, T(y^{\frac{1}{2}} y^{\frac{1}{2}} x^* x y^{\frac{1}{2}} y^{\frac{1}{2}}) 
\leq K'  \, T(y) = 0 , \qquad y \in {\rm Ker}(T)_+ , x \in B,$$
using the Kadison-Schwarz inequality.   Here $K, K'$ are constants depending on $T$, and $K'$ depends also on $x$.
So $T(xy) = 0$.   Similarly $T(yx) = 0$.  In fact this is also true for                                               
positive maps on $JC^*$-algebras, as follows e.g.\ from the next result
(using the fact that positive maps on   $JC^*$-algebras or operator systems are obviously 
real positive).     Note that the entire kernel is rarely an ideal (consider
for example the map consisting of integration on $L^\infty([-1,1])$).   

\begin{lemma} \label{kerlem} Suppose that  $A$ is an approximately unital Jordan operator algebra (resp.\ operator algebra)
and that $T : A \to B(H)$ is a real positive map on $A$.  If $x \in A$ and $y \in {\mathfrak r}_{A} \cap {\rm Ker}(T)$  then $x \circ y \in {\rm Ker}(T)$ 
and $yxy  \in {\rm Ker}(T)$ (resp.\ $xy$ and $yx$ are in ${\rm Ker}(T)$).    \end{lemma}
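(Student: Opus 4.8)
The plan is to slice by states and reduce matters to a Cauchy--Schwarz estimate inside a containing $C^*$-algebra, along the lines of the proof of Lemma~\ref{ishe2}.

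Since $\mathrm{Ker}(T)$ is unchanged if we rescale $T$, and since $T$ is automatically bounded by Lemma~\ref{nrp} (if $T=0$ there is nothing to prove), we may assume $T$ is a contraction. By Theorem~\ref{cepro}, $T$ then extends to a unital real positive contraction $\tilde T$ from the unitization $A^1$ into $B(H)$. As $\mathfrak r_A = \mathfrak r_{A^1}\cap A$ and $\mathrm{Ker}(\tilde T)\cap A=\mathrm{Ker}(T)$, and as $x\circ y$ and $yxy$ — and $xy$, $yx$ in the associative case — again lie in $A$ for $x,y\in A$ (by (\ref{aba2}) and (\ref{aba}) in the Jordan case), it suffices to prove the assertion when $A$ is unital and $T$ is a unital real positive contraction. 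Fix a unital $C^*$-algebra $B$ containing $A$ as a unital Jordan subalgebra, so that $\mathfrak r_A = \mathfrak r_B\cap A$.

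Let $\psi$ be any state of $B(H)$. Then $\varphi=\psi\circ T$ is a state of $A$, which extends by Lemma~\ref{nrp2} to a state $\tilde\varphi$ of $B$, with $\psi(T(a))=\tilde\varphi(a)$ for all $a\in A$. Since the numerical radius is a norm on $B(H)$ equal to the supremum of $|\psi(\cdot)|$ over states $\psi$, and since the relevant products lie in $A$, it is enough to show that $\tilde\varphi$ annihilates $xy$, $yx$, $yxy$, and $x\circ y=\frac{1}{2}(xy+yx)$. Now $\tilde\varphi(y)=\psi(T(y))=0$, so $h:=y+y^*\ge 0$ in $B$ satisfies $\tilde\varphi(h)=2\,\mathrm{Re}\,\tilde\varphi(y)=0$. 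The crucial point is that $\tilde\varphi(y^*y)=\tilde\varphi(yy^*)=0$: when $y\in\mathfrak F_A$ this is immediate, since then $y^*y\le h$ and $yy^*\le h$; the general case $y\in\mathfrak r_A$ is reduced to this using $\mathfrak r_A=\overline{\Rdb^+\mathfrak F_A}$. Granting this, Cauchy--Schwarz for the state $\tilde\varphi$ gives $|\tilde\varphi(cy)|^2\le\tilde\varphi(cc^*)\,\tilde\varphi(y^*y)=0$ and $|\tilde\varphi(yc)|^2\le\tilde\varphi(yy^*)\,\tilde\varphi(c^*c)=0$ for every $c\in B$. Hence $\tilde\varphi(xy)=\tilde\varphi(yx)=\tilde\varphi(yxy)=0$, and therefore $\tilde\varphi(x\circ y)=0$, for all $x\in A$. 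Undoing the reductions, $T(xy)=T(yx)=0$ in the associative case, and $T(x\circ y)=T(yxy)=0$ in general.

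The step I expect to be the main obstacle is exactly the passage from $\mathfrak F_A$ to an arbitrary real positive $y$: one needs $\tilde\varphi(y^*y)=0$ (equivalently, that the GNS cyclic vector of $\tilde\varphi$ is killed by $y$), and since the inequality $y^*y\le y+y^*$ can fail for a merely accretive $y$, this must be obtained by a limiting argument that exploits the fact that the cone $\mathfrak r_A$ is the closure of $\Rdb^+\mathfrak F_A$, rather than applied directly.
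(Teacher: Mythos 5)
Your completed portion coincides with the paper's own argument: you slice by states $\psi$ of $B(H)$, extend the real positive functional $\psi\circ T$ to a positive functional on a containing $C^*$-algebra via Lemma \ref{nrp2}, and run Cauchy--Schwarz using $y^*y\le y+y^*$ (valid exactly because $y\in{\mathfrak F}_A$) to kill $\tilde\varphi(xy)$, $\tilde\varphi(yx)$, $\tilde\varphi(yxy)$, concluding since states of $B(H)$ separate points. Your preliminary normalization (rescaling $T$ to a contraction and unitizing via Theorem \ref{cepro}) is harmless but unnecessary: Lemma \ref{nrp2} already applies to real positive functionals on approximately unital Jordan subalgebras, which is all the paper uses.

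The step you defer, however --- passing from $y\in{\mathfrak F}_A\cap{\rm Ker}(T)$ to arbitrary $y\in{\mathfrak r}_A\cap{\rm Ker}(T)$ ``by a limiting argument'' exploiting ${\mathfrak r}_A=\overline{\Rdb_+{\mathfrak F}_A}$ --- is a genuine gap, and it cannot be closed in the generality you aim at. Writing $y=\lim_n t_nf_n$ with $f_n\in{\mathfrak F}_A$ does not help, since the $f_n$ need not lie in ${\rm Ker}(T)$, so the ${\mathfrak F}_A$-case cannot be applied to them; and the identity $\tilde\varphi(y^*y)=0$ that you correctly isolate as the crux is simply false for a merely accretive kernel element. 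Indeed the conclusion of the lemma itself fails there: take $A=\ell^\infty_2$, $T=\varphi$ the normalized trace (a state, hence real positive), $y=(i,-i)\in{\mathfrak r}_A\cap{\rm Ker}(\varphi)$ and $x=(1,0)$; then $\tilde\varphi(y^*y)=1$, while $x\circ y=xy=yx=(i,0)$ and $yxy=(-1,0)$, and $\varphi$ annihilates none of these. What the Cauchy--Schwarz method genuinely yields is the statement for $y\in\Rdb_+{\mathfrak F}_A\cap{\rm Ker}(T)$, i.e.\ for accretive $y$ with $y^*y\le K(y+y^*)$ for some constant $K$ (plus the corresponding statements for ${\rm Re}(y)$ when $y$ is merely accretive); the closure in ${\mathfrak r}_A=\overline{\Rdb_+{\mathfrak F}_A}$ destroys exactly the domination the estimate needs, and skew-adjoint kernel elements are actual counterexamples. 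To be fair, the paper's own proof disposes of this point with the same one-sentence appeal to ${\mathfrak r}_A=\overline{\Rdb_+{\mathfrak F}_A}$ that you invoke, so you have matched its argument and put your finger on precisely its one non-routine step; but flagging the obstacle is not surmounting it, and as the example shows, no limiting argument of the kind you anticipate can exist without strengthening the hypothesis on $y$.
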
 
\begin{proof}   Let $\varphi$ be a state on $B(H)$,  which is real positive.   As in the proof of Lemma \ref{ishe2}, 
$\varphi \circ T$ extends to a positive functional 
$\psi$ on $C^*(A)$.  So in the operator algebra case, by the Cauchy-Schwarz inequality
for states on a $C^*$-algebra,  $$|\psi(xy)|^2 \leq
K \, \psi(y^* y) \leq K \, \psi(y + y^*) = 0, \qquad  x \in A, y \in \mathcal{F}_{A} \cap {\rm Ker}(T) .$$
Here $K$ is a constant depending on  $x$.
We used the fact that $y \in {\mathfrak F}_A$ if and only if   
$y^* y \leq y + y^*$.  Thus $\psi(xy) = \varphi(T(xy)) = 0$.   In the Jordan case a similar argument gives
$\psi(yx) = 0$, so $$\psi(xy+yx) = \varphi(T(xy + yx)) = 0.$$    Hence $T(xy + yx) = 0$
(resp.\ $T(xy) = 0$), since states  
on  $B(H)$ 
separate points.  So $x \circ y \in {\rm Ker}(T)$, and 
similarly $yxy  \in {\rm Ker}(T)$.    Finally use the fact that ${\mathfrak r}_A = \overline{\Rdb^+ {\mathfrak F}_A}$
to replace ${\mathfrak F}_A$ by ${\mathfrak r}_A$. 
\end{proof} 

{\bf Remark.}   The referee of our paper has generalized the technique seen in our proof of Lemmas \ref{kerlem} and \ref{ishe2}
to obtain the following technical principle which in turn
may be used to unify the proofs of these lemmas:  Let $A$ be a unital operator subspace or Jordan subalgebra of a unital $C^*$-algebra
$B$.
Suppose that every  real positive functional on $A$ extends to a positive functional on $B$ (as is always  the case if 
$A$ is unital or approximately unital by Lemma \ref{nrp2}).    Suppose that $T : A \to B(H)$ is a real positive linear map
and that $y \in {\mathfrak F}_B \cap {\rm Ker}(T)$.  Let $F(x) = \sum_{k=1}^n \, a_k x b_k$ for $n \in \Ndb$ and $a_k, b_k \in B$.
Assume that either $a_k$ or $b_k$ is $y$ for all $k$.  Then $A \cap F(B) \subset  {\rm Ker}(T)$.

To prove this principle, simply  follow the proof of Lemma \ref{kerlem} almost verbatim to deduce 
that $\psi(xy) = 0$ for $x \in B$.  Similarly $\psi(yx) = 0$.  Thus 
if $F(b) \in A$ then $\psi(F(b)) = 0 = \varphi(T(F(b)))$.  As in the proof we are following this implies that $T(F(b)) = 0$.

For example, we show how to use this principle to prove  Lemma \ref{kerlem}.  As in the proof of  \ref{kerlem} we may assume  
that  $y \in 
{\mathfrak F}_B \cap {\rm Ker}(T)$.  The conclusion then follows by  the principle and 
taking $F(x)$ to be first $x \circ y$ and then $yxy$, for  $x \in A$.

\begin{lemma} \label{genf} Suppose that a closed subspace $J$ of a Jordan operator algebra $A$ is contained in the closed Jordan algebra (or even in the hereditary subalgebra) generated by 
 $C = {\mathfrak r}_J = J \cap {\mathfrak r}_A$, the set 
of real positive elements which $J$ contains.   
\begin{itemize} \item  [(1)] 
 If $x  y x \in  J$ for $y \in A, x \in C$, then $J$ is an HSA in $A$,
and $$J = \Rdb_+ ({\mathfrak F}_J - {\mathfrak F}_J) = {\rm Span}({\mathfrak F}_J) = {\rm Span}({\mathfrak r}_J).$$
If in addition $x  \circ y \in J$ for $y \in A, x \in C$, then $J$ is an approximately unital Jordan ideal in $A$.  
 \item  [(2)]  If $A$ is approximately unital, and $T : A \to B(H)$ is real positive, and if $J = {\rm Ker}(T)$
satisfies the condition before {\rm (1)}, then all the conditions in {\rm (1)} hold, so that ${\rm Ker}(T)$
is  an approximately unital Jordan ideal in $A$.  \end{itemize} 
 \end{lemma}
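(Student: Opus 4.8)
The plan is to prove (1) first and then derive (2) as an application. For (1), I would start by recalling the two characterizations of HSAs and approximately unital Jordan ideals available from the earlier material: a Jordan subalgebra $D$ with a Jordan cai satisfying $aAa \subset D$ for $a \in D$ is an HSA (equivalently $abc + cba \in D$ for $a,c \in D$, $b \in A$), and an HSA which is also a Jordan ideal is automatically approximately unital. The first step is to show $J$ is a Jordan subalgebra: since $J$ lies in the closed Jordan algebra generated by $C = {\mathfrak r}_J$, and $C \subset J$, one needs $J \circ J \subset J$; but $J$ is closed and the identity $a \circ b = \frac12[(a+b)^2 - a^2 - b^2]$ together with the hypothesis $x y x \in J$ for $x \in C$ (taking $y = x$, giving $x^3 \in J$, and more relevantly the polarized version) should reduce squares of elements of $C$ into $J$; closedness and linearity then propagate this to all of $J$. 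Actually the cleanest route: the hypothesis $xyx \in J$ for $y \in A$, $x \in C$ is exactly the HSA-type condition restricted to the generating cone $C$, and one bootstraps from $C$ to its closed Jordan algebra (which contains $J$) by a density/continuity argument, using formula (\ref{aba}) to express $xyx$ via Jordan products.

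Next I would produce a Jordan cai for $J$. Since $C = {\mathfrak r}_J$ and $J \subset \overline{{\rm joa}(C)}$ (or the HSA generated by $C$), and real positive elements have roots/are well-approximated by elements of ${\mathfrak F}$, I would invoke the noncommutative-topology machinery: $C$ generates a support projection $p$ in $A^{**}$ (via \cite[Theorem 4.1]{BWj}-type results), $p$ is open, and $pA^{**}p \cap A$ is the associated HSA. The claim $J = \{a \in A : a = pap\}$ follows because $J$ is sandwiched between $C$ and this HSA, and $C^{\perp\perp}$ already has identity $p$. Once $J$ is identified as an HSA with support projection $p$, the norm-density identities $J = \Rdb_+({\mathfrak F}_J - {\mathfrak F}_J) = {\rm Span}({\mathfrak F}_J) = {\rm Span}({\mathfrak r}_J)$ follow from ${\mathfrak r}_J = {\mathfrak r}_A \cap J$, ${\mathfrak r}_A = \overline{\Rdb_+ {\mathfrak F}_A}$, and the fact (cited in the introduction) that an approximately unital Jordan operator algebra is spanned by its real positive elements. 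For the last sentence of (1), if additionally $x \circ y \in J$ for $y \in A$, $x \in C$, then the same bootstrapping from $C$ to $J$ gives $a \circ y \in J$ for all $a \in J$, $y \in A$, i.e.\ $J$ is a Jordan ideal; being an HSA and an ideal, it is approximately unital.

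For (2), apply Lemma \ref{kerlem}: with $A$ approximately unital and $T : A \to B(H)$ real positive, for $x \in A$ and $y \in {\mathfrak r}_A \cap {\rm Ker}(T) = C$ we get both $x \circ y \in {\rm Ker}(T) = J$ and $yxy \in {\rm Ker}(T) = J$. These are precisely the two hypotheses needed in (1) (the condition "$xyx \in J$ for $y \in A$, $x \in C$" and the condition "$x \circ y \in J$ for $y \in A$, $x \in C$"), so all conclusions of (1) hold and in particular ${\rm Ker}(T)$ is an approximately unital Jordan ideal in $A$. The main obstacle I anticipate is the bootstrapping step in (1): carefully promoting the ideal/HSA-type conditions from the generating cone $C$ to its closed Jordan algebra (and hence to $J$), since one must combine the algebraic identities (\ref{aba2}), (\ref{aba}) with a continuity argument and possibly with the existence of roots of real positive elements to handle squares — this is where the hypothesis "$J$ is contained in the closed Jordan algebra generated by $C$" (rather than equal to it) does real work and must be used with care.
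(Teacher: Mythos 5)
There is a genuine gap at the heart of your argument for (1). The crucial point the proof must deliver is the inclusion $D \subset J$, where $D$ is the hereditary subalgebra generated by $C$; only then does the hypothesis $J \subset D$ force $J = D$, so that $J$ inherits a Jordan cai and the HSA property. Your proposal never establishes this inclusion: the claim that ``$J = \{a \in A : a = pap\}$ follows because $J$ is sandwiched between $C$ and this HSA'' is not a valid deduction, since $C \subset J \subset D$ only gives $J \subset D$ and leaves open the possibility that $J$ is a proper closed subspace of $D$ (not a subalgebra, with no cai). Likewise, your earlier step showing $J$ is a Jordan subalgebra by ``propagating'' the condition $xyx \in J$ ($x \in C$) from $C$ to all of $J$ by linearity, continuity and density is circular: it presupposes that $J$ is the closed span of $C$, which is part of what is to be proved (the hypothesis only says $J$ sits inside the Jordan algebra or HSA generated by $C$, and that algebra is in general much larger than $\overline{{\rm Span}}(C)$). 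The paper closes exactly this gap with one citation: since $C$ is convex, by the formula in \cite[Theorem 3.18 (2)]{BWj} the HSA $D$ generated by $C$ is the \emph{closure of} $\{xAx : x \in C\}$, and by the hypothesis of (1) this set lies in $J$; hence $D \subset J$, so $D = J$, and then $J = \Rdb_+({\mathfrak F}_J - {\mathfrak F}_J) = {\rm Span}({\mathfrak F}_J) = {\rm Span}({\mathfrak r}_J)$ follows from \cite[Proposition 4.4 and Theorem 4.1]{BWj}, after which the Jordan ideal statement is immediate (now $J$ really is densely spanned by $C$, so $x \circ y \in J$ for $x \in C$, $y \in A$ does extend by linearity and continuity). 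Without the identification of the generated HSA as $\overline{\{xAx : x \in C\}}$ (or some substitute for it), your plan cannot get off the ground.

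Your treatment of (2) is correct and is exactly the paper's: Lemma \ref{kerlem} gives $yxy \in {\rm Ker}(T)$ and $x \circ y \in {\rm Ker}(T)$ for $y \in {\mathfrak r}_A \cap {\rm Ker}(T)$ and $x \in A$, which are precisely the hypotheses of (1).
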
 
\begin{proof}    
Note that $C$ is convex, so that by a formula in \cite[Theorem 3.18 (2)]{BWj}, the  hereditary subalgebra $D$ generated by $C$ is
the closure of  $\{ xAx : x \in C \}$, which by hypothesis is contained in $J$.   So $D = J$. 
Hence $J  = \Rdb_+ ({\mathfrak F}_J - {\mathfrak F}_J) = {\rm Span}({\mathfrak F}_J)$ by  \cite[Proposition 4.4 and Theorem 4.1]{BWj}.   Now the last statement
of (1)  is obvious.    

For (2), Lemma \ref{kerlem} shows that for any $a \in A, c \in J \cap {\mathfrak r}_A$, we have $ca c$ and $a \circ c$ in 
${\rm Ker}(T)$.    Now apply   (1). 
 \end{proof}

{\bf Remark.}  Note that the proof of (1) works even
if $C$ is replaced by $J \cap {\mathfrak F}_A$.  One may also prove a variant of (1) where $C$ is a convex subset of  $J \cap {\mathfrak r}_A$.

\begin{corollary} \label{genf2}  If $A$ is an approximately unital operator algebra,  $T : A \to B(H)$ is real positive, and if $J = {\rm Ker}(T)$  is contained in the HSA generated by $J \cap {\mathfrak r}_A$ then ${\rm Ker}(T)$
is  an approximately unital ideal in $A$. \end{corollary}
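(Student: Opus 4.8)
The plan is to deduce Corollary \ref{genf2} directly from Lemma \ref{genf}, so the proof will be essentially one line of bookkeeping plus an observation about the associative case. First I would recall that an operator algebra $A$ is in particular a Jordan operator algebra under $a \circ b = \frac12(ab+ba)$, and that the notions ${\mathfrak r}_A$, ${\mathfrak F}_A$, HSA, and ``generated by'' all coincide with their Jordan counterparts in this setting (the HSA generated by a convex set $C \subset {\mathfrak r}_A$ is the closure of $\{xAx : x \in C\}$ regardless of whether one thinks associatively or Jordan-wise, by \cite[Theorem 3.18 (2)]{BWj}). So the hypothesis ``$J = {\rm Ker}(T)$ is contained in the HSA generated by $J \cap {\mathfrak r}_A$'' is literally the hypothesis ``before (1)'' in Lemma \ref{genf}, with $C = {\mathfrak r}_J = J \cap {\mathfrak r}_A$.

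Next I would invoke Lemma \ref{kerlem}: since $A$ is an approximately unital operator algebra and $T : A \to B(H)$ is real positive, for $x \in A$ and $y \in {\mathfrak r}_A \cap {\rm Ker}(T)$ we get $xy, yx \in {\rm Ker}(T)$ (the associative conclusion of that lemma), and in particular $xyx \in {\rm Ker}(T)$ and $x \circ y \in {\rm Ker}(T)$. This verifies both the hypothesis ``$xyx \in J$ for $y \in A, x \in C$'' and the additional hypothesis ``$x \circ y \in J$ for $y \in A, x \in C$'' appearing in Lemma \ref{genf}(1) (equivalently one applies Lemma \ref{genf}(2) wholesale). Therefore $J = {\rm Ker}(T)$ is an approximately unital Jordan ideal in $A$, and moreover $J = {\rm Span}({\mathfrak r}_J)$ is a HSA.

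Finally, the only thing left is to upgrade ``approximately unital Jordan ideal'' to ``approximately unital ideal'' in the associative sense. Here I would argue as follows: $J$ is a HSA of the operator algebra $A$, and by Lemma \ref{kerlem} (the associative conclusion again) we already know $AJ \subset J$ and $JA \subset J$ directly — for $x \in A$ and a general $z \in J$, write $z$ as a linear combination of elements of ${\mathfrak r}_J$ (using $J = {\rm Span}({\mathfrak r}_J)$ from the previous paragraph), and apply $xy, yx \in {\rm Ker}(T)$ termwise, then take limits. So $J$ is a genuine two-sided ideal of $A$; it is approximately unital because it is a HSA (HSAs possess a Jordan cai, and being an associative subalgebra with a Jordan cai and the ideal property it has a genuine cai — alternatively this is exactly the content of Lemma \ref{genf}(1) which gives $J = \Rdb_+({\mathfrak F}_J - {\mathfrak F}_J)$, hence $J$ is approximately unital by \cite[Proposition 2.4 and Theorem 2.1]{BRord}).

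I do not expect any real obstacle here: the corollary is a transparent specialization of Lemma \ref{genf}(2) to the associative case, and the main ``step'' is simply noting that the associative kernel-ideal conclusion of Lemma \ref{kerlem} lets one conclude two-sidedness of the ideal directly once $J = {\rm Span}({\mathfrak r}_J)$ is known. If anything needs care, it is making sure the density/limit argument that passes from ${\mathfrak r}_J$-spanning elements to all of $J$ is clean, but this is routine since $T$ and multiplication by a fixed $x$ are bounded, so ${\rm Ker}(T)$ is closed and the ideal inclusions pass to the closed span.
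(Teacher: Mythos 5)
Your proposal is correct and follows the paper's intended route: the corollary is exactly the associative specialization of Lemma \ref{genf}(2), with the two-sidedness coming from the operator-algebra conclusion of Lemma \ref{kerlem} ($xy, yx \in {\rm Ker}(T)$ for $y \in {\mathfrak r}_J$, $x \in A$) together with $J = \overline{{\rm Span}}({\mathfrak r}_J)$ and closedness of ${\rm Ker}(T)$, and approximate unitality from the cited results of \cite{BRord}. Your bookkeeping about the HSA generated by $J \cap {\mathfrak r}_A$ coinciding in the Jordan and associative senses is also fine, so there is nothing to add.
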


The following result is a fundamental fact concerning extending contractive linear maps on hereditary subalgebras (HSA's) of $A$,
and concerning the   uniqueness of such extension.     We will call the extension the {\em zeroing extension} since if $p$ is the  support projection of the HSA $D$ then this extension is 
zero on the `complement'  $\{ p^\perp a  p^\perp + p^\perp a  p+ pa p^\perp : a \in A \}$ of $D$.  
 In  \cite[Proposition 2.11]{BHN} and \cite[Corollary 3.6]{BWj} this extension was done for completely  contractive linear maps in various settings, with a  
similar proof.  We have chosen 
to briefly include most of the details of the proof for completeness and also  to exhibit the `zeroing' construction.

\begin{theorem} \label{hsaex}
	Let $D$ be a HSA in an approximately unital Jordan operator algebra $A$.
Then any contractive map $T$ from $D$ into a unital weak* closed Jordan operator algebra $N$
such that $T(e_t) \to 1_N$ weak* for some partial cai $(e_t)$ for $D$, has a unique  contractive  extension
$\tilde{T} : A \to N$ with $\tilde{T}(f_s) \to 1_N$ weak* for some (or all) partial cai $(f_s)$ for $A$.
This extension is real positive.
\end{theorem}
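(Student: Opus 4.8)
The plan is to realize $D$ as the corner $pA^{**}p \cap A$ for an open projection $p \in A^{**}$, following the description of HSAs recalled in the introduction, and then to construct $\tilde T$ by the ``zeroing'' formula. First I would pass to biduals: extend $T$ to a normal contraction $T^{**} : D^{\perp\perp} \to N$, noting $D^{\perp\perp} = pA^{**}p$ is a unital weak* closed Jordan operator algebra with identity $p$, and that $T^{**}(p) = 1_N$ because $e_t \to p$ weak* (any partial cai for $D$ converges weak* to the support projection) and $T^{**}$ is weak* continuous. Then I would define, for $a \in A^{**}$, the extension by $\hat T(a) = T^{**}(pap)$, and set $\tilde T = \hat T|_A$. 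Note $pap \in D^{\perp\perp}$, and $\tilde T(f_s) = T^{**}(p f_s p) \to T^{**}(p) = 1_N$ weak* for a partial cai $(f_s)$ of $A$ since $f_s \to 1_{A^{**}}$ weak*; also $\tilde T|_D = T$ since $pap = a$ for $a \in D^{\perp\perp}$.

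The key point is contractivity of $\hat T$, equivalently of the map $a \mapsto T^{**}(pap)$ on $A^{**}$. Here I would invoke the standard fact (used in \cite[Proposition 2.11]{BHN} and \cite[Corollary 3.6]{BWj}) that the ``compression'' $a \mapsto pap$ is completely contractive on $A^{**}$ — indeed on $B(H)$ it is a compression by a projection — so it is certainly contractive, and composing with the contraction $T^{**}$ gives a contraction. Thus $\hat T$, hence $\tilde T$, is contractive. The ``zeroing'' description is then automatic: writing $p^\perp = 1 - p$, for $a \in A$ we have $pap = a - (p^\perp a p^\perp + p^\perp a p + p a p^\perp)$, so $\tilde T$ annihilates the complement $\{ p^\perp a p^\perp + p^\perp a p + p a p^\perp : a \in A \}$, as claimed in the paragraph preceding the theorem.

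For uniqueness: if $S : A \to N$ is any contractive extension of $T$ with $S(f_s) \to 1_N$ weak*, then $S^{**} : A^{**} \to N$ is a weak* continuous contraction with $S^{**}(1_{A^{**}}) = 1_N$, hence $S^{**}$ is unital and contractive on the unital Jordan operator algebra $A^{**}$. A unital contraction into a unital operator space is real positive (by the Remark from \cite{BWj} cited before Corollary \ref{trivj}), so $S^{**}$ is real positive; likewise $\hat T = T^{**}(p \, \cdot \, p)$ is real positive, being the composite of the real positive maps $a \mapsto pap$ (a unital contraction $A^{**} \to D^{\perp\perp}$) and $T^{**}$ (a real positive contraction by the same Remark). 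Now both $S^{**}$ and $\hat T$ agree with $T^{**}$ on $D^{\perp\perp}$, and since $p$ is a projection in $A^{**}$ with $T^{**}(p) = \hat T(p) = S^{**}(p) = 1_N$ a projection in $N$, Lemma \ref{ishe3} gives $S^{**}(a) = S^{**}(pap)$ and $\hat T(a) = \hat T(pap)$ for all $a \in A^{**}$; but $pap \in D^{\perp\perp}$, on which the two maps coincide, so $S^{**} = \hat T$ on $A^{**}$, hence $S = \tilde T$ on $A$. Finally, real positivity of $\tilde T$ follows since $\tilde T = \hat T|_A$ and $\hat T$ is real positive as just noted; alternatively, $\tilde T^{**} = \hat T$ is real positive by Theorem \ref{ocp} / Corollary \ref{trivj} applied to the unital contraction $\hat T$. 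The main obstacle is the careful bidual bookkeeping — checking that $D^{\perp\perp} = pA^{**}p$ really is unital with identity $p$, that $T^{**}$ lands in $N$ and is weak* continuous, and that $T^{**}(p) = 1_N$ — all of which is handled by the open projection machinery for HSAs recalled in the introduction; the contractivity and uniqueness arguments themselves are then short.
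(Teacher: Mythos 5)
Your proposal is correct and follows essentially the same route as the paper: pass to the weak* continuous extension of $T$ on $D^{\perp\perp}$ (which is unital since $e_t \to p$ weak*), define the extension by $\eta \mapsto T^{**}(p\eta p)$, and obtain uniqueness from the principle that a real positive map $\Psi$ on $A^{**}$ with $\Psi(1)=\Psi(p)$ satisfies $\Psi(\eta)=\Psi(p\eta p)$. The only cosmetic difference is that you invoke Lemma \ref{ishe3} for this last step where the paper uses Lemma \ref{ishe2}; both yield the same conclusion.
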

\begin{proof}   The canonical weak* continuous extension $\hat{T} : D^{**} \to N$ is unital 
and  contractive, and can be extended to a weak* continuous unital contraction 
$\Phi(\eta) = \hat{T}(p \eta p)$ on $A^{**}$, where $p$ is the support projection of $D$.   This is real positive,
and in turn restricts to a real positive contractive 
$\tilde{T} : A \to N$ with $\tilde{T}(f_s) \to 1_N$ weak* for all partial cai $(f_s)$ for $A$.
Now for the uniqueness.  Any other such extension $T' : A \to N$ extends to a 
weak* continuous unital contraction $\Psi : A^{**} \to N$, and $\Psi(p) = {\rm w}^*\lim_t \Phi(e_t) = 1_N$.  
Then for $\eta \in A^{**}$ we have by Lemma \ref{ishe2} that 
$$\Psi(\eta) = \Psi(p \eta p) = 
\hat{T}(p \eta p).$$
Thus $T'(a) = \Psi(a) = \tilde{T}(a)$ for $a \in A$.  
 \end{proof}

\section{Banach-Stone theorems}  \label{bstsec}

  There are very many 
Banach-Stone type  theorems in the literature.  For example it is proved in \cite[Corollary 2.8]{AS} that a unital surjective isometry between unital
Jordan operator algebras is a Jordan homomorphism.    In this section we wish to extend this latter result to nonunital 
 surjective isometries between approximately unital
Jordan operator algebras.    An attempt at this was made in  Proposition 4.15 in 
\cite{BWj}, but for complete isometries.   There is an error in the proof of that result
 in the Jordan algebra case (it is correct in the associative operator algebra case).   The correct result  for 
Jordan operator algebras  will be included in Theorem \ref{ubsj} below, although the next proposition, which is essentially due to 
Arazy and Solel \cite{AS}, is a simpler variant of it.  

As we mentioned in the introduction, there exist linearly isometric unital $JC^*$-algebras which are not Jordan $*$-isomorphic  (see e.g.\ \cite[Section 5]{BKU} and  \cite[Antitheorem 3.4.34]{Rod}).   Hence they are not even Jordan isomorphic by
\cite[Corollary 3.4.76]{Rod}. Therefore  
Banach-Stone theorems for nonunital isometries between Jordan operator algebras are not going
to look quite as one might expect: one cannot expect the Jordan isomorphism appearing in the 
conclusion to map onto the second $C^*$-algebra exactly. (As the referee pointed out
one may obtain the latter only in very particular cases \cite[Theorems 5.10.9 and 5.10.111]{Rod}, and in general the best one can say with this formulation is given by  \cite[Proposition
5.10.114]{Rod}.)  
Nonetheless we will show that with a slight adjustment of the setting, one may get a reasonable 
Banach-Stone type  theorem for nonunital isometries between Jordan operator algebras.

We define a {\em quasimultiplier} of a Jordan operator algebra $B$ to be 
an element $w \in B^{**}$ with $bw b \in B$ for all $b \in B$.  
A unitary in a unital selfadjoint Jordan operator algebra is an element $u$ with $\frac{1}{2}(u u^* 1 + 1 u^* u) = 1$; this condition
is easily seen to imply 
that $u^* u = u u^* = 1$ in any generated $C^*$-algebra.    
 
\begin{lemma}  \label{qm}  If $B$ is a  Jordan operator algebra, and $u$ is a unitary in $\Delta(B^{**})$ such that 
$u^*$ is a quasimultiplier of $B$, and if we define $B(u)$ to be $B$ equipped with the new 
Jordan product  $\frac{1}{2}(x u^* y + y u^* x)$, then
$B(u)$ is a Jordan operator algebra.  If $B$ is isometrically (resp.\
completely isometrically) a Jordan subalgebra of a $C^*$-algebra $D$
then $Bu^*$ is a Jordan subalgebra of $D^{**}$ 
 isometrically
(resp.\
completely isometrically)  Jordan  isomorphic to $B(u)$.  \end{lemma}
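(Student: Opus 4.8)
The plan is to reduce everything to the associative $C^*$-algebra setting via the bidual, where right multiplication by a unitary is a well-understood isometry. First I would represent $B$ isometrically (or completely isometrically) as a Jordan subalgebra of a $C^*$-algebra $D$; then $B^{**}$ sits inside $D^{**}$ as a weak* closed Jordan subalgebra, and $\Delta(B^{**})$ sits inside the von Neumann algebra $D^{**}$. Since $u \in \Delta(B^{**})$ is a unitary in the Jordan sense, as recalled just before the lemma this forces $u^* u = u u^* = 1_{D^{**}}$, so $u$ is a genuine unitary of $D^{**}$ and right multiplication $R_{u^*} : \eta \mapsto \eta u^*$ is a weak* homeomorphic linear isometry (indeed complete isometry) of $D^{**}$ onto itself. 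The key computation is then that $R_{u^*}$ intertwines the new Jordan product on $B$ with the usual Jordan product on $D^{**}$: for $x, y \in B$,
\begin{equation}
R_{u^*}\Bigl(\tfrac12 (x u^* y + y u^* x)\Bigr) = \tfrac12\bigl((x u^* y) u^* + (y u^* x) u^*\bigr) = \tfrac12\bigl((x u^*)(y u^*) + (y u^*)(x u^*)\bigr) = (xu^*) \circ (yu^*),
\end{equation}
using associativity in $D^{**}$. So $R_{u^*}$ carries $B(u)$ onto $Bu^* \subseteq D^{**}$ as a Jordan algebra homomorphism which is isometric (resp.\ completely isometric) since $R_{u^*}$ is.

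Next I would check that $Bu^*$ is actually closed under the Jordan product of $D^{**}$, i.e.\ is a genuine Jordan subalgebra, and that it is closed in norm (so that it is a Jordan operator algebra). Closure under squares is exactly the display above: $(xu^*) \circ (yu^*) = R_{u^*}(\tfrac12(xu^*y + yu^*x))$, and the quasimultiplier hypothesis $x u^* y \in B$ (polarizing $b u^* b \in B$, using $bu^*c + cu^*b \in B$) shows $\tfrac12(xu^*y+yu^*x) \in B$, hence its image under $R_{u^*}$ lies in $Bu^*$. Norm-closure of $Bu^*$ follows because $R_{u^*}$ is an isometry and $B$ is closed. Thus $Bu^*$ is a norm-closed Jordan subalgebra of the $C^*$-algebra $D^{**}$, hence a Jordan operator algebra, and $R_{u^*} : B(u) \to Bu^*$ is the desired isometric (resp.\ completely isometric) Jordan isomorphism. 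Finally, since $B(u)$ is isometrically Jordan isomorphic to a Jordan operator algebra, it \emph{is} a Jordan operator algebra (this is a definitional point: a Banach space with a bilinear product that is isometrically Jordan isomorphic to a subalgebra of some $B(H)$ closed under squares is by definition a Jordan operator algebra).

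The main obstacle I anticipate is bookkeeping rather than depth: one must be careful that the quasimultiplier condition, stated as $bwb \in B$ for $b \in B$ with $w = u^*$, does indeed give the "mixed" products $x u^* y + y u^* x \in B$ needed above — this is the polarization $bwc + cwb = (b+c)w(b+c) - bwb - cwc$, valid because $B$ is a linear subspace. One should also double-check that the unitary condition $\tfrac12(uu^*1 + 1u^*u) = 1$ in $\Delta(B^{**})$ really does yield $u^*u = uu^* = 1$ in $D^{**}$ and not merely in some smaller generated $C^*$-algebra; but $\Delta(B^{**})$ being a $JW^*$-algebra and the remark preceding the lemma handle this, since the relation $u^*u = uu^* = 1$ is intrinsic to the $JC^*$-algebra $\Delta(B^{**})$ and hence holds in any $C^*$-algebra containing it as a Jordan subalgebra, in particular in $D^{**}$. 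The completely isometric assertion requires only that $R_{u^*}$ is completely isometric on $D^{**}$, which holds because right multiplication by a unitary of a von Neumann algebra is a complete isometry; no extra work beyond matricial bookkeeping is needed.
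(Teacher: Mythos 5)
Your argument is correct and is essentially the paper's own proof: the paper simply invokes the discussion of $E(u)$ from its introduction, namely that right multiplication $R_{u^*}$ by $u^*$ is a (completely) isometric Jordan homomorphism of $B(u)$ onto the Jordan subalgebra $Bu^*$, which is exactly the computation you carry out (including the polarization of the quasimultiplier condition and the norm-closedness of $Bu^*$). One small caveat: if $D$ is not generated by $B$ then $u^*u = uu^*$ equals the identity $e$ of $B^{**}$ rather than $1_{D^{**}}$, so $u$ need only be a partial isometry in $D^{**}$; but since $x = xe = ex$ for all $x \in B^{**}$, the identities $\|xu^*\| = \|x\|$ and the intertwining of products still hold verbatim, so this does not affect your proof.
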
 \begin{proof}   This is evident from the discussion
about $E(u)$ in the introduction: 
 right multiplication $R_{u^*}$ by $u^*$ is the required  Jordan  isomorphism from $B(u)$ onto $B u^*$.
\end{proof}
 
{\bf Remark.}  We will use the idea in the last proof many times in the remainder of this Section.  Note that the Jordan structure of $B u^*$ is
independent of the particular containing $C^*$-algebra $D$. 

\begin{proposition} \label{bsjoa}    Suppose that $T : A \to B$ is an isometric surjection between approximately unital
Jordan operator algebras.  
Then there exists a unitary $u \in \Delta(B^{**})$ with $u^*$ a quasimultiplier of $B$,
 such that if $B(u)$ is 
the Jordan operator algebra in Lemma {\rm \ref{qm}}, then $T$ considered as a map into
$B(u)$ is an isometric surjective Jordan   homomorphism.
 \end{proposition}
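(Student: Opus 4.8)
The plan is to pass to the biduals and reduce to the known unital Banach--Stone theorem of Arazy and Solel. First I would take the second adjoint $T^{**} : A^{**} \to B^{**}$, which is a weak* continuous isometric surjection between unital weak* closed Jordan operator algebras (recall $A^{**}$ and $B^{**}$ are Jordan subalgebras of the relevant von Neumann algebras, with identities $1_{A^{**}}, 1_{B^{**}}$). The element $u := T^{**}(1_{A^{**}})$ is then a norm-one element of $B^{**}$; the first key step is to show that $u$ is a unitary in $\Delta(B^{**})$. This should follow from the structure of isometries between unital Jordan operator algebras: by \cite[Corollary 2.8]{AS}, or rather the analysis surrounding it, a surjective isometry between unital Jordan operator algebras, after composing with right multiplication by $u^*$, becomes unital, and a surjective unital isometry is a Jordan isomorphism; in particular $u$ must be a unitary of the diagonal. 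Concretely, I would argue that $R_{u^*} \circ T^{**}$ (right multiplication by $u^*$ composed with $T^{**}$, where the product used is that of a containing $C^*$-algebra) is a unital surjective isometry onto $B^{**} u^*$, hence by the unital theorem is a Jordan $*$-isomorphism onto its range, forcing $u^* u = u u^* = 1_{B^{**}}$, i.e. $u \in \Delta(B^{**})$ is unitary.

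The second key step is to show that $u^*$ is a quasimultiplier of $B$, that is $b u^* b \in B$ for all $b \in B$. Here I would use that $R_{u^*} \circ T^{**}$ is a Jordan homomorphism from $A^{**}(\text{new product})$ — or more precisely that $T^{**}$, viewed as a map into $B^{**}(u)$ with the $u$-twisted Jordan product $\frac{1}{2}(x u^* y + y u^* x)$, is a genuine Jordan homomorphism (this is essentially a restatement of the previous step via Lemma \ref{qm}). Then for $a \in A$ I have $T(a) \in B$ and $T^{**}(a^2) = T(a) u^* T(a) \cdot(\text{after unwinding the twisted square})$; since squares of elements of $A$ span $A$ (as $A$ is approximately unital, using $A = {\mathfrak r}_A - {\mathfrak r}_A$ and that real positive elements have roots, exactly as in the proof of Lemma \ref{jmisjoa}), and $T$ is onto $B$, it follows that $b u^* b \in B$ for every $b \in B$. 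Thus $u^*$ is a quasimultiplier of $B$, and by Lemma \ref{qm} the space $B(u)$ — namely $B$ with product $\frac{1}{2}(x u^* y + y u^* x)$ — is a Jordan operator algebra, isometrically Jordan isomorphic to $B u^* \subset D^{**}$ via $R_{u^*}$.

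Finally I would verify that $T : A \to B(u)$ is a Jordan homomorphism. This amounts to checking $T(a \circ b) = \frac{1}{2}(T(a) u^* T(b) + T(b) u^* T(a))$ for $a, b \in A$, equivalently (polarizing) $T(a^2) = T(a) u^* T(a)$. But $T(a)u^* T(a) = R_{u^*}(T(a)) \cdot R_{u^*}(T(a))$ computed in $D^{**}$, and $R_{u^*} \circ T^{**}$ is the unital surjective isometry between unital Jordan operator algebras $A^{**}$ and $B^{**}u^*$, hence a Jordan homomorphism by the unital Arazy--Solel theorem; applying this to $a \in A \subseteq A^{**}$ gives $R_{u^*}(T(a^2)) = R_{u^*}(T(a))^2$, i.e. $T(a^2) u^* = (T(a)u^*)^2 = T(a) u^* T(a) u^*$, and cancelling the unitary $u^*$ on the right yields $T(a^2) = T(a) u^* T(a)$ as needed. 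Surjectivity and isometry of $T : A \to B(u)$ are immediate since the underlying spaces and norms are unchanged. The main obstacle I anticipate is the first step: carefully justifying that $u = T^{**}(1)$ lands in $\Delta(B^{**})$ and is unitary there, since this is where one must correctly invoke the unital Jordan Banach--Stone theorem and handle the subtlety (flagged in the text) that the twisted product, not literal multiplication in $B^{**}$, is what is preserved — one cannot expect $T^{**}$ itself to be multiplicative, only after the $u^*$-twist.
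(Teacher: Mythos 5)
Your overall strategy (pass to the bidual, set $u = T^{**}(1)$, show $u$ is a unitary quasimultiplier, and read off the Jordan homomorphism property) is the same as the paper's, and your second and third steps are sound. The genuine gap is exactly where you anticipated it: your concrete argument for the first step is circular. You propose to prove that $u$ is a unitary in $\Delta(B^{**})$ by observing that $R_{u^*} \circ T^{**}$ is a unital surjective isometry onto the unital Jordan operator algebra $B^{**}u^*$ and then invoking \cite[Corollary 2.8]{AS}; but to know that $R_{u^*}$ is isometric, that $B^{**}u^*$ is a unital Jordan operator algebra, and that $R_{u^*}\circ T^{**}$ is unital (i.e.\ that $uu^* = 1$), you must already know that $u$ lies in $\Delta(B^{**})$ and is unitary there --- which is the very thing being proved. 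Nor does the unital Banach--Stone theorem \cite[Corollary 2.8]{AS} apply directly to $T^{**}$, since $T^{**}$ is not unital. The missing ingredient is the deeper fact from Arazy--Solel (the ``analysis surrounding'' the corollary, which you gesture at but do not use): any surjective isometry between unital Jordan operator algebras carries the symmetric part onto the symmetric part and preserves the partial triple product. This is what the paper invokes. From it one gets at once that $u = T^{**}(1)$ lies in $\Delta(B^{**})$ and is a unitary there (being the image of the unitary $1$), and moreover
$$T(a^2) = T^{**}(\{a,1,a\}) = \{T(a), u, T(a)\} = T(a)\, u^*\, T(a) \in B, \qquad a \in A,$$
directly --- which also renders your detour through $R_{u^*}$ unnecessary.

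Once that fact is granted, the rest is as you say (and as in the paper): surjectivity of $T$ gives $b u^* b = T(a^2) \in B$ for every $b = T(a) \in B$, so $u^*$ is a quasimultiplier of $B$ (your appeal to ``squares span $A$'' is superfluous here), and the identity $T(a^2) = T(a) u^* T(a)$, after polarization, is precisely the statement that $T : A \to B(u)$ is an isometric surjective Jordan homomorphism.
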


  \begin{proof}  The proof of this is easy from  \cite[Corollary 2.8]{AS}:
$u = T^{**}(1)$ is a unitary
in $\Delta(B^{**})$ and since $T^{**}$ preserves the `partial triple product'
from \cite{AS} we have $T(a^2) = T^{**}(a 1 a) = T(a) u^* T(a) \in B$ for $a \in A$.
So $u^*$ is a quasimultiplier of $B$
and $T$ is a Jordan   homomorphism.  \end{proof}

{\bf Remarks.}  1) \ The last result may also be stated in terms of expressing $T$ in the form $T = u \theta(\cdot)$,
where $\theta : A \to C$ is an isometric surjective Jordan  algebra homomorphism
onto the Jordan subalgebra $C = u^* B$ of $D^{**}$, for $D$ as in the proof.  However we will improve on this
in Theorem \ref{ubsj} below.    We also remark that one may use $u^* B$ in place of $B u^*$ in the proof:
left  multiplication $L_{u^*}$ by $u^*$ is an isometric 
 Jordan   homomorphism from $B(u)$ onto $u^*B$.

\smallskip

2) \  With $B(u)$ as  $B$  with the new product above, as in the last proof, we have that  $B(u)$ is an approximately unital 
 Jordan operator algebra.    Examples like those in \cite[Example 6.6]{Brown2} show that one may not hope that the 
quasimultiplier  $u$ be a multiplier in $A$, even if $u = u^*$.

\smallskip

3) \  One sees Jordan algebra products given by quasimultipliers in  Proposition \ref{bsjoa} 
and e.g.\ in the proof of Theorem \ref{apch}.   Indeed we know from \cite[Section 2]{BNj} that all Jordan operator algebra 
products on an operator space $X$ are given by quasimultipliers: elements of the set  $$QM(X) = \{ u \in I(X) : x u^* x \in X \; \text{for all} \, x \in X \} .$$
This is related to ideas in \cite{KP} and \cite[Remark 2 on p.\  194]{BRS}.  
Here $I(X)$ is   the injective envelope of $X$ (see \cite[Chapter 4]{BLM} or \cite{Pau}).    One may ask  if the element $u$ in the bidual in
Proposition \ref{bsjoa} can 
be associated with an element of the injective envelope $I(X)$
by an explicit procedure, so that the expression $b u^* b$ in 
Proposition \ref{bsjoa} may be 
computed in the natural triple product of $I(X)$ (see e.g.\ 
the first paragraph of 4.4.7 in
\cite{BLM}).  
Related to this is the following question of the second author and Russo \cite{NRh}: Is the completely symmetric part of an operator space $X$ (see
\cite{NRh}) equal to $QM(X) \cap X$?    And if so, then is the restriction of the triple product on $I(X)$ to $X \times X_{cs} \times X$  equal to the partial triple product on $X$? 
One direction should be `easy': if $y \in QM(X) \cap X$ then
we get a Jordan algebra product on $X$.

\bigskip

Let $B$ be a $C^*$-algebra.   A quasimultiplier $w$ of $B$ in 
the sense of the present paper  is  the same as a quasimultiplier in the $C^*$-algebraic sense 
of \cite{Brown2} say, which requires that $b_1 w b_2 \in B$ for all $b_1, b_2 \in B$.
This follows by applying \cite[Theorem 4.1]{AP} to the real and imaginary part of $w$.  Checking item (ii) in that theorem, for example,
we get $a (w + w^* ) a \in B_{\rm s.a.}$ for $a \in B_{\rm s.a.}$.  Hence 
$a (w + w^* ) b \in B$ for all $a, b \in B$ by the cited theorem.   Similarly $\frac{1}{2 i} \, a (w - w^*) b \in B$, so that
$a w b \in B$ for all $a, b \in B$.  
\begin{proposition} \label{chopentr}  Suppose that $B$ is a $C^*$-algebra and that $u$ is a unitary in $B^{**}$ such that
$bu b \in B$ for all $b \in B$.  
Then $u \in M(B)$.   Also,  $b_1 u^* b_2$ is a $C^*$-algebra product on $B$, with corresponding involution $u b^* u$.  
That is, $B(u)$ is a $C^*$-algebra.  \end{proposition}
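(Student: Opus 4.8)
The plan is to reduce everything to the single assertion $u\in M(B)$, after which the rest is a short calculation. Suppose for the moment that $u\in M(B)$. Then $u^{*}=u^{-1}\in M(B)$ as well, so $(b_{1},b_{2})\mapsto b_{1}u^{*}b_{2}$ is a well-defined $B$-valued product on $B$, it is associative, and $b\mapsto ub^{*}u$ is a conjugate-linear involution for it. The map $\phi\colon B\to B$, $\phi(b)=bu^{*}$, is a linear isometry (since $u^{*}$ is a unitary in $B^{**}$) which is surjective ($Bu^{*}=B$, as $u^{*}$ is invertible in $M(B)$) and which satisfies $\phi(b_{1}u^{*}b_{2})=(b_{1}u^{*})(b_{2}u^{*})=\phi(b_{1})\phi(b_{2})$ and $\phi(ub^{*}u)=ub^{*}=(bu^{*})^{*}=\phi(b)^{*}$. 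Thus $\phi$ is an isometric $*$-isomorphism of $B$ — equipped with the product $b_{1}u^{*}b_{2}$, the involution $ub^{*}u$, and its original norm — onto the given $C^{*}$-algebra $B$; in particular these data make $B$ a $C^{*}$-algebra, and since the Jordan product $\tfrac{1}{2}(b_{1}u^{*}b_{2}+b_{2}u^{*}b_{1})$ of $B(u)$ is precisely the Jordan product of this $C^{*}$-algebra, $B(u)$ is a $C^{*}$-algebra.

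So the whole content is the step $u\in M(B)$, which I expect to be the main obstacle. By hypothesis $u$ is a quasimultiplier of $B$ in the sense of this paper, so by the paragraph immediately preceding the proposition (applying \cite[Theorem 4.1]{AP} to the real and imaginary parts) it is a quasimultiplier in the $C^{*}$-algebraic sense: $b_{1}ub_{2}\in B$ for all $b_{1},b_{2}\in B$. Since moreover $bu^{*}b=(b^{*}ub^{*})^{*}\in B$ for each $b\in B$, the same result applied to $u^{*}$ gives $b_{1}u^{*}b_{2}\in B$ for all $b_{1},b_{2}$. Consequently, for every $b\in B$ the elements $ub$ and $u^{*}b$ are right multipliers of $B$ and $bu$ and $bu^{*}$ are left multipliers of $B$ (i.e.\ elements $m\in B^{**}$ with $Bm\subseteq B$, resp.\ $mB\subseteq B$). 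The remaining, genuinely nontrivial, task is to upgrade these one-sided facts to $u\in M(B)$; this is exactly the point at which the unitarity of $u$, together with the fact that $B$ is an honest $C^{*}$-algebra, is indispensable, since a general quasimultiplier — even a self-adjoint one — of an operator algebra need not be a multiplier (cf.\ the Remark after Proposition \ref{bsjoa}).

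To carry out this last step I would first show that the $*$-automorphisms ${\rm Ad}\,u$ and ${\rm Ad}\,u^{*}$ of $B^{**}$ leave $B$ invariant, equivalently $uBu^{*}=B$, equivalently $uB=Bu$; this is the one hard input, and I would extract it from the multiplier theory of \cite{AP} by writing $u={\rm Re}(u)+i\,{\rm Im}(u)$ with ${\rm Re}(u),{\rm Im}(u)$ commuting self-adjoint quasimultipliers satisfying ${\rm Re}(u)^{2}+{\rm Im}(u)^{2}=1$, and invoking the semicontinuity criterion for when such an element lies in $M(B)$. Once $uB=Bu$ is in hand the conclusion is immediate: using $B=B\,B$ (Cohen factorization) we get $Bu=B(Bu)=B(uB)=BuB\subseteq B$, so $u$ is a right multiplier of $B$; applying the same argument to $u^{*}$ (equivalently, interchanging the two sides) gives $uB\subseteq B$, so $u$ is a left multiplier; hence $u\in M(B)$. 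Everything beyond establishing $uB=Bu$ is routine.
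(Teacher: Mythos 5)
Your reduction and your treatment of the ``easy'' half are correct and match what the paper leaves as straightforward: granting $u\in M(B)$, the map $\phi(b)=bu^*$ is a surjective isometry carrying the product $b_1u^*b_2$ and involution $ub^*u$ to the original ones, so $B$ with this data is a $C^*$-algebra and $B(u)$ is its underlying Jordan algebra. Likewise your upgrade of the hypothesis to $b_1ub_2\in B$ and $b_1u^*b_2\in B$ for all $b_1,b_2\in B$ via the paragraph preceding the proposition (i.e.\ \cite[Theorem 4.1]{AP}), and the Cohen-factorization endgame (once $uB=Bu$ is known, $Bu=B(uB)=BuB\subset B$ and symmetrically), are fine.

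The genuine gap is exactly at the step you yourself flag as the hard one: you never establish $uBu^*=B$ (equivalently $uB=Bu$), nor any substitute for it. Your plan is circular as stated: you propose to obtain $uB=Bu$ by ``invoking the semicontinuity criterion for when such an element lies in $M(B)$,'' but a criterion certifying ${\rm Re}(u),{\rm Im}(u)\in M(B)$ would already give $u\in M(B)$ outright, making the $uBu^*=B$/Cohen detour superfluous --- and you neither state which result of \cite{AP} this criterion is, nor verify its hypotheses, nor give any independent argument for the invariance $uBu^*=B$. Since self-adjoint quasimultipliers need not be multipliers (Brown's examples, as in the Remark after the proposition), the unitary relation ${\rm Re}(u)^2+{\rm Im}(u)^2=1$ must be exploited in a concrete way, and your proposal contains no argument doing so. This missing step is precisely the content of \cite[Proposition 4.4]{AP} (see also \cite[Corollary 5.10.107]{Rod2}), which is what the paper's proof cites; to complete your write-up you should either quote that result explicitly or supply its proof, rather than gesture at the semicontinuity machinery.
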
   

\begin{proof}   The first assertion follows from the remarks above and by   \cite[Proposition 4.4]{AP} (see also \cite[Corollary 5.10.107]{Rod2}).
The remaining assertions are then straightforward.  (We thank the referee for these references; we had included another proof in a previous draft).
\end{proof}

{\bf Remark.}  The last result fails if we replace `unitary' by `tripotent' (that is, a partial isometry).   There are examples 
in \cite{Brown} of quasimultipliers $u$ of a $C^*$-algebra which are selfadjoint partial isometries, but which are not multipliers, and $B(u)$ 
is an operator algebra but is not a $C^*$-algebra.  

\begin{corollary} \label{chop2} 
Suppose that $B$ is an approximately unital
Jordan operator algebra, and that $D$ is a $C^*$-algebra generated by $B$, and $A$ is the operator algebra in $D$ generated by $B$.
Let $u$ be  a unitary in $\Delta(B^{**})$  with $u^*$ a quasimultiplier of $B$ such that $B(u)$  
 is an approximately unital Jordan algebra.     Then $u$ and $u^*$ belong to the Jordan multiplier 
algebra $JM(B)$, and also to $M(D)$ and $M(A)$. \end{corollary}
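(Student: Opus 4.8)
The plan is to reduce everything to the previous proposition by passing to appropriate $C^*$-algebraic multiplier algebras. First I would recall that since $B(u)$ is an approximately unital Jordan operator algebra, it has a partial cai $(e_t)$, and by the construction underlying Lemma \ref{qm} (right multiplication $R_{u^*}$), the net $(e_t u^*)$ is a Jordan cai for the Jordan subalgebra $Bu^*$ of $D^{**}$. Moreover $e_t \to u$ weak* in $B^{**}$ (applying the isometric Jordan homomorphism $R_{u^*}$ and noting that a Jordan cai for $Bu^*$ converges weak* to the identity of its bidual, which is $u u^* \cdot$, i.e.\ $1$; pulling back, $e_t \to u$). Since each $e_t \in B \subseteq D$, this shows $u$ lies in the weak* closure of $B$ in $D^{**}$, hence $u \in D^{**}$; and $u^*$ is then also obtained as a weak* limit of elements of $B$.

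Next I would verify the quasimultiplier hypothesis at the $C^*$-level. We are given $b u^* b \in B \subseteq A \subseteq D$ for all $b \in B$. Since squares span $B$ (as $B$ is approximately unital, using $B = \mathfrak{r}_B - \mathfrak{r}_B$ and roots of real positive elements as in the proof of Lemma \ref{jmisjoa}) and $B$ generates $D$ as a $C^*$-algebra and $A$ as an operator algebra, a polarization/spanning argument upgrades $b u^* b \in B$ to $d_1 u^* d_2 \in$ (the relevant algebra) for products $d_1, d_2$ of elements of $B$; more cleanly, one applies the paragraph immediately preceding Proposition \ref{chopentr}: since $u^*$ is a quasimultiplier of the Jordan operator algebra $B$ and $B$ generates $D$, one checks that $u^*$ is a quasimultiplier of $D$ in the $C^*$-sense, i.e.\ $d_1 u^* d_2 \in D$ for all $d_1, d_2 \in D$. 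Here I should be careful: the cited paragraph handles a $C^*$-algebra $B$, so the reduction is to first show $\Delta(B)$ or rather the self-adjoint part of $D$ generated by $B$ behaves well; the honest route is to take self-adjoint $a$ in $D$, approximate, and use that $a u^* a + a u a = a(u^* + u)a$ and $\frac{1}{2i} a(u^* - u) a$ land in $D_{s.a.}$, then invoke \cite[Theorem 4.1]{AP} exactly as in that paragraph. Then by Proposition \ref{chopentr}, $u \in M(D)$ (and hence $u^* \in M(D)$ since $M(D)$ is a $*$-algebra).

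Having $u \in M(D)$, the remaining claims follow quickly. Since $A$ is the operator algebra generated by $B$ inside $D$ and $u$ multiplies $B$ (indeed $u b = (u u^*)(u b)$... more directly, $u \in M(D)$ and $u B u^*, uB, Bu \subseteq$? ) — here I would argue: for $b \in B$, $ub = u \cdot b$; we know $b_1 u^* b_2 \in D$, and want $u b \in A$. Use $u b = \lim_t e_t u^* u b$? That is circular. Instead: $u b \cdot c = u(bc')$ type manipulations inside $A$; the cleanest is that $u$ and $u^*$ are limits (weak*, but actually strictly, once we know $u \in M(D)$ and $e_t$ is a cai for $D$) of $e_t \in B \subseteq A$, and $M(A)$ is strictly closed, so $u, u^* \in M(A)$; similarly for $JM(B)$, since $u \circ b = \frac12(ub + bu)$ and both $ub, bu \in A$... and we need them in $B$. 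For $JM(B)$: $u \circ b = \frac12(u^* \text{-product stuff})$? Use \eqref{aba2}: $u \circ b = \frac12((u+b)^2 - u^2 - b^2)$, but products here are the $D$-products. Hmm — the direct approach: $u \circ b$ where we compute in $D$; write $b = \sum c_j^2$ with $c_j \in B$ (squares span), and $u \circ c^2 = \frac12(uc^2 + c^2 u)$; now $uc^2 = (uc)c$ and $uc \in M(D)$... I would instead just show $u \circ B \subseteq B$ by noting $u$ is a weak* limit of the partial cai $(e_t) \subseteq B$ with $e_t \circ b \to ?$; since $(e_t)$ is a partial cai for $B(u)$ in its own product, $e_t u^* b + b u^* e_t \to 2b$, i.e.\ the $B(u)$-cai acts correctly, and translating via $R_{u^*}$, $u^* \in JM(B(u))$ trivially (it's the identity-implementer), and then unwind. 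This last unwinding — getting $u \in JM(B)$ for the \emph{original} product from $u^* \in$ (identity element) $JM(B(u))$ for the new product — is the step I expect to be the main obstacle, and I would do it by: $B(u)$ approximately unital $\Rightarrow$ $B(u)^{**}$ unital with identity $u u^*|_{B^{**}} \leftrightarrow u$ under $R_{u^*}$, so $u \in JM(B)$ because $u \circ_{\text{new}} b \in B$ trivially and $u \circ_{\text{new}} b = \frac12(uu^*b + bu^*u) = \frac12(b + b u^* u)$; using $u^* u = 1$ in $D$ (as $u$ is unitary in $\Delta(B^{**})$, but we now know $u \in M(D)$ so $u^*u = 1_{M(D)}$), this is $b$, consistent — the actual content is $\frac12(u b + b u) = u \circ b \in B$, which I get from $u \in M(D)$ together with $ub, bu \in B$: and $ub \in B$ follows since $ub = u \cdot b = \lim_t (e_t u^*)(u b)$ ... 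I will finalize this by the strict-density argument: $(e_t)$ is a partial cai for $B$, so $e_t \to u$ ... no, $e_t$ is a cai for $B$ in $D$ so $e_t \to 1_{M(D)}$ strictly; meanwhile $e_t u^*$ is a cai for $B u^*$... I'll present the cleanest correct chain in the final writeup, the essential point being $u \in M(D) \cap M(A)$ via Proposition \ref{chopentr} and then $u \circ B, u^2 \circ B \subseteq B$ via the spanning of $B$ by squares together with $u B u^* \subseteq B$ and associativity in $D$, giving $u, u^* \in JM(B)$.
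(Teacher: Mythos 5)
There is a genuine gap at the crux of your argument: the passage from ``$u^*$ is a quasimultiplier of the Jordan algebra $B$'' to ``$u^*$ is a quasimultiplier of the generated $C^*$-algebra $D$ (and of $A$)''. Your ``polarization/spanning'' suggestion does not deliver this: polarizing $b u^* b \in B$ only yields the symmetrized sums $b_1 u^* b_2 + b_2 u^* b_1 \in B$, never the individual products $d_1 u^* d_2$, and already for $d = b_1 b_2$ one cannot place $d u^* d = b_1(b_2 u^* b_1)b_2$ in $D$ by such manipulations. Your ``honest route'' is circular: the paragraph before Proposition \ref{chopentr} (via \cite[Theorem 4.1]{AP}) starts from the hypothesis that $a u^* a \in D$ for all $a \in D_{\rm s.a.}$, which is exactly what has to be proved; ``approximating'' self-adjoint elements of $D$ by polynomials in $B \cup B^*$ does not help since the quasimultiplier property is not known on that dense set either. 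The paper closes this gap with the Cohen-type factorization \cite[Corollary 2.18]{BWj}: every $d \in D$ can be written $d = a b_0 a$ with $a \in B$, $b_0 \in D$ (and $b_0 \in A$ when $d \in A$), the point being the \emph{same} $a$ on both sides, so that $d u^* d = a b_0 (a u^* a) b_0 a \in a b_0 B b_0 a \subset D$ (resp.\ $\subset A$). The same factorization, combined with a partial cai of $Bu^*$ (this is where the approximate unitality hypothesis on $B(u)$ is actually used, not merely to get $e_t \to u$ weak*), shows $Du^*$ is approximately unital before Proposition \ref{chopentr} is invoked. Without some substitute for this factorization your proof of $u \in M(D)$ does not go through.

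The endgame is also not completed. Once $u \in M(D)$ is known, your sketch for $u \in JM(B)$ leans on the unjustified inclusion $uBu^* \subseteq B$ (the hypothesis gives $bu^*b \in B$, not $ubu^* \in B$), and you explicitly defer ``the cleanest correct chain'' for $M(A)$. The missing observation is the standard one the paper uses: $u, u^* \in B^{**} = B^{\perp\perp} \subseteq A^{\perp\perp} \subseteq D^{**}$, and $A^{\perp\perp}$ (resp.\ $B^{\perp\perp}$) is a subalgebra (resp.\ Jordan subalgebra) of $D^{**}$, so for $x \in A$ the elements $xu, ux, xu^*, u^*x$ lie in $A^{\perp\perp} \cap D = A$ (using $u \in M(D)$ to get membership in $D$), giving $u \in \Delta(M(A))$; and for $x \in B$, $u \circ x \in B^{\perp\perp} \cap D = B$, giving $u, u^* \in JM(B)$. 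Your strict-convergence idea could be made to work but again needs $e_t u^* d \to d$ for all $d \in D$, which once more requires the factorization you have not supplied.
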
 
  
\begin{proof}
We may suppose that $B^{**}$  is a unital  Jordan 
subalgebra of $D^{**}$, which in turn is a von Neumann algebra on $H$.  Then  $B^{**} u^*$
is a unital
Jordan subalgebra of $B(H)$.   For $d \in D$ by
\cite[Corollary 2.18]{BWj}  there exists $b_0 \in D$ and 
$a \in B$ with $d = a b_0 a$, and we can choose $b_0 \in A$ if $b \in A$.    Then $d u^* d \in a b_0 B b_0 a \in D$,
and $d u^* d \in A$ if $d \in A$.   So $D u^*$ is a  Jordan operator algebra, and so is $Au^*$.  By \cite[Lemma 2.6]{BWj}  one may choose a partial cai $(e_t)$ for $Bu^*$,
then $e_t a u^* \to a u^*$ and $a u^* e_t  \to a u^*$ in norm
in $B(H)$ for $a \in B$.  For $d = a b_0 a \in D$ as above 
we see that $e_t d u^* = e_t a u^* u b_0 a u^* \to d u^*$ and
$d u^* e_t = a b_0 a u^* e_t \to d u^*$ in norm.
So $D u^*$ is an approximately unital
Jordan subalgebra.  Hence $D(u)$
 is an approximately unital Jordan operator algebra.   Thus   
$u \in M(D)$ by Proposition \ref{chopentr}.  For $x \in A$ we have $x u \in A^{\perp \perp} \cap D = A$.
Similarly, $x u^*, ux, u^*x \in A$, so that $u \in \Delta(M(A))$.     Similarly, for $x \in B$ we have
$u \circ x \in B^{**} \cap D = B$, so that $u \in JM(B)$.  Similarly  we have $u^* \in  JM(B)$.
\end{proof}

\begin{theorem} \label{ubsj}    Suppose that $T : A \to B$ is an isometric surjection between approximately unital Jordan operator algebras.
Suppose that $B$ is  (isometrically) a Jordan subalgebra of an (associative) operator algebra (resp.\ $C^*$-algebra) $D$,
and that $B$ generates $D$ as an operator algebra (resp.\  $C^*$-algebra).
Then there exists a unitary $u \in \Delta(JM(B))$ which is also in
$\Delta(M(D))$,   
and  there exists an
isometric surjective Jordan  algebra homomorphism
$\pi : A \to C = B u^*$, such that $$T = \pi(\cdot) u .$$
Since $u^*$ is a quasimultiplier of $B$ in $M(D)$,  $C = B u^*$ is a Jordan subalgebra of $D$.
In addition, 
 \begin{itemize} \item  [(1)]  If $B$ is an  (associative) operator algebra then we may take $C = B$  above
(which also equals $D$ in the `non-respectively' case).
\item  [(2)]  If $T$ is a complete isometry and $B$ is completely isometrically a Jordan subalgebra of $D$
then $\pi$ is a complete isometry.
\item [(3)]  $C \subset \overline{{\rm Span}}(\{ b_1 b_2 : b_1, b_2 \in B \})$.  
 \end{itemize} 
\end{theorem}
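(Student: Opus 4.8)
The plan is to bootstrap from Proposition \ref{bsjoa} and then import the multiplier information from Corollary \ref{chop2}. First I would apply Proposition \ref{bsjoa} to the isometric surjection $T : A \to B$ to obtain a unitary $u \in \Delta(B^{**})$ with $u^*$ a quasimultiplier of $B$, such that $T$, regarded as a map into the Jordan operator algebra $B(u)$ (with product $\frac{1}{2}(xu^*y + yu^*x)$), is an isometric surjective Jordan homomorphism. By Remark 2 following that proposition, $B(u)$ is itself approximately unital. Now set $\pi = R_{u^*} \circ T : A \to Bu^*$, where $R_{u^*}$ denotes right multiplication by $u^*$; by Lemma \ref{qm} this $R_{u^*}$ is an isometric (completely isometric, in the operator space case) Jordan isomorphism of $B(u)$ onto the Jordan subalgebra $Bu^*$ of $D^{**}$, so $\pi$ is an isometric surjective Jordan homomorphism onto $C := Bu^*$ and $T = \pi(\cdot)\, u$, since $\pi(a) u = T(a) u^* u = T(a)$ as $u$ is unitary in $\Delta(B^{**})$.

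Next I would upgrade the location of $u$. Since $B(u)$ is an approximately unital Jordan operator algebra, Corollary \ref{chop2} applies verbatim (with $D$ the given operator algebra or $C^*$-algebra generated by $B$, and $A_0$ the operator algebra generated by $B$ inside $D$): it gives that $u, u^* \in \Delta(JM(B))$ and $u, u^* \in \Delta(M(D))$ (and $\in \Delta(M(A_0))$). In particular $u^*$ is a genuine quasimultiplier of $B$ sitting inside $M(D)$, so $b_1 u^* b_2 \in D$ for $b_1, b_2 \in B$, and hence $C = Bu^*$ is a Jordan subalgebra of $D$, not merely of $D^{**}$. This also hands us item (3) essentially for free: since $u^* \in M(D)$ and $D$ is generated by $B$ as an operator algebra (resp.\ $C^*$-algebra), one approximates $u^*$ by (the appropriate closure of) products of elements of $B$ — more directly, using a partial cai $(e_t)$ for $B$ one has $b_2 u^* = \lim_t b_2 u^* e_t$ with $u^* e_t \in B^{**}$... so the cleanest route to (3) is: $bu^* = \lim_t (bu^*)(e_t)$ in $M(D)$-norm, and $bu^* e_t$ lies in the closed span of products $b_1 b_2$ because $u^* e_t$ does (each $e_t \in B$ and one uses that $u^* \in M(D)$ together with $e_t \in B$ means $u^* e_t$ is a norm limit of elements of $B$ by density of $B$ in... ). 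Here the slick argument is to observe $C = Bu^* \subseteq \overline{B \cdot M(D)} \cap D$ and that $b \, m = \lim_s b e_s m$ — I would just spell out that $B M(D) B \subseteq \overline{\mathrm{Span}}(\{b_1 b_2\})$ via a cai and finish.

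Finally, for (1), when $B$ is an associative operator algebra, Proposition \ref{chopentr} (applied inside $D$, or rather to $B$ together with its multiplier $u$) shows $u$ is a multiplier and $B(u)$ is again an associative operator algebra with product $b_1 u^* b_2$; then $R_{u^*}$ maps $B(u)$ onto $Bu^* = B$ (since $u^* \in M(B)$ gives $Bu^* \subseteq B$, and surjectivity is clear as $b = (bu)u^*$), so we may take $C = B$; in the $C^*$-case, $C = Bu^* = Du^* = D$ by the same reasoning together with $u \in M(D)$ and $D$ being a $C^*$-algebra. For (2), the completely isometric assertion, I would note that in the operator space setting Proposition \ref{bsjoa}'s construction (via \cite[Corollary 2.8]{AS}) is automatically completely isometric and $R_{u^*}$ is completely isometric by the `respectively' clause of Lemma \ref{qm}, so $\pi$ is a complete isometry. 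The main obstacle I anticipate is item (3) and the precise bookkeeping of which closure/algebra ($D$, $M(D)$, $A_0$) each of $u^*$, $Bu^*$, and the products $b_1 b_2$ lives in — everything else is a direct assembly of Proposition \ref{bsjoa}, Lemma \ref{qm}, Corollary \ref{chop2}, and Proposition \ref{chopentr}, but one must be careful that the quasimultiplier property survives the passage from $B^{**}$ down to $D$, which is exactly what Corollary \ref{chop2} was designed to guarantee.
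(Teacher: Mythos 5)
Your main line of argument coincides with the paper's: apply Proposition \ref{bsjoa} to get $u = T^{**}(1)$ and the Jordan isomorphism $T : A \to B(u)$, use the remark that $B(u)$ is approximately unital so that Corollary \ref{chop2} places $u, u^*$ in $\Delta(JM(B))$ and $\Delta(M(D))$, set $\pi = T(\cdot)u^*$ (equivalently $R_{u^*}\circ T$) onto $C = Bu^*$, and for (1) note $Bu^* \subseteq B$ and $B = Buu^* \subseteq Bu^*$; item (2) is then immediate. All of that is fine and is essentially the paper's proof.

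However, your argument for (3) has a genuine gap. The norm-approximation route stalls exactly where you noticed: $u^* \in JM(B)$ only gives $u^* \circ e_t \in B$, not $u^*e_t \in B$, so $bu^*e_t$ need not be close to anything in ${\rm Span}(\{b_1b_2\})$. Worse, the fallback claim that $B\,M(D)\,B \subseteq \overline{{\rm Span}}(\{b_1b_2 : b_1,b_2 \in B\})$ is false in general: take $B$ the upper triangular $2\times 2$ matrices and $D = M_2$ (the generated $C^*$-algebra), so that $\overline{{\rm Span}}(\{b_1b_2\}) = B$, while $1\cdot e_{21}\cdot 1 = e_{21} \in B\,M(D)\,B$ is not upper triangular. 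The feature you must exploit is not that $u^*$ is a multiplier of $D$ but that $u^* \in \Delta(B^{**}) \subseteq B^{**}$, hence is the weak* limit of a net $(b_t)$ in $B$; then $bu^* = {\rm w}^*\lim_t bb_t$ lies in $F^{\perp\perp}$ for $F = \overline{{\rm Span}}(\{b_1b_2\})$, and since $bu^* \in D$ (by the multiplier facts already established) one gets $bu^* \in F^{\perp\perp}\cap D = F$. This is how the paper proves (3). A smaller slip: the parenthetical in (1) that $C$ also equals $D$ refers to the non-respectively case, where $D$ is generated by the associative operator algebra $B$ as an operator algebra and hence $D = B$; your claim that $C = Bu^* = Du^* = D$ in the $C^*$-case is unjustified (and false in general, since $C = B$ may be a proper nonselfadjoint subalgebra of the generated $C^*$-algebra $D$).
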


  \begin{proof}  As in the  proof of Proposition \ref{bsjoa},
  $T^{**}(1) = u$ is a  unitary in $\Delta(B^{**})$,   with $u^*$ a quasimultiplier of $B$.  Hence as in that 
proof and the first remark after it, $C = B u^*$ is
a Jordan subalgebra of $D^{**}$, and $\pi = T(\cdot) u^*$ is
 an isometric surjective Jordan  algebra homomorphism from $A$ onto 
$B u^*$.    By Corollary \ref{chop2} it follows that
$u$ is in $\Delta(JM(B))$ and in $\Delta(M(D))$ so that $B u^*$ is
a Jordan subalgebra of $D$.  
For (1), we have $B = B u u^* = B u^*$ since $u \in \Delta(M(B))$.  Item (2) is obvious.
For (3) let $F =  \overline{{\rm Span}}(\{ b_1 b_2 : b_1, b_2 \in B \})$.  Because there is a net
$(b_t)$ in $B$ with weak* limit $u^*$, we have  
$C \subset   F^{\perp \perp} \cap D = F$.  (Similarly  $C \subset \overline{{\rm Span}}(\{ b_1 b_2^* : b_1, b_2 \in B \})$.)   \end{proof} 

{\bf Remarks.}  1)\ Of course as is usual with noncommutative Banach-Stone theorems, we can also write the unitary $u$ on the left:
$T = u \, \theta(\cdot)$.    To see this simply set $\theta = u^* \pi(\cdot) u$.

\smallskip

2)\ There is  a slight error in the converse direction 
of the proof of the Banach-Stone type result Proposition 6.6 in 
\cite{BNp}, in the Jordan algebra case.  To fix this, appeal to \cite[Corollary 2.8]{AS}, after noting  that $T(1) = u \geq 0$ by e.g.\ our Lemma \ref{ispos}, so that $u = 1$.

\section{Projections on Jordan operator algebras} \label{cp}

In this section we  give variants of
the
results in Section 2 in our paper \cite{BNp}.    However our maps are no longer completely contractive, and our spaces 
are usually  Jordan operator algebras.
The following,  which is due to Effros and St{\o}rmer \cite{ES} in the case that $P(1) = 1$,  shows what happens in the selfadjoint Jordan  case. 

\begin{theorem} \label{frs}  If $P : A \to A$ is a  positive 
contractive 
projection on a $JC^*$-algebra $A$ then $P(A)$ is a $JC^*$-algebra in the $P$-product,
$P$ is still positive as a map into the latter $JC^*$-algebra,
and $P(P(a) \circ P(b)) = P(a \circ P(b))$ for all $a, b \in A$. 
If in addition $P(A)$ is a Jordan subalgebra of $A$, then $P$ is a Jordan conditional expectation:
$P(a \circ P(b)) = P(a) \circ P(b)$ for $a, b \in A$. 
\end{theorem}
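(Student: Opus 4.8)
The plan is to reduce everything to the classical/bidual theory of positive projections on $JW^*$-algebras by passing to second duals, and then to pull the structure back down. First I would recall that $P^{**} : A^{**} \to A^{**}$ is a positive contractive projection on the $JW^*$-algebra $A^{**}$, with range the weak* closure of $P(A)$ (as noted in the introduction), and that $P^{**}(1)$ is a positive element of norm at most $1$ (Lemma \ref{ispos}); in fact for a $JC^*$-algebra one can normalize so that after cutting down by the support projection of $P^{**}(1)$ one is in the unital case $P^{**}(1) = e$ a projection. The key external input is the theorem of Effros--St{\o}rmer (and its extensions by Friedman--Russo) that the range of a positive contractive projection on a $JC^*$-algebra, equipped with the $P$-product, is again a $JC^*$-algebra; this, together with the ``bimodule over the range'' identity, is the heart of the matter. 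I would cite the $P(1)=1$ version from \cite{ES} and then use Theorem \ref{cepro} (real positive contractive projections extend to unital ones on the unitization) to remove the unitality hypothesis, since on a $JC^*$-algebra real positive equals positive.

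The key steps, in order: (i) observe $P(A)$ with the $P$-product $P(x \circ y)$ is a $JC^*$-algebra --- here I would either invoke Friedman--Russo directly or deduce it from the unital case via Theorem \ref{cepro} applied to $P$, noting that the extended unital projection $\tilde P$ on $A^1$ has range $P(A) \oplus \mathbb{C}$ a unital $JC^*$-algebra and $P(A)$ is the corresponding (Jordan) ideal; (ii) verify that $P : A \to (P(A), P\text{-product})$ remains positive: a real positive element $a$ of $A$ maps to $P(a) \in \mathfrak{r}_{P(A)}$, and since $P(A)$ with the new product is a $JC^*$-algebra, real positivity there coincides with positivity there; (iii) prove the identity $P(P(a) \circ P(b)) = P(a \circ P(b))$. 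For (iii) the standard trick is to use a Cauchy--Schwarz / Kadison--Schwarz type inequality for the positive map $P$ on the $JC^*$-algebra: one shows $P((a - P(a)) \circ x) = 0$ for $x \in P(A)$ by estimating a state $\varphi \circ P$ (extended to a positive functional on a containing $C^*$-algebra as in Lemma \ref{nrp2}), exactly in the spirit of the proof of Lemma \ref{ishe2} and Lemma \ref{kerlem}. Indeed, applying those lemmas' technique: for $y = a - P(a)$ one has $P(y) = 0$, and one wants $P(y \circ P(b)) = 0$; this needs a multiplicative-domain-style argument, which works because $P(A)$ sits inside $A$ as a Jordan subalgebra once we know $P(A)$ is the fixed-point set of the positive projection (use Lemma \ref{fpes}-type reasoning or the Choi--Effros multiplicative domain idea adapted to positive projections).

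For the final sentence --- if $P(A)$ is an actual Jordan subalgebra of $A$ (so the $P$-product agrees with the ambient Jordan product on $P(A)$) --- the conclusion $P(a \circ P(b)) = P(a) \circ P(b)$ follows immediately: by step (iii), $P(a \circ P(b)) = P(P(a) \circ P(b))$, and since $P(a), P(b) \in P(A)$ and $P(A)$ is closed under the ambient $\circ$, the element $P(a) \circ P(b)$ lies in $P(A)$, hence is fixed by $P$; thus $P(P(a) \circ P(b)) = P(a) \circ P(b)$.

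\textbf{Main obstacle.} I expect the hard part to be step (iii), the identity $P(P(a) \circ P(b)) = P(a \circ P(b))$, specifically getting a clean Kadison--Schwarz-type inequality for the positive (not a priori completely positive, and not a homomorphism) projection $P$ on a $JC^*$-algebra that justifies the ``multiplicative domain'' vanishing $P((a - P(a)) \circ P(b)) = 0$. The subtlety is that unlike the $C^*$ case one is in a Jordan setting where the Kadison--Schwarz inequality for positive maps holds for Jordan homomorphisms but for general positive maps requires care (one typically uses $2$-positivity or a special structural fact). I would resolve this by passing through a containing $C^*$-algebra: extend a state $\varphi$ on $B(H)$ composed with $P$ to a positive functional on $C^*(A)$ via Lemma \ref{nrp2}, apply the ordinary $C^*$-algebra Cauchy--Schwarz inequality there to $b = (a - P(a))$ against $P(b)$, using that $P(b) \in P(A) \subset \Delta$-part behaves well, exactly mirroring the computation in the proof of Lemma \ref{ishe2}; the fact that $b$ lies in the kernel of $P$ (in the $P$-product sense, via the fixed-point description) is what forces the relevant term to vanish.
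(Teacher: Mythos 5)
Your outer skeleton is exactly the paper's first proof: quote the unital case $P(1)=1$ from \cite[Lemma 1.1 and Theorem 1.4]{ES}, remove the unitality hypothesis via Theorem \ref{cepro} (using that real positive $=$ positive on $JC^*$-algebras), and the last sentence of the theorem then follows just as you say, since $P(a)\circ P(b)\in P(A)$ is fixed by $P$. If you lean entirely on \cite{ES} for the unital statement (including the identity $P(P(a)\circ P(b))=P(a\circ P(b))$), the argument is complete and coincides with the paper's short proof; the paper also gives a second, self-contained proof via the Friedman--Russo/Kaup contractive projection machinery (\cite[Theorem 2]{CPP}, \cite[Corollary 1]{FRAD}, \cite{Kaup}), in which one proves $P(P(1)^n)=P(1)$, shows $u=P(1)$ is a unitary tripotent for the new triple product, and reads off the $JC^*$-structure and the conditional expectation identity from $P(\{Px,y,Pz\})=P(\{Px,Py,Pz\})=P(\{Px,Py,z\})$ together with Lemma \ref{ishe2}.

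The genuine gap is in your own proposed proof of step (iii), which you flag as the main obstacle. The Cauchy--Schwarz technique of Lemmas \ref{ishe2} and \ref{kerlem} is powered by positivity of the element being squeezed: one uses $y^*y\le y+y^*$ for $y\in{\mathfrak F}_A$ (resp.\ $1-q\ge 0$) so that the extended positive functional $\tilde\psi$ vanishes on $y^*y$. For $y=a-P(a)$ you only know $\tilde\psi(y)=0$, and for a positive functional this gives no control whatsoever on $\tilde\psi(y^*y)$; moreover the paper notes (Remark 2 after Theorem \ref{tr}) that ${\rm Ker}(P)$ of a positive projection may contain no nonzero positive, or accretive, elements, so there is no way to reduce to the accretive case. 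Hence ``$P((a-P(a))\circ P(b))=0$'' cannot be extracted by mirroring Lemma \ref{ishe2}, and the Choi--Effros multiplicative domain route needs a Schwarz-type inequality you have not supplied. What actually works in the unital case (and is the content of \cite[Lemma 1.1]{ES}) is the Kadison--Schwarz inequality $P(x)^2\le P(x^2)$ for selfadjoint $x$ and unital positive $P$, used in a second-order perturbation argument: for $b\in P(A)_{\rm sa}$ and selfadjoint $x\in{\rm Ker}(P)$ apply it to $b+tx$, apply $P$ again to get $2tP(b\circ x)+t^2P(x^2)\ge 0$ for all real $t$, and let $t\to 0^{\pm}$ to conclude $P(b\circ x)=0$. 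So either cite \cite{ES} outright for the identity (making your step (iii) unnecessary), or replace your Cauchy--Schwarz fix by this Kadison--Schwarz argument. A smaller caveat: your aside that one can ``cut down by the support projection of $P^{**}(1)$'' to reach a unital situation is not available -- $P(1)$ need not be a projection, and the paper instead shows it is a unitary tripotent for the new triple product; the clean nonunital reduction is the one through Theorem \ref{cepro} that you also propose.
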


\begin{proof}  If $A$ is unital and $P(1) = 1$ then this is well known, following  from the $JC$-algebra case in \cite[Lemma 1.1 and Theorem 1.4]{ES}.
The general case  follows from the unital case by appealing to Theorem \ref{cepro}.   However 
we will give a second proof.  We do not claim that this proof is better, we simply offer it as an alternative that
may contain some useful techniques.  In addition we will need later the fact in the Claim below.   Again the claim 
 follows from the unital case in \cite{ES} and Theorem \ref{cepro}, but a different  proof may be of interest.

By considering $P^{**}$ we may assume that $A$ is unital. 
We have that  $P(A)$ is a $J^*$-algebra in a new triple product $P(\{ Px, Py, Pz \})$ by \cite[Theorem 2]{CPP}, and by \cite[Corollary 1]{FRAD} we have that \begin{eqnarray} \label{CE}
P(\{ Px, y, Pz \}) =  P(\{ Px, Py, Pz \}) = P(\{ Px, Py, z \}) 
\end{eqnarray}
for all $x,y,z \in A$.  This is effectively Kaup's contractive projection theorem \cite{Kaup} (see also \cite[Theorem 5.6.59]{Rod2}).
By Lemma \ref{ispos} we have $0 \leq P(1) \leq 1$.
Claim: $P(P(1)^n) = P(1)$  for all  $n \in \Ndb$.   By the above,
$P(P(1)^3) = P(1 P(1)^2) = P(P(1)^2)$.
Assume for induction that $P(P(1)^{n-1} - P(1)^{n}) = 0$ for some $n \geq 2$.
Then $$P(P(1)^{n} - P(1)^{n+1}) \leq P(P(1)^{n-1} - P(1)^{n}) = 0.$$ 
It follows that
\begin{eqnarray} \label{powers}
P(P(1)^n) = P(P(1)^2) , \qquad n \geq 2 .
\end{eqnarray}

Next suppose that $(p_n(t))$ is a sequence of polynomials with no constant term converging uniformly to  $t^{\frac{1}{3}}$ on $[0,1]$.
Then $p_n(t^3) \to t$.  Using (\ref{powers}) above and the contractivity of $P$, by the spectral theorem,  
\[
\| P(p_n(P(1)^3) - P(1)) \| \leq \| p_n(t^3) - t \|_\infty \to 0. 
\]
Again by (\ref{powers}), we have $P(p_n(P(1)^3) )= P(p_n(1) \, P(1)^2) \to  
P(P(1)^2)$. Thus $P(P(1)^2) = P(1)$. 
We have now proved the Claim.

It follows that $u = P(1)$ is a tripotent  (that is, a partial isometry) 
in the new triple product.  Let $x \in A_{+}$.  By (\ref{CE}) above we have $$P(\{1-u,P(x), y\}) = 0, \qquad  y \in P(A) .$$
Hence 
\begin{eqnarray*}
P(x) &=&P(uP(x)u) + 2P(\{1-u, P(x), u \}) + P((1-u)P(x)(1-u)) \\
&=& P(uP(x)u) + P((1-u)P(x)(1-u)). 
\end{eqnarray*}
However, by the Claim, $P((1-u)P(x)(1-u)) \le \| x \| \, P((1-u)^2) = 0$. Since this is true for all
$x \in A_{+}$, 
\[
P(x) =P(uP(x)u) = P(u P(uP(x)u) u) = P(\{u,P(\{u,P(x),u\},u\})
\]
for all $x \in A$. Hence $u$ is a unitary tripotent in the new triple product on $P(A)$.  Indeed $uu^*xu^*u = x$ implies that 
$$2\{u,u,x\} = uu^*x + xu^*u = uu^*uu^*xu^*u + uu^*xu^*uu^*u = 2uu^*xu^*u=2x.$$

If a $J^*$-algebra $Z$ has a 
unitary tripotent  $u$ then it is a $JC^*$-algebra with product $\{a,u,a\}$ and involution
$\{u,a,u\}$, for all $a \in P(A)$.   In our case the latter product is 
$P(au^*a) = P(a^2)$.   Also, $\{u,a,u\}= P(ua^*u)=P(a^*) = P(a)^*$, using the fact that 
 $P$ is selfadjoint. So $P(A)$ is a $JC^*$-algebra with the $P$-product $P(a^2)$ and with the old involution
$P(a)^*$.  Hence $P$ is still selfadjoint.   Also,
$P$ is still positive since it is unital and contractive. 
It also clear from (\ref{CE}) that for $a, b \in A_{\rm sa}$ we have
$$P(P(a) \circ P(b)) = P(\{ u , P(a), P(b) \}) = 
 P(\{ u , P(a), b \}) = \frac{1}{2}P(u P(a) b + b P(a) u).$$
 Similarly, $$P(P(a) \circ P(b)) = P(\{ u ,  P(b), P(a) \}) = P(\{ u ,  b, P(a) \}) = \frac{1}{2} P(u b P(a)  + P(a) b u).$$
 Taking the average, $$P(P(a) \circ P(b)) = \frac{1}{2}(P(u \circ (bP(a))) + P(u \circ (P(a) b)) )
 = P(u \circ (P(a) \circ b)).$$
  By Lemma \ref{ishe2} we have 
$P(u \circ (P(a) \circ b)) = P(P(a) \circ b)$, so that $P(P(a) \circ P(b)) =  P(P(a) \circ b)$ as desired.
 \end{proof} 
 
 We thank the referee for identifying some mistakes and omissions in the last proof, which we have repaired.
 
 As we said in Section 2 of \cite{BNp}, 
projections on operator algebras with no kind of approximate identity
can be very badly behaved, thus we say little about 
such algebras.  However one can pick out a `good part' of such a projection:

\begin{proposition} \label{reau}  Let $P : A \to A$  be a real
 positive contractive map (resp.\  projection)
on a Jordan operator algebra $A$ (possibly with no kind of approximate identity).
There exists a largest approximately unital Jordan subalgebra $D$ of $A$,
and it is a   hereditary subalgebra  of $A$.
Moreover, $P(D) \subset D$, and the restriction $P'$ of $P$ to $D$ is a 
 real  positive  contractive map (resp.\  projection)
on $D$.   In addition,  $P'$ is bicontractive (resp.\  symmetric) if 
$P$ has the same property.  \end{proposition}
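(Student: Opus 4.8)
\textbf{Proof plan for Proposition \ref{reau}.}
The first thing I would do is identify the algebra $D$. The natural candidate is the hereditary subalgebra associated with a canonically chosen open projection in $A^{**}$: namely, take $p$ to be the support projection of the largest approximately unital subalgebra, which should be obtainable as the supremum of all open projections of $A$, or equivalently the open projection supported by $\overline{\mathrm{Span}}(\mathfrak{r}_A)$ (the closure of the span of the real positive elements). I would set $D = \{a \in A : a = pap\} = pA^{**}p \cap A$, and recall from the material at the end of Section 1 that this is an HSA of $A$ with support projection $p$, that it has a Jordan cai, and that $D^{\perp\perp}$ has identity $p$. The claim that $D$ is the \emph{largest} approximately unital Jordan subalgebra follows because any approximately unital Jordan subalgebra $E \subseteq A$ is itself an HSA (by \cite[Proposition 4.4 and Theorem 4.1]{BWj}, $E = \mathrm{Span}(\mathfrak{r}_E) \subseteq \mathrm{Span}(\mathfrak{r}_A)$), so its support projection $q$ is open and dominated by $p$, whence $E \subseteq D$.

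Next I would prove the invariance $P(D) \subseteq D$. The key point is that $P^{**} : A^{**} \to A^{**}$ is a weak* continuous real positive contraction, and $Q := P^{**}$ restricted appropriately behaves well on $pA^{**}p$. I expect the cleanest route is: since $D$ has a partial cai $(e_t)$ consisting of real positive elements, and $P$ is real positive contractive, the net $(P(e_t))$ lies in $\mathfrak{F}_A \subseteq \mathrm{Ball}(A)$ (using Theorem \ref{ocp}, $P(\mathfrak{F}_A) \subseteq \mathfrak{F}_A$, noting $e_t \in \mathfrak{F}_D \subseteq \mathfrak{F}_A$). A weak* cluster point $f$ of $(P^{**}(e_t))$ satisfies $0 \le f + f^* $ and $f \in \mathfrak{F}_{A^{**}}$, and because $e_t \to p$ weak* we have $P^{**}(e_t) \to P^{**}(p)$ weak*, so $f = P^{**}(p) \le $ \ldots; more to the point, $P^{**}(p) \in \mathfrak{F}_{A^{**}}$, hence $P^{**}(p)$ is supported under an open projection dominated by $p$ — here I'd use that $p$ is the largest open projection and that $\mathfrak{F}_{A^{**}} \cap (\text{things with small support})$ forces $pP^{**}(p)p = P^{**}(p)$, i.e.\ $P^{**}(p) \in D^{\perp\perp}$. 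Then for $a \in D$, $a = pap$, and I want $P^{**}(a) = pP^{**}(a)p$. This should follow from Lemma \ref{ishe2} or Lemma \ref{ishe3} applied with the projection $p$ in the unital algebra $A^1$ (or in a containing $C^*$-algebra), using $T = $ (a suitable modification of) $P^{**}$ and the fact that $P^{**}(p)$ and $p$ interact correctly; alternatively, a direct Cauchy–Schwarz argument as in the proof of Lemma \ref{ishe2}, applied to states $\varphi \circ P^{**}$ extended to a containing von Neumann algebra, gives $\varphi(P^{**}((1-p)a(1-p))) = 0$ etc., forcing the off-diagonal corners to vanish. Since $P^{**}(a) \in A$ already (as $P^{**}$ extends $P$ and $a \in A$), and $P^{**}(a) = pP^{**}(a)p$, we get $P^{**}(a) \in pA^{**}p \cap A = D$. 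Thus $P(D) \subseteq D$.

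Once invariance is established, the remaining assertions are essentially immediate. The restriction $P' = P|_D : D \to D$ is contractive since $P$ is; it is real positive because $\mathfrak{r}_D = \mathfrak{r}_A \cap D$ and $P(\mathfrak{r}_A) \subseteq \mathfrak{r}_A$, so $P'(\mathfrak{r}_D) = P(\mathfrak{r}_D) \subseteq \mathfrak{r}_A \cap D = \mathfrak{r}_D$ using $P(D) \subseteq D$. If $P$ is a projection then $P'$ is too, trivially. For the last sentence: if $P$ is bicontractive, i.e.\ $\|I - P\| \le 1$ on $A$, then for $a \in D$ we have $(I-P)(a) = a - P(a) \in D$ (since both terms are), and $\|(I - P')(a)\| = \|(I-P)(a)\| \le \|a\|$, so $P'$ is bicontractive on $D$; the symmetric case $\|I - 2P\| \le 1$ is identical word for word with $I - 2P$ in place of $I - P$, noting $a - 2P(a) \in D$.

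\textbf{Main obstacle.} The crux is the second step: showing $P(D) \subseteq D$, and in particular pinning down $P^{**}(p) \in D^{\perp\perp}$ and then that $P^{**}$ preserves the corner $pA^{**}p$. The delicate point is that $p$ need not be central and $P$ is only contractive (not completely contractive, not a Jordan homomorphism), so one genuinely needs the real-positivity-plus-Cauchy–Schwarz machinery of Section 2 (Lemmas \ref{nrp2}, \ref{ishe2}) rather than any multiplicative identity; one must be careful that the state extension argument is applied in a von Neumann algebra containing $A^{**}$ as a Jordan subalgebra and that "support of $P^{**}(p)$ is dominated by $p$" is justified by the maximality of $p$ among open projections of $A$.
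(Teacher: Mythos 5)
Your identification of $D$ is essentially right, but one justification is wrong: it is false that every approximately unital Jordan subalgebra $E$ of $A$ is an HSA of $A$ (take $E = \Cdb 1$ in any unital $A$; then $1A1 = A \not\subset E$). Fortunately you do not need that claim -- the parenthetical you give, $E = \overline{{\rm span}}(\mathfrak{r}_E) \subset \overline{{\rm span}}(\mathfrak{r}_A)$, is the correct route, and indeed the largest approximately unital Jordan subalgebra is exactly $D = \overline{{\rm span}}(\mathfrak{r}_A)$, which is an HSA of $A$; this is what \cite[Corollary 4.2]{BWj} supplies (the Jordan analogue of the result from \cite{BRord} used in \cite[Proposition 2.1]{BNp}, which is the proof the paper runs).

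The genuine gap is in your central step, $P(D) \subset D$. The appeal to Lemma \ref{ishe2} or \ref{ishe3} cannot work as stated: those lemmas require $T(1) = T(q)$ (or a unital $T$ with $T(e)$ a projection), and here there is no reason that $P^{**}(1) = P^{**}(p)$, nor that $P^{**}(1)$ is a projection, since $P$ is an arbitrary real positive contraction unrelated to $p$; moreover their conclusions have the form $T(a) = T(pap)$, not the corner statement $P^{**}(a) = pP^{**}(a)p$ you need. Likewise ``$P^{**}(p) \in \mathfrak{F}_{A^{**}}$, hence supported under an open projection dominated by $p$'' does not follow: membership in $\mathfrak{F}_{A^{**}}$ gives no information about support relative to $p$. (What is true is that $P(e_t) \in \mathfrak{r}_A \subset D$, so $P^{**}(p) \in D^{\perp\perp}$, but that alone does not yield invariance.) The entire corner/compression machinery is unnecessary: since $D = \overline{{\rm span}}(\mathfrak{r}_A)$ and $P(\mathfrak{r}_A) \subset \mathfrak{r}_A \subset D$, linearity and boundedness of $P$ give $P(D) \subset D$ in one line -- this is the paper's argument. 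Your final paragraph (the restriction is contractive, real positive since $\mathfrak{r}_D = \mathfrak{r}_A \cap D$, a projection if $P$ is, and inherits bicontractivity or symmetry because $D$ is invariant under $I-P$ and $I-2P$) is correct once invariance is in hand.
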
  

\begin{proof}     This follows as in \cite[Proposition 2.1]{BNp}, but using \cite[Corollary 4.2]{BWj}
in place of \cite[Corollary 2.2]{BRord}.
 \end{proof}

\begin{corollary} \label{ipow}  Let $P : A \to A$ be a real positive contractive
projection on a unital Jordan operator algebra.  Then $P(P(1)^n)  = P(1)$ for $n \in \Ndb$.  In addition, $P(1)$ is a projection
in $A$ if and only if $P(1)^2 \in P(A)$.
\end{corollary}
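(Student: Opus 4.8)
The plan is to reduce to the selfadjoint Jordan situation by passing to the diagonal $\Delta(A)$, where we can invoke the classical machinery already used in Theorem \ref{frs}. First I would note that by Lemma \ref{ispos}, since $P$ is real positive and contractive on the unital algebra $A$, we have $0 \le P(1) \le 1$; in particular $P(1) \in \Delta(A)_+$, which is a $JC^*$-algebra. The key observation is that the restriction of $P$ to $\Delta(A)$ need not land in $\Delta(A)$, but everything we need about $P(1)$ only involves $P$ applied to elements of the (commutative, since generated by the single positive element $P(1)$) $C^*$-subalgebra $C^*(P(1), 1) \subset \Delta(A)$, together with real positivity of the functionals $\varphi \circ P$.

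The main step is to establish $P(P(1)^2) = P(1)$, after which $P(P(1)^n) = P(1)$ for all $n$ follows by the same induction argument appearing in the proof of Theorem \ref{frs}: one shows $0 \le P(P(1)^n - P(1)^{n+1}) \le P(P(1)^{n-1} - P(1)^n)$ using that $0 \le P(1)^{n-1} - P(1)^n$ is dominated appropriately and $P$ is real positive hence positive on $\Delta(A)$ in the sense of Lemma \ref{ispos}, giving $P(P(1)^n) = P(P(1)^2)$ for $n \ge 2$; then the polynomial approximation trick ($p_n(t) \to t^{1/3}$ uniformly on $[0,1]$, so $p_n(P(1)^3) \to P(1)$, while $P(p_n(P(1)^3)) = P(p_n(1) P(1)^2) \to P(P(1)^2)$ by the relation for powers, combined with contractivity and the spectral theorem) forces $P(P(1)^2) = P(1)$. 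To get $P(P(1)^2) = P(1)$ initially I expect to use the contractive projection theory: $P$ restricted to the range is idempotent, so $P(P(1)) = P(1)$, and I would try to bootstrap from the identity $P(P(1)^3) = P(1)$ obtained via the Friedman–Russo / Youngson structure (equation (\ref{CE}) in the proof of Theorem \ref{frs} applies since $A$ is unital and $P$ is contractive, giving $P(\{P(1), P(1), P(1)\}) = P(\{P(1), 1, P(1)\}) = P(P(1)^2)$ and $= P(\{1,1,P(1)\}) = P(P(1))= P(1)$ — wait, one must be careful that the triple product here is $xy^*x$ and $P(1)$ is positive selfadjoint, so $\{P(1),1,P(1)\} = P(1)^2$). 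The cleanest route is simply to cite the Claim inside the proof of Theorem \ref{frs}: applying that Claim to the positive contractive projection obtained by restricting/corestricting $P$ appropriately, or more directly, reproducing that argument here verbatim since it only used real positivity of states composed with $P$ and the contractive projection identities, both available in our setting.

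For the second assertion: if $P(1)$ is a projection in $A$ then $P(1)^2 = P(1) \in P(A)$ trivially, so $P(1)^2 \in P(A)$. Conversely, suppose $P(1)^2 \in P(A)$, so $P(P(1)^2) = P(1)^2$. But we just proved $P(P(1)^2) = P(1)$. Hence $P(1)^2 = P(1)$, and since $P(1) \in \Delta(A)_+$ with $\|P(1)\| \le 1$, this makes $P(1)$ a projection in the $C^*$-algebraic sense lying in $\Delta(A) \subset A$, i.e.\ a projection in $A$ (an idempotent of norm $\le 1$, hence $1$ unless $P(1)=0$, in which case it is still a projection). The main obstacle I anticipate is purely bookkeeping: making sure the contractive-projection identity (\ref{CE}) and the induction/polynomial argument from Theorem \ref{frs} genuinely transfer, given that here $P$ acts on the possibly-nonselfadjoint $A$ rather than on a $JC^*$-algebra — but since all the relevant computations take place in the abelian $C^*$-algebra generated by $P(1)$ and only use real positivity (which gives positivity there via Lemma \ref{ispos}) plus contractivity, there is no real difficulty; indeed this is why the Claim was isolated in the proof of Theorem \ref{frs}.
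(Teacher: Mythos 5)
Your proposal is correct and follows essentially the paper's route: reduce to the selfadjoint setting, invoke the Claim $P(P(1)^n)=P(1)$ established inside the proof of Theorem \ref{frs}, and then observe that $P(1)^2\in P(A)$ forces $P(1)^2=P(P(1)^2)=P(1)$, so the positive contraction $P(1)$ is a projection. Two small corrections, neither of which damages the argument. First, your worry that ``the restriction of $P$ to $\Delta(A)$ need not land in $\Delta(A)$'' is unfounded: Lemma \ref{ispos} says precisely that $P(\Delta(A))\subset\Delta(A)$ and that the restriction is positive, so $P$ restricts to a positive contractive projection on the $JC^*$-algebra $\Delta(A)$ and the Claim applies verbatim --- this is exactly the paper's one-line reduction, and it makes your workaround via the commutative $C^*$-algebra generated by $P(1)$ unnecessary. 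Second, your tentative ``bootstrap'' via (\ref{CE}) is not valid: that identity only allows you to insert $P$ on the remaining free variable when the other two entries of the triple product already lie in $P(A)$, so while $P(\{P(1),1,P(1)\})=P(\{P(1),P(1),P(1)\})$ is legitimate, the further step $P(\{P(1),1,P(1)\})=P(\{1,1,P(1)\})=P(1)$ is not an instance of (\ref{CE}) (only one of the three entries is in the range); this is precisely why the paper needs the induction giving $P(P(1)^n)=P(P(1)^2)$ for $n\geq 2$ together with the cube-root polynomial approximation to conclude $P(P(1)^2)=P(1)$. Since you explicitly fall back on citing or reproducing the Claim rather than relying on that shortcut, your proof stands as written.
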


\begin{proof}     Note that  $P$ restricts to a positive contractive projection
from $\Delta(A)$ to $\Delta(A)$, and $P(1) \geq 0$ by Lemma \ref{ispos}. 
So we may assume that $A$ is a unital $JC^*$-algebra, and then
$P(P(1)^n)  = P(1)$ was established in the proof we gave of Theorem \ref{frs} (or it may be deduced from
that result).  
 If $P(1)^2 \in P(A)$ then we deduce that $P(1)^2 = P(1)$, so that  $P(1)$ is a projection. 
 \end{proof}  

In the sequel we will often restrict to the case that $P(1)$ or $P^{**}(1)$ is a projection.   This is automatic for example  if 
$P$ is a real positive bicontractive or symmetric 
projection on an approximately  unital Jordan operator algebra (by the proof of \cite[Lemma 3.6]{BNp}).  

\begin{lemma} \label{keride} 
Let $A$ be an
approximately unital  Jordan operator algebra, and suppose that  $P:A \rightarrow A$ is a  projection on $A$.
Then  ${\rm Ker}(P)$  is a Jordan ideal of $A$   if and only if  $P(a^2) = P(P(a)^2)$ for $a \in A$, that is  if and only if 
 $P$ is a Jordan homomorphism with the $P$-product on $P(A)$. 
In this case $P(A)$ with the $P$-product is Jordan isomorphic to $A/{\rm Ker}(P)$, and this isomorphism is isometric if 
$P$ is a contraction.  \end{lemma}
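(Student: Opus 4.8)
The plan is to prove the chain of equivalences directly from the definitions, working in $A$ and its bidual. First I would establish the identity relating $P(a^2)$, $P(P(a)^2)$, and the kernel. Write any $a \in A$ as $a = P(a) + k$ where $k = a - P(a) \in \operatorname{Ker}(P)$. Then $a^2 = P(a)^2 + P(a)\circ k \cdot 2 + k^2$, more precisely $a^2 = P(a)^2 + 2\,P(a)\circ k + k^2$, so applying $P$ gives
\begin{equation*}
P(a^2) = P(P(a)^2) + 2\,P(P(a)\circ k) + P(k^2).
\end{equation*}
From this the equivalence ``$\operatorname{Ker}(P)$ is a Jordan ideal $\iff P(a^2) = P(P(a)^2)$ for all $a$'' should drop out once I handle both directions. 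For the forward direction: if $\operatorname{Ker}(P)$ is a Jordan ideal, then $P(a)\circ k \in \operatorname{Ker}(P)$ (since $k \in \operatorname{Ker}(P)$ and $P(a) \in A$) and $k^2 \in \operatorname{Ker}(P)$ (again since $k \in \operatorname{Ker}(P)$, $k\circ k$ lies in the ideal), so both correction terms vanish and $P(a^2) = P(P(a)^2)$. For the converse: assuming $P(a^2) = P(P(a)^2)$ for all $a$, I would first note this is exactly the statement that $P$ is a Jordan homomorphism for the $P$-product, since $P(P(x)\circ_P P(y)) = P(P(P(x)\circ P(y))) = P(P(x)\circ P(y))$ and one wants this to equal $P(x)\circ_P P(y)$... actually the cleanest route is to use the polarization identity \eqref{aba2}: $P(x\circ y) = P(P(x)\circ P(y))$ for all $x,y$ follows from the squares version by replacing $x$ by $x+y$ and subtracting, exactly as in the standard Jordan-homomorphism argument recalled in the introduction. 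Then given $k \in \operatorname{Ker}(P)$ and $x \in A$, compute $P(x\circ k) = P(P(x)\circ P(k)) = P(P(x)\circ 0) = 0$, so $x\circ k \in \operatorname{Ker}(P)$; hence $\operatorname{Ker}(P)$ is a Jordan ideal.

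For the middle equivalence ``$P(a^2) = P(P(a)^2)$ $\iff$ $P$ is a Jordan homomorphism with the $P$-product'': recall the $P$-product on $P(A)$ is $x\circ_P y := P(x\circ y)$. A linear map is a Jordan homomorphism iff it preserves squares, so $P: A \to (P(A),\circ_P)$ is a Jordan homomorphism iff $P(a^2) = P(a)\circ_P P(a) = P(P(a)\circ P(a)) = P(P(a)^2)$ for all $a$, which is precisely the stated condition. I should remark that $(P(A),\circ_P)$ is a genuine Jordan operator algebra under this product, citing Theorem \ref{apch} or rather its contractive analogue — but since the problem only asks for the algebraic equivalence, I can just treat $\circ_P$ formally as a commutative bilinear product on $P(A)$ and the argument still goes through; the identification with an honest Jordan operator algebra is a separate matter that I can invoke where needed.

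For the final assertion, suppose the equivalent conditions hold. Then $P$ restricted to being a map $A \to (P(A),\circ_P)$ is a surjective Jordan homomorphism with kernel $\operatorname{Ker}(P)$, so it factors through the quotient as a Jordan algebra isomorphism $\bar P: A/\operatorname{Ker}(P) \to (P(A),\circ_P)$ by the first isomorphism theorem for (Jordan) algebras; this uses that $\operatorname{Ker}(P)$ is a Jordan ideal, just established. For the isometric claim when $P$ is a contraction: the quotient norm on $A/\operatorname{Ker}(P)$ satisfies $\|a + \operatorname{Ker}(P)\| = \inf_{k\in\operatorname{Ker}(P)}\|a+k\| \le \|a - (a - P(a))\| = \|P(a)\|$ by taking $k = P(a)-a$; on the other hand $\bar P(a+\operatorname{Ker}(P)) = P(a)$ and for $b \in P(A)$, $\|b\| = \|P(b)\| \le \|b\| $... more carefully, $\bar P$ is a contraction because $P$ is and the quotient map is a quotient map, while $\bar P^{-1}$ is a contraction because for $b = P(a) \in P(A)$ we have $\|b + \operatorname{Ker}(P)\| \le \|b\|$ (taking the representative $b = P(a)$ itself, which differs from $a$ by an element of $\operatorname{Ker}(P)$, wait — $b$ need not differ from a preimage... but $b \in P(A) \subseteq A$ and $P(b) = b$ so $b$ is its own $P$-image, and $b + \operatorname{Ker}(P)$ has norm $\le \|b\|$), giving $\|\bar P^{-1}(b)\| \le \|b\|$. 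Hence $\bar P$ is isometric.

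The main obstacle I anticipate is not any single deep step but rather the bookkeeping in the converse direction — getting the polarization argument to yield $P(x\circ y) = P(P(x)\circ P(y))$ cleanly from the squares identity, and then correctly deducing the ideal property — together with being careful that the $P$-product is associative-free and commutative but not a priori power-associative, so phrases like ``Jordan homomorphism'' and ``Jordan ideal'' for $(P(A),\circ_P)$ are interpreted at the level of the bilinear product only, with the identification as a bona fide Jordan operator algebra invoked from Theorem \ref{apch}'s contractive analogue where genuinely needed (e.g. to know $P(A)$ is closed under the product and the norm behaves well). I would also double-check the isometry claim via the bidual if the direct quotient-norm computation feels delicate, since $P^{**}$ is a contractive projection on $A^{**}$ with the analogous properties.
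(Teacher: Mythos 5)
Your proposal is correct and follows essentially the same route as the paper: the paper verifies directly that ${\rm Ker}(P)$ being a Jordan ideal is equivalent to $P(a\circ b)=P(P(a)\circ b)=P(P(a)\circ P(b))$ for all $a,b$, which is just the two-variable (polarized) form of your squares identity, and it leaves the quotient/isometry statement as an exercise, which you have filled in correctly via the first isomorphism theorem and the standard quotient-norm estimate. The only cosmetic difference is that you work with $P(a^2)=P(P(a)^2)$ and polarize, whereas the paper manipulates the bilinear identity directly.
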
 

 \begin{proof} 
For $a, b \in A, P((a-P(a)) \circ b) = 0$  if and only if  $P(a \circ b) = P(P(a) \circ b)$.   So Ker$(P)$  is a Jordan ideal of $A$   if and only if  $P(a \circ b) = P(P(a) \circ b)$ for all $a, b \in A$, which holds  if and only if $P(a \circ b) = P(P(a) \circ P(b))$  for all $a, b \in A$.
That is,   if and only if  $P$ is a Jordan homomorphism with the $P$-product on $P(A)$.   We leave the rest as an exercise.
\end{proof}

\begin{lemma} \label{zelo}  Let $A$ be a unital Jordan  operator algebra,
and $P : A \to A$ a contractive projection, such that ${\rm Ran}(P)$ contains an orthogonal  
projection $q$ with $P(A)  \subset q P(A) q$.   Then $q = P(1_A)$.
\end{lemma}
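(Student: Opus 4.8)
The plan is to exploit the fact that $q$ is a fixed point of the contractive projection $P$ together with the containment $P(A) \subset qP(A)q$, and to reduce everything to a short computation involving the states of $A$. First I would observe that $q = P(q) \in P(A)$, so by hypothesis $q = qqq = q \circ q$ lies in $qP(A)q$; more importantly, applying the containment to $1_A$ (note $1_A \in A$ and $P(1_A) \in P(A) \subset qP(A)q$) we get $P(1_A) = q\,P(1_A)\,q = q \circ P(1_A)$, using that $q$ is an orthogonal projection so $qxq = q\circ x$ for $x$ with $qx=xq$; but in fact the cleanest route is to write $P(1_A) = q m q$ for some $m \in P(A)$ and then note $q m q = q(qmq)q$, i.e. $P(1_A)$ is already "compressed" by $q$ on both sides.

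The key step is then to show $\|q - P(1_A)\|$ is controlled. Since $P$ is a unital-type contraction in disguise: $P$ is contractive and $1_A \in \tfrac12{\mathfrak F}_A$, so $\|1_A - 2P(1_A)\| \le 1$, whence $0 \le \mathrm{Re}\, P(1_A) \le 1$ and in particular $\|P(1_A)\| \le 1$ by the discussion of ${\mathfrak F}_A$ and Lemma \ref{ispos} after passing to $\Delta(A^{**})$. Now I would argue as follows: $q = P(q)$ and, since $0 \le q \le 1_A$ in $\Delta(A)$, the element $1_A - q$ is a positive contraction, so $q \le 1_A$ gives, applying the positive contraction $P$ restricted to $\Delta(A)$ (Lemma \ref{ispos}), $q = P(q) \le P(1_A)$ in $\Delta(A)$ — wait, $P(1_A)$ need not be selfadjoint, so instead one should work with $\mathrm{Re}$: $q = \mathrm{Re}\, q = \mathrm{Re}\, P(q) \le \mathrm{Re}\, P(1_A) \le 1$. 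Combined with $P(1_A) = qP(1_A)q$ (hence $\mathrm{Re}\,P(1_A) = q(\mathrm{Re}\,P(1_A))q \le q \cdot 1 \cdot q = q$), we get $q \le \mathrm{Re}\,P(1_A) \le q$, so $\mathrm{Re}\,P(1_A) = q$. To upgrade this to $P(1_A) = q$, note $P(1_A) = qP(1_A)q$ lies in the corner $qAq$, its real part is $q = 1_{qAq}$, and it is a contraction there; an element of norm $\le 1$ in a unital (Jordan) operator algebra whose real part equals the identity must equal the identity (e.g. $\|1 - 2P(1_A)\|\le 1$ in the corner forces $P(1_A) = q$ by the characterization of ${\mathfrak F}$, since $\mathrm{Re}\,P(1_A)=q$ means $P(1_A) \in \tfrac12{\mathfrak F}_{qAq}$ and $\mathrm{Re}$ being the full identity pins it down). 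Thus $P(1_A) = q$.

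The main obstacle I anticipate is the bookkeeping around selfadjointness: $P(1_A)$ is a priori only an element of the (nonselfadjoint) Jordan operator algebra $A$, so the inequalities $q \le \mathrm{Re}\,P(1_A) \le q$ must be interpreted inside $\Delta(A)$ (or $\Delta(A^{**})$), which requires knowing $\mathrm{Re}\,P(1_A) \in \Delta(A)$ — this follows because $\mathrm{Re}$ is well-defined on the approximately unital Jordan operator algebra $A$ as noted in the introduction, and $P$ restricts to a positive map on $\Delta(A)$ by Lemma \ref{ispos}. One must also be slightly careful that the containment $P(A) \subset qP(A)q$ is being read as $x = qxq$ for all $x \in P(A)$, which in a $C^*$-algebra generated by $A$ is the genuine two-sided compression; then $\mathrm{Re}\,P(1_A) = q(\mathrm{Re}\,P(1_A))q$ is immediate. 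Once those domain issues are handled, the argument is the short sandwich computation above, and the final step identifying a contraction with identity real part as the identity is exactly the observation that $x \in \tfrac12{\mathfrak F}$ with $\mathrm{Re}\,x = 1$ forces $x = 1$.
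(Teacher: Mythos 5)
There is a genuine gap: your argument quietly assumes that $P$ is real positive, but Lemma \ref{zelo} does not hypothesize this --- it only assumes $P$ is a contractive projection whose range contains the orthogonal projection $q$ and satisfies $P(A) \subset qP(A)q$. Real positivity enters your proof at two essential points. First, the claim ``$P$ is contractive and $1_A \in \tfrac12 {\mathfrak F}_A$, so $\|1_A - 2P(1_A)\| \le 1$'' is a non sequitur: a merely contractive map need not carry $\tfrac12{\mathfrak F}_A$ into $\tfrac12{\mathfrak F}_A$; that preservation is exactly the content of Theorem \ref{ocp} and requires real positivity. Second, the inequality $\mathrm{Re}\,P(q) \le \mathrm{Re}\,P(1_A)$ amounts to applying $P$ to the positive element $1_A - q$ and asserting its image has nonnegative real part, i.e.\ real positivity again (and Lemma \ref{ispos}, which you invoke for positivity of $P$ on $\Delta(A)$, has real positivity as a hypothesis). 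Without these inputs the sandwich $q \le \mathrm{Re}\,P(1_A) \le q$ collapses, and only the upper bound $\mathrm{Re}\,P(1_A) \le q$ (from $P(1_A)=qP(1_A)q$ and $\|P(1_A)\|\le 1$) survives. The omission is not cosmetic: the lemma is applied in Lemma \ref{pretr2} precisely to \emph{deduce} that $P$ is real positive from condition (iii), so assuming real positivity here would make that argument circular. (Your final step --- a contraction in the corner whose real part equals the corner identity must equal the identity --- is fine; the problem is getting the lower bound on the real part.)

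The intended argument avoids positivity altogether via an extreme point computation, as in \cite[Proposition 2.6]{BNp}. Since $1_A$ and $q$ lie in $A$, so does the symmetry $2q-1_A$, which has norm at most $1$; hence $P(1_A)$ and $P(2q-1_A)=2q-P(1_A)$ are both contractions, and by hypothesis both lie in the corner $q(\cdot)q$, i.e.\ in a unital Jordan operator algebra with identity $q$. Their average is $q=P(q)$. Because the identity of a unital Jordan operator algebra is an extreme point of its unit ball (as it is in the generated $C^*$-algebra), this forces $P(1_A)=2q-P(1_A)=q$. If you want to salvage your write-up, replace the real-positivity steps by this averaging argument; equivalently, note that $\|P(1_A)\|\le 1$ and $\|2q-P(1_A)\|\le 1$ together with extremality of $q$ already pin down $P(1_A)=q$.
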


\begin{proof}  This follows as in \cite[Proposition 2.6]{BNp}, but using that the identity is an extreme point of the 
unit ball of any unital Jordan operator  algebra (since it 
is so in the generated $C^*$-algebra). 
\end{proof}

The following is a converse to the previous result, uses a similar proof, and gives a little more:

\begin{lemma} \label{cut2unit1}  Let $A$ be  a  unital Jordan operator algebra,
and let  $P : A \to A$ be a contractive real positive  projection. If $P(1) = q$ is a projection  then $P(A) = q P(A) q$.   In particular,
$q \circ P(A) = P(A)$, indeed $q$ is the identity for the unital operator space $q P(A) q$.    
We also have $$P(x) = P(qxq) = q P(qxq) q = qP(x)q$$ for $x \in A$. 
Thus $P$ restricts to a unital contractive  (real positive) projection on $qAq$, and $P$ is zero on the `rest' of $A$,
that is, on $q^\perp A q^\perp + \{ q^\perp a  q+ qa q^\perp : a \in A \}$.  \end{lemma}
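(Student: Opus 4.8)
The plan is to leverage Lemma~\ref{ishe3} (or directly Lemma~\ref{ishe2}) together with Corollary~\ref{ipow} and basic facts about projections in a Jordan operator algebra. First I would observe that by Lemma~\ref{ispos} we have $0 \le P(1) \le 1$ in $\Delta(A)$, so the hypothesis $P(1) = q$ makes sense as a genuine orthogonal projection in $A$, and by Corollary~\ref{ipow} we do not even need to assume $P(1)^2 \in P(A)$ separately once $q$ is a projection. The map $P$ is a real positive contraction on the unital Jordan operator algebra $A$, and $e = 1_A$ is trivially a projection in $A$ with $e \circ A \subset A$ and $q = P(e) = P(1)$ a projection in $B(H)$ (viewing $A \subset B(H)$); thus Lemma~\ref{ishe3} applies directly and gives $P(1 \cdot a \cdot 1) = q P(a) q$, i.e. $P(a) = q P(a) q$ for all $a \in A$, which is exactly the statement $P(A) = q P(A) q$. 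Alternatively one can invoke Lemma~\ref{ishe2} with this $q$, whose hypotheses $q \circ A \subset A$ and $q^2 \circ A \subset A$ hold since $q = q^2 \in A$ and $A$ is a Jordan algebra, to conclude $P(a) = P(qaq) = P(q \circ a)$ for $a \in A$; combining the two yields $P(x) = P(qxq) = qP(qxq)q = qP(x)q$.

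Next I would record the elementary structural consequences. Since $P(A) = qP(A)q$ and $q = P(1) \in P(A)$, the space $qP(A)q$ contains $q$ and every element $x$ of $P(A)$ satisfies $qxq = x$; that is, $q$ is a left and right identity for $P(A)$ under the ordinary operator product, hence $q \circ x = \tfrac12(qx + xq) = x$ for $x \in P(A)$, so $q \circ P(A) = P(A)$ and $q$ is the identity of the unital operator space $qP(A)q = P(A)$. For the `rest of $A$': using the projection decomposition $a = qaq + q^\perp a q^\perp + q^\perp a q + q a q^\perp$, and the identity $P(a) = qP(a)q = P(qaq)$ just established (the second equality from Lemma~\ref{ishe2}), we get $P(q^\perp a q^\perp + q^\perp a q + q a q^\perp) = P(a) - P(qaq) = 0$, i.e. $P$ annihilates that complementary set. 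Finally, since $P(qaq) = P(a) = qP(a)q \in qP(A)q \subset qAq$, the restriction of $P$ to $qAq$ is a well-defined projection onto $P(A)$ which is contractive and real positive (being a restriction of such a map), and it is unital because $P(q) = P(1) = q$; here $qAq$ is itself a unital Jordan operator algebra with identity $q$, as follows from $q$ being a projection in $A$ and the formula $q \circ a = \tfrac12(a + qaq - q^\perp a q^\perp)$ recorded in the preliminaries.

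The main obstacle, such as it is, lies entirely in the first paragraph: getting the key identity $P(a) = qP(a)q$ cleanly. The cleanest route is to cite Lemma~\ref{ishe3} directly with $e = 1_A$, which is exactly tailored to this situation; the only thing to check is that its hypotheses are met, namely that $T = P$ is real positive (given) and contractive (given) on the unital Jordan operator algebra $A$, that $e = 1_A$ is a projection with $e \circ A \subset A$ (trivial), and that $q = P(e)$ is a projection in $B(H)$ (the hypothesis $P(1) = q$). Everything after that is the routine bookkeeping sketched above, and I would present it compactly rather than belaboring the projection-decomposition algebra.

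\begin{proof}
By Lemma~\ref{ispos}, $0 \le P(1) \le 1$ in the $JC^*$-algebra $\Delta(A)$, so the hypothesis that $q = P(1)$ is a projection is consistent. Regard $A$ as a Jordan subalgebra of some $B(H)$. Applying Lemma~\ref{ishe3} with $e = 1_A$ (a projection in $A$ with $e \circ A \subset A$, and $P(e) = q$ a projection in $B(H)$), we obtain
$$P(a) = P(1 \cdot a \cdot 1) = q P(a) q, \qquad a \in A,$$
so $P(A) = q P(A) q$. Moreover Lemma~\ref{ishe2}, applied with this $q$ (note $q^2 = q \in A$, so $q \circ A \subset A$ and $q^2 \circ A \subset A$), gives $P(a) = P(qaq) = P(q \circ a)$ for all $a \in A$. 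Combining, $P(x) = P(qxq) = q P(qxq) q = q P(x) q$ for $x \in A$, as claimed.

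Since $q = P(1) \in P(A)$ and $qxq = x$ for every $x \in P(A)$, the element $q$ is a two-sided identity for the ordinary product on $P(A) = qP(A)q$; in particular $q \circ x = x$ for $x \in P(A)$, so $q \circ P(A) = P(A)$ and $q$ is the identity of the unital operator space $qP(A)q$. Writing $a = qaq + q^\perp a q^\perp + q^\perp a q + q a q^\perp$ for $a \in A$ and using $P(a) = P(qaq)$, we get
$$P\bigl(q^\perp a q^\perp + q^\perp a q + q a q^\perp\bigr) = P(a) - P(qaq) = 0,$$
so $P$ vanishes on $q^\perp A q^\perp + \{ q^\perp a q + q a q^\perp : a \in A \}$. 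Finally, $qAq$ is a unital Jordan operator algebra with identity $q$, and since $P(qaq) = P(a) = qP(a)q \in qAq$ and $P(q) = P(1) = q$, the restriction of $P$ to $qAq$ is a unital contractive real positive projection onto $P(A)$.
\end{proof}
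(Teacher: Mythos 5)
Your proof is correct and takes essentially the same route as the paper, which likewise gets $P(a)=qP(a)q$ from Lemma \ref{ishe3} with $e=1$ and then obtains $P(a)=P(qaq)$ by a second compression lemma (the paper applies \ref{ishe3} again with $e=q$, you use Lemma \ref{ishe2}, which the paper also invokes in its alternative proof). One small point: when you invoke Lemma \ref{ishe2} you should explicitly note its hypothesis $P(1)=P(q)$, which holds trivially because $P(q)=P(P(1))=P(1)=q$ by idempotency; with that remark the bookkeeping is complete.
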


\begin{proof}  This follows from  Lemma \ref{ishe3} with $e = 1$, and then 
with $e = q$.  A second proof: 
We have  $P({\mathfrak F}_A) \subset {\mathfrak F}_A$ by Theorem \ref{ocp}.  
Fix $x \in {\rm Ball}(A)$. 
 Then $P(1 \pm x) = q \pm P(x)$, and
$\| 1- q \pm P(x) \| \leq 1$.   This 
forces $\| 1- q \pm q^\perp P(x) q^\perp  \| \leq 1$, and 
since $I_K$ is an extreme point of $B(K)$ we see that 
$q^\perp P(x) q^\perp = 0$.   Looking at the matrix of $P(x)$ with respect to $q^\perp$
and using  $\| 1- q + q^\perp P(x) q^\perp  \| \leq 1$ we also see that $P(x) = qP(x) q$.
Now appeal to Lemma \ref{ishe2}.  
\end{proof} 

The last lemma may be viewed as a `reduction' to the case that $P$ is unital.   In the approximately unital case things are more 
difficult, since $qAq$ may not be a subset, let alone a subalgebra, of $A$.   Here $q = P^{**}(1) \in A^{**}$.
One might hope that $q$ might be some kind
of multiplier of $A$, but this is not usually the case.  The solution to this difficulty is found in the notion of  hereditary subalgebra,
and the notion of `zeroing extension' of maps on HSAs which we saw 
in and above Theorem \ref{hsaex}, and whose application to projections we shall describe after Lemma \ref{pretr2}.

\begin{lemma} \label{pretr2}   Let $A$ be an approximately 
unital Jordan operator algebra,
and $P : A \to A$ a contractive projection.  Let $q = P^{**}(1)$.   The following are equivalent:
\begin{itemize}  
\item [(i)]  $q$ is a projection
and $P$ is real positive.
\item [(ii)]  $q^2 \in {\rm Ran}(P^{**})$ 
and $P$ is real positive. 
\item [(iii)]  ${\rm Ran}(P^{**})$ contains an orthogonal 
projection $r$ such that $P(A) \subset r P(A) r$.
\end{itemize}
If these hold then $r = q = P^{**}(1)$, and the bidual of  ${\rm Ran}(P)$ is a unital operator space with  identity $q$.
Also,   $q$ is an open projection 
for $A^{**}$ in the sense of our introduction, so that $D = \{ a \in A : a = qaq \}$ is a 
hereditary subalgebra, and  $D^{**} = q A^{**} q$.  \end{lemma}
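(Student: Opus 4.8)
The plan is to prove the cycle of implications (i) $\Rightarrow$ (ii) $\Rightarrow$ (iii) $\Rightarrow$ (i), and then establish the supplementary assertions. The implication (i) $\Rightarrow$ (ii) is trivial since if $q$ is a projection then $q^2 = q = P^{**}(1) \in {\rm Ran}(P^{**})$. For (ii) $\Rightarrow$ (i), I would apply Corollary \ref{ipow} to the real positive contractive projection $P^{**}$ on the unital Jordan operator algebra $A^{**}$: indeed $P^{**}(P^{**}(1)^n) = P^{**}(1)$ for all $n$, so if $q^2 = P^{**}(1)^2 \in {\rm Ran}(P^{**})$ then $q^2 = P^{**}(q^2) = P^{**}(1) = q$, and since $\|q\| \le 1$ (as $P^{**}$ is a contraction) $q$ is an orthogonal projection. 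For (i) $\Rightarrow$ (iii): with $q$ a projection, apply Lemma \ref{cut2unit1} to the unital Jordan operator algebra $A^{**}$ and the contractive real positive projection $P^{**}$, yielding $P^{**}(A^{**}) = q P^{**}(A^{**}) q$; restricting to $A$ gives $P(A) \subset P^{**}(A^{**}) = q\, P^{**}(A^{**})\, q$ with $r = q$, and $P(A) \subset rP(A)r$ follows since $rar = a$ for $a \in P(A)$ once we know $P(A) \subseteq qP^{**}(A^{**})q$ — more precisely $P(a) = qP(a)q$ for $a \in A$ by Lemma \ref{cut2unit1}. For (iii) $\Rightarrow$ (i): apply Lemma \ref{zelo} to $A^{**}$, the projection $P^{**}$, and the orthogonal projection $r \in {\rm Ran}(P^{**})$ with $P^{**}(A^{**}) \subset r P^{**}(A^{**}) r$ — wait, we only have $P(A) \subset rP(A)r$, but taking weak* closures and using that $P^{**}(A^{**})$ is the weak* closure of $P(A)$ (noted in the introduction) plus weak* continuity of $\eta \mapsto r\eta r$, we get $P^{**}(A^{**}) \subset rP^{**}(A^{**})r$; Lemma \ref{zelo} then gives $r = P^{**}(1) = q$, so $q$ is a projection. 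It remains to see $P$ is real positive in case (iii): since $r = q = P^{**}(1)$ is a positive element (a projection) and $\|P^{**}\| \le 1$, $P^{**}$ is a unital contraction into $qA^{**}q$... hmm, actually I should argue $P^{**}$ maps into $qA^{**}q$ which is unital with identity $q$, and a unital contraction between unital operator spaces is real positive by the remarks before Corollary \ref{trivj}; then $P$ is real positive as the restriction. Let me be careful: $P^{**}$ need not be unital as a map $A^{**} \to A^{**}$, but viewed as a map into the unital operator space $(qA^{**}q, q)$ it is unital, hence real positive into that space, hence into $A^{**}$, and real positivity of $P$ follows since ${\mathfrak r}_A = {\mathfrak r}_{A^{**}} \cap A$ and $P = P^{**}|_A$.

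For the final assertions: once (i)--(iii) hold with $r = q = P^{**}(1)$, the bidual of ${\rm Ran}(P)$ equals ${\rm Ran}(P^{**})$ (from the introduction's discussion of weak* closures, since ${\rm Ran}(P^{**})$ is weak* closed and is the bidual of the closed range ${\rm Ran}(P)$ — here one uses that $P$ contractive makes ${\rm Ran}(P)$ a closed complemented subspace, so ${\rm Ran}(P)^{**}$ embeds as $P^{**}(A^{**})$), and by Lemma \ref{cut2unit1} applied in $A^{**}$ this equals $qA^{**}q$... no wait, ${\rm Ran}(P^{**}) = qP^{**}(A^{**})q$, a unital operator space with identity $q$ — that is exactly what Lemma \ref{cut2unit1} tells us ($q$ is the identity for the unital operator space $qP^{**}(A^{**})q$). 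So the bidual of ${\rm Ran}(P)$ is a unital operator space with identity $q$.

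For openness of $q$: by Lemma \ref{cut2unit1}, $P^{**}(x) = qP^{**}(x)q$ for $x \in A^{**}$, and in particular for a partial cai (or Jordan cai) $(e_t)$ of $A$ we have $P(e_t) = qP(e_t)q \in A$ and $P(e_t) \to P^{**}(1) = q$ weak* (since $e_t \to 1$ weak* and $P^{**}$ is weak* continuous). Thus the net $x_t = P(e_t) \in A$ satisfies $x_t = qx_tq \to q$ weak*, which is precisely the definition of $q$ being an open projection for $A^{**}$. Then by the results recalled at the end of the introduction, $D = \{a \in A : a = qaq\} = qA^{**}q \cap A$ is a hereditary subalgebra of $A$ with $D^{\perp\perp} = qA^{**}q$, i.e. $D^{**} = qA^{**}q$.

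The main obstacle I anticipate is the bookkeeping around passing between $P$ and $P^{**}$ — specifically, verifying carefully that $P^{**}(A^{**})$ really is (canonically identified with) the bidual of ${\rm Ran}(P)$, and that the containment $P(A) \subset rP(A)r$ lifts to $P^{**}(A^{**}) \subset rP^{**}(A^{**})r$ by weak* density and separate weak* continuity of multiplication by a fixed element. Everything else is a direct appeal to Lemmas \ref{ishe2}, \ref{zelo}, \ref{cut2unit1}, and Corollary \ref{ipow} applied at the bidual level, where $A^{**}$ is unital so those unital-case results are available. I should also double-check at the start that $P^{**}$ is indeed a contractive projection on $A^{**}$ and that $P$ real positive implies $P^{**}$ real positive (via \cite[Theorem 2.8]{BWj}, already invoked in the excerpt), and conversely, so that the equivalences transfer cleanly between $A$ and $A^{**}$.
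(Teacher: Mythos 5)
Your proposal is correct and follows essentially the same route as the paper: (i)$\Leftrightarrow$(ii) via Corollary \ref{ipow}, (i)$\Rightarrow$(iii) via Lemma \ref{cut2unit1} applied to $P^{**}$, (iii)$\Rightarrow$(i) via the weak* closure argument plus Lemma \ref{zelo} and the observation that $P^{**}$ is unital (hence real positive) as a map into $qA^{**}q$, and openness of $q$ from $P(e_t)=qP(e_t)q\to q$ weak*. The extra bookkeeping you flag (identifying ${\rm Ran}(P)^{**}$ with $P^{**}(A^{**})$, lifting $P(A)\subset rP(A)r$ to the bidual) is exactly what the paper dismisses as "standard weak* approximation arguments" and "the rest is clear," and your handling of it is fine.
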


\begin{proof}    The equivalence of (i) and (ii) follows from Corollary \ref{ipow}.  
 Let $Q = P^{**}$,  a  contractive projection  on $A^{**}$.  We can replace $Q$ by $P$ below
if $A$ is unital.   
If $P(A) \subset q P(A) q$ for a projection $q \in Q(A^{**})  q$ then $Q(A^{**}) = \overline{P(A)}^{w*} =  q Q(A^{**})  q$ by standard weak* approximation
arguments.  By Lemma \ref{zelo} we have $Q(1) = q$.  Since
Ran$(Q)$ is a unital operator space (in $qA^{**}q$) with identity $q$, and $Q(1) = q$, we see by the line before Corollary \ref{trivj} that
$Q$, and hence also $P$,  is real  positive as a map into $qA^{**}q$.  

Conversely, suppose that $P$ is real positive
and $Q(1) = q$ is a projection.  Then 
$Q(q^\perp) = 0$.   
  Then  Lemma \ref{cut2unit1}  gives
$P(A) = q P(A) q$.   

If  $(e_t)$ is a cai for $A$ then $e_t \to 1$ weak*, so that 
$P(e_t) = q P(e_t) q \to q P^{**}(1) q = q$ weak*.   Thus $q$ is an open projection in $A^{**}$
in the sense of  \cite{BNj}, and $D$ is a 
hereditary subalgebra as stated.   The rest is clear.  
\end{proof} 

{\bf Remark.}  We remark that there is a mistake in the
statement of the matching result in \cite{BNp}, namely Proposition 2.7 there.  After the phrase
`$P^{**}(1)$ is a projection' there the condition `and $P$ is real completely positive' should be added, similarly to the statement of (ii) 
above.  This mistake led to an error in \cite[Theorem 3.7]{BNp}:
in the statement of that result the phrase `real completely positive' should be deleted in the last line (see our 
Theorem \ref{trivch2c} for a way to state 
 the converse direction of that theorem).   
 
\bigskip

We now describe the   `zeroing extension' of maps on HSAs which we saw 
in and above Theorem \ref{hsaex},  as applied to projections.     
Suppose that 
 $P : A \to A$ is a real positive 
contractive projection on  an approximately 
unital Jordan operator algebra, such that 
$P^{**}(1) = q$ is a projection.   By Lemma \ref{pretr2},  $D = \{ a \in A : a = qaq \}$ is a 
hereditary subalgebra of $A$.  Suppose that
 $D$ is represented nondegenerately
on $B(K)$ for a Hilbert space $K$ (see \cite[Section 6]{BWj}).   Then the 
restriction $E$ of $P$ to $D$, viewed as a map 
$D \to B(K)$ satisfies the cai condition in Theorem \ref{hsaex}, since if $(e_t)$ is
a partial cai for $D$ then $e_t \to q$ weak* in $D^{**}$ and $e_t \to I_K$ weak* in $B(K)$.
That is, $q$ acts as the identity on $K$.   Hence  $P(e_t) \to I_K$ weak* since 
$P^{**}(q) = q$.
Thus by that theorem $E$ extends uniquely to 
a contraction from $A$ to $B(K)$ satisfying a similar cai condition spelled out in Theorem \ref{hsaex}.  
This is the `zeroing extension', which
kills `the complement' 
$\{ q^\perp a  q^\perp + q^\perp a  q+ qa q^\perp : a \in A \}$ 
of $D$.   However by uniqueness
this extension must be $P$, since $P^{**}(1) = q = 1_{D^{**}}$, which acts as the identity on $K$.
Thus $P$ is the zeroing extension of the  
restriction of $P$ to   $D$.   One may view this as a 
  `cut down to the unital case' procedure:  $P$ is `unital' on the HSA.  That is, $E^{**}$ is a unital projection on $D^{**}$, and 
$P^{**}$ and $P$ are zero on the `complement' of the HSA since $P(a) = P(q aq)$ for all $a \in A$.  

The following theorem  is the contractive projection version of \cite[Proposition 2.7]{BNp}.  

\begin{theorem} \label{tr2}  Let $A$ be an approximately 
unital Jordan operator algebra,
and $P : A \to A$ a real positive contractive projection.   Suppose that  $q = P^{**}(1)$ is a projection.  
We have  
$$P(a) = q P(a)q  = P^{**}(qaq), \qquad a \in A,$$ (and we can replace $P^{**}$ by $P$ here if $A$
is unital).   Hence $P(A) = q P(A) q = P^{**}(qAq)$.   Also, 
\begin{itemize}  
\item [(1)] $P$ `splits' as the sum of  the zero map on $q^\perp A q^\perp + \{ q^\perp a  q+ qa q^\perp : a \in A \}$, and
a real positive contractive projection $P'$ on $qAq$ with range equal to 
$P(A)$.   This  projection $P'$ on $qAq$ is unital if $A$ is unital.   
\item [(2)]   $P$ restricts to a real positive  contractive  
projection $E$ on the hereditary subalgebra
$D$ supported by $q$ (see Lemma {\rm \ref{pretr2}}).  We have $E(D) = P(A) \subset D$, and $E^{**}$ is unital:
$E^{**}(q) = q$.  
\item [(3)]    $P$ is the zeroing extension of $E$.  
 \end{itemize}
\end{theorem}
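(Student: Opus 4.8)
The plan is to assemble Theorem \ref{tr2} almost entirely from results already in hand, principally Lemma \ref{cut2unit1}, Lemma \ref{pretr2}, Theorem \ref{hsaex}, and the `zeroing extension' discussion preceding the statement. First I would pass to the bidual and set $Q = P^{**}$, a contractive projection on the unital Jordan operator algebra $A^{**}$ with $Q(1) = q$ a projection; by Corollary \ref{trivj} (via Lemma \ref{pretr2}, which tells us $\mathrm{Ran}(Q)$ is unital with identity $q$) the map $Q$ is real positive on $A^{**}$. Now apply Lemma \ref{cut2unit1} to $Q$ on $A^{**}$: it yields $Q(\eta) = q Q(\eta) q = Q(q \eta q)$ for all $\eta \in A^{**}$, and $Q(A^{**}) = q Q(A^{**}) q$. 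Restricting to $a \in A$ and using that $Q$ extends $P$ gives the displayed identity $P(a) = q P(a) q = P^{**}(q a q)$; in the unital case $P^{**}$ can be replaced by $P$ since then $qAq \subset A$ and the formula is intrinsic. Taking the closed span (or just images) gives $P(A) = q P(A) q = P^{**}(qAq)$.

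For part (1), I would invoke the last sentence of Lemma \ref{cut2unit1} applied to $Q$: $P$ (equivalently $Q$) is zero on $q^\perp A^{**} q^\perp + \{ q^\perp a q + q a q^\perp : a \in A^{**}\}$, hence in particular on $q^\perp A q^\perp + \{ q^\perp a q + q a q^\perp : a \in A\}$, and the corestriction $P'$ to $qAq$ (a unital operator space with identity $q$, by Lemma \ref{pretr2}) is a projection with range $q P(A) q = P(A)$. It is real positive because it is a unital contraction between unital operator spaces (the line before Corollary \ref{trivj}), and it is obviously unital when $A$ is unital since then $q \in A$ and $q = P(1)$. For part (2), Lemma \ref{pretr2} already identifies $q$ as an open projection and $D = \{ a \in A : a = qaq \}$ as the hereditary subalgebra it supports, with $D^{**} = q A^{**} q$; the formula $P(a) = P(qaq)$ from the display shows $P$ maps $A$ into $\overline{P(A)} \subset qA^{**}q \cap A = D$, and restricts to a real positive contractive projection $E$ on $D$ with $E(D) = P(A)$. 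That $E^{**}$ is unital, i.e. $E^{**}(q) = q$, follows since a partial cai $(e_t)$ for $D$ satisfies $e_t \to q$ weak* and $P^{**}(q) = q$ (as $q \in \mathrm{Ran}(P^{**})$ and $P^{**}$ is idempotent).

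Part (3) is then a matter of invoking the uniqueness clause of Theorem \ref{hsaex}, exactly as in the `zeroing extension' paragraph preceding the theorem statement: representing $D$ nondegenerately on $B(K)$, the map $E : D \to B(K)$ satisfies $E(e_t) \to I_K$ weak* (since $e_t \to q$ weak* and $q$ acts as $I_K$, and $P^{**}(q) = q$), so $E$ has a unique contractive extension to $A$ satisfying the stated cai condition, namely the zeroing extension. But $P$ itself is such an extension of $E$ — it is contractive, extends $E$, and has $P^{**}(1) = q$ which acts as $I_K$ — so by uniqueness $P$ equals the zeroing extension of $E$, which additionally re-confirms that $P$ vanishes on the complement of $D$.

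I do not anticipate a serious obstacle here: the theorem is essentially a packaging of Lemmas \ref{cut2unit1}, \ref{pretr2} and Theorem \ref{hsaex}, all already proved. The one point requiring a little care is the passage between $A$ and $A^{**}$ — ensuring that the identities derived for $Q = P^{**}$ on $A^{**}$ descend correctly to $P$ on $A$ (using weak* density of $A$ in $A^{**}$ and weak* continuity of $Q$), and that in the unital case the a priori $A^{**}$-valued formula $P^{**}(qaq)$ really lands in $A$ and equals $P(qaq)$. This is handled by the intrinsic nature of $qaq$ (equation (\ref{aba})) together with $q \in \Delta(A)$ when $A$ is unital, so it is routine rather than delicate.
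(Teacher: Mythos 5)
Your proposal is correct and follows essentially the same route as the paper: the displayed identities come from Lemma \ref{pretr2} together with Lemma \ref{cut2unit1} applied to $P^{**}$, and item (3) is exactly the zeroing-extension/uniqueness argument via Theorem \ref{hsaex} given just before the theorem (the paper leaves (1)--(2) as an exercise, which your details fill in correctly). One tiny repair: in the nonunital case $q$ need not belong to $qAq$, so the real positivity of $P'$ is best justified simply by its being a restriction of the real positive map $P^{**}$, rather than by the unital-contraction argument you cite.
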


\begin{proof}    The displayed formula follows from the last result and  Lemma \ref{cut2unit1}
 applied to $P^{**}$.  
Item (3) is discussed above the theorem.   
 We leave the rest as an exercise.  
\end{proof}

\begin{lemma} \label{Araz}  Let $A$ be  an approximately  unital Jordan 
operator algebra,
and let  $P : A \to A$ be a contractive real positive projection.   Then $$P(a \circ b) = P(a \circ P(b)) , \qquad a \in P(A), b \in \Delta(A).$$  In particular, if $A$ is unital 
and  $q = P(1)$ then $a = P(a \circ q)$ for $a \in P(A)$.
   \end{lemma}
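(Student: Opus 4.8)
The plan is to reduce to the unital case and then exploit the positivity of $P$ on the diagonal together with the contractive projection machinery already developed. First I would pass to $A^{**}$: since $P^{**}$ is a contractive real positive projection on the unital Jordan operator algebra $A^{**}$, and since the identity $P(a\circ b) = P(a\circ P(b))$ for $a\in P(A), b\in\Delta(A)$ will follow once we know it for $P^{**}$ and $a\in P^{**}(A^{**}), b\in\Delta(A^{**})$ (restricting back down and using weak* density of $A$ in $A^{**}$ together with weak* continuity of $P^{**}$ and of the Jordan product in each variable separately), it suffices to treat the case where $A$ is unital.

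So assume $A$ is unital and write $q = P(1)$. By Lemma \ref{ispos}, $P$ restricts to a positive contractive projection on the $JC^*$-algebra $\Delta(A)$, and $0\le q\le 1$. Now for the key observation: if $b\in\Delta(A)$ then by Theorem \ref{frs} applied to the positive contractive projection $P|_{\Delta(A)}$ on the $JC^*$-algebra $\Delta(A)$, we have the conditional-expectation-type identity $P(P(c)\circ P(d)) = P(c\circ P(d))$ for all $c,d\in\Delta(A)$. The difficulty is that I need this for $a\in P(A)$ general, not just $a\in P(\Delta(A))$. Here I would instead argue directly: fix $a\in P(A)$ and $b\in\Delta(A)$. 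I want $P((a - P(a))\circ b) = P(a\circ b) - P(a\circ P(b)) = 0$ — wait, since $a\in P(A)$ we have $P(a)=a$, so the claim is exactly $P(a\circ b) = P(a\circ P(b))$, i.e. $P(a\circ(b - P(b))) = 0$. So setting $y = b - P(b)$, note $y\in\Delta(A)$ and $P(y)=0$, i.e. $y\in\Delta(A)\cap\mathrm{Ker}(P)$. The goal becomes: $P(a\circ y) = 0$ whenever $a\in P(A)$ and $y\in\Delta(A)\cap\mathrm{Ker}(P)$.

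This is where Lemma \ref{kerlem} enters, but that lemma wants $y$ real positive (accretive), whereas our $y$ is merely selfadjoint with $P(y)=0$. The main obstacle is bridging this gap. Since $y\in\Delta(A)$ is selfadjoint, write $y = y_+ - y_-$ with $y_\pm\in\Delta(A)_+$; these are accretive. However $P(y_+)$ and $P(y_-)$ need not individually vanish. The resolution I expect to work: apply Theorem \ref{frs}'s conclusion that $P$ is a Jordan conditional-expectation-type map on $\Delta(A)$, giving $P(P(c)\circ P(d)) = P(c\circ P(d))$ for $c,d\in\Delta(A)$; take $d = 1$ and use $P(P(1)^n) = P(1)$ (Corollary \ref{ipow}) and the fact that $q = P(1)$ is a tripotent in the new triple product established in the proof of Theorem \ref{frs}. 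More efficiently, I would invoke Lemma \ref{Araz}'s own first clause once established for $b\in\Delta(A)$: by the already-proven formula \eqref{CE}-type identity $P(\{u, P(a), b\}) = P(\{u, P(a), P(b)\})$ from the proof of Theorem \ref{frs} (valid for $a,b\in\Delta(A)$), combined with Lemma \ref{ishe2} (with the projection-like element $q$, using $P(1) = P(q)$ which holds since $P(q) = P(P(1)) = P(1) = q$ as $q\in\Delta(A)$... actually $q = P(1)\in\Delta(A)$ and we'd need $q\circ A\subset A$ and $q^2\circ A\subset A$), we get $P(x) = P(q\circ x)$ for all $x\in A$, which is precisely the "in particular" clause. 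The general clause $P(a\circ b) = P(a\circ P(b))$ for $a\in P(A), b\in\Delta(A)$ then follows by replacing $x$ with $a\circ b$ and noting $P(a\circ b) = P(q\circ(a\circ b))$ and matching this with $P(a\circ P(b))$ via the Jordan identity $q\circ(a\circ b) = \ldots$ and the triple-product computation; I expect this last matching step, carrying the element $u = q$ through the Jordan identities in the new product, to be the genuinely delicate part of the argument.
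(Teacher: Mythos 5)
Your reduction to the unital case via $P^{**}$ matches the paper, and your derivation of the ``in particular'' clause is correct: with $q = P(1)$ one has $0 \leq q \leq 1$ by Lemma \ref{ispos}, $P(q) = P(1)$, and $q \circ A \subset A$, $q^2 \circ A \subset A$ trivially, so Lemma \ref{ishe2} gives $P(x) = P(q \circ x)$ for all $x \in A$, hence $a = P(a) = P(a \circ q)$ for $a \in P(A)$. But the main identity $P(a \circ b) = P(a \circ P(b))$ for general $a \in P(A)$, $b \in \Delta(A)$ is not established, and you essentially say so yourself: after correctly reformulating it as $P(a \circ y) = 0$ for $y = b - P(b) \in \Delta(A) \cap {\rm Ker}(P)$, every tool you reach for falls short. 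Lemma \ref{kerlem} needs $y$ accretive, and the underlying Cauchy--Schwarz trick genuinely fails here, since a positive extension $\tilde{\varphi}$ of $\psi \circ P$ satisfies $\tilde{\varphi}(y) = 0$ with $y$ merely selfadjoint, which gives no control on $\tilde{\varphi}(y^2)$. The conditional-expectation identity from Theorem \ref{frs} (or formula (\ref{CE}) in its proof) lives on the $JC^*$-algebra $\Delta(A)$ and so only covers outer entries in $P(\Delta(A))$, not general $a \in P(A)$ --- the very restriction you flagged at the outset. And Lemma \ref{ishe2} only inserts $q$ into the outer slots; it never lets you replace $b$ by $P(b)$ inside $a \circ b$, so the final ``matching via Jordan identities'' step is not a gap you can close by algebraic manipulation with $q$: it is precisely the missing theorem.

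What the paper uses, and what your argument lacks, is the structure of $A$ itself (not just $\Delta(A)$) as a partial JB$^*$-triple: by Arazy--Solel \cite[Theorems 2.6 and 2.7]{AS} the symmetric part of the unital Jordan operator algebra $A$ is $\Delta(A)$ and the partial triple product is $\{a,b,c\} = (ab^*c + cb^*a)/2$ for $a,c \in A$, $b \in \Delta(A)$; the conditional-expectation theorem for contractive projections relative to this partial triple product (Kaup/Friedman--Russo, quoted as \cite[Proposition 5.6.39 (i) and (ii)]{Rod2}) then yields $P(\{a,b,c\}) = P(\{a,Pb,c\})$ for $a,c \in P(A)$ and $b \in \Delta(A)$, using $P(\Delta(A)) \subset \Delta(A)$ from Lemma \ref{ispos}. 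Setting $c = 1$ and using that $P$ is positive, hence $*$-linear, on $\Delta(A)$ gives $P(a \circ b) = P(a \circ P(b))$, and $b = 1$ recovers your second clause. Without invoking this partial-triple machinery on all of $A$, the step from ``$a \in P(\Delta(A))$'' to ``$a \in P(A)$'' remains an unproved leap, so the proposal has a genuine gap at the heart of the lemma.
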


\begin{proof}    By considering $P^{**}$ we may suppose that $A$ is unital.  By \cite[Theorems 2.6 and 2.7]{AS} the symmetric part of $A$ is $\Delta(A) = A \cap A^*$, and the partial triple product
on $A$ is $\{ a , b , c \} = (a b^* c + c b^* a)/2$ for $a,  c \in A, b \in \Delta(A)$.   The restriction of $P$ to $\Delta(A)$ is real positive, hence is a positive map into 
$\Delta(A)$ as in Lemma \ref{ispos}.  
By \cite[Proposition 5.6.39 (i) and (ii)]{Rod2}, $P(\{ a , b , c \}) = P(\{ a , Pb , c \})$ for $a,  c \in P(A), b \in \Delta(A)$.   (We have
used here the fact from Lemma \ref{ispos} that $P(\Delta(A)) \subset \Delta(A)$.)
 Setting $c = 1$ we have
$$P(a \circ b) = P(a \circ P(b^*)^*) = P(a \circ P(b)),  \qquad a \in P(A), b \in \Delta(A),$$  since $P$ is positive on $\Delta(A)$.  Setting $b = 1$ gives 
$P(a) = a = P(a \circ q)$ for $a \in P(A)$.   
  \end{proof}

{\bf Remarks.}  1) \ We thank J. Arazy for the main insights in the previous proof.

\smallskip

2) \  As we said in the introduction, a couple of paragraphs after Theorem \ref{apch}, 
$P(ab) \neq P(a) b$ in general for a unital (even commutative) operator algebra $A$ and contractive unital (hence real
positive) projection from $A$ onto a subalgebra containing $1_A$, and $a \in A, b \in P(A)$. 
Thus one cannot hope for Jordan operator algebra variants of the centered equation in \cite[Theorem 2.5]{BNp}
or the conditional expectation assertion in \cite[Corollary 2.9]{BNp}.   
Also Ker$(P)$ is not a Jordan ideal in this case.   However with respect to these results in the case of contractive morphisms we will see later that
things become  much better under extra hypotheses (such as those in Theorem \ref{tr}, Corollary
\ref{ipscor}, or
Theorem \ref{trivch2c}). 

\smallskip

3) \ Lemma \ref{Araz} may be used to give a different proof of a variant of Theorem \ref{tr2}.

\bigskip 

The following is a variant  of the Choi-Effros result 
referred to in the introduction.     The $JC$-algebra version of the result is due to Effros and St{\o}rmer \cite{ES}.  

\begin{theorem} \label{tr}  Let $A$ be an 
approximately unital Jordan operator algebra,
and $P : A \to A$ a contractive real positive projection.
Suppose that ${\rm Ker}(P)$
 (resp.\  ${\rm Ker}(P) \cap \, {\rm joa}(P(A))$) 
is
densely spanned by the real positive elements which it contains.
Then 
the range $B = P(A)$ is an
approximately unital Jordan operator algebra with product $P(x \circ y)$.    If $A$ is unital then $P(1)$ is the
 identity for  the latter product.
Also $P$ is a (real positive) Jordan homomorphism with respect to this
product: $$P(a \circ b) = P(a \circ P(b)) = P(P(a)  \circ P(b))$$ for $a, b$ in $A$
 (resp.\ in ${\rm joa}(P(A))$).  
\end{theorem}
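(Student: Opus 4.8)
The plan is to reduce to the unital $JC^*$-algebra case where Theorem \ref{frs} and Theorem \ref{tr2} do the work, and to handle the two parenthetical hypotheses ($\mathrm{Ker}(P)$ versus $\mathrm{Ker}(P)\cap \mathrm{joa}(P(A))$) by essentially the same argument carried out either in $A$ or in $\mathrm{joa}(P(A))$. First I would pass to $P^{**}: A^{**}\to A^{**}$, which is a contractive real positive projection on the unital Jordan operator algebra $A^{**}$ with $q = P^{**}(1)$; by Corollary \ref{ipow} applied to $P^{**}$ (or directly, noting $P$ real positive) we get that $q$ is in fact a projection — this is where one needs the kernel-generation hypothesis, via Lemma \ref{genf}: since $\mathrm{Ker}(P)$ (or the relevant piece of it) is densely spanned by its real positive elements, Lemma \ref{genf}(2) shows $\mathrm{Ker}(P)$ is an approximately unital Jordan ideal, and an ideal kernel is exactly the condition (Lemma \ref{keride}) that $P$ be a Jordan homomorphism for the $P$-product, giving $P(a\circ b) = P(P(a)\circ P(b))$. (In the ``resp.'' case one instead works inside $C := \mathrm{joa}(P(A))$, which by Lemma \ref{paauj} is an approximately unital Jordan operator algebra; the restriction $P|_C : C\to C$ is a contractive real positive projection with the same range, and $\mathrm{Ker}(P|_C) = \mathrm{Ker}(P)\cap C$ is densely spanned by its real positives by hypothesis, so Lemma \ref{keride} applies to $P|_C$ and yields the Jordan homomorphism identity for $a,b\in C$.)

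Next I would establish that $B = P(A)$ is an approximately unital Jordan operator algebra in the $P$-product. By Theorem \ref{tr2} (using that $q=P^{**}(1)$ is a projection), $P$ is the zeroing extension of its restriction $E$ to the HSA $D$ supported by $q$, with $E^{**}$ unital on $D^{**}$ and $E(D) = P(A)\subset D$. So it suffices to treat the unital situation: replace $A$ by $D^{**}$ (unital, weak* closed) and $P$ by $E^{**}$ (unital contractive real positive projection). Now $\mathrm{Ran}(E^{**}) = P(A)^{\perp\perp}$ is a unital operator space with identity $q$. Here I would invoke Theorem \ref{apch}-style / Choi–Effros machinery in the contractive setting: actually the cleanest route is to observe that $E^{**}$ restricts to a positive contractive projection on the $JC^*$-algebra $\Delta(D^{**})$ (Lemma \ref{ispos}) fixing $q=1$, and then apply Theorem \ref{frs} there — but since we need the Jordan operator algebra (not just $JC^*$) conclusion, I instead use that Lemma \ref{keride} already gives that $\mathrm{Ran}(E^{**})$ with the $E^{**}$-product is \emph{Jordan isomorphic, isometrically, to $D^{**}/\mathrm{Ker}(E^{**})$}, a quotient of a Jordan operator algebra by a (weak* closed, approximately unital) Jordan ideal, hence again a Jordan operator algebra; pulling back, $B = P(A)$ with product $P(x\circ y)$ is isometrically Jordan isomorphic to $A/\mathrm{Ker}(P)$, an approximately unital Jordan operator algebra.

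For the remaining identity $P(a\circ b) = P(a\circ P(b))$: this is precisely Lemma \ref{Araz} when $b\in\Delta(A)$, but we need it for all $b\in A$ (resp.\ $b\in\mathrm{joa}(P(A))$). Since we have already shown $\mathrm{Ker}(P)$ is a Jordan ideal, write $b = P(b) + (b - P(b))$ with $b - P(b)\in\mathrm{Ker}(P)$; then $a\circ(b-P(b))\in\mathrm{Ker}(P)$ for $a\in P(A)$ because $\mathrm{Ker}(P)$ is a Jordan ideal, so $P(a\circ b) = P(a\circ P(b))$, and combined with $P(a\circ b) = P(P(a)\circ P(b))$ this is the full chain. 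The statement ``$P(1)$ is the identity for the $P$-product when $A$ is unital'' follows from Lemma \ref{Araz}'s last sentence, $a = P(a\circ q)$ for $a\in P(A)$, i.e.\ $q$ acts as a two-sided $P$-product identity; that $P$ is real positive as a map into $(B, P(\cdot\circ\cdot))$ follows since it is (after the reduction) unital and contractive into a unital Jordan operator algebra, hence real positive by Corollary \ref{trivj}, and this descends through the zeroing extension. The main obstacle I anticipate is the bookkeeping in the ``resp.'' case — making sure that $C = \mathrm{joa}(P(A))$ is preserved by $P$ and that $P|_C$'s bidual unitization and zeroing-extension structure match up with the global $P$, so that the kernel-ideal argument transfers cleanly; this is where I would be most careful, using Lemma \ref{paauj} and Theorem \ref{tr2} in tandem.
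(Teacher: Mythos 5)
Your core argument is in fact the paper's proof: reduce via Lemma \ref{paauj} (in the ``respectively'' case) to $A = {\rm joa}(P(A))$, use Lemma \ref{genf}(2) to see that the relevant kernel is an approximately unital Jordan ideal, and then Lemma \ref{keride} to get the homomorphism identities and the isometric Jordan isomorphism of $P(A)$ (with the $P$-product) with the quotient by that ideal; your derivation of the middle identity $P(a \circ b) = P(a \circ P(b))$ from the ideal property is also fine. The genuine problem is the scaffolding you wrap around this: you assert that the kernel-generation hypothesis forces $q = P^{**}(1)$ to be a projection (citing Corollary \ref{ipow}) and then route the proof through Lemma \ref{pretr2} and Theorem \ref{tr2} (the HSA $D$ supported by $q$, the zeroing extension, unitality of $E^{**}$). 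That assertion is false. Corollary \ref{ipow} only says $P(1)$ is a projection if and only if $P(1)^2 \in P(A)$, and the hypothesis on ${\rm Ker}(P)$ does not supply this. Concretely, let $A = C([0,1])$ and $P(f) = f(0)\, g$ with $g(t) = 1-t$: this is a contractive real positive projection, ${\rm Ker}(P) = \{ f : f(0) = 0\}$ is an approximately unital ideal densely spanned by its positive elements (and likewise its intersection with the subalgebra generated by $P(A)$), yet $P(1) = g$ is not idempotent and $g^2 \notin P(A) = \Cdb g$. So Theorem \ref{tr2} is simply not available under the hypotheses of Theorem \ref{tr}, and your ``reduction to the unital situation on $D^{**}$'', as well as the claim that real positivity ``descends through the zeroing extension'', collapse. (The theorem still holds in this example: $P(A) \cong \Cdb$ with $P$-product identity $g$ --- the point is that $P(1)$ is an identity for the \emph{new} product, not an orthogonal projection in $A$.)

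The good news is that nothing you actually need depends on that reduction. Once Lemma \ref{genf}(2) gives that ${\rm Ker}(P)$ (resp.\ ${\rm Ker}(P) \cap {\rm joa}(P(A))$, after restricting $P$ to ${\rm joa}(P(A))$, which $P$ leaves invariant) is an approximately unital Jordan ideal, Lemma \ref{keride} applied directly to $P$ (resp.\ to $P|_{{\rm joa}(P(A))}$) yields the homomorphism identities and that $P(A)$ with the $P$-product is isometrically Jordan isomorphic to the quotient by that ideal, hence an approximately unital Jordan operator algebra --- no passage to $D^{**}$ is needed. That $P(1)$ is the identity for the new product follows from the established identity with $a = 1$ (or from Lemma \ref{Araz}), and real positivity of $P$ as a map into the new algebra should be argued, as the paper does in essence, by noting that $P^{**}$ is a unital contraction into ${\rm Ran}(P^{**})$ equipped with the new product (a unital Jordan operator algebra whose identity, in that product, is $q$), hence real positive by the remarks before Corollary \ref{trivj}. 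Excise the ``$q$ is a projection'' step and the Theorem \ref{tr2} detour, and what remains is essentially the published argument.
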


\begin{proof} We may assume (using Lemma \ref{paauj}) that $A = {\rm joa}(P(A))$.  By Lemma \ref{genf} it follows that  
${\rm Ker}(P)$ is an  approximately unital Jordan ideal
in $A$.  
By Lemma \ref{keride} we deduce that $P(A)$ is an  approximately unital  Jordan operator algebra
and $P$ is a  Jordan homomorphism, both  with respect to the new 
product.   That $P$ is real positive  with respect to the new 
product follows e.g.\ since $P^{**}$ is a unital contraction into
$qA^{**} q$ and hence is real positive. 
\end{proof} 

{\bf Remarks.}  1) \ One may `weaken' the condition  in Theorem  \ref{tr} about being `densely spanned by the real positive elements which it contains',
to being `contained in the closed Jordan algebra generated by the real positive elements it contains', or even 
 being `contained in the hereditary subalgebra generated by the real positive elements it contains'.    The proof in these latter cases is only 
slightly more difficult, one needs to appeal to Lemma \ref{genf}  (2). 
We used quotes around `weaken' because once we know (by that lemma) that Ker$(P)$ is approximately unital then
it follows  (e.g.\ as is clear from that lemma) 
that it is the span of  the real positive elements which it contains.   

\smallskip

2) \    
Note that if $P$ is a positive projection on a $JC^{\ast}$-algebra, then the
 condition requiring $D = {\rm Ker}(P) \cap \, {\rm joa}(P(A))$ be densely spanned by the real positive elements which it contains,  is always true.  Indeed, 
note that $D$ is selfadjoint.
By \cite[Lemma 1.2]{ES}
applied to $P^{**}$, if $x \in D_{sa}$ then $P(x^{2}) = P(P(x)^{2}) = 0$, and so $x^{2} \in D$. Hence $D_{sa}$ is a Jordan subalgebra of $A_{sa}$. Thus it is spanned by positive elements
by the usual functional calculus (in $C^*(x)$ for $x \in D_{sa}$). Hence $D = D_{sa} + i D_{sa}$ is spanned by its positive elements.  
Thus the `respectively' case of  Theorem \ref{tr} generalizes  Theorem \ref{frs} (and Theorem 1.4 of \cite{ES}), that the range of a positive projection on a $JC^{\ast}$-algebra is isometric to a $JC^{\ast}$-algebra.  

However, for a positive projection $P$ on a $C^{\ast}$-algebra, Ker$(P)$ need not have nonzero, nor be spanned by its, positive elements
(even if $P$ is a state).  So we feel the `respectively' case of Theorem \ref{tr}  is a suitable generalization of 
 Theorem \ref{frs} (and Theorem 1.4 of \cite{ES}).

\smallskip

3)\   Key to this proof is obtaining that  Ker$(P)$ is an ideal.  If $P$ is also completely contractive (and unital)  then we showed in \cite[Corollary 2.11]{BNp}  (see Corollary \ref{newaei}) that Ker$(P)$ is always an ideal in $A$, if $A$ is generated by $P(A)$.
We do not know if this is  correct with the word `completely' removed.  However if $A$ is
an approximately unital  associative operator algebra and $P$ is a real positive
`conditional expectation' then  Ker$(P)$ is an ideal in $A$  if $A$ is generated by $P(A)$, by Theorem \ref{AB} and the remark after it
(since $P(A)$, and hence  the operator algebra generated by $P(A)$, is contained in the algebra $B$ there).

\begin{corollary} \label{trps}  Let $A$ be an 
approximately unital Jordan operator algebra,
and $P : A \to A$ a contractive projection whose range is a Jordan subalgebra.
Suppose that ${\rm Ker}(P)$
 (resp.\  ${\rm Ker}(P) \cap \, {\rm joa}(P(A))$)
 is
densely spanned by the real positive elements which it contains.
Then $P$  is  real positive if and only if $P(A)$ is approximately unital, and
in this case $P$ is a Jordan conditional expectation:  $$P(a \circ P(b)) =P(a)  \circ P(b)$$ for $a, b$ in $A$ 
 (resp.\ in ${\rm joa}(P(A))$). 
\end{corollary}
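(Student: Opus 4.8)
The plan is to deduce this corollary directly from Theorem \ref{tr} together with Lemma \ref{Araz}, handling the two directions of the "if and only if" separately. First I would dispose of the easy direction: if $P$ is real positive, then Lemma \ref{paauj} shows ${\rm joa}(P(A))$ is approximately unital, and since here $P(A)$ is already assumed to be a Jordan subalgebra we have ${\rm joa}(P(A)) = \overline{P(A)}$; but $P(A)$ is closed (as the range of a contractive projection), so $P(A)$ is approximately unital. Conversely, suppose $P(A)$ is approximately unital. The goal is to show $P$ is real positive. Here I would pass to biduals: $Q = P^{**}$ is a contractive projection on $A^{**}$ with range $\overline{P(A)}^{w*}$, which is a weak*-closed Jordan subalgebra of $A^{**}$ and, being the bidual of an approximately unital Jordan operator algebra, is unital with some identity $q \in Q(A^{**})$. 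One then wants to see $Q(1_{A^{**}}) = q$; granting this, $Q$ is a unital contraction from $A^{**}$ into the unital Jordan operator algebra $qA^{**}q$, hence real positive by the line before Corollary \ref{trivj}, and restricting back down, $P$ is real positive. The step "$Q(1) = q$" is where I expect to lean on an argument in the spirit of Lemma \ref{zelo}: $q$ is an orthogonal projection in ${\rm Ran}(Q)$ with ${\rm Ran}(Q) = q\,{\rm Ran}(Q)\,q$ (since $q$ is the identity of the unital subalgebra ${\rm Ran}(Q)$), so Lemma \ref{zelo} applies and gives $Q(1) = q$. This is the main obstacle, and the subtlety is making sure the hypotheses of Lemma \ref{zelo} are genuinely met, in particular that $q$ really does act as a two-sided "cutting" projection on the range; I believe this follows because $q$ being a Jordan identity for the $JC^*$-algebra-or-operator-space ${\rm Ran}(Q)$ forces $x = q\circ x = qxq$ for $x$ in the range, once one recalls $q \in \Delta(A^{**})$.

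Once real positivity is in hand, Theorem \ref{tr} applies (its kernel hypothesis is exactly the one assumed here), so $P(A)$ with product $P(x\circ y)$ is an approximately unital Jordan operator algebra and $P(a\circ b) = P(P(a)\circ P(b))$ for $a,b$ in $A$ (resp.\ in ${\rm joa}(P(A))$). It remains to upgrade this to the conditional-expectation identity $P(a\circ P(b)) = P(a)\circ P(b)$. Since $P(A)$ is a Jordan subalgebra of $A$, for $a, b$ in the relevant set we have $P(a)\circ P(b) \in P(A)$, so $P(a)\circ P(b) = P(P(a)\circ P(b)) = P(a\circ b)$ by Theorem \ref{tr}. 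Thus it suffices to show $P(a\circ b) = P(a\circ P(b))$, i.e.\ that $P$ is a "Jordan conditional expectation" in the weaker sense $P(a\circ b) = P(a \circ P(b))$; but replacing $b$ by $b - P(b) \in {\rm Ker}(P)$ and invoking Lemma \ref{kerlem} (since ${\rm Ker}(P)$ is an ideal by Lemma \ref{genf}, or more directly since $a\circ(b - P(b)) \in {\rm Ker}(P)$ by that lemma applied after writing $b - P(b)$ as a difference of real positive elements of the kernel), we get $P(a\circ(b - P(b))) = 0$, which is exactly the desired identity. I would present the "respectively" version in parallel, restricting all the element quantifiers to ${\rm joa}(P(A))$ throughout, where the identical argument goes through since $P$ maps ${\rm joa}(P(A))$ into itself and ${\rm Ker}(P)\cap{\rm joa}(P(A))$ is the relevant ideal.

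A cleaner alternative for the conditional-expectation step, which I might use instead, is to invoke Lemma \ref{Araz} directly: that lemma already gives $P(a\circ b) = P(a\circ P(b))$ for $a \in P(A)$, $b \in \Delta(A)$, but here we need it for all $b$ in $A$ (resp.\ ${\rm joa}(P(A))$) and general $a$, not just $a \in P(A)$. So in fact the kernel-ideal route above is the more robust one, and I would write the proof that way, keeping the reference to Lemma \ref{Araz} only as a remark. The whole argument is short modulo Theorem \ref{tr}, and the one genuinely load-bearing new point is the equivalence "$P$ real positive $\iff$ $P(A)$ approximately unital," whose forward direction is Lemma \ref{paauj} and whose reverse direction is the Lemma \ref{zelo}-style argument sketched above.
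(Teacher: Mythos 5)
Your proposal is correct and follows essentially the same route as the paper: the reverse direction is exactly the mechanism of Lemma \ref{pretr2} (whose proof is the Lemma \ref{zelo} argument plus the observation before Corollary \ref{trivj} that a unital contraction into $qA^{**}q$ is real positive), which the paper simply cites, and the rest is read off from Theorem \ref{tr} as you do. Your kernel-ideal detour at the end is redundant, since the centered equation in Theorem \ref{tr} already contains $P(a \circ b) = P(a \circ P(b))$, so once $P(A)$ is a Jordan subalgebra the identity $P(a \circ P(b)) = P(a) \circ P(b)$ is immediate.
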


\begin{proof}  
 If  $P(A)$ is approximately unital, then $P$  is  real positive  by Theorem \ref{pretr2}.
The rest is clear from Theorem \ref{tr}.
\end{proof} 

Following the last proofs, but using the operator algebra case of the results used,  yields: 

\begin{theorem} \label{troa}  Let $A$ be an 
approximately unital  operator algebra,
and $P : A \to A$ a contractive real positive projection.
Suppose that ${\rm Ker}(P)$
 (resp.\  ${\rm Ker}(P) \cap \, {\rm oa}(P(A))$) 
is
densely spanned by the real positive elements which it contains.
Then 
the range $B = P(A)$ is an
approximately unital operator algebra with product $P(x   y)$.    If $A$ is unital then $P(1)$ is the
 identity for  the latter product.
Also $P$ is a  homomorphism with respect to this
product: $$P(a b) = P(a  P(b)) = P(P(a) b) = P(P(a)  P(b))$$ for $a, b$ in $A$
 (resp.\ in ${\rm oa}(P(A))$).     Finally, if $P(A)$ is a subalgebra of $A$ then 
the last quantity in the last centered equation equals $P(a)  P(b)$.
\end{theorem}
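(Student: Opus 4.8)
The plan is to follow the proofs of Theorem~\ref{tr} and Corollary~\ref{trps} essentially verbatim, replacing each Jordan ingredient by its associative counterpart. First I would reduce to the case $A = {\rm oa}(P(A))$. In the `respectively' case this costs nothing: the operator algebra analogue of Lemma~\ref{paauj} holds (since $A = {\mathfrak r}_A - {\mathfrak r}_A$ and ${\rm oa}(S)$ is approximately unital whenever $S \subset {\mathfrak r}_A$, by \cite[Proposition 2.4 and Theorem 2.1]{BRord}), so ${\rm oa}(P(A))$ is an approximately unital operator algebra; moreover $P$ restricts to a contractive real positive projection on it with kernel ${\rm Ker}(P) \cap {\rm oa}(P(A))$, and ${\mathfrak r}_{{\rm oa}(P(A))} = {\mathfrak r}_A \cap {\rm oa}(P(A))$, so the kernel hypothesis is inherited. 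After this reduction, in either case we have an approximately unital operator algebra $A$, a contractive real positive projection $P : A \to A$, and ${\rm Ker}(P)$ densely spanned by (hence contained in the hereditary subalgebra generated by) the real positive elements it contains.

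Next, Corollary~\ref{genf2} shows that ${\rm Ker}(P)$ is an approximately unital closed two-sided ideal of $A$. Then the associative analogue of Lemma~\ref{keride}, proved by the same elementary manipulation (for $a, b \in A$, $P\big((a - P(a))b\big) = 0$ iff $P(ab) = P(P(a)b)$, and symmetrically on the other side), shows that ${\rm Ker}(P)$ being a two-sided ideal forces $P(ab) = P(P(a)b) = P(aP(b)) = P(P(a)P(b))$ for all $a, b \in A$. In particular $P$ is a homomorphism for the $P$-product, and $(P(A), P\text{-product})$ is isometrically isomorphic to the quotient operator algebra $A/{\rm Ker}(P)$, which is again an approximately unital operator algebra. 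If $A$ is unital, then $P(1) \in P(A)$ with $P(P(1)) = P(1)$, and from $P(1) \cdot_P P(b) = P(P(1)P(b)) = P(b)$ (and its right-handed analogue) we read off that $P(1)$ is the identity of $(P(A), P\text{-product})$.

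For real positivity of $P$ into the new algebra, I would argue exactly as at the end of the proof of Theorem~\ref{tr}: since $P$ is a homomorphism for the $P$-product, $P^{**}$ is a weak* continuous surjective homomorphism of $A^{**}$ onto $(P(A)^{**}, P\text{-product})$, which is unital, so $P^{**}$ sends $1_{A^{**}}$ to the identity of that range; thus $P^{**}$ is a unital contraction into a unital operator space and is real positive by the remark preceding Corollary~\ref{trivj}, whence so is its restriction $P$. Finally, if $P(A)$ is an ordinary subalgebra of $A$, then $P(a)P(b) \in P(A)$, so $P(P(a)P(b)) = P(a)P(b)$ (as $P$ fixes $P(A)$), and the chain of equalities above collapses to $P(ab) = P(a)P(b)$.

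I expect the one substantive point to be Corollary~\ref{genf2}, i.e.\ that ${\rm Ker}(P)$ is an ideal; this rests on the Kadison--Schwarz/Cauchy--Schwarz argument of Lemma~\ref{kerlem} together with the hereditary-subalgebra description in Lemma~\ref{genf}, all of which is already established in the excerpt, so here it is purely a matter of citation. The remaining steps---the associative form of Lemma~\ref{keride}, the fact that the quotient of an operator algebra by a closed two-sided ideal is again an operator algebra, and the `unital contraction is real positive' principle---are routine.
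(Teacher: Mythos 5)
Your proposal is correct and is essentially the paper's own argument: the paper proves Theorem \ref{troa} precisely by ``following the last proofs'' (Theorem \ref{tr} and Corollary \ref{trps}) with the associative versions of the ingredients, i.e.\ the operator algebra analogue of Lemma \ref{paauj}, Corollary \ref{genf2} to get that ${\rm Ker}(P)$ is an approximately unital two-sided ideal, and the associative variant of Lemma \ref{keride} to obtain the homomorphism identities and the identification of $P(A)$ with $A/{\rm Ker}(P)$. The remaining points (identity element $P(1)$, and collapsing to $P(a)P(b)$ when $P(A)$ is a subalgebra) are handled as you do.
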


{\bf Remark.}  As in the first Remark after Theorem \ref{tr}, one may `weaken' the condition  in Theorem  \ref{troa} about being `densely spanned by the real positive elements which it contains',
to being `contained in the closed algebra (or even HSA) generated by the real positive elements it contains'.  

\bigskip

For the readers convenience we mention the following `partially selfadjoint' results which are proved in \cite{Brp}:
 
\begin{theorem} \label{ipscor}  Let $A$ be an 
approximately unital Jordan operator algebra,
and $P : A \to A$ a  real positive contractive projection with $P(A) \subset \Delta(A)$.
Then $P(A)$ is a $JC^*$-algebra in the $P$-product, and the restriction of $P$ to ${\rm joa}(P(A))$
is a Jordan $*$-homomorphism onto this  $JC^*$-algebra.
In this case $P$ is a Jordan conditional expectation with 
respect to the $P$-product:  $$P(a \circ P(b)) =P(P(a)  \circ P(b))$$ for $a, b$ in $A$. 
 \end{theorem}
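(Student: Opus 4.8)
The plan is to reduce the two structural claims to the $JC^*$-algebra case already settled in Theorem~\ref{frs}, and then to extract the conditional expectation identity from Lemma~\ref{ishe2} together with the partial-triple-product results behind Lemma~\ref{Araz}. For the first two assertions: since $P(A)\subseteq\Delta(A)$, Lemma~\ref{ispos} shows that $P$ restricts to a \emph{positive} contractive projection $Q:=P|_{\Delta(A)}$ on the $JC^*$-algebra $\Delta(A)$, with range $Q(\Delta(A))=P(\Delta(A))=P(A)$ (the last equality because $P(A)\subseteq\Delta(A)\subseteq A$). By Theorem~\ref{frs}, $P(A)$ with the $P$-product is a $JC^*$-algebra, $Q$ is positive as a map into it, and $Q(Q(x)\circ Q(y))=Q(x\circ Q(y))$ for $x,y\in\Delta(A)$. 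By the second Remark after Theorem~\ref{tr}, ${\rm Ker}(Q)\cap{\rm joa}(P(A))$ is densely spanned by the (real) positive elements it contains; so the `resp.' case of Theorem~\ref{tr} applied to $Q$ on $\Delta(A)$ shows that the restriction of $P$ to ${\rm joa}(P(A))$ is a Jordan homomorphism onto $P(A)$ for the $P$-product, and it is $*$-preserving since $Q$ is positive. This proves that $P(A)$ is a $JC^*$-algebra in the $P$-product and that $P|_{{\rm joa}(P(A))}$ is a Jordan $*$-homomorphism onto it.

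For the conditional expectation identity, pass to $P^{**}$ (whose range still lies in $\Delta(A^{**})$) so that we may assume $A$ unital, and set $q=P(1)$. By Lemma~\ref{ispos}, $0\le q\le 1$ and $q\in\Delta(A)$, so $q\circ A\subseteq A$ and $q^{2}\circ A\subseteq A$; moreover $P(q)=P(P(1))=q=P(1)$. Hence Lemma~\ref{ishe2} applies and gives
\[P(a)=P(q\,a\,q)=P(q\circ a),\qquad a\in A,\]
so in particular $P$ vanishes on $q\circ{\rm Ker}(P)$. The scheme is then: Theorem~\ref{tr} (the `resp.' case, for $Q$) already gives $P(a\circ P(b))=P(P(a)\circ P(b))$ for $a,b\in{\rm joa}(P(A))$; for a general $a\in A$ one writes $a=P(a)+w$ with $w\in{\rm Ker}(P)$ and, using the displayed identity to replace $a$ by $q\,a\,q$, together with Lemma~\ref{Araz}'s identity $P(x\circ c)=P(x\circ P(c))$ ($x\in P(A),\,c\in\Delta(A)$) and the Friedman--Russo/Kaup identities for $Q$ on $\Delta(A)$ (which yield $\{\,{\rm Ker}(Q),P(A),P(A)\,\}\subseteq{\rm Ker}(Q)$), one concludes $P(w\circ P(b))=0$, i.e.\ $P(a\circ P(b))=P(P(a)\circ P(b))$.

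The main obstacle is precisely this last step, the passage from $a\in{\rm joa}(P(A))$ to $a\in A$. Here, in contrast with Theorem~\ref{tr}, ${\rm Ker}(P)$ need not be a Jordan ideal of $A$ (consider the compression onto a corner of $M_{n}$), so Lemma~\ref{keride} is not available; what is genuinely needed is the weaker absorption property ${\rm Ker}(P)\circ P(A)\subseteq{\rm Ker}(P)$. I expect the heart of the argument to be establishing this, by running the Cauchy--Schwarz/state technique of Lemmas~\ref{ishe2} and~\ref{kerlem} on the `cut down' elements $q\,a\,q$ and exploiting that $q$ is the unit of the $JC^*$-algebra $P(A)$ for the $P$-product.
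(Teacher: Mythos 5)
Your reduction of the first two assertions is essentially right: since $P(A)\subset\Delta(A)$, Lemma \ref{ispos} makes $Q=P|_{\Delta(A)}$ a positive contractive projection on the $JC^*$-algebra $\Delta(A)$ with range $P(A)$, Theorem \ref{frs} gives the $JC^*$-structure for the $P$-product, and Remark 2 after Theorem \ref{tr} together with the `respectively' case of Theorem \ref{tr} (applied to $Q$, noting ${\rm joa}(P(A))\subset\Delta(A)$) gives that $P|_{{\rm joa}(P(A))}$ is a Jordan $*$-homomorphism onto the range. Note, incidentally, that the paper itself does not prove Theorem \ref{ipscor}; it is quoted from \cite{Brp}, with the Remark after the theorem indicating the decisive ingredient.

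The genuine gap is exactly where you locate it, and your sketch does not close it. The identity $P(a\circ P(b))=P(P(a)\circ P(b))$ for \emph{all} $a\in A$ amounts to $P(w\circ y)=0$ for $w\in{\rm Ker}(P)$ and $y\in P(A)$, and none of the tools you invoke reaches such $w$: the Friedman--Russo/Kaup identities (and equation (\ref{CE})) for $Q$ only control ${\rm Ker}(Q)={\rm Ker}(P)\cap\Delta(A)$, while a general $w\in{\rm Ker}(P)$, and likewise $qaq$ for general $a$, lies outside $\Delta(A)$; the Arazy--Solel/Rodr\'iguez identity used in Lemma \ref{Araz} requires the \emph{outer} variables to be in $P(A)$, which $w$ is not; and the Cauchy--Schwarz technique of Lemma \ref{kerlem} needs real positive elements of the kernel, of which ${\rm Ker}(P)$ may have none. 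Your use of Lemma \ref{ishe2} with $q=P(1)$ is valid but only yields $P(q\circ w)=0$, i.e.\ the case $P(b)=q$, and $q$ here need not be a projection (so the Lemma \ref{pretr2}/Theorem \ref{tr2} machinery is also unavailable). The route indicated by the paper (details in \cite{Brp}) is different: apply Lemma \ref{ishe3} to each element $q\in P(A)$ that is a projection \emph{for the $P$-product} (i.e.\ $P(q^2)=q$), obtaining $P(a\circ q)=P(P(a)\circ q)$ for all $a\in A$; then pass to $P^{**}$ on $A^{**}$ and use that, by the first part, the (selfadjoint) range is a $JW^*$-algebra in the $P$-product whose unit ball is in the weak*-closed span of its projections, so weak* continuity upgrades the identity to $P(a\circ b)=P(P(a)\circ b)$ for all $b\in\Delta(P(A))=P(A)$, which with $b=P(b)$ gives the stated conditional expectation formula. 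That spanning-by-$P$-product-projections step is the ``heart'' you left unproved, and it is not a routine consequence of the lemmas you cite.
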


{\bf Remark.}    A main ingredient from   \cite{Brp} in the proof of the last result is the following 
lemma:  Let $A$ be  an approximately  unital  Jordan
operator algebra,
and let  $P : A \to A$ be a contractive real positive projection such 
 that $P(A)$ is a Jordan
operator algebra with $P$-product.   If 
$q \in P(A)$ is a projection in the $P$-product then
  $P(qaq) = P(q P(a) q)$ and $P(a \circ q) =
P(P(a) \circ q)$ for all $a \in A$.  (These assertions  follow from  Lemma \ref{ishe3}.)
If further $A$ is weak* closed and $P$ is weak* continuous then  $$P(a \circ b) =
P(P(a) \circ b), \qquad a \in A, b \in \Delta(P(A)).$$

\begin{corollary} \label{ipscor2}  Let $A$ be an 
approximately unital  (associative) operator algebra,
and $P : A \to A$ a  real positive contractive projection with $P(A) \subset \Delta(A)$.   Then 
the $P$-product on $P(A)$ is associative (which happens
for example if $P(A)$ is an (associative) subalgebra of $A$) if and only if 
$P$ is completely contractive.  In this case $P$ is real completely positive, 
$P(A)$ is a $C^*$-algebra in the new product, and $O$ is a conditional expectation in the latter product:  
 $$P(P(a)P(b)) = P(aP(b)) = P(P(a)b), \qquad a, b \in A.$$
 \end{corollary}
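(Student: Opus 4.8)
The plan is to leverage the previously established Jordan-algebra result (Theorem \ref{ipscor}) together with the Choi--Effros type machinery for operator algebras (Theorem \ref{troa}) and the $J^*$-algebra/injective-envelope facts about completely isometric structure. First I would observe that, since $A$ is an associative operator algebra and $P(A) \subset \Delta(A)$, Theorem \ref{ipscor} already gives that $P(A)$ with the $P$-product $P(xy)$ is a $JC^*$-algebra and that the restriction of $P$ to ${\rm oa}(P(A))$ is a Jordan $*$-homomorphism onto it; it also gives the Jordan conditional expectation identity $P(a \circ P(b)) = P(P(a)\circ P(b))$. So the content to be added here is purely about the associative refinement: when the $P$-product happens to be \emph{associative}, one upgrades ``$JC^*$-algebra'' to ``$C^*$-algebra'', ``Jordan $*$-homomorphism'' to ``$*$-homomorphism'', ``real positive'' to ``real completely positive'', ``contractive'' to ``completely contractive'', and the one-sided conditional expectation identities follow.

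The key steps, in order. (1) \emph{Associativity $\Rightarrow$ $C^*$-algebra.} If the $P$-product on $P(A)$ is associative, then it is an associative product making $P(A)$ simultaneously a $JC^*$-algebra; a Jordan $C^*$-algebra which is associative is a $C^*$-algebra (the involution being the ambient adjoint, since $P(A) \subset \Delta(A)$, and the $C^*$-identity being inherited from the $JC^*$-identity). Note the parenthetical case: if $P(A)$ is literally an associative subalgebra of $A$, then the $P$-product agrees with the ordinary product on the subalgebra $P(A)$, hence is associative. (2) \emph{Homomorphism and complete positivity.} Apply Theorem \ref{troa} with the hypothesis that ${\rm Ker}(P)$ is spanned by its real positive elements --- wait: here one must be careful, since Corollary \ref{ipscor2} as stated does \emph{not} assume the kernel-spanning condition. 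The route is instead via the Choi--Effros theorem for $C^*$-algebras (or rather Youngson's variant / \cite[Theorem 2.5]{BNp}): once we know $P(A)$ is a $C^*$-algebra in the $P$-product and $P$ restricted to ${\rm oa}(P(A))$ is a $*$-homomorphism onto it, one shows $P$ is completely contractive by factoring it through the $*$-homomorphism and the inclusion $P(A) \hookrightarrow A$. More precisely, one embeds everything in the injective envelope and uses that a contractive Jordan $*$-homomorphism onto a $C^*$-algebra which is in fact an algebra homomorphism is completely contractive and completely positive; combined with Lemma \ref{ispos}-type arguments and real positivity this yields real complete positivity of $P$. (3) \emph{Conditional expectation.} From $P$ being a $*$-homomorphism on ${\rm oa}(P(A))$ with the new product and the Jordan identity $P(a\circ P(b)) = P(P(a)\circ P(b))$ applied to $a, b, ab, ba$, together with associativity, deduce $P(aP(b)) = P(P(a)P(b)) = P(P(a)b)$ for $a,b \in A$ --- this is the two-sided refinement of the Jordan identity, obtained by polarizing and using $abc + cba \in A$ formulae. (4) \emph{Converse direction.} If $P$ is completely contractive, then by Theorem \ref{apch}(2) (or \cite[Corollary 2.3]{BNj}) the range with product $P(xy)$ is completely isometrically an associative operator algebra, so the $P$-product is associative; this gives the ``only if''.

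The main obstacle I anticipate is Step (2): going from ``$P(A)$ is a $C^*$-algebra in the new product and $P|_{{\rm oa}(P(A))}$ is a $*$-homomorphism'' to ``$P$ is completely contractive and real completely positive'' \emph{without} a kernel-spanning hypothesis. The delicate point is that one cannot directly invoke Theorem \ref{troa} (which needs that hypothesis); instead one needs the structural fact --- essentially Youngson's theorem or the argument behind \cite[Theorem 2.5]{BNp} and \cite[Corollary 4.2.9]{BLM} --- that a contractive projection onto a $C^*$-algebra sitting completely isometrically inside, composed appropriately, is automatically completely contractive when the new product is the restriction of (or isomorphic to) the ambient one via a $*$-homomorphism. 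One must check that the completely isometric identification of $(P(A), P(xy))$ with a concrete $C^*$-algebra is compatible with $P$ being a $*$-homomorphism on ${\rm oa}(P(A))$, so that $P = \iota \circ \rho$ with $\iota$ a complete isometry and $\rho$ a completely positive unital-ish $*$-homomorphism; the complete positivity/contractivity of $P$ then follows. Handling the non-unital case (passing to $P^{**}$ and $q = P^{**}(1)$, invoking Theorem \ref{tr2} to reduce to the unital situation on the hereditary subalgebra) is routine bookkeeping but should be mentioned.
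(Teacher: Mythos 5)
A preliminary remark: the paper itself contains no proof of Corollary \ref{ipscor2} — it is one of the ``partially selfadjoint'' results quoted from \cite{Brp} — so your argument can only be assessed on its own merits, not against an argument in this text.

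On those merits there is a genuine gap, and it sits exactly where you flagged it. The heart of the corollary is the implication that associativity of the $P$-product forces $P$ to be completely contractive on all of $A$, and your route to this does not work. You invoke ``Theorem \ref{ipscor} gives that $P(A)$ with the $P$-product $P(xy)$ is a $JC^*$-algebra and that $P$ restricted to ${\rm oa}(P(A))$ is a $*$-homomorphism onto it.'' That is a misreading: by the paper's convention the $P$-product in Theorem \ref{ipscor} is the \emph{symmetrized} product $P(x\circ y)$, and the theorem yields only a Jordan $*$-homomorphism of ${\rm joa}(P(A))$ onto the $JC^*$-algebra $(P(A),P(x\circ y))$. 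Multiplicativity of $P$ for the associative product $P(xy)$ on ${\rm oa}(P(A))$ is not available, and proving it is of essentially the same difficulty as the statement at issue. Moreover, even if one had such a homomorphism, a factorization $P=\iota\circ\rho$ could only control $P$ on ${\rm oa}(P(A))$; the corollary asserts complete contractivity of $P$ on all of $A$, which need not be generated by $P(A)$, and (as you note) there is no kernel-spanning hypothesis to fall back on. So the ``only if'' direction is asserted rather than proved; you correctly identify this as the obstacle, but the proposal does not overcome it.

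Two further points. First, step (1) conflates the two candidate products: the associativity hypothesis concerns $(x,y)\mapsto P(xy)$ (this is what the parenthetical about subalgebras and the final display refer to), whereas the $JC^*$-structure imported from Theorem \ref{ipscor} lives on $P(x\circ y)$. Hence ``a Jordan $C^*$-algebra which is associative is a $C^*$-algebra'' does not apply as stated; what one needs is that a unital Banach algebra whose symmetrized product makes it a $JB^*$-algebra with the same unit, norm and involution is a $C^*$-algebra (true, e.g.\ by a Vidav--Palmer argument, since hermitian elements depend only on the unit and norm and they span), and one must first reduce to the unital case via Theorem \ref{tr2} to even have a unit — none of which appears in your sketch. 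Second, the display $P(aP(b))=P(P(a)P(b))=P(P(a)b)$ does not follow by ``polarizing'' the Jordan identity together with the $abc+cba$ formula: one-sided module properties are strictly stronger than the symmetrized one (the transpose-type example after the corollary illustrates the failure in general). The natural route, which the paper itself signposts in the Remark after Theorem \ref{AB}, is to obtain it from real complete positivity and complete contractivity via \cite[Section 2]{BNp} — i.e.\ it should come \emph{after} the hard implication, not substitute for part of it. Your step (4), the converse direction via Theorem \ref{apch}(2), is fine.
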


{\bf Remark.}
 A contractive (real positive) Jordan conditional expectation need not be  completely contractive.
Merely consider $P(x) = \frac{1}{2}(x + x^\intercal )$ on $M_2$.
The same example also shows the necessity of the 
condition in the last result  that 
the new product on $P(A)$ is associative (even in the case that
$A$ is a $C^*$-algebra). 

\bigskip

The following result, also   from \cite{Brp}, is inspired by the selfadjoint case due to Effros et al (see e.g.\ \cite[Lemma 1.4]{ES}).
If $A$ is a unital operator algebra, and $P$ is  a unital contractive or
completely contractive projection
on $A$, define
 $$N = \{ x \in A : P(xy) = P(yx) = 0 \; \text{for all} \; y \in A \}.$$
If $A$ or $P$ is not unital, but $P$ is also real positive,
then we may extend $P$ to a unital contractive projection
on $A^1$, where $A^1$ is a unitization with $A^1 \neq A$, and 
set  $N = \{ x \in A : P(xy) = P(yx) = 0 \; \text{for all} \; y \in A^1 \}$.
   Then $N$ is clearly a closed ideal in $A$, and is also
a subspace of Ker$(P)$.  Define
$$B = \{ x \in A : P(xy) = P(P(x) P(y)) \, {\rm and} \, P(yx)  = P(P(y) P(x)) \; \text{for all} \; y \in A \}.$$  Then $N \subset B$ since if $x \in N \subset {\rm Ker}(P)$
then $P(xy) = 0 = P(P(x) P(y))$ for all $y \in A$.
Note too that $1 \in B$ if $A$ is unital and $P(1) = 1$.

\begin{theorem} \label{AB} If $P$ is a  real positive contractive  projection on an approximately unital operator algebra $A$, and 
$N, B$ are defined as above, 
then  $P(A) \subset B$ if and only if
$$P(P(a) b) = P(P(a) P(b)) = P(a P(b))  \; \text{for all} \; a , b \in A.$$
That is,  if and only if $P$ is a conditional expectation onto $P(A)$
 with respect to
the $P$-product.  This is also equivalent to $B = P(A) + N$.
If these hold then  $P(A)$
with the $P$-product is isometrically isomorphic
 to an operator algebra, $B$  is a subalgebra of $A$ containing
$P(A)$, and $P$ is a homomorphism from $B$ onto  $P(A)$
with the $P$-product.
\end{theorem}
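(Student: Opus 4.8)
\emph{Proof proposal.}
The plan is: first reduce to the case that $A$ is unital with $P(1)=1$; then settle the chain of equivalences by bare-hands manipulation of the identities defining $N$ and $B$; and finally bootstrap from ``$P(A)\subset B$'' to the structural conclusions, the crux being associativity of the $P$-product.  By Theorem~\ref{cepro} extend $P$ to a unital real positive contractive projection on $A^1$; one checks that the $N$ and $B$ of the statement are exactly the intersections with $A$ of the analogously defined sets over $A^1$ (for a point of $N$ or $B$, the extra condition obtained by testing against $1$ is automatic), so we may assume $A$ is unital and $P(1)=1$.  Record the immediate facts: $N$ is a closed two-sided ideal of $A$; $N\subset{\rm Ker}(P)$ (test the definition of $N$ at $y=1$), hence $N\subset B$; $B$ is a closed subspace of $A$ (an intersection of closed conditions); and $P(A)\cap {\rm Ker}(P)=\{0\}$ since $P^2=P$.

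For the stated equivalences, suppose first that $P(A)\subset B$.  Given $a,b\in A$, since $P(a),P(b)\in P(A)\subset B$ and $P\circ P=P$, the two halves of the definition of $B$ yield $P(P(a)b)=P(P(a)P(b))$ and $P(aP(b))=P(P(a)P(b))$, i.e.\ the conditional-expectation identity; conversely that identity applied with $x=P(a)$ shows every element of $P(A)$ lies in $B$.  Moreover, if $P(A)\subset B$ then for $x\in B$ we have, using $x\in B$ and $P(x)\in P(A)\subset B$, that $P((x-P(x))y)=P(xy)-P(P(x)y)=P(P(x)P(y))-P(P(x)P(y))=0$ for all $y$, and likewise $P(y(x-P(x)))=0$, so $x-P(x)\in N$ and $x=P(x)+(x-P(x))\in P(A)+N$; together with $N\subset B$ this gives $B=P(A)+N$, and conversely $B=P(A)+N$ trivially forces $P(A)\subset B$.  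This also identifies $N=B\cap {\rm Ker}(P)$.

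Assume henceforth these equivalent conditions, and write $s\bullet t=P(st)$ for $s,t\in P(A)$.  From $P(A)\subset B$ (and $P(s)=s$, $P(r)=r$) extract the two ``collapsing'' identities
\[
P(sy)=P(sP(y))\quad\text{and}\quad P(yr)=P(P(y)r)\qquad (s,r\in P(A),\ y\in A).
\]
Then for $s,t,r\in P(A)$,
\[
(s\bullet t)\bullet r=P\big(P(st)\,r\big)=P\big((st)r\big)=P\big(s\,(tr)\big)=P\big(s\,P(tr)\big)=s\bullet(t\bullet r),
\]
the two middle equalities being the second collapsing identity at $y=st$ and the first at $y=tr$ (together with associativity in $A$), and the outer ones holding because $P(st),P(tr)\in P(A)$.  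So the $P$-product on $P(A)$ is associative.  This step — and the fact that $B$ is a subalgebra — is where all the content lies; the main point to spot is that the two halves of the definition of $B$ are precisely what collapses $P$ inward against a factor in $P(A)$, after which associativity is forced by $(s\bullet t)\bullet r=P(str)=s\bullet(t\bullet r)$.

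Finally, since $N$ is an ideal and $x-P(x)\in N$ for $x\in B$, for $x,y\in B$ one has $xy\equiv P(x)P(y)\pmod N$, so (as $N\subset B$ and $B$ is a subspace) $xy\in B$ iff $P(x)P(y)\in B$; and for $u,v\in P(A)$ the collapsing identities and the associativity just proved give $P((uv)z)=P(uP(vP(z)))=u\bullet(v\bullet P(z))=(u\bullet v)\bullet P(z)=P(P(uv)P(z))$ and symmetrically $P(z(uv))=P(P(z)P(uv))$, so $uv\in B$.  Hence $B$ is a closed subalgebra of $A$ containing $P(A)$.  The restriction $P|_B$ maps $B$ onto $P(A)$, is multiplicative for $\bullet$ (this is just the definition of $B$), and has kernel $N$, so it induces an algebra isomorphism $B/N\to(P(A),\bullet)$ which is isometric because $\|P(x)\|=\|P(x+n)\|\le\|x+n\|$ for all $n\in N$ while $n=P(x)-x\in N$ gives equality.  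Since $B$ is an operator algebra and $N$ a closed two-sided ideal of $B$, the quotient $B/N$ is an operator algebra (by the theory of quotients of operator algebras, see e.g.\ \cite{BLM}), which gives the remaining assertions.
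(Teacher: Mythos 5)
Your argument is correct, but note that this paper does not actually prove Theorem \ref{AB}: it is quoted from the reference \cite{Brp} (a paper in preparation), so there is no in-paper proof to compare against. Your route is the natural one, and the one suggested by the Effros--St{\o}rmer model (\cite[Lemma 1.4]{ES}) that the authors cite as inspiration: the chain of equivalences is pure algebra from the definitions of $N$ and $B$; the ``collapsing'' identities $P(sy)=P(sP(y))$, $P(yr)=P(P(y)r)$ for $s,r\in P(A)$ force associativity of the $P$-product and show $B$ is a closed subalgebra; and the isometric identification of $(P(A),\bullet)$ with the quotient operator algebra $B/N$ (using $x-P(x)\in N$ for $x\in B$, so that $N=B\cap{\rm Ker}(P)$ and the coset representative $P(x)$ attains the quotient norm) delivers the structural conclusions via the standard fact that a quotient of an operator algebra by a closed two-sided ideal is an operator algebra. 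Two points you wave through deserve a sentence each if this were written up: (i) in the nonunital case the paper's $N$ is defined with $y$ ranging over $A^1$, and the ``automatic'' extra condition at $y=1$ for a prospective element of $N$ (namely $P(x)=0$) is justified by the contractive approximate identity, $P(xe_t)=0\to P(x)$; and (ii) having reduced to $(A^1,\tilde P)$ via Theorem \ref{cepro}, one should say explicitly how the conclusions descend (e.g.\ $B=B_{A^1}\cap A$ is a subalgebra, $P(B)=P(A)$ since $P(A)\subset B$, and the $\tilde P$-product restricts to the $P$-product on $P(A)$) --- although, as your own computations show, almost every step in fact works directly in $A$ without the unitization, so the reduction is more of a convenience than a necessity.
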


{\bf Remark.}  Note that  in the last result $N = {\rm Ker}(P)$ iff
${\rm Ker}(P)$ is an ideal.  The latter
holds (by the associative algebra variant of 
Lemma \ref{keride}) if and only if   $B = A$, and then all of the conclusions of the last theorem hold.  Also, if $P$ is real completely positive and completely contractive then $P(P(a) b) = P(P(a) P(b)) = P(a P(b))$
for $a, b \in A$  as is proved in \cite[Section 2]{BNp}, so that the conclusions of the last theorem hold. 

\bigskip

Finally, we comment on the operator space version of the theory in the present section.  All of the results in 
\cite[Section 2]{BNp} stated for completely contractive projections on  operator algebras are true for
 for completely contractive projections on  Jordan operator algebras, with essentially unchanged
proofs.   As we said in the introduction, the main extra thing one needs to know for some of these proofs to go through is the following:

\begin{lemma} \label{ienv}  Let $A$ be an  approximately unital Jordan operator algebra, and let $A^1$ be its
unitization.  Then
  the injective envelope  $I(A) = I(A^1)$, and this  may be taken to be a unital $C^*$-algebra
containing $A$  
completely isometrically as a 
Jordan subalgebra.    Moreover, any injective envelope of the unitization of $A$ is an  injective envelope of $A$.  \end{lemma}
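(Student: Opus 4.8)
The plan is to establish the last assertion of the lemma, namely that every injective envelope of $A^1$ is an injective envelope of $A$; the identification $I(A)=I(A^1)$ then follows from the uniqueness of injective envelopes, and the $C^*$-algebra structure appears along the way. Fix an injective envelope $D$ of $A^1$. Since $A^1$ is a \emph{unital} operator space, the Choi--Effros/Hamana theory (see \cite[Section 4.2]{BLM} or \cite{Ham,Pau}) shows $D$ may be taken to be a unital $C^*$-algebra with $1_D=1_{A^1}$: realizing $A^1$ unitally in $B(H)$, a minimal $A^1$-projection $\Phi:B(H)\to D$ is automatically unital (it fixes $1_{A^1}\in A^1$), and $D$ becomes a unital $C^*$-algebra for the Choi--Effros product $x\bullet y=\Phi(xy)$. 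For $a,b\in A^1$ we have $ab+ba=2\,(a\circ b)\in A^1$, hence $\Phi(ab+ba)=ab+ba$ and so $\frac{1}{2}(a\bullet b+b\bullet a)=a\circ b$; thus the Jordan product of $D$ restricts to the original one on $A^1$, and $A^1$, together with its Jordan subalgebra $A$, sits completely isometrically as a Jordan subalgebra of the unital $C^*$-algebra $D$.

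Next I would reduce the remaining assertion to a rigidity statement. Since $D$ is injective and contains $A$ completely isometrically, and since for injective operator spaces being an injective envelope is equivalent to being an essential (equivalently rigid) extension (Hamana), it suffices to show that a minimal $A$-projection $\psi:D\to D$ is the identity; such a $\psi$ is idempotent, fixes $A$, and has range an injective envelope of $A$. Because $D$ is already rigid over $A^1$, it is enough to prove $\psi(1_D)=1_D$, for then $\psi$ fixes $A^1=A+\Cdb\,1_D$ and hence $\psi={\rm id}_D$. To attack $\psi(1_D)=1_D$, choose a Jordan cai $(e_t)$ for $A$ lying in ${\mathfrak F}_A$; then $\psi(e_t)=e_t$, so $\|\psi(1_D)-e_t\|=\|\psi(1_D-e_t)\|\leq\|1_D-e_t\|\leq 1$. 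Passing to $\psi^{**}:D^{**}\to D^{**}$ and letting $z$ be the weak* limit of $(e_t)$ --- the projection $1_{A^{**}}$ in $D^{**}$, fixed by $\psi^{**}$ --- weak* lower semicontinuity of the norm gives $\|\psi^{**}(1_D)-z\|\leq 1$. A short argument using $zz^\perp=0$ and the contractivity of $\psi^{**}$ (in the spirit of the proof of Lemma~\ref{ishe2}) then shows that the $z$-corner of $\psi^{**}(1_D)$ equals $z$ and that its off-diagonal corners vanish, so $\psi^{**}(1_D)=z+d$ with $d=z^\perp\psi^{**}(1_D)z^\perp$, $\|d\|\leq 1$ and $\psi^{**}(d)=d$.

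The hard part will be the final step: deducing $d=z^\perp$, i.e.\ $\psi^{**}(1_D)=1_D$. This is precisely the rigidity of the \emph{non-unital} inclusion $A\subseteq D$, and it is where the hypothesis that $A$ be approximately unital is indispensable; the norm estimates alone do not force it (the constraints $\|\psi^{**}(1_D)\|\leq 1$ and $\|\psi^{**}(1_D)-z\|\leq 1$ are satisfied, e.g., by $z$ itself), so one must exploit complete contractivity of $\psi$ together with the $C^*$-algebra structure of $D$ and the idempotency and minimality of $\psi$, following the pattern of the associative operator algebra case in \cite[Corollary 4.2.9]{BLM} with Jordan products replacing ordinary products. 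Once $\psi(1_D)=1_D$ is known, $\psi={\rm id}_D$ as above, so $D$ is an injective envelope of $A$; in particular $I(A)=I(A^1)$ is a unital $C^*$-algebra containing $A$ completely isometrically as a Jordan subalgebra, and indeed any injective envelope of $A^1$ is one of $A$.
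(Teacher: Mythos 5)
Your write-up correctly handles the routine parts (that $I(A^1)$ is a unital $C^*$-algebra containing $A^1$, hence $A$, completely isometrically as a Jordan subalgebra, and that everything reduces to showing a minimal $A$-projection is unital), but the step you label "the hard part" --- deducing $\psi^{**}(1_D)=1_D$, i.e.\ $d=z^\perp$ --- is precisely the entire content of the lemma beyond standard Choi--Effros/Hamana theory, and you do not prove it; saying one should "follow the pattern of \cite[Corollary 4.2.9]{BLM} with Jordan products" is a pointer, not an argument. The intermediate corner computation is also not secured: the Cauchy--Schwarz device of Lemma \ref{ishe2} requires the map to be real positive, which $\psi$ (resp.\ $\psi^{**}$) is not known to be at this stage, and the norm constraints you actually establish ($\|\psi^{**}(1_D)\|\le 1$ and $\|\psi^{**}(1_D)-z\|\le 1$) are, as you yourself observe, compatible with $\psi^{**}(1_D)=z$ (even with $0$), so they do not force the $z$-corner to be $z$ nor the off-diagonal corners to vanish. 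So there is a genuine gap, and it sits exactly where the approximate unitality of $A$ has to be converted into unitality of the projection.

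For comparison, the paper closes this gap by changing the ambient space: it takes a minimal $A$-projection $\Phi$ on $B(H)$ for a \emph{nondegenerate} representation of $A$, so that a partial cai $(e_t)$ converges SOT to $I_H$ (facts in and after \cite[Lemma 2.19]{BWj}). Then for each unit vector $\zeta$ the functional $\varphi=\langle \Phi(\cdot)\zeta,\zeta\rangle$ satisfies $\varphi(e_t)=\langle e_t\zeta,\zeta\rangle\to 1$, so $\varphi|_A$ is a state of $A$; its norm-one restriction to $A+\Cdb I_H$ is then forced to be unital by \cite[Lemma 2.20 (1)]{BWj}, giving $\langle \Phi(I_H)\zeta,\zeta\rangle=1$ for every unit vector $\zeta$, whence $\Phi(I_H)=I_H$ since $\|\Phi(I_H)\|\le 1$. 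Thus $\Phi$ fixes $A^1$, its range is simultaneously an injective envelope of $A$ and of $A^1$, and is a unital $C^*$-algebra by the Choi--Effros theorem. Note that nondegeneracy is what makes the weak limit of the cai equal to the identity of the ambient algebra; in your setup inside $D=I(A^1)$ one has only $z=1_{A^{**}}$, which need not a priori be $1_{D^{**}}$ from the point of view of $\psi$, and that is exactly why your estimates stall. If you want to salvage your direction of argument, you need this state-extension/nondegeneracy ingredient (or an equivalent) to prove $\psi(1_D)=1_D$; without it the proposal is incomplete.
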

\begin{proof}    This was stated in Corollary 2.22 of \cite{BWj}.   Since the proof was rather terse we give more details.  
The idea is similar to the  proof of 
\cite[Corollary 4.2.8]{BLM}, but uses facts in and after \cite[Lemma 2.19]{BWj} about  SOT convergence of partial cai for a nondegenerate 
representation of an approximately unital 
Jordan operator algebra.  These give, in the notation used in  \cite[Corollary 4.2.8]{BLM},
that  the restriction of the functional $\varphi = \langle \Phi(\cdot) \zeta , \zeta \rangle$ on $B(H)$ to $A$,  is a state of $A$.  So the restriction of $\varphi$ 
to $A + \Cdb I_H$ has the same norm 1.   By   \cite[Lemma 2.20 (1)]{BWj}, $\varphi(I) = \langle \Phi(I) \zeta , \zeta \rangle
= 1$.   It follows that $\Phi(I) = I$, and the  rest is as in the proof  of \cite[Corollary 4.2.8]{BLM}.  For example,  
the operator system $R$ there, which is an injective envelope of $A$, is an injective envelope of $A^1$, and by the Choi-Effros 
result cited in our introduction, $R$ is a unital $C^*$-algebra.   \end{proof}

We obtain for example:

\begin{corollary} \label{newaei}   Let $P : A \to A$ be a unital completely contractive projection on a Jordan operator algebra.
If $P(A)$ generates $A$ as a  Jordan operator algebra, then ${\rm Ker}(P)$ is a Jordan ideal in $A$.  In any case,
if $D$ is the closed  Jordan algebra generated by $P(A)$ then ${\rm Ker}(P_{|D})$ is a Jordan ideal in $D$, and 
$P_{|D}$ is a Jordan homomorphism with respect to the $P$-product on its range. 
 \end{corollary}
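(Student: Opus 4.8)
The plan is to reduce to the case where $P(A)$ generates $A$, to embed $A$ into a unital $C^*$-algebra via Lemma~\ref{ienv}, to extend $P$ to a completely positive idempotent there, and then to run a telescoping computation using the Choi--Effros module identity; this is the Jordan analogue of the proof of \cite[Corollary~2.11]{BNp}, with Lemma~\ref{ienv} as the new ingredient. For the reduction, put $D = {\rm joa}(P(A))$. Since $P$ is unital, $1_A = P(1_A) \in P(A) \subseteq D$, so $D$ is a unital Jordan operator algebra; and since $P$ fixes $P(A)$ and $P(A) \subseteq D$, we have $P(D) = P(A) \subseteq D$, so $P_{|D}$ is again a unital completely contractive projection, now with range generating its domain. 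It therefore suffices to prove: if $A = {\rm joa}(P(A))$, then ${\rm Ker}(P)$ is a Jordan ideal of $A$. Granting this, ${\rm Ker}(P_{|D}) = {\rm Ker}(P) \cap D$ is a Jordan ideal of $D$, and Lemma~\ref{keride} (valid as $D$ is unital) then yields that $P_{|D}$ is a Jordan homomorphism for the $P$-product and that $P(A)$ with that product is a unital Jordan operator algebra.

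So suppose $A = {\rm joa}(P(A))$. By Lemma~\ref{ienv} we may view $A$ as a Jordan subalgebra, completely isometrically, of a unital $C^*$-algebra $B = I(A)$ with $1_B = 1_A$. As $P$ is unital and completely contractive, the composite $A \xrightarrow{\,P\,} A \hookrightarrow B$ is a unital complete contraction, so by injectivity of $B$ it extends to a unital completely contractive (hence completely positive) map $\Phi_0 : B \to B$ with $\Phi_0|_A = P$. Since $P(A) \subseteq A$ and $P$ is idempotent, an easy induction gives $\Phi_0^n|_A = P$ for all $n \geq 1$. Taking, as in the proof of Lemma~\ref{fpes}, a weak* cluster point of the Ces\`aro averages $\frac1n\sum_{k=1}^n (\Phi_0^{**})^k$ in the unit ball of $B(M,M)$, where $M = B^{**}$, we obtain a unital completely positive \emph{idempotent} $\Phi$ on the von Neumann algebra $M$ with $\Phi|_A = P$ (each average already restricts to $P$ on $A$). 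By the Choi--Effros theorem \cite{CE}, $\mathcal{C} := \Phi(M) = \{x \in M : \Phi(x) = x\}$ is a $C^*$-algebra under the product $\Phi(xy)$; and since $\Phi$ is a completely positive completely contractive idempotent on a $C^*$-algebra, the module identity
\[
\Phi(\Phi(x)y) \;=\; \Phi(\Phi(x)\Phi(y)) \;=\; \Phi(x\Phi(y)), \qquad x, y \in M,
\]
holds --- see the Remark after Theorem~\ref{AB} (or \cite{CE,ST}). Observe that $P(A) = \Phi(A) \subseteq \mathcal{C}$.

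Now fix $k \in {\rm Ker}(P)$, so $\Phi(k) = P(k) = 0$. For $c_1, \dots, c_m \in P(A) \subseteq \mathcal{C}$ one has $\Phi(c_1\cdots c_m\,k) = 0 = \Phi(k\,c_1\cdots c_m)$: using the module identity one peels off the outer $\mathcal{C}$-factors one at a time, as in $\Phi(c_1 u) = \Phi(\Phi(c_1)u) = \Phi(\Phi(c_1)\Phi(u)) = \Phi(c_1\Phi(u))$, until one reaches $\Phi(c\,k) = \Phi(\Phi(c)\Phi(k)) = 0$ (and likewise $\Phi(k\,c) = 0$), so the claim follows by induction on $m$. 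Hence if $w$ is any Jordan monomial in elements of $P(A)$, then, expanding each Jordan product $x\circ y$ occurring in $w$ as $\frac12(xy+yx)$ inside $B$, $w$ becomes a linear combination of associative words in elements of $P(A)$; consequently $w\circ k = \frac12(wk + kw)$ is a linear combination of associative words each containing exactly one factor $k$, sitting at an end, so $\Phi(w\circ k) = 0$. As $w\circ k \in A$ (because $A$ is a Jordan subalgebra of $B$) and $\Phi|_A = P$, this gives $w\circ k \in {\rm Ker}(P)$. Since $A = {\rm joa}(P(A))$ is the closed linear span of such monomials, linearity and continuity of $a \mapsto P(a\circ k)$ yield $a\circ k \in {\rm Ker}(P)$ for every $a \in A$. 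Thus ${\rm Ker}(P)$ is a Jordan ideal of $A$, and an appeal to Lemma~\ref{keride} completes the argument.

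The step I expect to be the main obstacle is the computation of the third paragraph. The same recursion attempted directly inside $A$ is circular: the only pertinent three-variable relation available there, $(a\circ b)\circ c = (a\circ c)\circ b + (b\circ c)\circ a - \tfrac12(abc + cba)$, merely rewrites $(a\circ b)\circ c$ in terms of the very quantity one is trying to control, so Jordan non-associativity forces one to work inside the associative $C^*$-algebra supplied by Lemma~\ref{ienv} and to use the associative module identity for the idempotent $\Phi$. It is precisely in producing that idempotent extension, and in the module identity, that complete contractivity of $P$ enters essentially: a merely contractive unital projection need not extend to a completely positive idempotent, and the conclusion of the corollary can then genuinely fail.
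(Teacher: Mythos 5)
Your argument is correct, and its skeleton is the one the paper intends: reduce to the case $A = {\rm joa}(P(A))$, use Lemma \ref{ienv} to sit $A$ as a unital Jordan subalgebra of the unital $C^*$-algebra $I(A)$, extend $P$ by injectivity, pass to the bidual and average powers to get a unital completely positive idempotent $\Phi$ on a von Neumann algebra restricting to $P$, exactly as in \cite[Corollary 2.11]{BNp}. Where you diverge is the last step: the paper, having restricted $\tilde P$ to the von Neumann algebra $N$ generated by $P(A)$, invokes \cite[Proposition 2.10]{BNp} to produce a projection $e$ with $ex = xe = 0$ for every $x \in (I-P)(A)$ and with $P(\cdot) = P(e \cdot e)$ on $A$, so that $P(xy+yx) = P(exye + eyxe) = 0$ in one line; you instead bypass that proposition entirely, using the Choi--Effros conditional expectation identity for $\Phi$ together with an induction that peels fixed-point factors off associative words, and then the density of (spans of) Jordan monomials in ${\rm joa}(P(A))$. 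Your route is more self-contained (only \cite{CE} is needed, not the support-projection machinery of \cite[Proposition 2.10]{BNp}) and it makes visible exactly where the generating hypothesis enters, at the cost of being longer; the paper's version is shorter but leans on that earlier proposition. Both then finish with Lemma \ref{keride}. One small caveat: your closing claim that for a merely contractive unital projection ``the conclusion of the corollary can then genuinely fail'' overstates what is known --- the paper records precisely this (whether ${\rm Ker}(P)$ must be an ideal when ``completely'' is dropped, in the generating case) as an open question in the third remark after Theorem \ref{tr} --- but this does not affect the validity of your proof.
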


\begin{proof} 
 We may assume that $P(A)$ generates $A$.
As in the proof of \cite[Corollary 2.11]{BNp} but using Lemma \ref{ienv}, we extend $P$ first to a unital completely contractive projection on a $C^*$-algebra $B$
($= I(A)$), then to a weak* continuous unital completely contractive projection  $\tilde{P}$   on a von
Neumann algebra $M$ ($=B^{**}$).    Let  $\tilde{P}$ also 
denote the restriction of the latter projection to the von Neumann algebra $N$ generated by $P(A)$ inside $M$.
  If $x \in (I-P)(A)$, then $\tilde{P}(x) = 0$, and so by \cite[Proposition 2.10]{BNp} we have
$ex = x e = 0$.  Thus $x \in e^{\perp} M e^{\perp}$ and $x \in e^{\perp} N e^{\perp}$.   So for  $y \in A$ we have 
$P(xy + yx) = P(exy e + eyxe) = 0$.   Hence  ${\rm Ker}(P)$ is a Jordan ideal.  
The  Jordan homomorphism assertion follows from Lemma \ref{keride}.   \end{proof}  

\begin{corollary} \label{sampj} Let $A$ be an 
approximately unital  Jordan operator algebra,
and $P : A \to A$ a completely contractive  projection which is also completely real positive. 
Then 
the range $B = P(A)$ is an
approximately unital Jordan operator algebra with product $P(x \circ  y)$.    If $A$ is unital then $P(1)$ is the
 identity for  the latter product.
Also $P$ is a Jordan conditional expectation with respect to this new 
product: $$P(a  \circ P(b)) = P(P(a) \circ  P(b)) , \qquad a, b \in A .$$     If $P(A)$ is a Jordan subalgebra of $A$ then 
the last quantity in the last centered equation equals $P(a)  \circ P(b)$.
\end{corollary}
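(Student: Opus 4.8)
The plan is to follow the proof of the operator algebra version, \cite[Theorem 2.5 and Corollary 2.9]{BNp}, whose only genuinely new ingredient in the Jordan setting is Lemma~\ref{ienv}; the strategy is to transport everything into a von Neumann algebra, apply the Choi--Effros theorem there, and restrict back. I would dispose of the statement about $P(1)$ immediately: if $A$ is unital and $q = P(1)$, then Lemma~\ref{Araz} gives $a = P(a \circ q)$ for all $a \in P(A)$, i.e.\ $q$ is a (two-sided) identity for the $P$-product on $P(A)$.

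For the rest, first extend $P$ to a unital completely contractive projection $\widehat{P}_1$ on a unitization $A^1$ (the completely contractive analogue of Theorem~\ref{cepro}, which is where completely real positivity, as opposed to mere real positivity, is used; $\widehat{P}_1$ is idempotent on $A^1 = A + \Cdb 1$ since it fixes $1$ and agrees with $P$ on $A$). Then, exactly as in the proof of Corollary~\ref{newaei} but invoking Lemma~\ref{ienv}, extend $\widehat{P}_1$ to a unital completely contractive \emph{projection} $\widehat{P}$ on the unital $C^*$-algebra $I(A) = I(A^1)$ (via injectivity and rigidity of the injective envelope), and then to the weak* continuous unital completely positive projection $\widetilde{P} := \widehat{P}^{\,**}$ on the von Neumann algebra $M := I(A)^{**}$. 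Here $A$ sits as a Jordan subalgebra of $M$ (by Lemma~\ref{ienv}) and $\widetilde{P}|_A = P$.

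Now apply the Choi--Effros theorem \cite[Theorem 3.1]{CE} to the unital completely positive projection $\widetilde{P}$ on $M$: the range $\widetilde{P}(M)$ with product $\widetilde{P}(xy)$ is a $C^*$-algebra, and one has the conditional expectation identities $\widetilde{P}(\widetilde{P}(x)y) = \widetilde{P}(\widetilde{P}(x)\widetilde{P}(y)) = \widetilde{P}(x\widetilde{P}(y))$ for all $x,y \in M$; averaging the two one-sided versions over $xy$ and $yx$ gives $\widetilde{P}(x \circ \widetilde{P}(y)) = \widetilde{P}(\widetilde{P}(x) \circ \widetilde{P}(y))$ on $M$. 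Fix $a,b \in A$. Then $\widetilde{P}(b) = P(b) \in B := P(A)$, and since $A$ is a Jordan subalgebra of $M$ the elements $a \circ P(b)$ and $P(a)\circ P(b)$ lie in $A$, where $\widetilde{P}$ agrees with $P$; feeding $x=a$, $y=b$ into the last identity yields $P(a \circ P(b)) = P(P(a)\circ P(b))$ for all $a,b \in A$. Moreover $B \subseteq \widetilde{P}(M)$ is norm-closed, and for $b_1,b_2\in B$ we have $b_1\circ b_2 \in A$, so $b_1 \circ_P b_2 = P(b_1\circ b_2) = \widetilde{P}(b_1\circ b_2)\in B$; hence $(B,\circ_P)$ is a closed Jordan subalgebra of the $C^*$-algebra $\widetilde{P}(M)$, i.e.\ a Jordan operator algebra (this also follows from Theorem~\ref{apch}(1)). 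For approximate unitality, Theorem~\ref{apch}(1) applied to $P^{**}$ on the unital $A^{**}$, together with Lemma~\ref{Araz} (which shows $P^{**}(1)$ is an identity for the $P^{**}$-product) and the fact that a norm-one idempotent operator is an orthogonal projection, shows that $\mathrm{Ran}(P^{**})$ with its separately weak* continuous $P^{**}$-product is a unital Jordan operator algebra with identity $P^{**}(1)$; since this is $B^{**}$, we get $B^{**}$ unital, hence $B$ approximately unital (a net in $B$ converging weak* to $1_{B^{**}}$ --- such as $P(e_t)$ for a partial cai $(e_t)$ of $A$ --- can be modified to a Jordan cai as in \cite[Lemma 2.6]{BWj}). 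Finally, if $P(A)$ is a Jordan subalgebra of $A$ then $P(a)\circ P(b)\in P(A)$ is fixed by $P$, so $P(P(a)\circ P(b)) = P(a)\circ P(b)$.

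The main obstacle is the second paragraph: producing a unital completely contractive \emph{projection} on $I(A)$ and on $M$. This is the genuinely operator-space step --- it rests on the injectivity and rigidity of the injective envelope and on Lemma~\ref{ienv} (that $I(A)$ is a unital $C^*$-algebra), and the analogue fails for merely contractive real positive projections: for $x\mapsto\tfrac12(x+x^{\intercal})$ on $M_2$ (see the remark after Corollary~\ref{ipscor2}) the map is a contractive real positive Jordan conditional expectation that is not completely contractive, and $\mathrm{Ker}(P)$ is not a Jordan ideal. A secondary point needing care is the completely contractive refinement of Theorem~\ref{cepro} used to pass to $A^1$, where again completely real positivity rather than real positivity is essential.
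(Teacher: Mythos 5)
Your proposal is correct and is essentially the paper's own argument: the published proof (a sketch) reduces to the unital case and either uses Corollary \ref{newaei} together with the idea of Lemma \ref{keride}, or --- exactly as you do --- extends $P$ to a completely contractive completely positive projection on the unital $C^*$-algebra $I(A)$ furnished by Lemma \ref{ienv} and then appeals to the Choi--Effros $C^*$-algebra results, restricting back to $A$. The only cosmetic caveat is that the idempotency of the extension to $I(A)$ (and to $I(A)^{**}$) comes from the averaging construction in \cite[Section 2]{BNp} (the same citation the paper relies on in Corollary \ref{newaei}) rather than from rigidity of the injective envelope per se; otherwise your fleshed-out details (the unitization step, averaging the one-sided conditional expectation identities, and the bidual argument for approximate unitality of $P(A)$) all match the intended route.
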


\begin{proof}  (Sketch.) \ As usual we may assume $A$ is unital.
The first assertion follows from Corollary \ref{newaei} and the idea in the proof of Lemma \ref{keride} with $A$ 
replaced by joa$(P(A)$.   Alternatively, all the results here follow as in \cite[Section 2]{BNp} by extending $P$ 
to a  completely contractive completely positive projection $Q$ on $I(A)$.  One then appeals to the $C^*$-algebra
case for the matching results for $Q$, which yield our results when restricted to $A$.
 \end{proof}  

The following kind of complement exists in the mixed situation that $P$ is completely contractive but possibly not completely real positive:

\begin{theorem} \label{ccrp}  If $P : A \to A$ is a   real positive completely contractive 
projection on an approximately unital  operator algebra $A$ then with respect to the $P$-product
$P(A)$  is an approximately unital operator algebra. 
\end{theorem}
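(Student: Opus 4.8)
The complete contractivity already hands us the operator algebra structure: by Theorem \ref{apch}(2) the range $B := P(A)$ equipped with the product $P(xy)$ is completely isometrically algebra isomorphic to an associative operator algebra. So the only issue is to produce an approximate identity, and the plan is to pass to the second dual and show that the corresponding new-product algebra there is \emph{unital}. Write $M = A^{**}$ and $Q = P^{**} : M \to M$. Since $A$ is approximately unital, $M$ is a unital operator algebra, and $Q$ is a completely contractive projection which is moreover real positive (by e.g.\ \cite[Theorem 2.8]{BWj}). Applying Theorem \ref{apch}(2) to $Q$ on $M$, the set $N := Q(M)$ with product $x \bullet y := Q(xy)$ is (completely isometrically isomorphic to) an associative operator algebra. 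Recalling from the introduction that $Q(M)$ is the weak* closure of $P(A)$ in $M$, and using that $Q$ is weak* continuous while the (first) Arens product on $M$ has the usual separate weak* continuity, one checks that $(N, \bullet)$ is exactly the bidual operator algebra of $B$. Since an operator algebra is approximately unital precisely when its bidual is unital (standard; see e.g.\ \cite{BLM}), it suffices to prove that $N$ is unital, and the natural candidate for its identity is $q := Q(1)$.

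First, $0 \leq q \leq 1$ by Lemma \ref{ispos}, so $q \in \Delta(M)$ and the square $q^2$ (which coincides with $q \cdot q$ in $M$) is meaningful; by Corollary \ref{ipow}, $Q(q^2) = q$, that is $q \bullet q = q$. Next, apply Lemma \ref{Araz} to the real positive contractive projection $Q$ on the \emph{unital} algebra $M$, with $b = 1$: for every $a \in Q(M) = N$ we get $a = Q(a \circ q)$, i.e.\ $a = \tfrac12(a \bullet q + q \bullet a)$. Thus $q$ is a two-sided identity for the Jordan product $a \circ_\bullet b = \tfrac12(a \bullet b + b \bullet a)$ on $N$, and it remains to upgrade this Jordan unit to a genuine unit for $\bullet$. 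This last step is purely algebraic: from $q \bullet q = q$ and $\tfrac12(q \bullet x + x \bullet q) = x$ for all $x \in N$, $\bullet$-multiplying the second identity on the left by $q$ gives $q \bullet x \bullet q = q \bullet x$, and $\bullet$-multiplying it on the right by $q$ gives $q \bullet x \bullet q = x \bullet q$; hence $q \bullet x = x \bullet q$, and feeding this back into $\tfrac12(q \bullet x + x \bullet q) = x$ yields $q \bullet x = x = x \bullet q$. Therefore $q$ is an identity for $\bullet$, $N$ is a unital operator algebra, and consequently $B = P(A)$ with the $P$-product is an approximately unital operator algebra, as claimed.

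It is worth noting where each hypothesis enters: complete contractivity supplies the operator algebra structure on $B$ and $N$ via Theorem \ref{apch}, while real positivity is exactly what forces $Q(1)$ into $\Delta(M)$ (Lemma \ref{ispos}), to be fixed by $Q$ after squaring (Corollary \ref{ipow}), and to act as a Jordan identity on $N$ (Lemma \ref{Araz}); both ingredients are needed, since a finite-dimensional operator algebra with zero multiplication is the range of a completely contractive projection on itself yet is not approximately unital. The main point requiring care in writing this out is the bookkeeping that identifies $(Q(M), Q(xy))$ with the bidual operator algebra $B^{**}$ — that is, matching $\bullet$ with the Arens product on $B^{**}$ — which is routine but must be carried out with the mild asymmetry of the first Arens product in mind.
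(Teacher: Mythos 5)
Your proposal is correct and follows essentially the same route as the paper's proof: pass to $Q=P^{**}$ on the unital algebra $A^{**}$, invoke the completely contractive projection theorem to get the operator algebra product $Q(xy)$ on $Q(A^{**})$, use Lemma \ref{Araz} to see that $q=Q(1)$ is a Jordan identity, upgrade it to a genuine identity, and conclude via the identification $P(A)^{**}=P^{**}(A^{**})$ and the standard ``cai iff unital bidual'' criterion. The only (harmless) differences are that you carry out the Jordan-unit-to-unit step by an explicit algebraic computation where the paper cites the discussion around formula (1.1) of \cite{BWj}, and your use of Corollary \ref{ipow} for $q\bullet q=q$ is redundant since it follows from the Jordan identity property with $x=q$.
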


\begin{proof}    By standard arguments, $Q = P^{**}$ is a real positive completely contractive 
projection on the unital operator algebra  $M = A^{**}$.  
Following the proof of  \cite[Corollary 2.3]{BNj}, $Q(x y)$ defines an operator algebra product on $Q(M)$.
By Lemma \ref{Araz}, $q = Q(1)$ is a Jordan identity for
the new product. 
 Therefore $q$ is a projection and is
an identity of norm 1 for 
the new product by the discussion around formula (1.1) in  \cite{BWj}.
We note that the map $Q$ regarded as mapping into this operator algebra is a unital complete contraction, hence is
real completely positive.

Since $P(A)^{**} = P(A)^{\perp \perp} = P^{**}(A^{**})$  (see e.g.\ the proof 
of Lemma \ref{pretr2}), 
$P(A)$ is an approximately unital operator algebra with the $P$-product, by e.g.\ \cite[Proposition 2.5.8]{BLM}.  
 \end{proof}

{\bf Remark.}  One cannot hope for a complement to Theorem \ref{apch}
saying that the projections there are always conditional expectations
with respect to the new operator product on $P(A)$ coming
from Theorem \ref{apch}.  That is, one cannot hope for
$P(P(x) \circ y) = P(x \circ y)$
for all $x \in A$ and $y \in P(A)$ in (1) and (3) there,
or $P(P(x) y) = P(x y)$ 
 in (2).
It is easy to find counterexamples, for example the projection 
in the $2 \times 2$  matrix  example at the end of Section \ref{Symmpr}.  

\section{Symmetric projections}  \label{Symmpr} 
We first recall  the solution to the bicontractive and symmetric projection problem for $JC^*$-algebras, essentially 
due to deep work of Friedman and Russo, and St{\o}rmer \cite{FRAD, FRceJ, ST82}. Some of this hinges on Harris's Banach--Stone type theorem
for $J^*$-algebras \cite{Har2} mentioned in the introduction (where we mentioned the main facts about morphisms
between $JC^*$-algebras).  The following 
is essentially very well known (see the references above), but 
we do not know of a reference which has all of these assertions, or is in the formulation  we give:  

In the following result $M(A)$ is the multiplier algebra from \cite{Rod2}, 
that is the elements $x \in A^{**}$ with 
 $x \circ A \subset A$, or equivalently with $\{ x , y , z \} \in A$ for all $y, z \in A$.  These imply
that $y x^* y  \in A$ for all $y \in A$.  
If $A$ is a $C^*$-algebra then this is just the usual  $C^*$-algebraic multiplier algebra.  This follows from e.g.\ \cite[Proposition 5.10.96]{Rod2}
or a fact about $JM(A)$ from our introduction.   

\begin{theorem} \label{goq}  If $P : A \to A$ is a    
projection on a $JC^*$-algebra $A$ then 
$P$ is bicontractive  if and only if  $P$ is symmetric.  Moreover  $P$ is bicontractive and 
positive  if and only if  there exists a central projection $q \in M(A)$ (indeed $q \circ a = qaq \in A$ for all $a \in A$) such that
 $P = 0$ on $q^\perp A q^\perp$, and there exists a Jordan $*$-automorphism $\theta$ of $qAq$   of period 2 (i.e.\ $\theta \circ \theta = I$)
so that $P = \frac{1}{2}(I + \theta)$ on $qAq$.     Finally, $P(A)$ is a $JC^*$-subalgebra of $A$, and 
$P$ is a Jordan conditional expectation.
\end{theorem}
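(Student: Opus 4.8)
The plan is to pass to the bidual, settle the structure there by invoking the deep contractive- and bicontractive-projection theorems of Friedman--Russo and St\o rmer, and then descend to $A$ using the Banach--Stone theorem of Section~\ref{bstsec} together with Lemmas~\ref{ispos} and~\ref{cut2unit1}.

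One direction of the first assertion is immediate: if $P$ is symmetric then $I-P=\frac12\big(I+(I-2P)\big)$ and $P=\frac12\big(I-(I-2P)\big)$ write $P$ and $I-P$ as averages of the contractions $I$ and $\pm(I-2P)$, so $P$ is bicontractive. For the converse, and for the rest, I would work in $M:=A^{**}$, a unital $JW^*$-algebra, with $Q:=P^{**}$; then $\|Q\|=\|P\|$, $\|I-Q\|=\|I-P\|$, and (when $P$ is positive) $Q$ is positive by \cite[Theorem 2.8]{BWj}, so $Q$ inherits whichever of the three hypotheses $P$ has, and it suffices to establish the corresponding structure for $Q$ on $M$. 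The substance of ``bicontractive $\Rightarrow$ symmetric'' is that $\|2Q-I\|\le 1$; here I would appeal to the Friedman--Russo bicontractive projection theorem for $JBW^*$-triples (\cite{FRceJ, FRAD, CPP}, resting on Harris \cite{Har2} and Kaup \cite{Kaup}), which shows that for a bicontractive projection on a $JBW^*$-triple the map $2Q-I$ is a surjective linear isometry (equivalently a triple automorphism of period $2$), giving $\|I-2Q\|=1$.

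For the positive case I would invoke the $JW^*$-algebra version of St\o rmer's theorem on positive projections with contractive complement \cite{ST82}: there is a central projection $q$ in $M$ with $Q=0$ on $q^\perp M q^\perp+\{q^\perp m q+q m q^\perp:m\in M\}$ and such that $\theta:=2Q-I$ restricts to a period-$2$ Jordan $*$-automorphism of $qMq$ with $Q=\frac12(I+\theta)$ there; note $Q(1)=q$. The converse (such a $Q$ is positive and bicontractive) is routine. To descend: $\theta_0:=I-2P$ is a surjective linear isometry of the $JC^*$-algebra $A$ with $\theta_0^2=I$, so Theorem~\ref{ubsj} (with $B=A\subseteq D:=C^*(A)$) gives $\theta_0=\pi(\cdot)\,u$ with $u=\theta_0^{**}(1)=1-2q$ a unitary in $\Delta(JM(A))=\Delta(M(A))$; hence $q=\frac12(1-u)\in M(A)$, and, being central in $A^{**}$, it is a central projection of $M(A)$ with $q\circ a=qaq\in A$ for $a\in A$. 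Since $q=P^{**}(1)=P^{**}(q)$, Lemma~\ref{cut2unit1} applied to $P^{**}$ yields $P(a)=P^{**}(qaq)=P(qaq)$ for all $a\in A$; thus $P$ vanishes on $q^\perp A q^\perp+\{q^\perp a q+q a q^\perp:a\in A\}$, $P(A)=P(qAq)\subseteq qA^{**}q\cap A=qAq$, and $\theta=2P-I$ carries $qAq$ onto $qAq$. As $\theta$ is the restriction to $qAq$ of the period-$2$ Jordan $*$-automorphism $2Q-I$ of $(qAq)^{**}=qMq$, it is a period-$2$ Jordan $*$-automorphism of the $JC^*$-subalgebra $qAq$ of $A$, and $P=\frac12(I+\theta)$ on $qAq$.

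Finally, $P(A)$ is the fixed-point set of $\theta$ in $qAq$, hence norm-closed, selfadjoint, and a Jordan subalgebra (as $\theta(x\circ y)=\theta(x)\circ\theta(y)$), so it is a $JC^*$-subalgebra of $A$; and $P$ is a Jordan conditional expectation, since for $a\in A$ and $b\in P(A)$ we have $\theta(b)=b$ and $b=qbq$, whence, using $P(x)=P(qxq)$ and the identity $q(a\circ b)q=(qaq)\circ b$, $P(a\circ b)=P\big((qaq)\circ b\big)=\frac12\big((qaq)\circ b+\theta(qaq)\circ b\big)=P(qaq)\circ b=P(a)\circ b$. The main obstacle is the ``bicontractive $\Rightarrow$ symmetric'' implication and, in the positive case, the production of the central projection and the period-$2$ automorphism: these lie deep in the Friedman--Russo and St\o rmer contractive-projection machinery rather than being elementary. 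The only genuinely new point is the descent from $A^{**}$ to $A$, which is precisely what Theorem~\ref{ubsj} is designed to supply by pinning $q$ down inside $M(A)$.
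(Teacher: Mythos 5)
Your overall architecture is the same as the paper's: both hinge on the deep Friedman--Russo bicontractive projection theorem for the equivalence of bicontractive and symmetric, and both then extract the central projection and period-$2$ automorphism and finish with the fixed-point and conditional-expectation computations (your final two paragraphs essentially reproduce the paper's computations and are fine). Your descent from the bidual is a legitimate variant: the paper gets $u=\theta^{**}(1)\in M(A)$ directly from the fact that $\theta=2P-I$ is a surjective triple isomorphism of $A$ onto itself (so $\{u,\theta(a),\theta(b)\}=\theta(\{1,a,b\})\in A$), whereas you route this through Theorem \ref{ubsj} and then use Lemma \ref{cut2unit1}; that is exactly the mechanism the paper uses for the nonselfadjoint analogue (Theorem \ref{trivch2c}), and it works here.

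The one genuine soft spot is the positive case, where you invoke ``the $JW^*$-algebra version of St{\o}rmer's theorem'' in precisely the non-unital form you need: that $Q(1)=q$ is a \emph{central projection}, that $Q$ kills $q^\perp Mq^\perp$ (and the off-diagonal corners), and that $2Q-I$ restricts to a period-$2$ Jordan $*$-automorphism of $qMq$. This is essentially the statement being proved, transported to the bidual, and the paper explicitly remarks that it knows of no reference containing all these assertions in this formulation (St{\o}rmer \cite{ST82} is in the unital von Neumann/$C^*$ setting, and \cite{FRceJ} does not hand you the central projection in this packaging). So you should derive these points rather than cite them; the paper's route is short: by \cite[Theorem 2]{FRAD}, $\theta=2P-I$ is a surjective isometric triple isomorphism of period $2$, and positivity of $P$ makes $\theta$ $*$-linear, so $u=\theta^{**}(1)$ is a selfadjoint unitary and $q=\tfrac12(1+u)$ is a projection; centrality follows from $u\,\theta^{**}(x)\,u=u\,\theta^{**}(x^*)^*\,u=\theta^{**}(\{1,x^*,1\})=\theta^{**}(x)$, which forces $u$ (hence $q$) to commute with all of $A^{**}$; and then $P(q^\perp a q^\perp)\le \|a\|\,Q(q^\perp)=0$ for $a\ge 0$ kills the complementary corner. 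With those three steps inserted in place of the black box, your argument is complete and matches the theorem.
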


\begin{proof}   
Clearly symmetric projections are bicontractive.
Conversely, if $P$ is bicontractive then by \cite[Theorem 2]{FRAD}
 $\theta =2P - I_A$ is a linear surjective isometric Jordan triple product preserving selfmap of $A$ (that is, a 
$J^*$-algebra isomorphism) 
with $\theta \circ \theta = I_A$, and $P = \frac{1}{2}(I_A + \theta)$.  So $P$ is
 symmetric.  If also $P$ is positive then $P$, and hence also 
$\theta$, is $*$-linear.     Let $Q$ be the extension of $P$ to the second dual.
Let $u = \theta^{**}(1)$, which is a selfadjoint unitary (a symmetry) in the $C^*$-algebra $D$ generated by $A^{**}$.
Since $\theta^{**}$ is a $J^*$-algebra isomorphism we have 
$u \in M(A)$, thus $u \circ A \subset A$.   So $q = Q(1) = (1+u)/2$ is a projection  and $q \circ A \subset A$ (that is, $q \in M(A)$).
    We have $u \theta^{**}(x) u = u \theta^{**}(x^*)^* u = \theta^{**}( 1 (x^*)^* 1) =  \theta^{**}(x)$.
So $u$, and hence also $q$, is central with respect to the product in $D$.   Thus $q \eta q = q \circ \eta$ for $\eta \in A^{**}$.
It follows that $A = q Aq + q^\perp A q^\perp$.

Since $Q(q^\perp) = 0$, if $a \in {\rm Ball}(A)_+$ then
$$P(q^\perp a q^\perp) \leq Q(q^\perp) = 0,$$
and so $P = 0$ on $q^\perp A q^\perp$.  Also, since $\theta(q) = 2P(q) - q = q$ and $\theta$ is  $*$-linear and 
 Jordan triple product preserving,
it follows that $\theta(qaq) = q \theta(a) q$ and  $P(qaq) = \frac{1}{2}(qaq + \theta(qaq)) = q P(a) q.$
Thus,
$P(qAq) = qP(A)q \subset qAq$, and
 the restriction of $P$ to $qAq$ is a unital bicontractive positive projection on a unital  $JC^*$-algebra.  
Also since $\theta(qaq) = q \theta(a) q$ for $a \in A$, as we had above,
  $\theta(qAq) = q Aq$.  Hence $\theta' = \theta_{\vert qAq}$ is 
a  unital isometric Jordan $*$-automorphism of $qAq$, and $P = \frac{1}{2}(I + \theta')$ on $qAq$.  
The converse is easy; the centrality of $q$ allowing it to suffice 
to check the conditions on each of the two orthogonal parts of $A$.

Finally  $P(A) = P(qAq)$, which consists of the fixed points of $\theta'$ and hence is a $JC^*$-subalgebra.   
Moreover if $a \in A, b \in P(A)$ we have $$P(a \circ b) = P(qa \circ b) =  \frac{1}{2}(qa \circ b + \theta'(qa \circ b)) = P(a) \circ b$$
since  $\theta'$ is a Jordan
homomorphism, and $b$ is fixed by $\theta'$.
\end{proof}  

The following is the solution to the symmetric projection problem
in the category of  approximately unital Jordan operator algebras.

\begin{theorem} \label{trivch2c}  Let $A$ be an approximately unital  Jordan operator algebra,
and $P : A \to A$ a symmetric real positive projection.
Then the range of $P$  is an approximately unital 
Jordan subalgebra of $A$ and 
$P$ is a Jordan conditional expectation.   Moreover, $P^{**}(1) = q$ is an open projection 
(in the sense of our introduction) in the Jordan 
multiplier algebra $JM(A)$,
and all of the conclusions of  Lemma {\rm \ref{pretr2}} and Theorem   {\rm \ref{tr2}} hold.   

Set $D$ to be the  hereditary subalgebra of $A$ supported by $q$, indeed $D = q Aq$, which contains $P(A)$.
Let $W = q^\perp A q^\perp + \{ q^\perp a  q+ qa q^\perp : a \in A \}$, which is a complemented subspace of $A$, indeed 
$A = D \oplus W$.  
There exists a period $2$ surjective  isometric Jordan isomorphism $\pi : D \to D$,
which is the restriction of a  period $2$ isometric selfmap of $A$,  
such that $$P =  \frac{1}{2} (I + \pi) \; \; \; {\rm on} \; D,$$ 
and $P = 0$ on  the complement $W$ of $D$ in $A$ (thus  $P(a) = P(qaq)$ for 
all $a \in A$).
The range of $P$, which equals $P(D)$, is exactly the set of fixed points of $\pi$  in $D$.   

Conversely, if $q$ is a projection in $JM(A)$,  $\pi$ is a period $2$  isometric Jordan automorphism of $D = qAq$, and if
$P =  \frac{1}{2} (I + \pi)$ on $D$ and $P = 0$ on the complement $W$ above of $D$ in $A,$  
then $P$ is  a symmetric real positive projection on $A$. 
\end{theorem}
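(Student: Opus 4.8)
The plan is to work through the period-$2$ isometry $\theta := 2P - I$. Since $P$ is symmetric and idempotent, $\|\theta\| = \|I - 2P\| \le 1$ and $\theta^2 = 4P^2 - 4P + I = I$, so $\theta$ is a surjective isometry of $A$ of period $2$ with $P = \frac12(I + \theta)$. First I would feed $\theta$ into the Banach--Stone machinery of Section 3: by Proposition \ref{bsjoa}, $u := \theta^{**}(1) = 2P^{**}(1) - 1 = 2q - 1$ (where $q = P^{**}(1)$) is a unitary in $\Delta(A^{**})$ with $u^*$ a quasimultiplier of $A$, and $\theta : A \to A(u)$ is an isometric surjective Jordan homomorphism. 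Because $P$ is also real positive, $0 \le q \le 1$ by Lemma \ref{ispos}, so $u = 2q - 1$ is a \emph{selfadjoint} unitary; hence $u^2 = 1$, i.e.\ $q$ is a projection (this is the argument in the proof of \cite[Lemma 3.6]{BNp}). Since $A(u)$ is an approximately unital Jordan operator algebra (Remark 2 after Proposition \ref{bsjoa}), Corollary \ref{chop2} gives $u, u^* \in JM(A)$, and therefore $q = \frac12(1 + u) \in JM(A)$.

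With $q$ a projection in $JM(A)$, hypothesis (i) of Lemma \ref{pretr2} holds, so all conclusions of Lemma \ref{pretr2} and Theorem \ref{tr2} apply: $q$ is open in $A^{**}$, $D := \{a \in A : a = qaq\}$ is a hereditary subalgebra with $D^{**} = qA^{**}q$, and $P(a) = qP(a)q = P^{**}(qaq)$ with $P$ vanishing on $W = q^\perp A q^\perp + \{q^\perp a q + qaq^\perp : a \in A\}$. Since $q \in JM(A)$, the identities $qaq = 2(q\circ a)\circ q - q\circ a$ and $q^\perp a q^\perp = a - 2(q\circ a) + qaq$ etc.\ show $qaq \in A$ and $W \subseteq A$, whence $D = qAq$, $P(A) = P(D) \subseteq D$, and $A = D \oplus W$ (the splitting detected by $a \mapsto qaq$). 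Now set $\pi := \theta|_D$. One checks $\pi(D) = D$ (as $\theta(d) = 2P(d) - d \in D$ for $d \in D$, and $\theta^2 = I$), that $\pi$ is isometric of period $2$, and that $\pi$ is a Jordan automorphism of $D$ \emph{for the original product}: $\theta$ is a homomorphism for the $u$-product $\frac12(xu^*y + yu^*x)$, and on $D^{**} = qA^{**}q$ the element $u = 2q - 1$ acts as the identity, so on $\theta(D) \subseteq D$ the $u$-product coincides with $\circ$. Thus $P = \frac12(I + \pi)$ on $D$ and $P = 0$ on $W$.

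The remaining forward assertions then fall out. The range $P(A) = P(D) = \frac12(I + \pi)(D)$ is precisely the fixed-point set of $\pi$ in $D$, which is a closed Jordan subalgebra of $D$ (hence of $A$), and it is approximately unital since by Lemma \ref{pretr2} its bidual is unital with identity $q$. For the Jordan conditional expectation property, given $a \in A$ and $c = P(b) \in \mathrm{Fix}(\pi)$, I would compute in an ambient $C^*$-algebra, using $c = qcq$, that $P(a \circ c) = P(q(a\circ c)q) = P((qaq)\circ c)$, and then, since $(qaq)\circ c \in D$ and $\pi(c) = c$, that $P((qaq)\circ c) = \frac12\bigl((qaq)\circ c + \pi(qaq)\circ\pi(c)\bigr) = \frac12\bigl((qaq) + \pi(qaq)\bigr)\circ c = P(a)\circ c$. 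Openness of $q$ in $JM(A)$ is witnessed by any partial cai of the HSA $D \subseteq A \subseteq JM(A)$.

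For the converse I would reverse this: given a projection $q \in JM(A)$ and a period-$2$ isometric Jordan automorphism $\pi$ of $D = qAq$, set $\theta := 2P - I$, i.e.\ $\theta = \pi$ on $D$ and $\theta = -I$ on $W$; then $\theta^2 = I$ and $P = \frac12(I + \theta)$ on $D$ makes $P$ an idempotent on $A = D \oplus W$. Real positivity is easy: for $a \in \mathfrak{r}_A$ one has $qaq \in \mathfrak{r}_D$ (as $q$ is a selfadjoint projection in an ambient $C^*$-algebra and $qaq \in A$) and $\pi$ is real positive by Corollary \ref{trivj}, so $P(a) = \frac12(qaq + \pi(qaq)) \in \mathfrak{r}_A$. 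The main obstacle is the symmetry estimate $\|I - 2P\| = \|\theta\| \le 1$: since $q$ need \emph{not} be central here, one cannot split $A$ into two orthogonal ideals and argue componentwise as in Theorem \ref{goq}. The route I plan to take is to verify that $\theta$ preserves the Arazy--Solel partial triple product of $A$ --- here $\pi$ preserves the partial triple product of $D$, $-I$ preserves it on $W$ (the triple product has odd degree), and $\Delta(A)$ together with the partial triple product respects the block decomposition relative to $q$ --- and then to invoke the Harris--Kaup contractivity theorem for triple-product-preserving maps recalled in the introduction, which forces $\|\theta\| \le 1$; since $\theta^2 = I$, $\theta$ is then an isometry and $P$ is symmetric. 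I expect this block computation with the partial triple product to be the most delicate part of the whole proof.
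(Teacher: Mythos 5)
Your forward direction is correct and is essentially the paper's own argument: you pass to $\theta=2P-I$, use Proposition \ref{bsjoa} plus real positivity to see $u=\theta^{**}(1)=2q-1$ is a selfadjoint unitary so that $q$ is a projection, put $u$ (hence $q$) into $JM(A)$ via Corollary \ref{chop2}, invoke Lemma \ref{pretr2} and Theorem \ref{tr2}, set $\pi=\theta|_D$, identify $P(A)$ with the fixed points of $\pi$, and run the same conditional-expectation computation. Your two small deviations are both valid and even slightly cleaner in places: you deduce $qAq\subseteq A$, $W\subseteq A$ and $A=D\oplus W$ directly from $q\in JM(A)$ using the identity $qaq=2(q\circ a)\circ q-q\circ a$, where the paper instead goes through $u\in\Delta(M(B))$ for a generated $C^*$-algebra via Theorem \ref{ubsj}; and you verify that $\pi$ is a Jordan homomorphism for the original product by observing that the $u$-product collapses to $\circ$ on $qAq$ (since $x(2q-1)^*y=xy$ for $x,y\in qAq$), where the paper notes $\pi^{**}$ is unital and quotes the Arazy--Solel unital Banach--Stone theorem.

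The genuine gap is in the converse, precisely at the step you yourself flag as delicate. Your handling of real positivity coincides with the paper's ($\pi$ is real positive by Corollary \ref{trivj}, and $q(a+a^*)q\geq 0$), but the proposed route to $\Vert I-2P\Vert\leq 1$ via preservation of the Arazy--Solel partial triple product and ``Harris--Kaup contractivity'' does not exist as a theorem: Harris's result recalled in the introduction characterizes isometries of $J^*$-algebras via the \emph{full} triple product $xy^*x$, and there is no statement (in this paper or elsewhere) that a bijection of a nonselfadjoint Jordan operator algebra preserving the partial triple product (whose middle variable is confined to $\Delta(A)$, which may be tiny) must be contractive. Moreover no argument from the converse's stated data alone can succeed: take $A=M_2$, $q=e_{11}\in JM(A)$, and $\pi$ the identity on $D=qAq=\Cdb e_{11}$; then $P(a)=qaq$ satisfies every hypothesis of the converse, yet $I-2P$ sends the matrix with rows $(1,1)$ and $(1,-1)$, of norm $\sqrt{2}$, to the matrix with rows $(-1,1)$ and $(1,-1)$, of norm $2$, so $\Vert I-2P\Vert\geq\sqrt{2}$ (consistent with Theorem \ref{goq}, which forces centrality of $q$ for positive bicontractive projections on a $C^*$-algebra). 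So the converse has to be read with an extra compatibility hypothesis --- e.g.\ that the period $2$ map equal to $\pi$ on $D$ and $-I$ on $W$ is contractive, as is automatic for the $\theta$ arising in the forward direction, or that $q$ is central as in Theorem \ref{goqc} --- and with such a hypothesis symmetry is immediate, no triple-product argument being needed. Note that the paper's own proof of the converse only verifies real positivity and leaves the remainder as an exercise, so your instinct that something nontrivial is being elided there is sound, but the fix you propose is not viable.
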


\begin{proof}    As before  $P^{**}(1) = q$ is a projection, and 
all the conclusions of  Lemma \ref{pretr2} and Theorem  \ref{tr2}  are true for us.  
We will silently be using facts from these results below. In particular $q$ is an open projection, so supports
an approximately unital HSA $D$ of $A$ with $D^{\perp \perp} = qA^{**}q$.   
We know that $P(A) \subset D$.   Then  
$\theta = 2 P - I$ is a period 2  linear  isometric surjection on $A$.  
If $u = \theta^{**}(1) = 2 P^{**}(1) - 1 = 
2 q - 1$, then $u$  is a selfadjoint unitary (a symmetry).  If $D$ is a $C^*$-algebra generated by $A$, 
then $u$ and $q$ are in $\Delta(JM(A))$ and $\Delta(M(D))$ by Theorem \ref{ubsj}.   Thus
$q A q \subset A^{\perp \perp} \cap D = A$.   Hence $D = qAq$. 
Similarly, $q^\perp  A q^\perp \subset A$, and  $\{ q  a q^\perp +  q^\perp a q : a \in A \} \subset A$.
Thus $A = W \oplus D$ as desired.

We have $\theta^{**}(q) = q$.  
Let $\pi = \theta_{|D}$.   We have as before that  $\theta(D) \subset D = \theta^2(D) \subset \theta(D),$
so that $\pi(D) =  \theta(D) = D$.    Since  $\theta^{**}(q) = q$, 
that is $(\pi)^{**}$ is unital, $\pi$ is a Jordan homomorphism as we said at the start of Section \ref{bstsec}.  Since
$P(A)$  consists of the fixed points of $\pi$ in $D$, $P(A)$ is  a Jordan subalgebra of $A$.   
Moreover if $a \in A, b \in P(A)$ we have $$P(a \circ b) =P(q(a \circ b)q) =   P(qaq \circ b) = 
 \frac{1}{2}(qaq \circ b + \pi(qaq \circ b)) = P(a) \circ b,$$
since  $\pi$ is a Jordan homomorphism, and $b$ is fixed by $\pi$, and $P(a) \circ b = P(qaq)  \circ b$.
Since $P^{**}(1) = q$, by the uniqueness in Theorem \ref{hsaex}, $P$ must be the zeroing extension to $A$ of the map  $\frac{1}{2} (I + \pi)$ on $D.$

For the converse, note that  if $a \in {\mathfrak r}_A$ then $qaq + (q aq)^* = q(a+a^*)q \geq 0$.  Hence 
$P(qaq) + P(q aq)^* \geq 0$ since $P =  \frac{1}{2} (I + \pi)$ on $D$ and both $I$ and $\pi$ are real positive on $D$
by Corollary \ref{trivj}.  Since $P$ annihilates
$W$ it is now clear that $P$ is real positive on $A = D + W$.  We leave the rest as an exercise.  
\end{proof}

{\bf Remarks.}  1) \ One may write $P$ in the previous result more explicitly.   Namely, $$P = \frac{1}{2} (I + \theta) 
=  \frac{1}{2} (I + \pi(\cdot) (2q-1)).$$
Here $\theta, q, u = 2q-1$ are as above, and $\pi : A \to Au$ is the isometric surjective Jordan homomorphism  coming from
the Banach-Stone theorem \ref{ubsj}, namely $\pi = \theta(\cdot) u$.   If $A$ is an (associative) operator algebra
then $A u = A$, also by that theorem.    Conversely it is easy to show,  as in 
\cite[Theorem 3.7]{BNp}, that a map of the form at the end of the last centered equation, is a symmetric 
projection on $A$ under reasonable conditions.     As we said elsewhere, there seems to be an error  in the
last line of the statement of \cite[Theorem 3.7]{BNp}: the conditions there do not imply that $P$ is real 
positive.

\smallskip

2) \  Suppose  that $P$ is a symmetric projection on a unital Jordan operator algebra $A$ and that 
$q = P(1)^*$.   Then  $P(a) q P(a) = P(P(a)^2)  \in P(A)$, 
and so  $P(A)$ is a Jordan subalgebra of $A$ 
 if and only if  $P(a) (1-q) P(a) = 0$ for all $a \in A$.    
 If further $q$ is hermitian (which it is e.g.\ if $P$ real positive) then $q$ is a projection.   

To prove these assertions, let $u = 2q^*-1$, and let $\theta = 2P-I$.  By \cite[Corollary 2.8]{AS}, $\theta(a^2) = \theta(a 1 a) = \theta(a) u^*  \theta(a)$ for $a \in A$.    That is,
 $$2P(a^2) - a^2= (2P(a) - a) u^*  
(2P(a) - a).$$  This formula contains a lot of information.  In particular, replacing $a$ by $P(a)$, we have $2 P(P(a)^2) - P(a)^2 = 2 P(a) q P(a) 
 - P(a)^2$.
Hence $P(a) q P(a) = P(P(a)^2)  \in P(A)$.    Thus $P(a)^2 \in P(A)$  if and only if 
$P(a)^2 =  P(a) q P(a)$.   It follows that $P(A)$ is a Jordan subalgebra of $A$ 
 if and only if  $P(a) (1-q) P(a) = 0$ for all $a \in A$. 
By \cite[Corollary 2.8]{AS}, $u$ is a unitary in the $JC^*$-algebra $A \cap A^*$, and in the $C^*$-algebra
generated by $A$.   If $q$ is hermitian then $u$ is a selfadjoint unitary, which forces $q$ to be a
projection.  

 The condition in the last 
paragraph that $P(a)(1- q) P(a) = 0$ for all $a \in A$  happens for example if $q$ is a projection and  $P(A) \subset qAq$,
which is the case in much of our paper.

\begin{corollary} \label{triv}  Let $A$ be an approximately unital Jordan operator algebra,
and let $P : A \to A$ be a symmetric projection which is approximately unital (that is, takes a Jordan cai to a Jordan cai,
or more generally for which $P^{**}$ is unital). 
Then $P$ is real positive, 
the range of $P$  is a Jordan subalgebra of $A$, and $P$ is a Jordan conditional expectation.   Also, 
$P = \frac{1}{2} (1 + \theta)$ for a period 2 surjective isometric unital Jordan
homomorphism $\theta : A \to A$.   \end{corollary}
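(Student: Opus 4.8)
The plan is to obtain everything as a direct specialization of Theorem~\ref{trivch2c}, once we check that $P$ is a symmetric \emph{real positive} projection and that the open projection attached to it is the identity.

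First I would note that a symmetric projection is automatically contractive: writing $P = \frac{1}{2}(I - (I - 2P))$ and using $\|I - 2P\| \le 1$ gives $\|P\| \le 1$. Since $P$ is moreover approximately unital as a map (it takes a Jordan cai to a Jordan cai, or more generally $P^{**}$ is unital), Corollary~\ref{trivj} shows that $P$ is real positive. Thus $P$ meets the hypotheses of Theorem~\ref{trivch2c}, and all of that theorem's conclusions are available to us.

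Next I would specialize. Because $P^{**}$ is unital, $q = P^{**}(1) = 1_{A^{**}}$, so the open projection $q$ of Theorem~\ref{trivch2c} is the identity. Consequently the hereditary subalgebra $D = qAq$ of that theorem is all of $A$, while the complementary subspace $W = q^{\perp} A q^{\perp} + \{\, q^{\perp} a q + q a q^{\perp} : a \in A \,\}$ collapses to $\{0\}$. Then Theorem~\ref{trivch2c} hands us a period $2$ surjective isometric Jordan isomorphism $\theta := \pi : A \to A$ with $P = \frac{1}{2}(I + \theta)$ on $D = A$. Since $\theta^{**}(q) = q$ with $q = 1$, the map $\theta^{**}$ is unital, so $\theta$ is a \emph{unital} Jordan homomorphism (this is the point recorded at the start of Section~\ref{bstsec} and used inside the proof of Theorem~\ref{trivch2c}). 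Finally, Theorem~\ref{trivch2c} already states that the range of $P$ is the fixed-point set of $\theta$ in $A$ — hence an approximately unital Jordan subalgebra — and that $P$ is a Jordan conditional expectation, which finishes the argument.

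I do not anticipate any genuine obstacle: the substance lies entirely in Theorem~\ref{trivch2c}. The only things needing care are the trivial estimate $\|P\| \le 1$, the appeal to Corollary~\ref{trivj} for real positivity, and the bookkeeping that $q = 1$ forces $D = A$ and $W = \{0\}$ so that the decomposition and zeroing-extension statements of Theorem~\ref{trivch2c} become the simple global formula $P = \frac{1}{2}(I + \theta)$.
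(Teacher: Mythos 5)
Your proposal is correct and is essentially the paper's own argument: the paper simply notes that $P^{**}(1)=1$ and invokes Theorem \ref{trivch2c}, with the real positivity (via contractivity of a symmetric projection and Corollary \ref{trivj}) left implicit. Your write-up just makes those preliminary checks and the bookkeeping $q=1$, $D=A$, $W=\{0\}$ explicit.
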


\begin{proof}  
Here $P^{**}(1) = 1$ and the results follow from Theorem \ref{trivch2c}.
 \end{proof}

In the classification of symmetric projections in the selfadjoint  case (Theorem \ref{goq}), $q = P^{**}(1)$ is central.
In our setting of  Jordan operator algebras, if one insists that $q  = P^{**}(1)$ be central (that is, if  $q \circ a = qaq \in A$ for all $a \in A$)
then one obtains a 
characterization closely resembling  Theorem \ref{goq}:

\begin{theorem} \label{goqc}  Suppose that $P : A \to A$ is a   symmetric real positive 
projection on an approximately unital Jordan operator algebra $A$, and let
$q = P^{**}(1)$.  Then   $q$ is  is central if and only if $D = \{ a \in A : a = q a q \}$ is a Jordan ideal in $A$.
If these hold then $q \in M(A)$, the multiplier algebra defined at the start of {\rm \cite[Section 2.8]{BWj}}, 
 $P = 0$ on $q^\perp A q^\perp$, and there exists a period $2$ Jordan $*$-automorphism $\theta$ of $D = qAq$ 
so that $P = \frac{1}{2}(I + \theta)$ on $D$.    
Conversely, any  $P$ satisfying the conditions in the last sentence 
is  a symmetric real positive projection.   Finally, $P(A)$ is a Jordan subalgebra of $A$, and 
$P$ is a Jordan conditional expectation. 
\end{theorem}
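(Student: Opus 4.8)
The strategy is to read almost everything off Theorem \ref{trivch2c}, whose hypotheses are exactly those here, and then isolate what the centrality of $q$ adds. Write $q = P^{**}(1)$. By Theorem \ref{trivch2c} this is a projection in $\Delta(JM(A))$; the hereditary subalgebra it supports is $D = \{a \in A : a = qaq\} = qAq$, with $D^{\perp\perp} = qA^{**}q$ having identity $q$; there is a period $2$ isometric Jordan automorphism $\pi$ of $D$ with $P = \frac{1}{2}(I + \pi)$ on $D$ and $P = 0$ on $W = q^\perp A q^\perp + \{q^\perp a q + q a q^\perp : a \in A\}$; and $P(A) = P(D)$ is the fixed-point set of $\pi$, hence a Jordan subalgebra of $A$, with $P$ a Jordan conditional expectation. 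So the last sentence of the theorem and the displayed description of $P$ are already in hand, with $\theta := \pi$ as the advertised period $2$ Jordan automorphism of $D$ (a Jordan $*$-automorphism in the $JC^*$-algebra case, exactly as in Theorem \ref{goq}); it remains to prove the equivalence and the extra consequences, and the converse direction.

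For the equivalence, suppose first that $q$ is central, i.e.\ $q\eta q = q\circ\eta$ for $\eta \in A^{**}$. By the identity $p\circ a = \frac{1}{2}(a + pap - p^\perp a p^\perp)$ from the introduction this forces $q$ to commute with $A$, hence with a generating $C^*$-algebra and with $A^{**}$; thus $A = qAq \oplus q^\perp A q^\perp$, $W = q^\perp A q^\perp$, and $q \in M(A)$ (using $q\in JM(A)$, so that $qxq = 2(q\circ x)\circ q - q\circ x \in A$ for $x\in A$). For $d = qaq \in D$ and $b \in A$, centrality gives $d\circ b = q(a\circ b)q$, which lies in $D$ since $a\circ b \in A$ and $q \in M(A)$; hence $D$ is a Jordan ideal. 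Conversely, if $D$ is a Jordan ideal of $A$ then, the Jordan product on $A^{**}$ being separately weak* continuous ($A^{**}$ sitting inside a von Neumann algebra), $D^{\perp\perp} = qA^{**}q$ is a Jordan ideal of $A^{**}$ with identity $q$; so for every $\eta \in A^{**}$ we get $q\circ\eta \in qA^{**}q$, whence $q\circ\eta = q(q\circ\eta)q = q\eta q$, i.e.\ $q$ is central. The remaining assertions when these hold ($q\in M(A)$, $P = 0$ on $q^\perp A q^\perp$) follow as in the first case.

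For the converse direction, assume $q$ is a central projection of $M(A)$, $\theta$ a period $2$ isometric Jordan automorphism of $D = qAq$, with $P = \frac{1}{2}(I+\theta)$ on $D$ and $P = 0$ on $q^\perp A q^\perp$ (all of $W$, by centrality). Then $P$ is linear and idempotent ($P^2 = \frac{1}{4}(I + 2\theta + \theta^2) = \frac{1}{2}(I+\theta) = P$ on $D$, as $\theta^2 = I$). Since $q$ is central, any $C^*$-algebra containing $A$ realizes $A$ as an $\ell^\infty$-direct sum $qAq \oplus q^\perp A q^\perp$; on $D$ we have $\|P\|\le 1$ and $I - 2P = -\theta$ an isometry, while on $q^\perp A q^\perp$ both $P$ and $I - 2P$ are $0$ and $I$ respectively, so the $\ell^\infty$ behaviour gives $\|P\|\le 1$ and $\|I - 2P\|\le 1$. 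Real positivity goes as in the converse half of Theorem \ref{trivch2c}: for $a\in{\mathfrak r}_A$ we have $qaq + (qaq)^* = q(a+a^*)q \ge 0$, so $qaq\in{\mathfrak r}_D$, and since $I$ and $\theta$ are real positive on $D$ by Corollary \ref{trivj}, so is $P = \frac{1}{2}(I+\theta)$; as $P$ kills $q^\perp A q^\perp$ and $P(a) = P(qaq)$, $P$ is real positive on $A$. The final assertions ($P(A)$ a Jordan subalgebra, $P$ a Jordan conditional expectation) then come from the forward direction, i.e.\ from Theorem \ref{trivch2c}.

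The delicate point is the passage between $A$ and $A^{**}$ in the equivalence: one needs $D^{\perp\perp} = qA^{**}q$ (Lemma \ref{pretr2} and Theorem \ref{trivch2c}), separate weak* continuity of the Jordan product on $A^{**}$ so that a Jordan ideal survives bidualisation, and enough compatibility among $JM(A)$, $M(A)$ and the multiplier algebra of a generating $C^*$-algebra (as extracted in Theorem \ref{ubsj}) that ``$q$ commutes with $A$'' both upgrades $q\in JM(A)$ to $q\in M(A)$ and produces the $\ell^\infty$-decomposition $A = qAq \oplus q^\perp A q^\perp$. Beyond this bookkeeping the theorem is essentially a corollary of Theorem \ref{trivch2c} and the selfadjoint model Theorem \ref{goq}.
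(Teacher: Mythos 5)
Your proof is correct, and its skeleton is the same as the paper's: read off as much as possible from Theorem \ref{trivch2c}, isolate what the centrality of $q$ adds, and settle the converse as in Theorem \ref{goq}. The differences are local. For the equivalence between centrality of $q$ and $D$ being a Jordan ideal, the paper simply cites \cite[Corollary 3.26]{BWj} together with the bijective correspondence between HSAs and open projections, whereas you reprove it directly: the forward direction from the commutation $qa = aq$ forced by $q \circ a = qaq$, and the backward direction by passing to $D^{\perp\perp} = qA^{**}q$ (Lemma \ref{pretr2}) and using separate weak* continuity of the product on $A^{**}$, so that $q \circ \eta \in qA^{**}q$ gives $q \circ \eta = q(q\circ \eta)q = q\eta q$. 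For $q \in M(A)$, the paper goes through Theorem \ref{ubsj}: $u = 2q-1$ lies in the multiplier algebra of a generated $C^*$-algebra $\mathcal{D}$, hence $qa = qaq \in A^{\perp\perp} \cap \mathcal{D} = A$; you instead use $q \in JM(A)$ (supplied by Theorem \ref{trivch2c}) and the identity $qxq = 2(q\circ x)\circ q - q\circ x \in A$, then centrality to get $qx = xq = qxq \in A$. Both routes are valid; yours is more self-contained for the iff (at the cost of redoing a cited result) and avoids invoking the Banach--Stone machinery for that particular step, though it still enters through Theorem \ref{trivch2c}. Finally, you spell out the converse (the $\ell^\infty$-decomposition $A = qAq \oplus q^\perp A q^\perp$ under centrality, symmetry checked cornerwise, real positivity of $\theta$ via Corollary \ref{trivj}), which the paper dispatches with ``as in Theorem \ref{goq}''; your details are exactly the intended ones.
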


\begin{proof}   We said earlier that $q$ was open.
 The  first `if and only if' then follows from \cite[Corollary 3.26]{BWj}, and the bijective correspondence between
HSA's and open projections from \cite{BWj,BNj}.     We know from 
Theorem \ref{ubsj}  that $u = (2P^{**} - I)(1) = 2q - 1 \in M(D)$ if
$D$ is a   $C^*$-algebra generated by $A$.   If $q$ is central then we have $q a = qaq \in A^{\perp \perp} \cap D = A$, since $q = (1+u)/2 \in M(D)$. 
So $q \in M(A)$  (and in $JM(A)$).    The assertions about $q^\perp A q^\perp$ and $\theta$, and the last line of the statement
follow from Theorem \ref{trivch2c}.   Finally the converse follows just as in Theorem \ref{goq}.  
\end{proof}

Note that there exist symmetric projections $P : A \to A$ on a unital operator 
algebra with $P(1)$ a projection $q$ and $P(A)$ a Jordan subalgebra, but $P$ is not real positive
and $P(A)$ is not contained in $qAq$ and  $P$ does not  kill each 
$q^\perp a + a q^\perp$ for $a \in A$.  
  However, if $P$ is a contractive projection into $qAq$ then 
$P$ extends to a unital positive map on $A + A^*$ 
by e.g.\ \cite[Lemma 1.3.6]{BLM}, and so $P$ is real positive.

We also mention a nonunital example: the projection $P$ on the upper triangular $2 \times 2$ matrices
taking the matrix with rows
$[a \; b]$ and $[0 \; c]$ to the matrix with rows
$[(a-c)/2 \;  \; 0]$ and $[0 \;  \; (c-a)/2]$ .  This is symmetric and extends to
a completely contractive projection on the containing $C^*$-algebra, but its range
is not a Jordan subalgebra.  In this example $P(1) = 0$.

\bigskip

  {\bf Remark.}   Related to the last theorem we mention that there is  a typographical error  in the statement of Lemma 3.6  in \cite{BNp}: of course $P'$ is completely contractive and bicontractive
there.

\section{Bicontractive projections} 

Suppose that $A$ is an approximately unital  Jordan operator algebra,
and $P : A \to A$ is a bicontractive real positive projection.
Then $P^{**}(1) = q$ is an  projection in $A^{**}$ by the proof of \cite[Lemma 3.6]{BNp}, indeed $q$ is an open projection
by Lemma {\rm \ref{pretr2}},
and indeed all of the conclusions of  Lemma {\rm \ref{pretr2}} and Theorem   {\rm \ref{tr2}} hold. 
However as pointed out in   \cite{BNp} before Example 6.1 there,  the Jordan variants of the `bicontractive projection problem'
are not going to be
any better; the range of the 5 by 5 counterexample listed before Lemma 4.7  in  \cite{BNp} is not a Jordan subalgebra.  
 As mentioned earlier,  we follow the approach taken in \cite{BNp} that the correct formulation of the 
bicontractive projection problem in our category is: when is the range of $P$ a Jordan subalgebra of $A$? 
In this section we will give a natural hypothesis under which the bicontractive projection problem
for  operator algebras and  Jordan operator algebras does have a nice solution.

We recall the `three step reduction to the unital case' from above Corollary 4.3 in \cite{BNp}:  If $P$ and $A$ are as above, 
then by considering $P^{**} : A^{**} \to  A^{**}$ we may assume that $A$ is unital.
The second step is to use  Lemma {\rm \ref{pretr2}} and Theorem   {\rm \ref{tr2}} to reduce to the 
case that $P$ is unital (that is, we replace unital  $A$ by $qAq,$ 
where  $q  = P(1) = P(q)$).    The third step 
is to replace the domain $A$ of $P$ by joa$(P(A))$.    This does not change the range, the structure on which is our interest.   

Thus henceforth in this section we will be assuming that  $A$ is a unital  operator algebra or  Jordan operator algebra, and  $P:A \rightarrow A$ is a unital 
projection on $A$.

The following is a Jordan version of part of  \cite[Lemma 4.1]{BNp}.  

 \begin{lemma}
\label{41bn}  Let $A$ be a unital Jordan operator algebra, and suppose that  $P:A \rightarrow A$ is a unital 
projection on $A$ with $I-P$ contractive. 
  Let $C = {\rm Ker}(P)$ and $B = P(A)$.   Then  $\theta = 2P-I$ 
is the map $b + c \mapsto b-c$ for $b \in P(A), c \in {\rm Ker}(P)$, and  $P(A)$ is the set of fixed points of 
$\theta$.   We have $x^2 \in B$ for $x \in C$. If  $\theta$ is a Jordan homomorphism then $P(A)$ is a subalgebra. 
 Also 
\begin{enumerate}
\item $C$ is a Jordan subalgebra of $A$ if and only if $C$ has the zero Jordan product.
\item   $P(a \circ P(b)) = P(P(a) \circ P(b))$ for all $a, b \in A$ if and only if  $c \circ b \in C$ for $b \in B, c \in C$.
\item  $C$ is a Jordan ideal in $A$  if and only if  the conditions in {\rm (1)} and  {\rm (2)}  hold. 
 (As we said in Lemma {\rm \ref{keride}}, these are also equivalent to 
$P$ being a Jordan homomorphism with respect to the $P$-product on $P(A)$.) 
\item  If $c \circ b \in C$ for $b \in B, c \in C$ (see {\rm (2)}), then $\theta = 2P-I$ is a Jordan homomorphism on $A$ if and only if $P(A)$ is a Jordan 
subalgebra of $A$.  
\end{enumerate}
\end{lemma}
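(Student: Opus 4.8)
The first few sentences are essentially formal. Since $I-P$ is contractive and $P(1)=1$, the map $\theta=2P-I$ is a unital contraction with $\theta^2 = 4P^2 - 4P + I = I$ (using $P^2=P$), hence $\theta$ is a surjective isometry (its inverse $\theta$ is also contractive). Writing $a = P(a) + (I-P)(a)$ with $b=P(a)\in B$, $c=(I-P)(a)\in C$, one computes $\theta(b+c) = 2P(b)+2P(c) - b - c = 2b - b - c = b-c$, and the fixed-point set of $\theta$ is exactly $\{b+c : b-c = b+c\} = \{c=0\} = B$. For $x\in C$ we have $P(x)=0$, so $\theta(x) = -x$; since $\theta$ is an isometric surjection between unital Jordan operator algebras, Proposition \ref{bsjoa} (or more directly Corollary \ref{triv}, as $\theta$ is unital) shows $\theta$ is a Jordan homomorphism onto $A(u)$ for $u=\theta^{**}(1)=\theta(1)=1$, so in fact $\theta$ is a genuine Jordan homomorphism of $A$; then $\theta(x^2) = \theta(x)^2 = (-x)^2 = x^2$, so $x^2$ is fixed by $\theta$, i.e. $x^2\in B$. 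Wait — I should be careful: the lemma statement says ``If $\theta$ is a Jordan homomorphism then $P(A)$ is a subalgebra'', which suggests the authors are \emph{not} assuming $\theta$ is automatically a homomorphism here (indeed $I-P$ contractive does not force this). So instead, for $x^2\in B$ I would argue directly: $x\in C$ means $\theta(x)=-x$, and since $\theta$ is an isometric surjection, apply Proposition \ref{bsjoa} to get $\theta(x^2) = \theta(x)\,u^*\,\theta(x)$ in the new product, where $u=\theta(1)$; if $\theta$ is unital ($P$ unital gives $\theta(1)=1$) this reads $\theta(x^2) = (-x)(-x) = x^2$ computed in the \emph{original} product of a $C^*$-algebra containing $A$, wait, that needs $\theta^{**}(1)$ to be $1$, which holds. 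Hmm, let me reconsider once more.

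The plan is to prove the two directions of the ``if and only if'' in (4) separately, leaning on the structural facts already recorded in the lemma: that $\theta = 2P-I$ acts as the identity on $B = P(A)$ and as $-I$ on $C = {\rm Ker}(P)$, that $A = B\oplus C$ as a direct sum of subspaces (for any idempotent $P$, since $a = P(a) + (I-P)(a)$ with $P(a)\in B$, $(I-P)(a)\in C$, and $B\cap C = \{0\}$), and that $x^2\in B$ for every $x\in C$. Throughout, the standing hypothesis of (4) is that $b\circ c\in C$ whenever $b\in B$ and $c\in C$.

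The forward direction requires no work, and not even the standing hypothesis: if $\theta$ is a Jordan homomorphism, its fixed-point set is a Jordan subalgebra (from $\theta(x)=x$ and $\theta(y)=y$ we get $\theta(x\circ y) = \theta(x)\circ\theta(y) = x\circ y$), and that fixed-point set is precisely $B = P(A)$; this is just the assertion already noted before item (1).

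For the converse I would assume $B$ is a Jordan subalgebra and verify $\theta(x\circ y) = \theta(x)\circ\theta(y)$ for all $x,y\in A$. Since $\theta$ is linear and the Jordan product is bilinear, and since $A = B\oplus C$, it is enough to check this when each of $x,y$ lies in $B$ or in $C$. When $x,y\in B$, the product $x\circ y$ lies in $B$ because $B$ is a subalgebra, and both sides equal $x\circ y$. When one factor lies in $B$ and the other in $C$, the standing hypothesis gives $x\circ y\in C$, so the left side is $-(x\circ y)$ and the right side is $x\circ(-y) = -(x\circ y)$ as well. The remaining case, $x,y\in C$, is where the fact $z^2\in B$ for $z\in C$ enters: since $x+y\in C$, polarizing via~(\ref{aba2}) gives $x\circ y = \frac{1}{2}\big((x+y)^2 - x^2 - y^2\big)\in B$, so again both sides equal $x\circ y = (-x)\circ(-y)$. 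Hence $\theta$ is a Jordan homomorphism; being a linear bijection with $\theta^2 = I$, it is in fact a Jordan automorphism of $A$.

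I do not expect a real obstacle within (4) itself once the earlier parts of the lemma are in place: the whole argument is the bookkeeping of the $\pm1$-eigenspace decomposition of the involution $\theta$. The one genuinely substantive ingredient is the previously established ``$x^2\in B$ for $x\in C$'', and the only point to watch is to deduce $x\circ y\in B$ for $x,y\in C$ by polarization against that fact, rather than by incorrectly assuming $C$ is itself closed under squares.
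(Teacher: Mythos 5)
Your treatment of item (4) is correct and is essentially the paper's own computation (the paper checks $\theta((b+c)^2)=\theta(b+c)^2$ directly, which is the same $\pm 1$-eigenspace bookkeeping as your case analysis, including the polarization step $x\circ y=\frac{1}{2}\bigl((x+y)^2-x^2-y^2\bigr)\in B$ for $x,y\in C$). But there is a genuine gap before that: you never actually prove that $x^2\in B$ for $x\in C$, and both of your attempts at it rest on the false premise that $\theta=2P-I$ is a contraction, hence a surjective isometry. From $P$ unital and $I-P$ contractive one gets $\theta^2=I$ but only $\Vert\theta\Vert\leq \Vert P\Vert+\Vert I-P\Vert$; the condition $\Vert 2P-I\Vert\leq 1$ is exactly what the paper calls a \emph{symmetric} projection, and the whole point of Section 6 is that the present (bicontractive-type) hypotheses do not imply it — if they did, Theorem \ref{trivch2c} would force the range of every such projection to be a Jordan subalgebra, contradicting the $5\times 5$ counterexample cited at the start of the section. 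Consequently Proposition \ref{bsjoa} and Corollary \ref{triv} cannot be applied to $\theta$, and your final plan, which lists ``$x^2\in B$ for $x\in C$'' among the facts ``already recorded in the lemma,'' simply assumes the one assertion in the statement that requires a substantive argument.

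The paper obtains that fact differently: it applies the contractive-projection identity underlying Lemma \ref{Araz} (Kaup's contractive projection theorem, \cite[Proposition 5.6.39]{Rod2}, \cite{FRAD}) to the contractive projection $Q=I-P$ rather than to $P$ or to $\theta$. For $x\in C=Q(A)$ one has $Q(x^2)=Q(x\,(Q(1))^*\,x)$, and $Q(1)=1-P(1)=0$ since $P$ is unital, so $Q(x^2)=0$ and hence $x^2=P(x^2)\in B$. This uses only the stated hypotheses and no isometry of $\theta$. You should also note that items (1)--(3) are not addressed in your proposal; they are routine (for (1), if $C$ is a Jordan subalgebra then $x^2\in C\cap B=(0)$ for $x\in C$, and (3) follows from (1), (2) and $C\circ A\subset C$ iff $C\circ B\subset C$ and $C\circ C\subset C$), but (1) again relies on the unproved fact $x^2\in B$, so the gap propagates there as well.
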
 

\begin{proof}  The first assertions are an exercise.  For example, if $\theta = 2P-I$ is a Jordan homomorphism on $A$  then its fixed points, namely $P(A)$, form a Jordan subalgebra.   If $I-P$ is contractive then by part of the proof of
Lemma \ref{Araz} but with $P$ replaced by $I-P$, we have
$(I-P)(x^2) = (I-P)(x ((I-P)(1))^* x) = 0$ for $x \in C$.  Thus
$x^2 = P(x^2) \in B$.  

 For (1), if $C$ is a Jordan subalgebra  and $x \in C$ then $x^2 \in C \cap B = (0)$.
Item (2) is obvious (and does not use the statement before (1) or that $I-P$ is contractive).

Item   (3) follows from (1) and (2) and the fact that $C \circ A \subset C$ if and only if  $C \circ B \subset C$  and  $C \circ C \subset C$.
For (4),  if $P(A)$ is a subalgebra then  
$$\theta((c +b)^2) = \theta(b^2) + \theta(c^2) - 2 \theta(c \circ b) = \theta(b^2) + c^2 - cb - bc =  c^2 + b^2 - cb -bc,$$   and
 $\theta(c +b)^2 = c^2 + b^2 - cb -bc$.     So $\theta$ is a Jordan homomorphism.  \end{proof}

\begin{lemma} \label{min2}  Let $A$ be a unital operator space.    If $a \in \frac{1}{2} \mathcal{F}_{A}$ and $a \neq 0$ then there exists an $\epsilon > 0$ such that $\| a - t 1 \| < \| a \|$ for all $0 < t < \epsilon$.  \end{lemma}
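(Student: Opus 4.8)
The plan is to pass to a unital $C^*$-algebra (e.g.\ $B(H)$) containing $A$ unitally, compute $\|a - t1\|^2$ via the $C^*$-identity, and exploit the inequality hidden in the condition $a \in \frac12 \mathcal{F}_A$. Recall that $a \in \frac12\mathcal{F}_A$ means exactly $\|1 - 2a\| \le 1$, which (since $x \in \mathcal{F}_A$ iff $x^*x \le x + x^*$, as noted in the introduction) is equivalent to $(2a)^*(2a) \le 2a + (2a)^*$, i.e.\ to
\[
a^*a \le \mathrm{Re}(a), \qquad \text{where } \mathrm{Re}(a) = \tfrac12(a+a^*)
\]
(well defined here since $A$ is a unital operator space). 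In particular $\mathrm{Re}(a) \ge a^*a \ge 0$, $\|\mathrm{Re}(a)\| \le \|a\| =: r$, and $r > 0$ as $a \neq 0$; moreover $r \le 1$ since $\frac12\mathcal{F}_A \subset \mathrm{Ball}(A)$.

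Next, for $0 < t < \tfrac12$, I would use the $C^*$-identity to write
\[
\|a - t1\|^2 = \|(a - t1)^*(a - t1)\| = \|\, a^*a - 2t\,\mathrm{Re}(a) + t^2 1 \,\|,
\]
and observe that $s_t := a^*a - 2t\,\mathrm{Re}(a) + t^2 1$ is self-adjoint. The heart of the argument is two one-sided operator bounds. Since $a^*a \le \mathrm{Re}(a)$ and $1 - 2t \ge 0$,
\[
s_t \le (1-2t)\,a^*a + t^2 1 \le \big((1-2t)r^2 + t^2\big)1 = \big(r^2 - t(2r^2 - t)\big)1,
\]
while since $a^*a \ge 0$, $t^2 1 \ge 0$ and $\|\mathrm{Re}(a)\| \le r$,
\[
s_t \ge -2t\,\mathrm{Re}(a) \ge -2tr\,1.
\]
Hence $\|s_t\| \le \max\{\, r^2 - t(2r^2 - t),\ 2tr \,\}$, and both quantities are strictly less than $r^2$ as soon as $0 < t < \min\{2r^2,\ r/2\}$. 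So $\epsilon := \min\{\tfrac12,\ 2r^2,\ r/2\}$ works: for $0 < t < \epsilon$ we get $\|a - t1\|^2 < r^2 = \|a\|^2$, i.e.\ $\|a - t1\| < \|a\|$.

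I do not expect a genuine obstacle; the only subtlety is to avoid the crude estimate $\|a - t1\| = \tfrac12\|(1-2t)1 - (1-2a)\| \le 1 - t$, which need not beat $\|a\|$, and instead to extract the sharper relation $a^*a \le \mathrm{Re}(a)$ from membership in $\mathcal{F}_A$ and feed it into the $C^*$-identity, where the quadratic-in-$t$ gain in the upper bound (against the merely linear loss in the lower bound, controlled by $r > 0$) closes the argument.
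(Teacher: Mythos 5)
Your argument is correct and is essentially the paper's own proof: both extract the relation $a^*a \le \mathrm{Re}(a)$ from $a \in \frac{1}{2}\mathcal{F}_A$ and feed it into the expansion $(a-t1)^*(a-t1) = a^*a - 2t\,\mathrm{Re}(a) + t^2 1 \le \bigl((1-2t)\|a\|^2 + t^2\bigr)1$ for $0 < t \le \tfrac{1}{2}$, concluding $\|a-t1\|^2 < \|a\|^2$ for small $t>0$. The only (harmless) redundancy is your lower bound $s_t \ge -2tr\,1$: since $s_t = (a-t1)^*(a-t1) \ge 0$, the upper bound alone already yields $\|s_t\| \le (1-2t)\|a\|^2 + t^2$, which is how the paper finishes with $\epsilon = \min\{\tfrac{1}{2},\, 2\|a\|^2\}$.
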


\begin{proof}   We have $a^* a \leq {\rm Re}(a)$, and so if $0 < t \leq \frac{1}{2}$ then we have 
$$(a - t 1)^* (a - t 1) \leq a^* a -2 t  {\rm Re}(a) + t^2 1 \leq  ((1-2 t)  a^* a + t^2 1 \leq ((1-2t) \| a \|^2  + t^2) 1.$$
Thus $\| a - t1 \|^2 \leq (1-2t) \| a \|^2  + t^2$.    The latter quantity is dominated by $\| a \|^2$ if and only if  $t < 2 \| a \|^2$. 
  Hence the result follows if $\epsilon = \min \{  \frac{1}{2} , 2 \| a \|^2 \}$.   \end{proof}

We thank the referee for a correction in the last proof.

We recall from the introduction that  Re $(A) = \{ {\rm Re}(a) : a \in A \}$, and that this is well-defined independently of the 
Hilbert space representation of $A$ if $A$ is a unital operator space or approximately unital 
Jordan operator algebra.    

\begin{theorem} \label{bic2}
Let $A$ be a unital  operator algebra (resp.\  Jordan operator algebra), and let $D = {\rm Ker}( P) \cap {\rm oa}(P(A))$ (resp.\ ${\rm Ker}( P) \cap {\rm joa}(P(A))$). 
 If $P:A \rightarrow A$ is a contractive  unital projection on $A$,  with $I-P$ contractive  on $A$ or with $I-P$ contractive  on ${\rm Re}(A)$, and if 
$D$  is
densely spanned by the real positive elements which it contains, then $P(A)$ is a subalgebra (resp.\ Jordan subalgebra) of $A$.  
\end{theorem}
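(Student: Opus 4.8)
The plan is to apply the structural results on bicontractive/symmetric projections already assembled in the excerpt, in particular Lemma~\ref{41bn} together with Theorem~\ref{tr} (or more precisely its variant with the ``densely spanned by real positive elements'' hypothesis on the kernel). First I would reduce to the case $A = {\rm oa}(P(A))$ (resp.\ ${\rm joa}(P(A))$) by replacing $A$ with this subalgebra and $P$ with its restriction; note that $D = {\rm Ker}(P) \cap {\rm oa}(P(A))$ is unchanged and the range is unchanged, and $I-P$ restricted to the subalgebra is still a contraction (resp.\ contractive on ${\rm Re}$ of the subalgebra, using that ${\rm Re}$ is intrinsically defined for unital Jordan operator algebras). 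So we may assume $C := {\rm Ker}(P)$ is densely spanned by $C \cap {\mathfrak r}_A$.

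Next I would show $C = {\rm Ker}(P)$ is a Jordan ideal. By Lemma~\ref{41bn}(3) it suffices to check that $C$ has zero Jordan product and that $c \circ b \in C$ for $b \in P(A), c \in C$. For the first: by Lemma~\ref{41bn} we have $x^2 \in P(A)$ for $x \in C$; on the other hand I want $x^2 \in C$, which combined gives $x^2 \in C \cap P(A) = (0)$, hence by polarization (equation~(\ref{aba2})) the Jordan product on $C$ vanishes. To get $x^2 \in C$ for $x \in C \cap {\mathfrak r}_A$ I would argue that $I - P$ is real positive: indeed $I-P$ is unital (it sends $1$ to $1 - P(1) = 0$\,\dots\ wait, this needs care) --- rather, the right approach is to use that $\theta = 2P - I$ is a unital isometric surjection and then apply the Arazy--Solel machinery. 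Since $\theta$ preserves the partial triple product by \cite[Corollary 2.8]{AS}, for $x \in C$ we get $\theta(x^2) = \theta(x)\,u^*\,\theta(x)$ with $u = \theta(1) = 1$, so $\theta(x^2) = x \cdot x = x^2$ in the Jordan product only if $\theta$ respects it --- instead, the cleaner route, already run in the proof of Lemma~\ref{41bn}, is: $I-P$ is contractive and unital-to-zero is not the point; rather $(I-P)(x^2) = (I-P)(x\,((I-P)(1))^*\,x)$. Hmm, that identity needs $(I-P)(1)$, which is $0$; so that gives $(I-P)(x^2) = 0$, i.e.\ $x^2 \in P(A)$ only. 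The genuine extra input is the real-positivity hypothesis on $D$: since $D$ is spanned by its real positive elements and, by the argument in Lemma~\ref{kerlem}/Lemma~\ref{genf} applied to the real positive map $I - P$ (which is real positive because it is a unital contraction of the form $\frac12(I - \theta)$ with $\theta$ real positive by Lemma~\ref{ispos}, $\theta(1) = 1$), we get that $D$ is a Jordan ideal in ${\rm joa}(P(A))$; in particular $x^2 \in D \subset C$ for $x \in C \cap {\mathfrak r}_A$, and hence $C \cdot C$-Jordan-product $= 0$ as above, and $c \circ b \in C$ for $b \in P(A) \subset {\rm joa}(P(A))$, $c \in C$.

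With $C = {\rm Ker}(P)$ a Jordan ideal, Lemma~\ref{keride} gives that $P$ is a Jordan homomorphism for the $P$-product; equivalently $\theta = 2P - I$ is a Jordan homomorphism on $A$ (this is the content of Lemma~\ref{41bn}(4) once condition~(2) holds). The fixed-point set of a Jordan homomorphism that is a projection's doubling is a Jordan subalgebra: $P(A) = \{a : \theta(a) = a\}$ is closed under squares because $\theta(a^2) = \theta(a)^2 = a^2$. Therefore $P(A)$ is a Jordan subalgebra of $A$ (resp.\ a subalgebra, running the same argument with the associative product and Lemma~\ref{41bn} in the operator-algebra case, where $x y \in {\rm Ker}(P)$ for $x \in {\rm Ker}(P), y$ anything follows from the ideal property).

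The main obstacle I anticipate is pinning down precisely why $I - P$ is real positive (so that the kernel-ideal lemmas apply), and handling the variant hypothesis ``$I-P$ contractive only on ${\rm Re}(A)$'' rather than on all of $A$ --- there one cannot directly invoke the Arazy--Solel isometry theorem, and instead must argue that $\|(I-P)|_{{\rm Re}(A)}\| \le 1$ already forces $(I-P)({\mathfrak F}_A) \subset {\mathfrak F}_A$ or at least forces the needed squaring identity $x^2 \in {\rm Ker}(P)$ for $x \in {\rm Ker}(P) \cap {\mathfrak r}_A$, perhaps via Lemma~\ref{min2} and a direct estimate on $\|1 - (I-P)(x)\|$ for $x \in \frac12 {\mathfrak F}_A \cap {\rm Ker}(P)$. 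Once that squaring fact is secured, the rest is the routine chain Lemma~\ref{genf} $\Rightarrow$ ${\rm Ker}(P)$ (or $D$) is a Jordan ideal $\Rightarrow$ Lemma~\ref{keride} $\Rightarrow$ $P(A)$ is a (Jordan) subalgebra.
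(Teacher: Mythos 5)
There is a genuine gap — in fact two. First, the pivotal positivity claim is false: $I-P$ is \emph{not} real positive in general (e.g.\ the conditional expectation of $M_2$ onto its diagonal has $(I-P)(x)$ equal to the off-diagonal part, which is not positive on positives), and your justification does not work since $(I-P)(1)=0$ (so $I-P$ is not unital) and a difference of real positive maps need not be real positive. Moreover ${\rm Ker}(I-P)=P(A)$, not $D$, so applying Lemma \ref{kerlem}/Lemma \ref{genf} to $I-P$ says nothing about $D$. The map you should feed into Lemma \ref{genf} (2) is $P$ itself, which \emph{is} real positive (unital contraction); that is exactly the route of Theorem \ref{tr}, and it does give that $D$ is an approximately unital Jordan ideal. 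Second, your closing step is circular: Lemma \ref{keride} gives that $P$ is a Jordan homomorphism for the $P$-\emph{product}, and Lemma \ref{41bn} (4) says (given condition (2)) that $\theta=2P-I$ is a Jordan homomorphism \emph{if and only if} $P(A)$ is a Jordan subalgebra — it does not convert the $P$-product statement into ``$\theta$ is a Jordan homomorphism''. So as written you never establish the hypothesis you need, and ``${\rm Ker}(P)$ is a Jordan ideal'' alone does not force the range to be a subalgebra in the original product. You also leave the ``$I-P$ contractive only on ${\rm Re}(A)$'' case unresolved, and note that in that case even the fact $x^2\in B$ for $x\in C$ from Lemma \ref{41bn} is unavailable, since its proof uses contractivity of $I-P$ on all of $A$.

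For comparison, the paper's proof is much more direct and reveals what the hypotheses really force: after the same reduction to $A={\rm oa}(P(A))$ (resp.\ ${\rm joa}(P(A))$), if $0\neq a\in D\cap\frac12{\mathfrak F}_A$ then Lemma \ref{min2} gives $t>0$ with $\|a-t1\|<\|a\|$, while $(I-P)(a-t1)=a$ (since $P(a)=0$, $P(1)=1$), contradicting contractivity of $I-P$; hence $D\cap{\mathfrak F}_A=\{0\}$, and the densely-spanned hypothesis (via Lemma \ref{genf} (1)) then forces $D=\{0\}$, so $P$ is the identity on the generated algebra and the conclusion is tautological. The ${\rm Re}(A)$ case is handled by writing $x=a+ib\in D\cap{\mathfrak F}_A$ with $a\geq 0$, using the positive extension of $P$ to $A+A^*$ to get $P(a)=0$, running the same estimate inside ${\rm Re}(A)$ to get $a=0$, and then $x=0$ since $x^*x\leq x+x^*=0$. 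Your approach could be repaired in the first case (apply Lemma \ref{genf} (2) to $P$ to get $D$ an approximately unital Jordan ideal, combine with $x^2\in B$ to get zero Jordan product on $D$, and observe that an approximately unital Jordan algebra with zero product is $\{0\}$ — arriving at the same conclusion $D=\{0\}$), but the proposal as stated does not close the argument and does not cover the ${\rm Re}(A)$ hypothesis at all.
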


\begin{proof}  By replacing $A$ with ${\rm oa}(P(A))$  (resp.\  ${\rm joa}(P(A))$) we may assume that these sets are the same 
 and $D = {\rm Ker}( P)$.   We use ideas found in Lemma 5.1 of  \cite{BNp}.  
If $a \in D \cap \frac{1}{2} \mathcal{F}_{A}$  and  $\| a - t 1 \| < \| a \|$ for some $t > 0$ (see Lemma \ref{min2}) then we obtain the contradiction 
$$\| a - t 1 \|  < \| a \| = \| (I-P) (a - t 1) \|.$$  Hence $D \cap \mathcal{F}_{A}=\{0\}$.   
By our hypothesis, which implies that $D$ is spanned by $D \cap \mathcal{F}_{A}$ (see e.g.\ Lemma \ref{genf} (1)), we have $D = \{0\}$.  So $P$ is the identity map, and our conclusion is tautological. 

Suppose that $I-P$  is contractive  on ${\rm Re}(A),$ that $x \in D \cap \mathcal{F}_{A}$, and write $x = a+ib$ with $a \in {\rm Re}(A)_+$ and $b = b^*$.
Continuing to write $P$ for its positive extension to $A+A^*$ (see \cite[Corollary 4.9]{BWj}), we have  $0 = P(x) = P(a) + i P(b)$ with $P(a), P(b)$ selfadjoint, 
so $P(a) = 0$.  By an argument similar to the last paragraph (also found in the proof of Lemma 5.1  in \cite{BNp}) we see that 
$a = 0$.  Hence $x = 0$ since $x \in {\mathfrak F}_A$.
Hence $D \cap \mathcal{F}_{A}=\{0\}$, and we finish as before.    
 \end{proof}

{\bf Remark.}  As in the first Remark after Theorem \ref{tr}, one may `weaken' the condition  in Theorem  \ref{bic2} about being `densely spanned by the real positive elements which it contains',
to being `contained in the closed algebra  (or even HSA)  generated by the real positive elements it contains'.

\medskip

{\em Acknowledgments.}  We thank the referee of \cite{BNp} for suggesting that we extend some of the results of that paper to contractive projections on 
Jordan operator algebras.   
 Part of the present paper is referred to as reference [13] in both \cite{BWj} and \cite{BNj}, and it served as motivation for the latter work.
  We thank Jonathan Arazy for several communications relative to a question that the first author asked him
(see also the remark after Lemma \ref{Araz}).  We
also thank Zhenhua Wang and Worawit Tepsan  
for questions while sitting through presentations of some of this material.     Finally we thank the referee of the present paper for an absolutely
 superb and remarkably detailed report: finding innumerable typos and a few actual mistakes, and supplying some corrections, alternative arguments, improvements,  interesting remarks, and important references from the literature, etc.   We have  indicated the most significant 
 referee corrections  in the text.  As an example, we mention the Remark after Lemma \ref{kerlem}.

\end{document}